\theoremstyle{plain}
\newtheorem{thm}{Theorem}[section]
\newtheorem{cor}[thm]{Corollary}
\newtheorem{prop}[thm]{Proposition}
\newtheorem{lem}[thm]{Lemma}
\theoremstyle{definition}
\newtheorem{defn}[thm]{Definition}
\newtheorem{exmp}[thm]{Example}
\newtheorem{exmps}[thm]{Examples}
\newtheorem{notn}[thm]{Notation}
\newtheorem{rem}[thm]{Remark}
\newcommand{\Tprod}{\operatorname{Tot^\Pi}}
\newcommand{\Tot}{\operatorname{Tot}}
\newcommand{\Hom}{\operatorname{Hom}}
\newcommand{\RHom}{\operatorname{\mathbb{R}Hom}}
\newcommand{\Der}{\operatorname{Der}}
\newcommand{\Inner}{\operatorname{Inner}}
\newcommand{\degree}{\operatorname{deg}}
\newcommand{\M}{MF\big( \J, \ell \big)}
\newcommand{\Q}{\mathcal{Q}}
\newcommand{\C}{\mathbb{C}}
\newcommand{\R}{\mathbb{R}}
\newcommand{\D}{\mathbb{D}}
\newcommand{\J}{J(\mathcal{Q})}
\newcommand{\JW}{J(\mathcal{Q})[\ell^{-1}]}
\newcommand{\Perf}{\mathcal{P}}
\newcommand{\bk}{\mathds{k}}
\newcommand{\Lie}{\mathcal{L}}
\newcommand{\Z}{\mathbb{Z} / 2 \mathbb{Z}}
\newcommand{\z}{\mathbb{Z}}
\newcommand{\tor}{\operatorname{Tor}}
\newcommand{\ext}{\operatorname{Ext}}
\newcommand{\B}{\operatorname{Bar}}
\newcommand{\zc}{\mathcal{Z}}
\newcommand{\zcl}{\mathcal{Z}[\ell^{-1}]}
\newcommand{\hA}{\widehat{A}}
\newcommand{\hB}{\widehat{\B}}
\newcommand{\hBc}{\widehat{B}}
\newcommand{\hL}{\widehat{L}}
\newcommand{\hD}{\widehat{D}}
\newcommand{\Div}{\operatorname{div}}
\newcommand{\ur}{\widetilde{\Sigma}}
\newcommand{\N}{N_{\C}}
\newcommand{\Ni}{N^{in}_{\C}}
\newcommand{\No}{N^{out}_{\C}}
\newcommand{\st}{\mathcal{O}}
\newcommand{\Int}{\operatorname{Int}}
\newcommand{\Su}{S_{\uparrow}}
\newcommand{\Sd}{S_{\downarrow}}
\newcommand{\mJ}{\mathcal{J}}
\newcommand{\E}{\mathcal{E}}
\begin{document}

\title[Dimer Models and Hochschild Cohomology]{Dimer Models and Hochschild Cohomology}

\author{Michael Wong}

\address{Department of Mathematics, University College London, Gower Street, London WC1E 6BT}

\email{michael.wong@ucl.ac.uk}

\subjclass[2010]{Primary 16E40, 16G20; Secondary 14J33}

\keywords{}

\begin{abstract}
Dimer models provide a method of constructing noncommutative crepant resolutions of affine toric Gorenstein threefolds. In homological mirror symmetry, they can also be used to describe noncommutative Landau--Ginzburg models dual to punctured Riemann surfaces. For a zigzag consistent dimer embedded in a torus, we explicitly compute the Hochschild cohomology of its Jacobi algebra in terms of dimer combinatorics. This includes a full characterization of the Batalin--Vilkovisky structure induced by the Calabi--Yau structure of the Jacobi algebra. We then compute the compactly supported Hochschild cohomology of the category of matrix factorizations for the Jacobi algebra with its canonical potential.
\end{abstract}

\maketitle

\tableofcontents

\section{Introduction}

A dimer model is a type of quiver embedded in a Riemann surface, decomposing it into oriented faces, arrows, and vertices. The decomposition gives rise to a superpotential, an element of zeroth Hochschild homology of the path algebra, that determines the relations of an associative algebra called the Jacobi algebra of the dimer. While generally noncommutative, this algebra exhibits nice homological and geometric properties. For instance, under certain conditions on the quiver, known generally as consistency, the Jacobi algebra is Calabi-Yau of dimension $3$ \cite{Davison}. If furthermore the ambient surface is a torus, the center is an affine toric Gorenstein threefold, of which the Jacobi algebra is a noncommutative crepant resolution (NCCR) \cite{Broomhead, Bocklandt2012}. Indeed, any Gorenstein affine toric threefold can be realized and resolved by dimer models in this way.

As Bocklandt shows in \cite{Bocklandt2016}, dimers provide an alternative framework for homological mirror symmetry of Riemann surfaces with $n \geq 3$ punctures. Traditionally, the mirror to a noncompact symplectic manifold is a Landau--Ginzburg model $(X, W)$ consisting of a smooth variety $X$ and a regular function $W$, called the potential. This notion can be extended to the noncommutative setting by replacing $X$ with an associative algebra $A$ and letting $W \in A$ be a central element. The relevant Landau--Ginzburg model in \cite{Bocklandt2016} and the focus of this work is $\big( \J, \ell )$, where $\Q$ is a dimer in a surface $\Sigma$, $\J$ is the Jacobi algebra, and $\ell$ is a canonical central element (see \S \ref{section Jacobi algebras}).

A matrix factorization of $(A, W)$ is a curved complex of $A$-modules. Precisely, in the $\Z$-graded setting, it is a diagram of the form  
\[
\begin{tikzcd}
M_0 \arrow[r, shift left, "d_0"] & M_1 \arrow[l, shift left, "d_1"]
\end{tikzcd} 
\]
where $M_0$ and $M_1$ are projective left $A$-modules in even and odd degrees, respectively, and $d_0$ and $d_1$ are $A$-linear maps satisfying
\[
d_1 d_0 = W \cdot Id_{P_0}, \; \; d_0 d_1 = W \cdot Id_{P_1}.
\]
Such objects form a differential $\Z$-graded category $MF(A, W)$. For the Landau--Ginzburg model $(\J, \ell)$, every arrow of $\Q$ furnishes an example of a matrix factorization (Example \ref{arrow mf}); the collection of these objects gives a full subcategory $mf(\Q) \subset \M$, which is the category on the B-side of Bocklandt's noncommutative version of mirror symmetry. 

The A-side is described in terms of a dual dimer $\Q^\vee$ that is embedded in another Riemann surface $\Sigma^\vee$. The set of arrows of $\Q^\vee$ is the same as that of $\Q$, but the vertices of $\Q^\vee$ correspond to special cycles in $\Q$ called zigzag cycles: closed paths that alternately follow clockwise and anticlockwise faces of $\Q$ (see \S \ref{section Zigzag cycles}). We consider the the $\Z$-graded wrapped Fukaya category of the ambient surface $\Sigma^\vee$ punctured at the vertices, denoted $wFuk(\Sigma^\vee \setminus \Q_0^\vee)$. The objects are exact Lagrangian submanifolds of the punctured surface, including the arrows of $\Q^\vee$ viewed as curves. The full subcategory from the arrows is denoted $fuk(\Q^\vee)$.

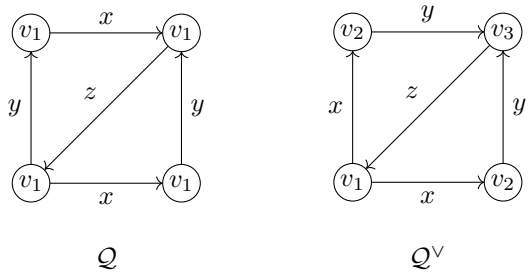
\begin{figure}[h] \label{dimer dual}
\[ 
\begin{tikzpicture}
\node at (0, 0) {$v_1$};
\draw (0,0) circle [radius=0.25];
\node at (0, 2) {$v_1$};
\draw (0,2) circle [radius=0.25];
\node at (2, 0) {$v_1$};
\draw (2,0) circle [radius=0.25];
\node at (2, 2) {$v_1$};
\draw (2,2) circle [radius=0.25];
\draw[->] (0, .25) -- (0, 1.75);
\node[left] at (0, 1) {$y$};
\draw[->] (.25, 0) -- (1.75, 0);
\node[below] at (1, 0) {$x$};
\draw[->] (.25, 2) -- (1.75, 2);
\node[above] at (1, 2) {$x$};
\draw[->] (2, .25) -- (2, 1.75);
\node[right] at (2, 1) {$y$};
\draw[<-] (0.18, .18) -- (1.82, 1.82);
\node[above left] at (1, 1) {$z$};
\node at (1, -1) {$\Q$};
\end{tikzpicture}
\qquad \qquad
\begin{tikzpicture}
\node at (0, 0) {$v_1$};
\draw (0,0) circle [radius=0.25];
\node at (0, 2) {$v_2$};
\draw (0,2) circle [radius=0.25];
\node at (2, 0) {$v_2$};
\draw (2,0) circle [radius=0.25];
\node at (2, 2) {$v_3$};
\draw (2,2) circle [radius=0.25];
\draw[->] (0, .25) -- (0, 1.75);
\node[left] at (0, 1) {$x$};
\draw[->] (.25, 0) -- (1.75, 0);
\node[below] at (1, 0) {$x$};
\draw[->] (.25, 2) -- (1.75, 2);
\node[above] at (1, 2) {$y$};
\draw[->] (2, .25) -- (2, 1.75);
\node[right] at (2, 1) {$y$};
\draw[<-] (0.18, .18) -- (1.82, 1.82);
\node[above left] at (1, 1) {$z$};
\node at (1, -1) {$\Q^\vee$};
\end{tikzpicture}
\]
\caption{ \label{dimer dual} The dimer $\Q$, with one vertex $v_1$ and arrows $x, y, z$, is embedded in a torus. The dual dimer $\Q^\vee$, which has the same arrow set but has three vertices, is embedded in a sphere.}
\end{figure}
 
\begin{thm}[\cite{Bocklandt2016} Corollary 8.4] \label{dimer mirror sym}
Suppose $\Q$ is a zigzag consistent dimer (see \S \ref{section consistency}) in a compact Riemann surface of positive genus. Then there exists an $A_\infty$-quasi-isomorphism
\[
mf(\Q) \cong fuk(\Q^\vee). 
\]
\end{thm}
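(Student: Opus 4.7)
The plan is to establish the equivalence by building explicit combinatorial models of both sides, using the identification of arrow sets $\Q_1 = \Q^\vee_1$ and the duality between zigzag cycles of $\Q$ and vertices of $\Q^\vee$. I would carry this out in three steps: describe the morphism spaces in $mf(\Q)$, describe those in $fuk(\Q^\vee)$, and match the $A_\infty$-structures.

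For the first step, each arrow $a \in \Q_1$ gives a matrix factorization $M_a$ (Example \ref{arrow mf}) obtained by isolating $a$ in the cyclic terms of $\ell$. I would identify the morphism complex between $M_a$ and $M_b$ with a quotient of the $(t(a), t(b))$-component of the path algebra, equipped with a curved differential induced by residual terms of $\ell$. Zigzag consistency guarantees that $\J$ is Calabi--Yau of dimension $3$ by \cite{Davison}, and combining this with the explicit form of $\ell$ I would show that the cohomology of this complex is spanned combinatorially by equivalence classes of paths in $\Q$ from $t(a)$ to $h(b)$ modulo Jacobi relations, with the two $\Z$-graded pieces related by Calabi--Yau duality.

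For the second step, I would analyze the wrapped Floer complex between the arcs $a^\vee, b^\vee \subset \Sigma^\vee \setminus \Q^\vee_0$. Since the surface is two-dimensional, its generators correspond, after positive wrapping around the punctures, to concatenations of arrows of $\Q^\vee$ meeting at a common vertex, equivalently to segments of zigzag paths of $\Q$ (since zigzag cycles of $\Q$ are exactly the vertices of $\Q^\vee$). The key combinatorial input is that each such segment corresponds, via the Jacobi relations, to one of the paths appearing on the B-side, producing a cohomological bijection on morphism spaces.

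Finally, the $A_\infty$-structures must be matched. On the Fukaya side, the products $\mu^k$ count immersed holomorphic polygons with boundary on the arcs $a_i^\vee$, which on a surface reduce to combinatorial polygons whose interior regions correspond to faces of $\Q^\vee$. On the matrix factorization side, the minimal $A_\infty$-model products arise from homotopy transfer applied to the curved dg structure, with higher products weighted by cyclic insertions of $\ell$. The main obstacle I anticipate is matching these two families of products: the Fukaya polygons traversing faces of $\Q^\vee$ should correspond term-by-term to cyclic insertions of $\ell$ on the B-side, up to signs distinguishing clockwise and anticlockwise faces. Zigzag consistency should be precisely what ensures this correspondence is finite and that no spurious holomorphic polygons arise, allowing the cohomological bijection to extend to a full $A_\infty$-quasi-isomorphism.
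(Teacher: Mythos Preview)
This theorem is not proved in the present paper: it is stated with attribution to \cite{Bocklandt2016}, Corollary 8.4, and serves only as background motivating the Landau--Ginzburg model $(\J,\ell)$. The paper supplies no argument of its own, so there is nothing here to compare your proposal against.

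That said, a brief comment on the substance of your sketch relative to Bocklandt's actual proof. The overall three-step architecture (identify hom-spaces on the B-side, identify hom-spaces on the A-side via wrapped intersections, match the $A_\infty$-products by counting polygons) is correct in spirit. But several of your intermediate descriptions are inaccurate. The morphism complex $\underline{\Hom}(M_a,M_b)$ is not ``a quotient of the $(t(a),t(b))$-component of the path algebra with a curved differential''; it is the genuine (uncurved) four-term dg complex displayed in Example~\ref{arrow mf}, and Bocklandt computes its cohomology directly from cancellation combinatorics rather than by invoking the CY-3 property or any duality. On the A-side, the generators of the wrapped Floer complex are angles at the punctures (equivalently, Reeb chords), which Bocklandt packages into a combinatorial ``angle category''; your phrasing in terms of segments of zigzag paths is loosely related but not the actual mechanism. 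Finally, the matching of higher products is the genuinely delicate step in \cite{Bocklandt2016}, carried out by an explicit bijection between immersed polygons in $\Sigma^\vee$ and certain sequences in $\Q$; your final paragraph gestures at this but would need substantial work to become a proof.
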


Dimer models thus offer an algebraic, combinatorial way of understanding resolutions of affine toric Gorenstein threefolds and mirror constructions for punctured surfaces. Our purpose is to use dimer models to study deformation theoretic aspects of these geometries. As a general principle, deformations of algebraic structures are governed by Hochschild cohomology. Our main result is a computation of the Hochschild cohomology of the Jacobi algebra and a version of the Hochschild cohomology of the matrix factorization category when the dimer is zigzag consistent and embedded in a torus.

As is well-known, Hochschild (co)homology of an associative algebra carries the rich structure of a noncommutative calculus \cite{TTnc}. This includes the cup and cap products, the Gerstenhaber bracket, and the Connes differential, which generalize standard operations on polyvector fields and differential forms on an algebraic variety. Moreover, Van den Bergh \cite{VdB} establishes that, when the algebra is Calabi--Yau, there is a Poincar\'e--type duality between homology and cohomology that endows the latter with a Batalin--Vilkovisky (BV) structure. 

For a zigzag consistent dimer $\Q$ embedded in a torus $\Sigma$, we calculate the Hochschild cohomology of $\J$ in terms of the combinatorics of the dimer. Let $\Q_0$ be the set of vertices of the quiver, and enumerate the zigzag cycles as $Z_1, Z_2, \dots, Z_K$. For each zigzag, there are special cycles running in the opposite direction that we call antizigzags (see \S \ref{section Zigzag cycles}). Let
\[
\eta_1, \dots, \eta_N \in H_1(\Sigma, \z)
\]
be the distinct homology classes represented by the antizigzag cycles, so the opposites of these classes are represented by the zigzag cycles. Note that $K \geq N$ as there may be multiple (anti)zigzags with the same homology. Each class $\eta_i$ generates a ray 
\[
\langle \eta_i \rangle : = \R_{\geq 0} \cdot \eta_i  \subset H_1(\Sigma, \z) \otimes_{\z} \R \cong \R^2.
\]
Together, the rays divide the plane into $N$ two-dimensional cones $\sigma_1, \dots, \sigma_N$ and determine a fan (cf. \cite{Broomhead}). The center $\zc$ of the Jacobi algebra can be constructed from the antizigzag fan by methods of toric geometry. For each $1 \leq i \leq N$, let $\Int \sigma_i$ be the interior of the cone and $S_i$ be the semigroup algebra $\C[ \Int \sigma_i \cap \z^2]$. We write $N^{out}$ for the lattice of one-parameter subgroups of outer automorphisms of $\J$, which can be described combinatorially by perfect matchings (see \S \ref{section Perfect matchings}), and let $\No = N^{out} \otimes_{\z} \C$.

\begin{thm} \label{HH of Jacobi algebra}
Suppose $\Q$ is a zigzag consistent dimer in a torus. Additively, the Hochschild cohomology of the Jacobi algebra is
\begin{align*}
HH^0(\J) & \cong \zc \\
HH^1(\J) & \cong \zc \otimes \No \, \oplus \, \bigoplus_{1 \leq i \leq N} S_i \\
HH^2(\J) & \cong \zc \otimes \No \wedge \No \, \oplus \, \bigoplus_{1 \leq i \leq N} S_i \wedge \No \, \oplus \; \bigoplus_{\substack{1 \leq j \leq K \\ 1 < n}} \C \, Z_j^n  \\
HH^3(\J) & \cong \zc  \otimes \No \wedge \No \wedge \No \, \oplus \,\bigoplus_{1 \leq i \leq N} S_i \wedge \No \wedge \No \, \oplus \, \bigoplus_{\substack{1 \leq j \leq K \\ 1 \leq n}} \C \, Z_j^n \, \oplus \, \C \Q_0.
\end{align*}
\end{thm}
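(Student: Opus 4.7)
The plan is to exploit the fact that, by Davison's theorem, $\J$ is Calabi--Yau of dimension $3$, and thus admits a self-dual length-$3$ bimodule resolution of the Ginzburg/Bocklandt type built from the superpotential. First I would write down this resolution explicitly:
\[
0 \to \J \otimes_S \J \xrightarrow{d_3} \J \otimes_S \C \Q_1^* \otimes_S \J \xrightarrow{d_2} \J \otimes_S \C\Q_1 \otimes_S \J \xrightarrow{d_1} \J \otimes_S \J \to \J \to 0,
\]
where $S = \C \Q_0$, the $d_i$'s are given by cyclic derivatives of the superpotential, and the outer pair of terms is identified with the inner pair under Calabi--Yau duality. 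Applying $\Hom_{\J^e}(-, \J)$ yields a cochain complex whose terms are
$C^0 = \J^S$, $C^1 = \bigoplus_{a\in \Q_1} e_{h(a)}\J e_{t(a)}$, $C^2 = \bigoplus_{a\in\Q_1} e_{t(a)}\J e_{h(a)}$, $C^3 = \bigoplus_{v \in \Q_0} e_v \J e_v$, and whose differentials are signed sums of left/right multiplications by summands of the cyclic derivatives.

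The next step is to parametrize these path spaces combinatorially. For a zigzag consistent dimer on a torus, the results of Broomhead and Bocklandt classify paths in $\J$ up to the Jacobi ideal by a homology class in $H_1(\Sigma,\z)$ together with a perfect matching degree in $N^{out}$; equivalently, each Hom space $e_i \J e_j$ decomposes as a finite union of shifted semigroup algebras attached to the two-dimensional cones $\sigma_1,\dots,\sigma_N$ of the antizigzag fan and their boundary rays. I would fix this parametrization once and for all, and then express each differential $d_k^*$ as a map between graded pieces, noting that it preserves perfect matching degree and shifts homology by prescribed amounts depending on which cyclic derivative is applied.

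Then I would carry out the cohomology computation cone-by-cone. In the interior of each two-dimensional cone $\sigma_i$, the complex splits off pieces of a toric Koszul-type subcomplex whose cohomology contributes $\zc \otimes \wedge^k \No$ in every degree along with the $S_i \wedge \No^{\wedge(k-1)}$ corrections; this mirrors the fact that the center $\zc$ and the outer-automorphism lattice $\No$ come from toric data. On the rays of the fan that correspond to zigzag cycles $Z_j$, the differentials acquire torsion because the relevant paths are precisely powers $Z_j^n$ of closed loops on the boundary of the cones; tracking these contributions yields the $\bigoplus_{j,n} \C\, Z_j^n$ summands, with the shift in the range of $n$ (strict $n > 1$ in degree $2$, allowing $n \geq 1$ in degree $3$) reflecting the homological degree at which each power first appears in the kernel versus the cokernel. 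Finally, the extra $\C\Q_0$ summand in $HH^3$ comes from the "origin" contribution to $C^3$, i.e.\ the idempotent paths at each vertex, which survive in top degree as the CY-dual of $HH_0 \supset S$.

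The main obstacle is step three: pinning down exactly which path classes sit in the kernel versus the image of each $d_k^*$ along the boundary rays. Generically, the toric part of the computation is clean, but at the rays corresponding to zigzag cycles the cyclic-derivative differentials become degenerate, and one must carefully match the torsion that appears with the powers $Z_j^n$ of the zigzag cycles. A useful consistency check, which I would carry out throughout, is Van den Bergh duality $HH^i(\J) \cong HH_{3-i}(\J)$: computing the Hochschild homology independently (again via the same resolution, but applying $- \otimes_{\J^e} \J$) and verifying the resulting isomorphisms both pins down all signs and catches any miscounting of the $\C Z_j^n$ summands.
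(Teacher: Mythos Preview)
Your approach is genuinely different from the paper's, and while plausible in outline, it skips over exactly the places where the real work lies.

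The paper does \emph{not} compute directly from the Ginzburg resolution. Instead it proceeds as follows. For $HH^0$ and $HH^1$ it works through the localization map $L^*: HH^*(\J) \to HH^*(\JW)$: since $\JW$ is Morita equivalent to $\C[x^{\pm 1}, y^{\pm 1}, z^{\pm 1}]$, its derivations are fully understood, and one then classifies by hand which derivations of $\JW$ preserve $\J$ using perfect-matching degree inequalities (this is how the $S_i$ pieces appear). For $HH^3$ the paper computes $HH_0(\J)$ directly as closed paths modulo commutators, and the crux is a combinatorial analysis of when two vertices $v, v'$ become identified after multiplying by a minimal central element $x_\alpha$; this ``$f$-connectedness'' relation (depending on whether $\alpha$ lies on a ray or in a cone interior, and on which strip $\E_{i,j}$ between parallel zigzags the vertices sit) is what produces the torsion classes $Z_j^n$ and the $\C\Q_0$ summand. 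Then $HH^3$ follows by Van den Bergh duality. Finally $HH^2$ is obtained not from the resolution at all but from the exact sequence $\bk \to HH^3 \xrightarrow{\Delta} HH^2 \xrightarrow{\Delta} HH^1 \xrightarrow{\Delta} HH^0 \to 0$ coming from the BV operator on a connected graded CY-$3$ algebra, together with the already-computed $HH^1$ and $HH^3$.

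Your plan to split the Ginzburg complex cone-by-cone and invoke a ``toric Koszul-type subcomplex'' is where the gap is. The differentials in $\Hom_{\J^e}(\mathbb{P}_*, \J)$ are left/right multiplications by partial cycles $R_a^\pm$, which do not cleanly decompose along the antizigzag fan; the splitting you assert is not automatic and would itself require the kind of minimal-path and perfect-matching bookkeeping the paper develops. More seriously, your treatment of the boundary rays (``tracking these contributions yields the $\bigoplus \C Z_j^n$ summands'') is exactly the hard step, and you give no mechanism for it. In the paper this is handled by the connectedness analysis of vertices under multiplication by $x_{\eta_i}$, which in turn rests on a delicate lemma about reordering up/down substitutions in the relation groupoid of the Jacobi algebra. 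Without an analogue of that argument, you cannot distinguish the $m_i$ parallel zigzag contributions in homology class $\eta_i$, nor explain the shift in the range of $n$ between degrees $2$ and $3$. Using Van den Bergh duality only as a consistency check also undersells it: in the paper it is the \emph{definition} of the $HH^3$ classes, and the BV exact sequence (which you do not mention) is what makes $HH^2$ accessible.
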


We also provide a complete computation of the the Batalin--Vilkovisky structure induced from the Calabi--Yau structure of $\J$. When the ambient surface of the dimer has higher genus, we give partial results and suggest a way to compute the Hochschild cohomology by relating it to the Chas--Sullivan string topology of the hyperbolic surface \cite{ChasSullivan}. 

Next, we turn to computing the Hochschild cohomology of the matrix factorization category. A Landau--Ginzburg model $(A, W)$ consists of the same data as a $\Z$-graded curved algebra: that is, a curved $\Z$-graded $A_\infty$-algebra in which all structure maps $m_n$, $n \geq 0$, are trivial except for $m_2$, the associative multiplication of $A$, and $m_0 = W$. The category of matrix factorizations of $(A, W)$ is the category of curved differential graded modules over the curved algebra. One might expect, as in Morita theory in the differential graded setting, the Hochschild cohomology of $MF(A, W)$ is isomorphic to that of $(A, W)$.

However, the presence of curvature generally makes this false; Hochschild cohomology of an algebra with nontrivial curvature is actually trivial \cite{CaldararuTu}. What is true, as shown by Polischuk--Positselski \cite{PolPos}, is that the compactly supported Hochschild cohomologies of $MF(A, W)$ and $(A, W)$ are the same. This variant of Hochschild cohomology is defined essentially by taking a direct sum totalization instead of the normal direct product totalization. It is an example of a derived functor of the second kind, the theory of which is established in \cite{Positselski}. 

C\u{a}ld\u{a}raru--Tu \cite{CaldararuTu} show how to compute the compactly supported Hochschild cohomology of a curved algebra by a spectral sequence. They carry out the computation specifically for a Landau--Ginzburg model consisting of an affine variety and a potential with isolated singularities. By their method and Theorem \ref{HH of Jacobi algebra}, we obtain the following.

\begin{thm} \label{main theorem}
Suppose $\Q$ is a zigzag consistent dimer in a torus, and let $g^\vee$ be the genus of the mirror dual. Additively, the compactly supported Hochschild cohomology of the matrix factorization category is  
\begin{align*}
HH^{\text{even}}_c \big( \M \big) & \cong \C \,  \oplus \, \bigoplus_{\substack{1 \leq j \leq K \\ 1 \leq n}} \C \, Z_j^n \\
HH^{\text{odd}}_c \big( \M \big) & \cong \C^{2 g^\vee + K- 1} \, \oplus \, \bigoplus_{\substack{1 \leq j \leq K \\ 1 \leq n}} \C \, Z_j^n.
\end{align*}
\end{thm}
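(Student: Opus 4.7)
The plan is to invoke the C\u{a}ld\u{a}raru--Tu spectral sequence \cite{CaldararuTu} that computes the compactly supported Hochschild cohomology of a curved algebra with central curvature from the ordinary Hochschild cohomology of its underlying uncurved algebra. For our Landau--Ginzburg model $(\J, \ell)$, this spectral sequence has $E_2$ page given by $HH^*(\J)$, which is furnished by Theorem \ref{HH of Jacobi algebra}, and its differential is contraction with $\ell$ via the Batalin--Vilkovisky/Gerstenhaber structure on $HH^*(\J)$ computed earlier in the paper. First I would decompose the $E_2$ page according to the four visible families of summands: the toric bulk $\zc \otimes \bigwedge^\bullet \No$, the codimension-one-stratum pieces $S_i \otimes \bigwedge^\bullet \No$ indexed by the two-dimensional cones, the zigzag powers $\C Z_j^n$, and the vertex summand $\C\Q_0$ concentrated in top degree. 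Because $\ell$ is homogeneous for the lattice grading dual to $\No$, the differential preserves this decomposition, so each family can be analyzed independently.

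On the toric bulk $\zc \otimes \bigwedge^\bullet \No$ the contraction by $\ell$ is identified, under the toric Hochschild--Kostant--Rosenberg isomorphism, with the Koszul differential on polyvector fields on $\operatorname{Spec} \zc$ associated to $d\ell$. Since $\ell$ has an isolated critical point at the toric apex, the analysis of C\u{a}ld\u{a}raru--Tu for isolated singularities produces the Jacobi ring of $\ell$, contributing a single class $\C$ in even degree. On each $S_i \otimes \bigwedge^\bullet \No$ the analogous local Koszul calculation is carried out along the codimension-one toric stratum corresponding to the cone $\sigma_i$; summing the local contributions over the $N$ cones and using the perfect-matching description of $\No$ yields the $\C^{2g^\vee+K-1}$ term in odd degree. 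The coincidence $2g^\vee + K - 1 = \operatorname{rank} H_1(\Sigma^\vee \setminus \Q_0^\vee)$ is not accidental: the antizigzag fan is combinatorially dual to the cell structure on the mirror surface, so the Euler-characteristic count of surviving Koszul classes recovers the first homology rank of the punctured mirror.

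The remaining families are handled directly. Each zigzag power $Z_j^n$ represents a cyclically symmetric class whose BV-bracket with the central element $\ell$ vanishes (the bracket of $\ell$ with any cyclic class $Z_j^n$ is a coboundary in the Gerstenhaber calculus of the paper), so the differential annihilates these summands and they survive to $HH^*_c$ with their original parity, producing the identical $\bigoplus \C Z_j^n$ contributions in both the even and odd parts of the answer. The extra $\C\Q_0$ in $HH^3$ is absorbed into the $\C^{2g^\vee+K-1}$ odd-degree count, matching the fact that one loop around a puncture of $\Sigma^\vee$ is redundant via the surface relation $\prod [a_i,b_i] \prod c_j = 1$. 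Finally I would check that no higher differentials can fire: the decomposition is equivariant for the torus of outer automorphisms, and higher pages must preserve lattice weights while shifting cohomological degree, which forces them to vanish for parity or weight reasons on each summand. Assembling the four contributions gives the stated formula.

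The main obstacle is the computation on the mixed pieces $S_i \otimes \bigwedge^\bullet \No$. These correspond geometrically to the codimension-one toric strata, where $\ell$ is not an isolated singularity and the naive C\u{a}ld\u{a}raru--Tu isolated-critical-point analysis does not directly apply; one must track, through the perfect-matching description of $\No$, how the canonical central $\ell$ acts on the semigroup algebra $\C[\operatorname{Int}\sigma_i \cap \z^2]$ in the BV calculus. This delicate combinatorial step is precisely where the topology of the mirror surface, and in particular the genus $g^\vee$, enters the B-side calculation.
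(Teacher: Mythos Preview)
The proposal has the right framework---the C\u{a}ld\u{a}raru--Tu spectral sequence reduces the problem to computing $H^*\big(HH^*(\J),\{\ell,-\}\big)$, and the paper's proof (Proposition~\ref{spectral} and Theorem~\ref{compact HH comp}) does exactly this. But the key step in your outline is wrong: the differential $\{\ell,-\}$ does \emph{not} preserve the four-family decomposition of Theorem~\ref{HH of Jacobi algebra}. For instance, applying $\{\ell,-\}$ to a zigzag class $x_{\eta_i}^n\theta_v$ in $HH^3$ lands in $\zc\cup\No\cup\No$ (see \eqref{3 to 2}), and applying it to $x_{\eta_i}^n\psi_{i,j}$ in $HH^2$ lands in $\zc\cup\No$ (see \eqref{2 to 1}). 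So you cannot analyse the families independently, and in particular your assertion that the bracket of $\ell$ with each $Z_j^n$ vanishes is false.

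What actually happens is more delicate. In $HH^3$, all of the $x_{\eta_i}^n\theta_v$ map under $\{\ell,-\}$ to the \emph{same} element of $HH^2$ independently of $v$, so only the differences $x_{\eta_i}^n(\theta_v-\theta_{v_o})$ survive; similarly in $HH^2$ only $x_{\eta_i}^n(\psi_{i,j}-\psi_{i,1})$ survive. The even generators indexed by the $K$ zigzag cycles therefore arise partly from the quotient $HH^0/\{\ell,HH^1\}\cong\C[x_{\eta_1},\dots,x_{\eta_N}]/(x_{\eta_i}x_{\eta_j}\mid i\neq j)$ (Lemma~\ref{cokernel bid}) and partly from these $\psi$-differences, not from the $Z_j^n$ summands surviving intact. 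Likewise the odd count $2g^\vee+K-1$ comes from $\#\Q_0-1$ classes $\Theta_v$ together with two classes $U,V$ from $\No$, matched to the mirror via the Euler characteristic identity $2-2g^\vee=K-\#\Q_0$. Your separate treatment of ``toric bulk gives $\C$'' and ``$S_i$ pieces give $\C^{2g^\vee+K-1}$'' does not reflect this: the $x_{\eta_i}^n$ in $HH^0$ are not killed by $\{\ell,\zc\cup\No\}$ alone (that image is $\ell\cdot\zc$), and they are not minimal for any interior cone point, so they survive to the even part rather than being absorbed into a Jacobi ring of an isolated singularity.

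Finally, your degeneration argument (``parity or weight reasons'') is too vague. The paper's argument is that $d^2$ has perfect-matching degree $2$, whereas by Lemma~\ref{cokernel bid} the target cokernel is concentrated in perfect-matching degree at most $1$; this is what forces $d^2=0$.
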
 

The zigzag cycles of $\Q$ are in bijective correspondence with the vertices of $\Q^\vee$ (\cite{Bocklandt2016} \S 8). Hence, the unit and the summand $\C^{2g^\vee + K - 1}$ can be viewed as the contribution from $H^*(\Sigma^\vee \setminus \Q_0^\vee, \C)$, the cohomology of the punctured surface. Additionally, each puncture contributes an even and an odd copy of $Z_j^n$. We explicitly describe the BV structure that the compactly supported cohomology inherits from $HH^*(\J)$ via the spectral sequence.

It remains to determine if the natural map from compactly supported to ordinary Hochschild cohomology is an isomorphism. Various authors prove the equivalence for certain commutative examples of Landau--Ginzburg models \cite{Dyckerhoff2011, CaldararuTu, LinPomerleano, PolPos}. For our noncommutative Landau--Ginzburg model, we do not address the question directly, but comparison with the A-side gives strong evidence for the affirmative. Ganatra \cite{Ganatra} shows that Hochschild cohomology of the wrapped Fukaya category of a punctured surface is isomorphic to the symplectic cohomology of the punctured surface. As a differential graded complex, the compactly supported cohomology in Theorem \ref{main theorem} agrees with the symplectic cohomology of the punctured surface described by the dual dimer.  

\subsection{Structure of the paper}

Section $2$ is a brief overview of background material, including dimer models, Calabi-Yau algebras, matrix factorizations, and the two kinds of Hochschild (co)homology. In section $3$, we relate the Hochschild invariants of an algebra to those of a central localization of the algebra. In particular, it is shown that, if the algebra is Calabi--Yau, Hochschild cohomology ``commutes" with central localization on the level of the BV structure. We then characterize the Hochschild cohomology of the localization of $\J$ with respect to the central subalgebra $\C[\ell]$.  

Section $4$ is devoted to the Hochschild cohomology of the Jacobi algebra, with focus on the genus $1$ case. Each cohomology group is computed, and the BV structure is calculated using results of section 3. Finally, in section $5$, the compactly supported Hochschild cohomology of $\M$ is computed, and we determine the BV structure that descends from $HH^*(\J)$ via the spectral sequence.

\subsection{Acknowledgements}

The author is indebted to Travis Schedler for guidance and support throughout the project. He would also like to thank Raf Bocklandt, Ed Segal, and Yanki Lekili for extensive discussions that refined and deepened his understanding of the problem. He is grateful to Nicol\`o Sibilla, Daniel Pomerleano, Dan Kaplan, and Jack Smith for helpful conversations and taking interest in this work.

\section{Preliminaries}

The content in this section is adapted from various sources: \S \ref{section Dimer models} to \S \ref{section Perfect matchings} comes mostly from \cite{Bocklandt2012, Bocklandt2016, abc, Broomhead}, \S \ref{section Curved algebras} to \S \ref{section Hochschild cohomology} from \cite{PolPos}, and \S \ref{section Noncommutative calculus} to \S \ref{section Calabi-Yau algebras} from \cite{VdB, TTnc, Ginzburg}. 

\subsection{Notation and conventions}

The following notation will be common throughout the text.
\begin{itemize}
\item $\Q$ is a quiver with finite vertex set $\Q_0$ and finite arrow set $\Q_1$.
\item $t, h: \Q_1 \rightarrow \Q_0$ are the tail and head functions, respectively.
\item $\C \Q$ is the path algebra of $\Q$ over the complex numbers.
\item $\bk : = \C \Q_0$ is the semisimple subalgebra of $\C \Q$ generated by the vertex idempotents. Abusing notation, we write $v \in \Q_0$ for the vertex and the idempotent. 
\item A symbol $p: v \to w$ indicates a path $p$ such that $t(p) = v$ and $h(p) = w$.
\item The unadorned tensor product $\otimes$ stands for $\otimes_{\C}$.
\end{itemize}

We use the convention of forward concatenation: for paths $p$ and $q$, the product $pq$ is nontrivial in $\C \Q$ if and only if $h(p) = t(q)$.

\subsection{Dimer models} \label{section Dimer models}

Let $\Sigma$ be a compact Riemann surface of genus $g$. We say a quiver $\Q$ \textbf{embeds} into $\Sigma$ if 
\begin{enumerate}
\item $\Q_0$ is identified with a finite subset of $\Sigma$;
\item each arrow $a \in \Q_1$ has a smooth embedding $\phi_a: [0, 1] \to \Sigma$ such that $\phi_a(0) = t(a)$, $\phi_a(1) = h(a)$, and $\phi_a([0, 1])$ is not a contractible loop;
\item the images of distinct arrows intersect only at vertices.
\end{enumerate}
Furthermore, the quiver is said to \textbf{split} $\Sigma$ if $\Sigma \setminus \Q$ is a disjoint union of open disks. The closure of such a disk is called a \textbf{face} of $\Q$, and the collection of all faces is denoted $\Q_2$.

\begin{defn}
Let $\Q$ be a quiver that splits a compact Riemann surface. We say $\Q$ is a \textbf{dimer model} if, for any face, the arrows in its boundary can be ordered as $\{ a_i \, | \, 1 \leq i \leq n \}$ where $n \geq 3$ and 
\[
h(a_i) = t(a_{i+1}) \; \forall \, i \; \text{mod} \; n.
\]
The path $a_1 a_2 \dots a_n$ is called a \textbf{boundary path}.
\end{defn}

In other words, the boundary of a face has path length at least $3$, and the arrows contained therein are oriented in the same direction. We say that a face is \textbf{positive} if the boundary arrows are oriented anticlockwise and $\textbf{negative}$ if the boundary arrows are oriented clockwise. The subsets of $\Q_2$ consisting of all positive and negative faces are denoted $\Q_2^+$ and $\Q_2^-$, respectively. Notice that every arrow is contained in exactly one positive face and one negative face.

\begin{rem}
Historically, a dimer model is defined as a bipartite graph that splits a Riemann surface. Such a graph is obtained from the cellular decomposition of $\Sigma$ dual to the quiver, the two sets of vertices coming from the positive and negative faces of $\Q$.
\end{rem}

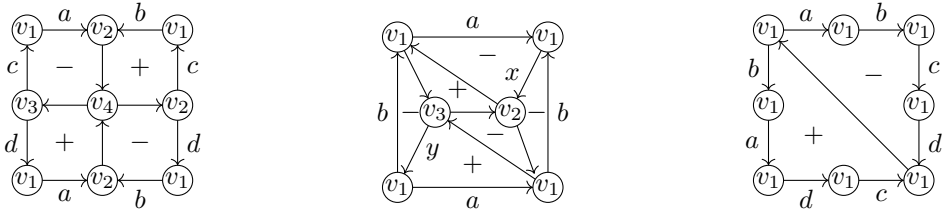
\begin{figure}[h]
\[
\begin{tikzpicture}
\draw (0,0) circle [radius=0.2];
\node at (0,0) {$v_1$};
\draw (0, 1) circle [radius=0.2];
\node at (0,1) {$v_3$};
\draw (0, 2) circle [radius=0.2];
\node at (0,2) {$v_1$};
\draw (1, 0) circle [radius=0.2];
\node at (1,0) {$v_2$};
\draw (1, 1) circle [radius=0.2];
\node at (1,1) {$v_4$};
\draw (1, 2) circle [radius=0.2];
\node at (1,2) {$v_2$};
\draw (2, 0) circle [radius=0.2];
\node at (2,0) {$v_1$};
\draw (2, 1) circle [radius=0.2];
\node at (2,1) {$v_2$};
\draw (2, 2) circle [radius=0.2];
\node at (2,2) {$v_1$};
\draw[->] (0.2, 0) -- (0.8, 0);
\node[below] at (0.5, 0) {$a$};
\draw[<-] (1.2, 0) -- (1.8, 0);
\node[below] at (1.5, 0) {$b$};
\draw[<-] (0, 0.2) -- (0, 0.8);
\node[left] at (0, 0.5) {$d$};
\draw[->] (0, 1.2) -- (0, 1.8);
\node[left] at (0, 1.5) {$c$};
\draw[<-] (0.2, 1) -- (0.8, 1);
\draw[<-] (1, 0.8) -- (1, 0.2);
\draw[<-] (1, 1.2) -- (1, 1.8);
\draw[->] (1.2, 1) -- (1.8, 1);
\draw[->] (2, 0.8) -- (2, 0.2);
\node[right] at (2, 0.5) {$d$};
\draw[->] (2, 1.2) -- (2, 1.8);
\node[right] at (2, 1.5) {$c$}; 
\draw[<-] (1.2, 2) -- (1.8, 2);
\node[above] at (1.5, 2) {$b$};
\draw[->] (0.2, 2) -- (0.8, 2);
\node[above] at (0.5, 2) {$a$};
\node at (0.5, 0.5) {$+$};
\node at (1.5, 0.5) {$-$};
\node at (0.5, 1.5) {$-$};
\node at (1.5, 1.5) {$+$};
\end{tikzpicture}
\qquad \qquad \qquad
\begin{tikzpicture}
\draw (0, 0) circle [radius=0.2];
\node at (0, 0) {$v_1$};
\draw (0, 2) circle [radius=0.2];
\node at (0, 2) {$v_1$};
\draw (2, 0) circle [radius=0.2];
\node at (2, 0) {$v_1$};
\draw (2, 2) circle [radius=0.2];
\node at (2, 2) {$v_1$};
\draw (0.5, 1) circle [radius=0.2];
\node at (0.5, 1) {$v_3$};
\draw (1.5, 1) circle [radius=0.2];
\node at (1.5, 1) {$v_2$};
\draw[->] (0.2, 0) -- (1.8, 0);
\node[below] at (1, 0) {$a$};
\draw[->] (0.2, 2) -- (1.8, 2);
\node[above] at (1, 2) {$a$};
\draw[->] (0, 0.2) -- (0, 1.8);
\node[left] at (0, 1) {$b$};
\draw[->] (2, 0.2) -- (2, 1.8);
\node[right] at (2, 1) {$b$};
\draw[<-] (0.09, 0.18) -- (0.41, 0.82);
\node[below right] at (0.25, 0.7) {$y$};
\draw[->] (0.7, 1) -- (1.3, 1);
\draw[->] (0.09, 1.82) -- (0.41, 1.18);
\draw[->] (1.91, 1.82) -- (1.59, 1.18);
\node[above left] at (1.75, 1.3) {$x$};
\draw[->] (1.59, 0.82) -- (1.82, 0.18);
\draw[->] (1.33, 1.11) -- (0.17, 1.89);
\draw[->] (1.83, 0.11) -- (0.67, 0.89);
\node at (1, 0.3) {$+$};
\node at (0.175, 1) {$-$};
\node at (1.3, 0.7) {$-$};
\node at (0.8, 1.3) {$+$};
\node at (1.2, 1.75) {$-$};
\node at (1.85, 1) {$-$};
\end{tikzpicture}
\qquad \qquad \qquad
\begin{tikzpicture}
\draw (0,0) circle [radius=0.2];
\node at (0,0) {$v_1$};
\draw (0, 1) circle [radius=0.2];
\node at (0,1) {$v_1$};
\draw (0, 2) circle [radius=0.2];
\node at (0,2) {$v_1$};
\draw (1, 0) circle [radius=0.2];
\node at (1,0) {$v_1$};
\draw (1, 2) circle [radius=0.2];
\node at (1,2) {$v_1$};
\draw (2, 0) circle [radius=0.2];
\node at (2,0) {$v_1$};
\draw (2, 1) circle [radius=0.2];
\node at (2,1) {$v_1$};
\draw (2, 2) circle [radius=0.2];
\node at (2,2) {$v_1$};
\draw[->] (0.2, 0) -- (0.8, 0);
\node[below] at (0.5, 0) {$d$};
\draw[->] (1.2, 0) -- (1.8, 0);
\node[below] at (1.5, 0) {$c$};
\draw[<-] (0, 0.2) -- (0, 0.8);
\node[left] at (0, 0.5) {$a$};
\draw[<-] (0, 1.2) -- (0, 1.8);
\node[left] at (0, 1.5) {$b$};
\draw[->] (2, 0.8) -- (2, 0.2);
\node[right] at (2, 0.5) {$d$};
\draw[<-] (2, 1.2) -- (2, 1.8);
\node[right] at (2, 1.5) {$c$}; 
\draw[->] (1.2, 2) -- (1.8, 2);
\node[above] at (1.5, 2) {$b$};
\draw[->] (0.2, 2) -- (0.8, 2);
\node[above] at (0.5, 2) {$a$};
\draw[->] (1.86, 0.14) -- (0.14, 1.86);
\node at (0.6, 0.6) {$+$};
\node at (1.4, 1.4) {$-$};
\end{tikzpicture}
\]
\caption{ \label{dimer examples} Examples of dimers from \cite{Bocklandt2016}. The first two are embedded in a torus, while the third is embedded in a genus $2$ surface.}
\end{figure}

\subsection{Jacobi algebras} \label{section Jacobi algebras}

A \textbf{superpotential} of a quiver $\Q$ is any element 
\[
\Phi \in HH_0(\C \Q) \cong \C \Q / [\C \Q, \C \Q],
\] 
the vector space spanned by closed paths up to cyclic permutation of the arrows. Ginzburg \cite{Ginzburg} defines a linear map 
\[
\partial_a: \C \Q / [\C \Q, \C \Q] \rightarrow \C \Q
\]
called the cyclic derivative with respect to $a \in \Q_1$: if $a_1, \dots, a_n \in \Q_1$ and $[a_1 \dots a_n]$ is the class of their product in the quotient, then
\[
\partial_a[a_1 \dots a_m] = 
\sum_{i  \, | \, a_i = a} a_{i+1} \dots a_m a_1 \dots a_{i-1}.
\]
The \textbf{Jacobi algebra} of the pair $(\Q, \Phi)$ is the quotient of $\C \Q$ by the two-sided ideal generated by the cyclic derivatives of $\Phi$,
\[
J(\Q, \Phi) : = \frac{\C \Q}{( \partial_a \Phi \, | \, a \in \Q_1)}.
\]    

If $\Q$ is a dimer model, a face $F \in \Q_2$ determines a unique element $\partial F \in \C \Q / [ \C \Q, \C \Q]$, the projection of any boundary path of $F$. So there is a canonical superpotential
\begin{equation} 
\Phi_0 = \sum_{F \in \Q_2^+} \partial F - \sum_{F \in \Q_2^-} \partial F.
\end{equation}

\begin{defn}
The \textbf{Jacobi algebra} of a dimer model $\Q$ is the algebra $\J : = J(\Q, \Phi_0)$.
\end{defn} 

We can write the relations defining $\J$ explicitly. Given $a \in \Q_1$, let $R^+_a$ be the path in $\Q$ such that $a R^+_a$ is the boundary of a positive face, and let $R^-_a$ be the path such that $a R^-_a$ is the boundary of a negative face. Then
\begin{equation} \label{relations}
\partial_a \Phi_0 = R^+_a - R^-_a.
\end{equation}
Since the boundary of a face has path length at least $3$, the terms $R^{\pm}_a$ have path length at least $2$, but they need not have equal length. We call $R_a^+$ and $R_a^-$ \textbf{partial cycles}.

\[
\begin{tikzpicture}
\draw (0,0) circle [radius=0.1];
\draw (0,-2) circle [radius=0.1];
\draw (2,-2) circle [radius=0.1];
\draw (2,0) circle [radius=0.1];
\draw (1, 2) circle [radius=0.1];
\node at (1, -1) {$+$};
\node at (1, 1) {$-$};
\node[left] at (0, -1) {$R^+_a$};
\node[left] at (0, 1) {$R^-_a$};
\draw[->, dashed] (1.9, 0) -- (0.1, 0);
\node[above] at (1, 0) {$a$}; 
\draw[->] (0, -0.1) -- (0, -1.9);
\draw[->] (0.1, -2) -- (1.9, -2);
\draw[->] (2, -1.9) -- (2, -0.1);
\draw[->] (0.04, 0.08) -- (0.96, 1.92);
\draw[->] (1.04, 1.92) -- (1.96, 0.08);
\end{tikzpicture}
\]

Let $\zc$ be the center of the algebra $\J$. There is a canonical element $\ell \in \zc$, called the \textbf{potential} of $\J$, that is constructed as follows. For each vertex $v \in \Q_0$, choose a boundary path beginning and ending at $v$. Let $p_v$ be its image in $\J$, and let $\ell$ be the sum
\[
\ell : = \sum_{v \in \Q_0} p_v.
\]
It is straightforward to check that $\ell$ is independent of the choices made and is indeed central.

\begin{exmps}
For the first dimer in Figure \ref{dimer dual}, the path algebra is the free associative algebra $\C \langle x, y, z \rangle$. The Jacobi algebra is
\[
\J = \frac{ \C \langle x, y, z \rangle}{(yz - zy, zx - xz, xy -yx)} \cong \C[x, y, z],
\]
and the potential is $\ell = xyz$. In this exceptional case, the Landau--Ginzburg model is commutative, coinciding with the mirror to the pair of pants in \cite{Abouzaid}.

For the last dimer in Figure \ref{dimer examples}, the Jacobi algebra is 
\[
\J = \frac{\C \langle a, b ,c, d \rangle}{( bcd - dcb, cda - adc, dab - bad, abc - cba)},
\]
which is noncommutative, and the potential is $\ell = abcd$.
\end{exmps}

\subsection{Zigzag cycles} \label{section Zigzag cycles}

Let $\Q$ be a dimer embedded in $\Sigma$, and let $\pi: \ur \rightarrow \Sigma$ be the universal covering map. We may lift the dimer to a quiver $\tilde{\Q} = (\tilde{\Q}_0, \tilde{\Q_1})$ embedded in $\ur$, with infinite sets of vertices and arrows when the genus is positive.

\begin{defn}
A \textbf{zigzag flow} in $\tilde{\Q}$ is an infinite path
\[
\tilde{Z} : = \dots a_{-2} a_{-1} a_0 a_1 a_2 \dots, \; \; a_i \in \tilde{\Q}_1
\]
such that $h(a_i) = t(a_{i+1})$ and
\begin{enumerate}
\item for all $i \in \z$, $a_i a_{i+1}$ is contained in a positive face when $i$ is even and a negative face when $i$ is odd
\item or, for all $i \in \z$, $a_i a_{i+1}$ is contained in a negative face when $i$ is even and a positive face when $i$ is odd.
\end{enumerate}
\end{defn}

Since $\Q$ is finite, the projection of a zigzag flow under the covering map is periodic. We refer to a single period as a \textbf{zigzag path} or, when considered up to cyclic permutation of the arrows, a \textbf{zigzag cycle}. If $a_i a_{i+1}$ is contained in a positive face, then the arrow $a_i$ is called a \textbf{zig} of the zigzag flow (path, cycle); otherwise, it is called a \textbf{zag}. Any arrow $a \in \Q_1$ is contained in exactly two zigzags, one in which $a$ is a zig and one in which $a$ is a zag. 

For a given zigzag flow $\tilde{Z}$, two infinite paths can be constructed that run in the opposite direction: the \textbf{positive antizigzag flow} to $\tilde{Z}$, denoted $O^+(\tilde{Z})$, consisting of all arrows in the positive faces that meet $\tilde{Z}$ but are not in $\tilde{Z}$, and the \textbf{negative antizigzag flow} to $\tilde{Z}$, denoted $O^-(\tilde{Z})$, consisting of all arrows in the negative faces that meet $\tilde{Z}$ but are not in $\tilde{Z}$. We define and notate antizigzag paths and cycles analogously.

\[
\begin{tikzpicture}
\draw (-2, 2) circle [radius=0.1];
\draw (0,0) circle [radius=0.1];
\draw (2,2) circle [radius=0.1];
\draw (4,0) circle [radius=0.1];
\draw (6,2) circle [radius=0.1];
\draw (8,0) circle [radius=0.1];
\draw (10,2) circle [radius=0.1];
\draw (12,0) circle [radius=0.1];
\draw[->](-1.93, 1.93) -- (-0.07, 0.07);
\node[above right] at (-1, 1) {zig};
\draw[->] (0.07, 0.07) -- (1.93, 1.93);
\node[above left] at (1, 1) {zag};
\draw[->] (2.07, 1.93) -- (3.93, 0.07);
\draw[->] (4.07, 0.07) -- (5.93, 1.93);
\draw[->] (6.07, 1.93) -- (7.93, 0.07);
\draw[->] (8.07, 0.07) -- (9.93, 1.93);
\draw[->] (10.07, 1.93) -- (11.93, 0.07);
\draw[->, dashed] (3.93, 0)..controls (2, -1) and (2, -1)..(0.07, 0);
\draw[->, dashed] (7.93, 0)..controls (6, -1) and (6, -1)..(4.07, 0);
\draw[->, dashed] (11.93, 0)..controls (10, -1) and (10, -1)..(8.07, 0);
\draw[->, dashed] (1.93, 2)..controls (0, 3) and (0, 3)..(-1.93,2);
\draw[->, dashed] (5.93, 2)..controls (4, 3) and (4, 3)..(2.07,2);
\draw[->, dashed] (9.93, 2)..controls (8, 3) and (8, 3)..(6.07,2);
\node at (2, 0.5) {$-$};
\node at (6, 0.5) {$-$};
\node at (10, 0.5) {$-$};
\node at (0, 1.5) {$+$};
\node at (4, 1.5) {$+$};
\node at (8, 1.5) {$+$};
\node at (-3, 3) {$\st^+(\tilde{Z})$};
\node at (-3, 1) {$\tilde{Z}$};
\node at (-3, -1) {$\st^-(\tilde{Z})$};
\end{tikzpicture}
\]

Zigzag and antizigzag paths are topological $1$-cycles in $\Sigma$. We write 
\[
\eta_1, \eta_2, \dots, \eta_N \in H_1(\Sigma, \z)
\]
for all distinct homology classes represented by the antizigag paths in $\Q$, so then
\[
-\eta_1, -\eta_2, \dots, -\eta_N \in H_1(\Sigma, \z)
\] 
are the distinct homology classes represented by the zigzag paths. Note that there may be multiple (anti)zigzag cycles representing the same homology class, as demonstrated in Example \ref{pinchpoint}. Let $m_i$ be the number of zigzag cycles of homology $-\eta_i$. The total number of zigzag cycles is then 
\[
K : = \sum_{i = 1}^N m_i. 
\]

\subsection{Consistency} \label{section consistency}

For a dimer model $\Q$, consider the algebra 
\[
\JW : = \J \otimes_{\C[\ell]} \C[\ell, \ell^{-1}],
\]
the Ore localization of $\J$ with respect to the multiplicative set $\{ \ell^n \, | \, n \in \z_{\geq 0} \}$. This algebra is also realized as the quotient of the path algebra of the double quiver $\overline{\Q}$ by the relations \eqref{relations} and
\begin{equation} \label{inverse relations}
aa^{-1} = t(a), \; a^{-1}a = h(a) \; \; \forall \, a \in \Q_1.
\end{equation}
Let $L: \J \rightarrow \JW$ be the map sending a path to its image in the localization. From the first description of $\JW$, it is clear that the kernel of $L$ is precisely the torsion of $\J$ under the action of $\C[\ell]$. From the second description, it is apparent that $\JW$ has the following cancellation property: if $p$ and $q$ are paths in $\JW$ and $a \in \overline{\Q}_1$ such that $h(p) = h(q) = t(a)$, then
\[
p a = q a \Longrightarrow p = q,
\]
or if $t(p) = t(q) = h(a)$, then
\[
ap = aq \Longrightarrow p = q.
\] 
So if $\J$ is torsion-free and $L$ is injective, then $\J$ inherits the cancellation property, and we say $\J$ is \textbf{cancellation}.

When the genus of $\Sigma$ is positive, this algebraic condition is equivalent to a geometric one.  Given an arrow $a \in \tilde{\Q}_1$, truncate the two zigzag flows containing $a$ to obtain semi-infinite paths
\begin{align*}
\tilde{Z}^+_a & =  a a_1 a_2 \dots a_i \dots \\
\tilde{Z}^-_a & = a b_1 b_2 \dots b_j \dots
\end{align*}

\begin{defn}
A dimer $\Q$ is \textbf{zigzag consistent} if, for all $a \in \tilde{\Q}_1$, $\tilde{Z}^+_a$ and $\tilde{Z}^-_a$ intersect only in $a$: that is, $a_i \neq b_j$ for any $i, j \in \z_{>0}$.
 \end{defn}

Note that a dimer in a sphere can never be zigzag consistent because $\tilde{\Q}$ is finite. According to Theorem 5.5 of \cite{Bocklandt2012}, if $\Q$ is a dimer embedded in a surface of positive genus, then $\J$ is cancellation if and only if $\Q$ is zigzag consistent.

\begin{exmp}
The first and third dimers in Figure \ref{dimer examples} are zigzag consistent. The second dimer is not, as the two zigzag paths starting at $x$ intersect in $y$.
\end{exmp}

Zigzag consistency implies that zigzag paths are homotopically nontrivial simple closed curves in $\Sigma$. Therefore, the homology classes $\eta_i$, $1 \leq i \leq N$, are nonzero primitive elements of $H_1(\Sigma, \z)$. Moreover, in the genus $1$ case, zigzag flows behave almost like straight lines in the plane, as suggested by the following properties:


\begin{prop}[\cite{Bocklandt2012, Broomhead}] \label{intersection properties}
Suppose $\Q$ is a zigzag consistent dimer model in a torus. 
\begin{enumerate}
\item If two zigzag cycles have the same homology class, they do not intersect in any arrow.
\item If two zigzag cycles have linearly independent homology, then they intersect in at least one arrow. 
\end{enumerate}
\end{prop}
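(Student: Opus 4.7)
The plan is to work in the universal cover $\pi \colon \ur \to \Sigma$ and exploit zigzag consistency, which says that the two zigzag flows through any arrow $a \in \tilde{\Q}_1$ meet only at $a$. A zigzag cycle of homology $-\eta_i$ lifts to a zigzag flow invariant under translation by $\eta_i$, so information about intersections in $\Sigma$ translates to information about deck-group orbits of lifts in $\ur \cong \R^2$. For part (1), I would argue by contradiction. Suppose $Z_1 \neq Z_2$ are zigzag cycles of common homology $-\eta_i$ that share an arrow $a$. Each arrow belongs to exactly two zigzag cycles, one with $a$ as a zig and one with $a$ as a zag, so $\{Z_1, Z_2\}$ is precisely this pair. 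Fix a lift $\tilde{a}$ of $a$ and let $\tilde{Z}_a^+, \tilde{Z}_a^-$ be the two zigzag flows through $\tilde{a}$, lifting $Z_1$ and $Z_2$ respectively. Since both cycles have homology $-\eta_i$, each lifted flow is invariant under translation by $\eta_i$, so both contain every translate $\tilde{a} + n \eta_i$, $n \in \z$. This infinite family of common arrows contradicts zigzag consistency.

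For part (2), let $Z$ and $Z'$ be zigzag cycles with linearly independent homology classes $-\eta, -\eta'$, and fix lifts $\tilde{Z}, \tilde{Z}'$ in $\R^2$. Zigzag consistency together with the primitivity of $-\eta, -\eta'$ guarantees that each lift is an injective proper embedding of a bi-infinite path. A compactness argument using finiteness of the fundamental domain shows that $\tilde{Z}$ stays within bounded Hausdorff distance of the line $\R \eta$, and analogously for $\tilde{Z}'$; in particular $\tilde{Z}$ separates $\R^2$ into two unbounded components. Since $\tilde{Z}'$ has asymptotic direction $\eta'$ not parallel to $\eta$, it must accumulate in both components of $\R^2 \setminus \tilde{Z}$, and hence meet $\tilde{Z}$ topologically.

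The hard part is upgrading this topological meeting to an honest shared arrow, since a priori $\tilde{Z}$ and $\tilde{Z}'$ might meet only at vertices using disjoint incident arrows. I would handle this by a local analysis at each shared vertex: the zig/zag condition together with the cyclic arrangement of positive and negative faces around the vertex forces any visit of $\tilde{Z}'$ to $\tilde{Z}$ through a common vertex but not a common arrow to stay on one side of $\tilde{Z}$, so a genuine topological crossing requires a shared arrow. Alternatively, by varying the chosen lift of $Z'$ over its deck-group orbit, one matches the algebraic intersection number $|\eta \wedge \eta'|$ to a count of shared arrows per fundamental domain. This combinatorial bookkeeping at vertices is the principal obstacle to turning the topological separation argument into the desired shared-arrow conclusion.
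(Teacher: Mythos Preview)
The paper does not supply its own proof of this proposition; it is stated as a known result with attribution to \cite{Bocklandt2012, Broomhead}, so there is no in-paper argument to compare your attempt against.

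On the merits of your sketch: the argument for (1) is correct. If distinct zigzag cycles $Z_1, Z_2$ share an arrow $a$, they must be precisely the two zigzags through $a$ (one with $a$ as zig, one as zag); lifting to $\ur$ and using that both are invariant under the same deck translation $-\eta_i$ forces the forward rays $\tilde Z_a^+$ and $\tilde Z_a^-$ to share the translate of $\tilde a$ by one period, contradicting zigzag consistency.

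For (2), your outline matches the approach in the cited references: lifts of zigzag cycles stay within bounded Hausdorff distance of straight lines in their homology direction (this uses finiteness of the dimer together with zigzag consistency, which ensures each lift is an embedded bi-infinite path), so linearly independent directions force a topological crossing. You have correctly isolated the genuine remaining work, namely upgrading a topological crossing to a shared arrow rather than merely a shared vertex. The local zig/zag analysis you gesture at is indeed how this is done in the literature, but the details are not trivial and your paragraph does not yet constitute a proof; it is an accurate diagnosis of where the content lies.
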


Because of the first statement of the proposition, distinct zigzag cycles (paths) of the same homology class are said to be \textbf{parallel}.




 
\subsection{Perfect matchings} \label{section Perfect matchings}

Every dimer $\Q$ provides a cellular cochain complex
\[
\begin{tikzcd}
\z^{\Q_0} \arrow[r, "\partial"] & \z^{\Q_1} \arrow[r, "\partial"] & \z^{\Q_2}.
\end{tikzcd}
\]
With $v^*$, $a^*$, and $F^*$ the duals to a vertex $v$, an arrow $a$, and a face $F$, the coboundary maps are defined by the formulas
\[
\partial v^*: a \mapsto 
\begin{cases}
1 & \text{if} \; t(a) \neq v  = h(a) \\
-1 & \text{if} \; t(a) = v \neq h(a) \\
0 & \text{otherwise}
\end{cases},
\qquad
\partial a^* : F \mapsto 
\begin{cases}
1 & \text{if} \; a \in \partial F \\
0 & \text{otherwise}
\end{cases}
\]
It is apparent that $H^0(\Sigma, \z) \cong \z$ is the rank $1$ submodule generated by the primitive element 
\[ 
\sum_{v \in \Q_0} v^*.
\]
Let $N^{in} = \partial (\z^{\Q_0})$, the space of $1$-coboundaries. 

An element of $\varphi \in \z^{\Q_1}$ is an integer grading on the path algebra $\C \Q$. As explained in \cite{Broomhead}, the associated $\C^*$-action defines a one-parameter subgroup of the automorphism group of $\C \Q$: for any $t \in \C^*$ and $a \in \Q_1$,
\[
a \mapsto t^{\varphi (a)} \cdot a.
\]
In order for the grading to descend to the Jacobi algebra, the relations \eqref{relations} must be homogeneous in the grading; in particular, the sum of the degrees of the arrows in the boundary of a face must be the same for all faces. This is precisely the condition of being contained in
\[
N : = \partial^{-1} \big( \z \cdot \sum_{F \in \Q_2} F^* \big).
\] 
The quotient $N^{out} : = N / N^{in}$ is a rank $2g+1$ free abelian group containing $H^1(\Sigma, \z) \cong \partial^{-1}(0) / N^{in}$. Note that the one-parameter subgroup to which $\partial v^*$ corresponds is conjugation by
\[ 
tv + \sum_{\substack{w \in \Q_0 \\ w \neq v}} w, \; \; t \in \C^*,
\]
so the 1-coboundaries span the one-parameter subgroups of inner automorphisms. Hence, $N^{out}$ can be identified as the lattice of one-parameter subgroups of outer automorphisms of $\J$.


\begin{defn} 
A \textbf{perfect matching} (or simply matching) $\Perf$ is a subset of $\Q_1$ containing exactly one arrow from every face. Such a subset defines an element of $N$ by
\[
\degree_{\Perf} (a) = \begin{cases}
1 & \text{if} \; a \in \Perf \\ 
0 & \text{otherwise}.
\end{cases}
\]
We label the collection of all perfect matchings by $PM(\Q)$.
\end{defn}

Not all dimers admit a perfect matching. However, as long as $\Q$ admits a strictly positive grading, then a perfect matching exists, and $N$ is integrally generated by the perfect matchings (\cite{Broomhead} Lemma 2.11).

The difference of two matchings is a cocycle. Fixing a reference matching $\Perf_o$, we obtain a lattice polytope from the convex hull of the image of $\{ \Perf - \Perf_o \, | \, \Perf \; \text{a perfect matching in} \; \Q\}$ in $H^1(\Sigma, \z) \otimes_{\z} \R \cong \R^{2g}$, unique to $\Q$ up to affine integral transformation. This polytope, denoted $MP(\Q)$, is called the \textbf{matching polytope}.

When $\Q$ is a zigzag consistent dimer embedded in a torus, the combinatorics of perfect matchings are especially well understood. A matching is classified as
\begin{itemize}
\item an \emph{internal matching} if its image lies in the interior of $MP(\Q)$,
\item a \emph{boundary matching} if its image lies on the boundary of $MP(\Q)$, and
\item a \emph{corner matching} if its image lies at a corner of $MP(\Q)$.
\end{itemize}
Each lattice point not on a corner is the image of at least one matching, and each corner is the image of a unique matching. The homology classes of the zigzag cycles $\{-\eta_i \, | \, 1 \leq i \leq N \}$ are, in fact, the outward pointing normal vectors to $MP(\Q)$ \cite{Gulotta}; thus, they are in bijection with the corner matchings, which can then be labeled as $\{\Perf_i \, | \, 1 \leq i \leq N \}$. We may assume the ordering on corner matchings and homology classes is such that
\begin{enumerate}
\item $\Perf_{i+1}$ succeeds $\Perf_i$ ($i$ mod $N$) in $MP(\Q)$ in the counterclockwise direction and 
\item $-\eta_{i+1}$ is the outward normal vector to the boundary segment between $\Perf_i$ and $\Perf_{i+1}$ ($i$ mod $N$).
\end{enumerate}

\begin{thm}[\cite{Gulotta} \S 3; see also \cite{abc} Theorem 1.47] \label{matching polygon}
Suppose $\Q$ is a zigzag consistent dimer in a torus. 
\begin{enumerate}
\item The corner matchings $\Perf_i$ and $\Perf_{i+1}$ contain the zigs and zags, respectively, of all zigzag cycles of homology $-\eta_{i+1}$. In each boundary of a face that does not meet a zigzag cycle of homology $-\eta_{i+1}$, $\Perf_i$ and $\Perf_{i+1}$ coincide.
\item The number of lattice points on the boundary of $MP(\Q)$ between $\Perf_i$ and $\Perf_{i+1}$, inclusive, is $m_{i+1} + 1$. On this segment, the $d^{th}$ lattice point from $\Perf_i$ represents perfect matchings that are the union of $\Perf_i \cap \Perf_{i+1}$, all arrows in $\Perf_i$ from $d$ chosen zigzag cycles of homology $-\eta_{i+1}$, and all arrows in $\Perf_{i+1}$ from the remaining $m_{i+1}-d$ zigzag cycles of homology $-\eta_{i+1}$.
\item An internal matching meets every nontrivial closed path of $\Q$.
\end{enumerate}
\end{thm}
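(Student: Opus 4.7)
The plan is to work through the pairing between $N^{out} \otimes \R$ (containing the polytope $MP(\Q)$) and $H_1(\Sigma, \z)$, realized concretely by the count $\#(\Perf \cap \gamma)$ for a perfect matching $\Perf$ and a closed path $\gamma$. Since $\gamma$ is a $1$-cycle, coboundaries pair trivially with it, so $\#(\Perf \cap \gamma) - \#(\Perf' \cap \gamma) = \langle [\Perf - \Perf'], [\gamma]\rangle$; that is, the count is an affine-linear functional of the polytope coordinate of $\Perf$, with linear part dual to $[\gamma]$. For (1), fix a zigzag cycle $Z$ of homology $-\eta_{i+1}$ with $m_Z$ zigs and $m_Z$ zags. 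Examining the $2 m_Z$ positive and negative faces adjacent to $Z$ gives the bound $\#(\Perf \cap Z) \leq m_Z$, with equality iff $\Perf$ selects a zig or zag of $Z$ in each such face (rather than a ``side'' arrow, which exists since boundary paths have length at least $3$). Since $-\eta_{i+1}$ is the outward normal to the edge $[\Perf_i, \Perf_{i+1}]$ of $MP(\Q)$, the functional $\#(\cdot \cap Z)$ attains its maximum precisely on this edge. Applying the same argument to the neighboring edge $[\Perf_{i-1}, \Perf_i]$ (normal $-\eta_i$) and a zigzag of homology $-\eta_i$ distinguishes $\Perf_i$: the combined extremality forces $\Perf_i$ to contain all zigs (and no zags) of every zigzag of homology $-\eta_{i+1}$, and dually $\Perf_{i+1}$ to contain all zags. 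The second sentence of (1) then follows because $\Perf_i - \Perf_{i+1}$, as a $1$-cocycle, is a sum of ``swap'' cocycles $\mathds{1}_{\mathrm{zigs}(Z)} - \mathds{1}_{\mathrm{zags}(Z)}$, each supported only on arrows adjacent to $Z$.

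For (2), construct $m_{i+1}+1$ matchings on the edge: for each $d \in \{0, \ldots, m_{i+1}\}$, choose $d$ of the $m_{i+1}$ zigzag cycles of homology $-\eta_{i+1}$ and swap the zigs of $\Perf_i$ for the zags in those $d$ cycles; parallelism (Proposition~\ref{intersection properties}(1)) ensures the result is a perfect matching. Each swap cocycle represents the same class $e \in N^{out}$, primitive because it is Poincar\'e dual to the primitive class $-\eta_{i+1}$, so the $d$-th matching has polytope coordinate $\Perf_i + d \cdot e$ and $\Perf_{i+1} - \Perf_i = m_{i+1} e$; this produces exactly $m_{i+1}+1$ distinct lattice points on the edge. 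Conversely, any matching with polytope coordinate on the segment attains the max of $\#(\cdot \cap Z)$ for every such $Z$ and hence must be of the prescribed form. For (3), suppose for contradiction that an internal matching $\Perf$ is disjoint from a nontrivial closed path $\gamma$. If $[\gamma] \neq 0$, then $\#(\cdot \cap \gamma)$ is a nonconstant nonnegative affine-linear functional on $MP(\Q)$ attaining its global minimum $0$ at $\Perf$, forcing $\Perf$ onto the boundary---contradiction. If $[\gamma] = 0$, the functional is constant, but by (1) every arrow of $\Q_1$ lies in some corner matching (as the zig of its containing zigzag cycle), so picking any arrow of $\gamma$ together with its corner matching $\Perf'$ yields $\#(\Perf' \cap \gamma) \geq 1$, contradicting $\#(\Perf \cap \gamma) = 0$.

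The main obstacle is the detailed combinatorial accounting needed for (1): establishing $\#(\Perf \cap Z) \leq m_Z$ with its equality characterization comes down to a face-by-face count, while identifying the swap cocycle class with the primitive edge direction of $MP(\Q)$ requires Poincar\'e duality together with the primitivity of $\eta_{i+1}$ provided by zigzag consistency. Both are bookkeeping rather than conceptual obstacles, but must be carried out carefully in order to pin down the edges of the matching polytope and hence justify the counting in (2).
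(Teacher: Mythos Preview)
The paper does not prove this theorem; it is quoted from \cite{Gulotta} and \cite{abc}, so there is no in-paper argument to compare against. Your polytope strategy---realizing $\#(\Perf\cap\gamma)$ as an affine-linear functional on $MP(\Q)$ with linear part $[\gamma]$---is the standard one and is largely sound. The bound $\#(\Perf\cap Z)\le m_Z$ and its equality characterization (forcing $\Perf\cap Z$ to be exactly the zigs or exactly the zags, via the no-two-consecutive constraint on the $2m_Z$-cycle) are correct, the construction of the $m_{i+1}+1$ edge matchings in (2) is correct, and your proof of (3) is complete.

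There are, however, two genuine gaps in (1). First, ``combined extremality'' does not determine that $\Perf_i$ contains the \emph{zigs} rather than the zags: knowing $\Perf_i$ maximizes intersection with zigzags of both homologies $-\eta_i$ and $-\eta_{i+1}$ only says it is all-zigs-or-all-zags in each, and at a common arrow $a\in Z\cap Z'$ (which is a zig of one and a zag of the other) this still leaves two consistent configurations. Resolving this requires an actual orientation computation---either a local analysis at $a$ relating the zig/zag convention to the counterclockwise ordering of the $-\eta_j$, or equivalently computing the sign of the Poincar\'e dual of the swap cocycle relative to the edge direction $\Perf_{i+1}-\Perf_i$. Second, your derivation of the last sentence of (1) is circular: you assert that $\Perf_i-\Perf_{i+1}$ \emph{equals} the sum of swap cocycles, but you have only matched them on arrows lying in the $Z_j$; agreement on the remaining arrows is exactly the statement to be proved. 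This gap propagates to (2), because the equality $\Perf_{i+1}-\Perf_i=m_{i+1}e$ (and hence the count of lattice points) depends on it---your argument gives only $\ge m_{i+1}+1$ lattice points. One route to a fix is to invoke the uniqueness of the corner matching at $\Perf_{i+1}$ (stated in the paper just before the theorem) and show that the ``swap all zigs to zags in $\Perf_i$'' matching lands at that corner; but this is not automatic from edge-extremality alone, and in Gulotta's treatment it comes from an explicit construction of the corner matchings rather than from polytope arguments.
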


As can be deduced from the theorem, the antizigzag paths $\mathcal{O}^{\pm}(Z)$ to a zigzag $Z$ of homology $-\eta_{i+1}$ have degree $0$ in $\Perf_i$, $\Perf_{i+1}$, and all boundary matchings between them. Since the potential $\ell$ has degree $1$ in all matchings, an antizigzag is not a multiple of $\ell$ in $\J$.

\begin{defn}
A path (cycle) is \textbf{minimal} if it is not a multiple of $\ell$ in $\J$.
\end{defn}

In a zigzag consistent toric dimer, there exists a minimal path between any two vertices and of any given homotopy class (\cite{Broomhead} Chapter 6; \cite{abc} Lemma 3.18). The minimal path is unique in $\J$, since homotopic paths differ by a factor of $\ell^n$ for some $n \geq 0$ (\cite{Bocklandt2012} Lemma 7.4).

\begin{exmp} \label{pinchpoint}
Consider the following dimer in a torus, known as the suspended pinchpoint (\cite{abc} Example 1.5).
\[ 
\begin{tikzpicture}
\draw (0,0) circle [radius=0.2];
\node at (0, 0) {$v_1$};
\draw (0,2) circle [radius=0.2];
\node at (0, 2) {$v_2$};
\draw (0,4) circle [radius=0.2];
\node at (0, 4) {$v_3$};
\draw (0,6) circle [radius=0.2];
\node at (0, 6) {$v_1$};
\draw (2,0) circle [radius=0.2];
\node at (2, 0) {$v_1$};
\draw (2,2) circle [radius=0.2];
\node at (2, 2) {$v_2$};
\draw (2,4) circle [radius=0.2];
\node at (2, 4) {$v_3$};
\draw (2,6) circle [radius=0.2];
\node at (2, 6) {$v_1$};
\node at (1, -1) {$\Q$};
\draw[->] (.2, 0) -- (1.8, 0);
\node[below] at (1,0) {$d$};
\draw[->] (0, 0.2) -- (0, 1.8);
\node[left] at (0, 1) {$a$};
\draw[<-] (0, 2.2) -- (0, 3.8);
\node[left] at (0, 3) {$b$};
\draw[->] (0, 4.2) -- (0, 5.8);
\node[left] at (0, 5) {$c$};
\draw[->] (0.2, 6) -- (1.8, 6);
\node[above] at (1, 6) {$d$};
\draw[->] (2, 0.2) -- (2, 1.8);
\node[right] at (2, 1) {$a$};
\draw[<-] (2, 2.2) -- (2, 3.8);
\node[right] at (2, 3) {$b$};
\draw[->] (2, 4.2) -- (2, 5.8);
\node[right] at (2, 5) {$c$};
\draw[->] (1.86, 5.86) -- (0.14, 4.14);
\node[above left] at (1, 5) {$e$};
\draw[->] (0.14, 2.14) -- (1.86, 3.86);
\node[above left] at (1, 3) {$f$};
\draw[->] (1.86, 1.86) -- (0.14, 0.14);
\node[above left] at (1, 1) {$g$};
\end{tikzpicture}
\\
\qquad \qquad \qquad \qquad \qquad
\\
\begin{tikzpicture}
\draw (0,2) -- (0,6);
\draw (0,6) -- (2,2);
\draw (2,2) -- (2, 0);
\draw (2,0) -- (0,2);
\draw[->] (0, 3) -- (-1, 3);
\node[left] at (-1, 3) {$ce$};
\draw[->] (0, 5) -- (-1, 5);
\node[above] at (-0.5, 3) {$-\eta_4$};
\node[left] at (-1, 5) {$ag$};
\node[above] at (-0.5, 5) {$-\eta_4$};
\draw[->] (1, 4) -- (3, 5);
\node[right] at (3, 5) {$dafc$};
\node[above left] at (2, 4.5) {$-\eta_3$};
\draw[->] (2, 1) -- (3, 1);
\node[right] at (3, 1) {$fb$};
\node[above] at (2.5, 1) {$-\eta_2$};
\draw[->] (1, 1) -- (0, 0);
\node[left] at (0, 0) {$debg$};
\node[above left] at (0.5, 0.5) {$-\eta_1$};
\filldraw[black] (0,2) circle (2pt) node[anchor=east] {$\{e, g\} = \Perf_4$ }; 
\filldraw[black] (0,4) circle (2pt) node[anchor=east] {$\{a, e\}, \{c, g\}$};
\filldraw[black] (0,6) circle (2pt) node[anchor=east] {$ \{ a, c\} = \Perf_3 $};
\filldraw[black] (2,2) circle (2pt) node[anchor=west] {$ \Perf_2 = \{d, f\} $};
\filldraw[black] (2, 0) circle (2pt) node[anchor= west] {$\Perf_1 = \{b, d\}$};
\node at (1, -1) {$MP(\Q)$};
\end{tikzpicture}
\]
The matching polygon $MP(\Q)$ is presented with respect to the basis of $H_1(\Sigma, \z)$ given by the homology classes of $d$ and $afcec$. The zigzag paths are listed next to the outward normal vectors representing their homology classes. Notice that $ag$ and $ce$ are parallel while every other homology class consists of only one zigzag cycle. The perfect matchings are listed next to the corresponding lattice points: there are four corner matchings, two boundary matchings, and no internal matchings.
\end{exmp}

\subsection{Topological gradings} \label{section The localization}

Let $\Q$ be a dimer model embedded in $\Sigma$. The integer grading on the Jacobi algebra by a perfect matching can be extended to the localization by the rule
\[
\degree_{\Perf}(a^{-1}) : = - \degree_{\Perf}(a) \; \; \forall a \in \Q_1.
\]

In addition, we consider gradings on $\JW$ encoding topological information. The embedding of $\Q$ into $\Sigma$ can be extended to the double $\bar{\Q}$ by letting the image of $a^{-1}$ be the inverse path of the image of $a \in \Q_1$. Then the terms in each relation of the localization \eqref{relations}, \eqref{inverse relations} are homotopic paths in the surface. In fact, the class of a path in $\JW$  is uniquely determined by its homotopy class and degree in any perfect matching (\cite{Bocklandt2007} Lemma 7.2).

Fix a perfect matching $\Perf_o$, a vertex $v_o$ as a basepoint, and for every $v \in \Q_0$, a path $p_v: v_o \to v$ in $\JW$. We may take $p_{v_o}$ to be the idempotent $v_o$ and, multiplying by the appropriate power of $\ell$, may ensure that $\degree_{\Perf_o}(p_v) = 0$ for all vertices. Then define a $\pi_1(\Sigma, v_o)$-grading on $\JW$ by assigning to a path $p$ the homotopy class of the path $p_{t(p)} \, p \, p_{h(p)}^{-1}$, denoted $|p|$. Passing to the abelianization of the fundamental group gives a grading by $H_1(\Sigma, \z)$, independent of the choice of basepoint and connecting paths $p_v$.  The Jacobi algebra inherits these gradings via the universal map $L: \J \rightarrow \JW$. 

By keeping track of the gradings and the head and tail data, $\JW$ can be realized as a matrix algebra.

\begin{thm}[\cite{Bocklandt2007} Theorem 7.4] \label{matrix algebra}
Let $\Q$ be a dimer model and $\Perf_o$ be a perfect matching. The map
\[
\Psi: \JW \rightarrow Mat_{\# \Q_0}(\C[\pi_1(\Sigma, v_o)] \otimes \C[z^{\pm 1}])
\]
sending a path $p: v \rightarrow w$ to
\[
\big( |p| \otimes z^{\degree_{\Perf_o}(p)} \big) e_{vw},
\] 
where $e_{vw}$ is the $(v, w)$-elementary matrix, is an isomorphism of algebras.
\end{thm}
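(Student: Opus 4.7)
The plan is to define $\Psi$ on generators of $\JW$, verify that the defining relations are respected, check the homomorphism property from the multiplicative behavior of endpoints, homotopy classes, and $\Perf_o$-degrees, and then deduce bijectivity from the cited fact (\cite{Bocklandt2007} Lemma 7.2) that a class of a path in $\JW$ is uniquely determined by these three pieces of data.

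For well-definedness on the free path algebra $\C \overline{\Q}$, the inverse relations $a a^{-1} = t(a)$ and $a^{-1} a = h(a)$ are immediate: $|a| \cdot |a^{-1}| = 1$ in $\pi_1(\Sigma, v_o)$, the degrees cancel, and the matrix units multiply correctly. The key check is the Jacobi relation $R_a^+ = R_a^-$. Both partial cycles go from $h(a)$ to $t(a)$, so they target the same matrix entry; the loops $a R_a^+$ and $a R_a^-$ are the boundaries of the positive and negative faces $F^\pm$ sharing the arrow $a$, and since $F^+ \cup_a F^-$ is a topological disk these loops are freely homotopic, whence $R_a^+$ and $R_a^-$ are homotopic rel endpoints. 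For the degree, $\Perf_o$ meets each face boundary in exactly one arrow, so $\degree_{\Perf_o}(a R_a^\pm) = 1$, giving $\degree_{\Perf_o}(R_a^+) = \degree_{\Perf_o}(R_a^-)$. The homomorphism property then follows from multiplicativity: path concatenation $p q$ is nonzero precisely when $h(p) = t(q)$, matching the matrix-unit product $e_{t(p), h(p)} e_{t(q), h(q)} = \delta_{h(p), t(q)} e_{t(p), h(q)}$, while homotopy classes multiply in $\pi_1$ and $\Perf_o$-degrees add.

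For bijectivity, Lemma 7.2 of \cite{Bocklandt2007} implies that $\JW$ has a linear basis of path classes indexed by quadruples $(v, w, g, n) \in \Q_0 \times \Q_0 \times \pi_1(\Sigma, v_o) \times \z$ for which a path $v \to w$ of homotopy class $g$ and degree $n$ exists; each such class maps tautologically to the matrix-unit basis element $(g \otimes z^n) e_{vw}$ of the target, so $\Psi$ is injective. For surjectivity, every quadruple is realized: given $(v, w, g, n)$, a word in $\overline{\Q}_1$ based at $v_o$ represents $g$ because $\Sigma \setminus \Q$ is a disjoint union of disks and the 1-skeleton $\overline{\Q}$ generates $\pi_1(\Sigma, v_o)$; conjugating by the chosen base paths $p_v, p_w$ produces a path $v \to w$ of class $g$, and multiplication by a suitable power of $\ell \in \JW$ adjusts the $\Perf_o$-degree to $n$, using that $\ell$ is null-homotopic (each summand bounds a face) and has degree $1$ in every perfect matching. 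The main obstacle is the well-definedness on the Jacobi relations: once one recognizes the two partial cycles $R_a^\pm$ as opposite sides of the disk $F^+ \cup_a F^-$ and observes that $\Perf_o$ equalizes their degrees, the remainder of the proof reduces to bookkeeping against Lemma 7.2.
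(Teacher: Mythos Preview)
The paper does not give its own proof of this statement: it is quoted verbatim as Theorem 7.4 of \cite{Bocklandt2007}, with only a remark on the dependence of $\Psi$ on the auxiliary choices. Your argument is a correct self-contained proof. The well-definedness check is right (each face boundary is null-homotopic and meets $\Perf_o$ in exactly one arrow, so $R_a^+$ and $R_a^-$ agree in both homotopy class and $\Perf_o$-degree), the multiplicativity follows from $|pq| = |p|\,|q|$ via the connecting paths $p_v$, and the bijectivity is exactly the content of the cited Lemma 7.2 together with the observation that every quadruple $(v,w,g,n)$ is realized in $\JW$ once $\ell$ is inverted. There is nothing to compare against in the present paper.
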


\begin{rem}
The map $\Psi$ depends on the choice of $\Perf_o$, $v_o$, and connecting paths $p_v$, though we exclude these from the notation.
\end{rem}  

When the ambient surface is a torus, $\JW$ is Morita equivalent to the algebra of Laurent polynomials in three variables. In the hyperbolic case, the group algebra $\C[\pi_1(\Sigma, v_0)]$ is noncommutative, but its Hochschild (co)homology is still well understood (see \S \ref{section BV localized Jacobi}).

\subsection{Curved algebras and matrix factorizations} \label{section Curved algebras}

Let $\Gamma$ be either $\z$ or $\Z$. A $\Gamma$-graded \textbf{curved algebra} is a pair $(A, W)$ where $A$ is a $\Gamma$-graded associative algebra and $W \in A$ is a central element of degree 2. This is a special example of a curved $A_\infty$-algebra, in which all the structure maps are trivial except those of order $0$ and $2$. When $A$ is entirely in even degree, we call $(A, W)$ a \textbf{Landau--Ginzburg model}. A \textbf{matrix factorization} of a Landau--Ginzburg model $(A, W)$ is a curved differential graded module that is finitely generated and projective as a graded $A$-module. As a diagram, this can be presented as
\[
(M_*, d_M) : =
\begin{tikzcd}
M_0 \arrow[r, shift left, "d_M"] & M_1 \arrow[l, shift left, "d_M"]
\end{tikzcd} 
\]
where $M_0$ and $M_1$ are finitely generated, projective left $A$-modules concentrated in even and odd degrees, respectively, and $d_M$ is a degree $+1$ $A$-linear map satisfying $d_M^2 = W \cdot Id$. We write $MF(A, W)$ for the differential $\Gamma$-graded category whose
\begin{enumerate}
\item objects are matrix factorizations of $(A, W)$ and
\item morphism space between two objects $M_*$ and $N_*$ is the internal $\Hom$ of $\Gamma$-graded vector spaces
\[
\underline{\Hom}(M_*, N_*),
\]
equipped with the commutator differential
\[
\delta(f) : = d_{N} f - (-1)^{|f|} f d_{M}.
\] 
\end{enumerate}
One easily checks that, indeed, $\delta^2 = 0$, despite the objects having curvature.

\begin{exmp} \label{arrow mf} 
For a dimer model $\Q$, we consider $(\J, \ell)$ to be a $\Z$-graded Landau--Ginzburg model with $\J$ concentrated in even degree. Suppose $p$ and $q$ are paths in $\Q$ that factorize the boundary of a face. In the Jacobi algebra, $\ell \cdot t(p) = pq$ and $\ell \cdot h(p) = qp$. Thus, we have a matrix factorization
\[
\begin{tikzcd}
M^{p, q} : = \J t(p) \arrow[r, shift left, "\cdot p"] & \J h(p) \arrow[l, shift left, "\cdot q"]
\end{tikzcd}
\]
where $\cdot p$ and $\cdot q$ indicate multiplication on the right by $p$ and $q$, respectively.

If $v$ and $w$ are two vertices, a left $\J$-module morphism from $\J v$ to $\J w$ is determined by the image of $v$, which must be an element of $v \J w$. Hence, if $r$ and $s$ are also paths factorizing the boundary of a face, then as $\Z$-graded vector spaces, 
\[
\underline{\Hom}(M^{p, q}, M^{r, s}) = t(p) \J t(r) \oplus h(p) \J h(r) \oplus t(p) \J h(r) [1] \oplus h(p) \J t(r) [1],
\]
where $[1]$ denotes the shift in parity. 

Observe that every arrow $a \in \Q_1$ yields a matrix factorization
\[
\begin{tikzcd}
M^{a, R_a^+} : = \J t(a) \arrow[r, shift left, "\cdot a"] & \J h(a) \arrow[l, shift left, "\cdot R_a^+"].
\end{tikzcd} 
\]
where $R_a^+$ is the path in \eqref{relations}. Using $R_a^-$ instead of $R_a^+$ defines the same matrix factorization because the two paths are equivalent in $\J$. The full subcategory of $\M$ consisting of the objects $\{ M^{a, R_a^+} \, | \, a \in \Q_1 \}$ is the category $mf(\Q)$ in Theorem \ref{dimer mirror sym}. Specifically, the matrix factorization $M^{a, R_a^+}$ corresponds to the Lagrangian $a \in \Q_1^\vee$ of the mirror punctured Riemann surface.
\end{exmp}

\subsection{Hochschild cohomology} \label{section Hochschild cohomology}

Let $(A, W)$ be a $\Gamma$-graded curved algebra. We define the two kinds of Hochschild (co)homology of $(A, W)$ and compare them. 

For each $n \geq 1$, the tensor product $A^{\otimes n}$ has the induced $\Gamma$-grading. We denote the homogeneous degree $m$ component in this grading by $(A^{\otimes n})_m$. The product is also considered to be in tensor degree $n$, modulo $2$ if $\Gamma = \Z$. The \textbf{Hochschild chain complex} of $(A, W)$, denoted $C_*(A)$, is the $\Gamma$-graded complex whose homogeneous degree $-k$ component is the direct sum totalization
\[
C_{-k}(A) : = \bigoplus_{m + n = k} (A^{\otimes n})_m
\]  
with differential given by the sum of two terms:
\begin{eqnarray} \label{chain}
d_A (a_0 \otimes \dots \otimes a_n) & = & \sum_{i = 0}^{n-1} (-1)^{i} a_0 \otimes \dots \otimes a_i a_{i+1} \otimes \dots \otimes a_n \\
& & + \, (-1)^{n + |a_n|( |a_0| + \dots + | a_{n-1}|) } a_n a_0 \otimes a_1 \otimes \dots \otimes a_{n-1} \nonumber \\
d_{W}(a_0 \otimes \dots \otimes a_n) & = & \sum_{i = 0}^n (-1)^{i} a_0 \otimes \dots \otimes a_{i} \otimes W \otimes a_{i+1} \otimes \dots \otimes a_n \nonumber
\end{eqnarray}
where $a_i \in A$ for all $i$. It is straightforward to check that $d_A^2 = d_W^2 = 0$ and $d_A d_W = - d_W d_A$. Alternatively, the \textbf{Borel-Moore Hochschild chain complex}, written $C_*^{BM}(A)$, is the $\Gamma$-graded complex whose homogeneous degree $-k$ component is the direct product totalization
\[
C^{BM}_{-k}(A) : = \prod_{m + n = k} (A^{\otimes n})_m
\]
with differential given as above. The cohomologies of these complexes are respectively the \textbf{Hochschild homology} of $A$, denoted $HH_*(A)$, and the \textbf{Borel-Moore Hochschild homology} of $A$, denoted $HH^{BM}_*(A)$.

Dually, for each $n \geq 0$, we have the the internal $\Hom$ of $\Gamma$-graded vector spaces
\[
\underline{\Hom} (A^{\otimes n},  A).
\]
It is also considered to be in tensor degree $n$, modulo $2$ if $\Gamma = \Z$. Let 
\[
\underline{\Hom}^m (A^{\otimes n},  A)
\] 
be the homogeneous internal degree $m$ component. We define the \textbf{Hochschild cochain complex} $C^*(A)$ as the $\Gamma$-graded complex whose homogeneous degree $k$ component is the direct product totalization
\[
C^k(A) : = \prod_{m + n = k} \underline{\Hom}^m (A^{\otimes n},  A)
\]
with differential given by the sum of two terms:
\begin{eqnarray} \label{cochain}
d_A f( a_1 \otimes \dots \otimes a_{n+1}) & = & (-1)^{|f| |a_1|} a_1 f( a_2 \otimes \dots \otimes a_{n+1}) \\
& & \, + \sum_{i = 1}^n (-1)^i f(a_1 \otimes \dots \otimes a_i a_{i+1} \otimes \dots \otimes a_{n+1} ) \nonumber \\
& & \, +(-1)^{n+1} f(a_1 \otimes \dots \otimes a_n) a_{n+1} \nonumber \\
d_W f(a_1 \otimes \dots \otimes a_{n-1})  & = & \sum_{i =0 }^{n-1} (-1)^{i+1} f(a_1 \otimes \dots \otimes a_i \otimes W \otimes a_{i+1} \otimes \dots \otimes a_{n-1}) \nonumber
\end{eqnarray}
where $f \in C^*(A)$ and $a_i \in A$ for all $i$. Alternatively, the \textbf{compactly supported Hochschild cochain complex}, written $C^*_c(A)$, is the $\Gamma$-graded complex whose homogeneous degree $k$ component is the direct sum totalization
\[
C^k_c(A) : = \bigoplus_{m + n = k} \underline{\Hom}^m (A^{\otimes n},  A)
\]
with differential given by the sum of the same two terms. The cohomologies of these complexes are respectively the \textbf{Hochschild cohomology} of $A$, denoted $HH^*(A)$, and the \textbf{compactly supported Hochschild cohomology} of $A$, denoted $HH^*_c(A)$.

If the curvature $W$ is trivial, $\Gamma = \z$, and $A$ is concentrated in degree 0, then the usual definition of Hochschild (co)homology of an associative algebra is recovered,
\[
HH_*(A) = \tor^{A^e}_*(A, A), \; \; HH^*(A) = \ext^*_{A^e}(A, A).
\]
As the grading on the complexes coincides with the grading by tensor powers, the direct product and sum totalizations actually agree, so there is no distinction between the two kinds of cohomology and homology.

In contrast, for a $\Z$-graded Landau--Ginzburg model $(A, W)$, the two kinds of Hochschild (co)homology are different. The direct sum and direct product totalizations are no longer equal, since each parity has infinitely many tensor components. In fact, the ordinary Hochschild invariants of $(A, W)$ are trivial when $W \neq 0$ (\cite{CaldararuTu} Theorem 4.2), so only the compactly suppported Hochschild cohomology and Borel--Moore homology can possibly provide useful information about the Landau--Ginzburg model.

\begin{exmp}[\cite{CaldararuTu}] \label{Example compact HH}
Let $A$ be the coordinate ring of a smooth affine variety of dimension $n$, considered to be in even degree. Let $W$ be a regular function with isolated singularity. Then there are isomorphisms of $\Z$-graded vector spaces
\[
HH^*_c(A) \cong \frac{A}{(\partial_i W \, | \, 1 \leq i \leq n)}, \; \; \; HH_*^{BM}(A) \cong \frac{A}{(\partial_i W \, | \, 1 \leq i \leq n)}[n]
\]
where $\partial_i W$ denotes the $i^{th}$ coordinate partial derivative and $[n]$ denotes the parity shift by $n$ mod $2$.
\end{exmp}


Compact-type Hochschild (co)homology can be defined for differential graded categories just as for algebras: essentially, the direct sum and product totalizations in the usual definitions are interchanged. Derived Morita theory informs us that the Hochschild cohomology of an algebra is equivalent to that of its differential graded category of modules \cite{Toen}. There is, in fact, a curved analog of the result, relating the Landau--Ginzburg model $(A, W)$ to its category of matrix factorizations.

\begin{thm}[\cite{PolPos} \S 2.6] \label{compact type}
For a $\Z$-graded Landau-Ginzburg model $(A, W)$, there are natural isomorphisms
\begin{eqnarray*}
HH^*_c(A) & \cong & HH^*_c \big( MF(A, W) \big) \\
HH_*^{BM} (A) & \cong & HH_*^{BM} \big(MF(A, W) \big).
\end{eqnarray*}
\end{thm}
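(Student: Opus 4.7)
The plan is to adapt the Polishchuk--Positselski framework for compactly supported Hochschild invariants as derived functors of the second kind on curved DG bimodule categories. View $(A, W)$ as a one-object curved DG category $\mathcal{A}$, so that $C^*_c(A)$ is naturally identified with the Hochschild cochain complex of $\mathcal{A}$ with direct-sum totalization. Under this perspective, $HH^*_c$ computes $\RHom$ between the diagonal bimodule and itself in the \emph{coderived} category of curved $\mathcal{A}$-bimodules, rather than the ordinary derived category. The analogous interpretation holds for $C^*_c(MF(A, W))$ and for the Borel--Moore variant via the contraderived category and a contratensor product.

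The core step is a Morita-type equivalence in the curved setting. Construct a curved DG functor $\iota: \mathcal{A} \hookrightarrow MF(A, W)$ corresponding to the realization of $A$ itself as an object of a suitable completion of $MF(A, W)$, for instance by sending the generator of $\mathcal{A}$ to a twisted complex whose curvature term encodes $W$ (or, more concretely, to a formal colimit of arrow-type factorizations as in Example \ref{arrow mf}). The induced restriction $\iota^*$ on bimodule categories admits a left adjoint $\iota_!$, and one shows that the unit and counit of this adjunction become isomorphisms at the level of coderived categories when applied to the diagonal bimodules. This identifies the coderived endomorphism algebras of the diagonal bimodules on the two sides, yielding the first isomorphism. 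The second isomorphism follows by the same argument, with coderived replaced by contraderived and $\RHom$ by the contratensor product.

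The main obstacle is establishing that $\iota$ induces the claimed coderived equivalence on diagonal bimodules. Ordinary derived-category arguments fail because standard Hochschild invariants of curved algebras vanish (\cite{CaldararuTu} Theorem 4.2), so quasi-isomorphism of cochain complexes must be weakened to \emph{coacyclicity}, i.e. closure under direct-sum totalizations of short exact sequences of curved bimodules. Concretely, one must verify that the bar resolution of the diagonal $MF(A,W)$-bimodule, pulled back along $\iota$, agrees up to coacyclic bimodules with the bar resolution of the diagonal $\mathcal{A}$-bimodule, with every totalization taken as a direct sum. The essential point is that although individual bar terms pick up infinite curvature contributions, their direct-sum totalizations do not, which is precisely why the comparison survives on the compactly supported complexes while failing on the ordinary ones. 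Once coacyclic compact generation of the bimodule category of $MF(A, W)$ by the image of $\iota$ is established, naturality of both isomorphisms in $(A, W)$ is automatic from the functoriality of $\iota$ and the universal properties of the coderived and contraderived categories.
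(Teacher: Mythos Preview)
The paper does not supply its own proof of this theorem: it is stated with attribution to \cite{PolPos} \S 2.6 and invoked as a black box. So there is no argument in the paper to compare against, and your task was effectively to reconstruct the Polishchuk--Positselski proof.

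Your general framework is correct: the compactly supported invariants are indeed derived functors of the second kind, computed in the coderived (respectively contraderived) category of curved bimodules, and the result is a Morita-type statement in that setting. Where the sketch becomes shaky is the construction of the comparison functor $\iota$. As written, $\mathcal{A} = (A, W)$ is a curved one-object category with nontrivial curvature $W$, whereas $MF(A, W)$ is an honest (uncurved) DG category, since the commutator differential on morphism spaces squares to zero. A ``curved DG functor'' from a curved source to an uncurved target forces the image of the curvature to be a coboundary, and there is no canonical object in $MF(A, W)$ playing the role of $A$ itself: the free module $A$ with zero differential fails $d^2 = W \cdot \mathrm{Id}$, and the proposed ``twisted complex whose curvature term encodes $W$'' or ``formal colimit of arrow-type factorizations'' is not specified enough to carry the argument. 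This is the step that needs genuine work.

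What Polishchuk--Positselski actually do avoids constructing such an $\iota$ directly. They compare the two bar constructions explicitly: the second-kind bar complex of the CDG-ring $(A, W)$ versus that of its DG-category of finitely generated projective CDG-modules, and show the natural map between them has coacyclic cone. The key input is that finitely generated projective CDG-modules generate the coderived category (so the DG-category $MF(A, W)$ is large enough), together with the compatibility of second-kind bar resolutions with such generation. Your final paragraph gestures at this (``coacyclic compact generation of the bimodule category of $MF(A, W)$ by the image of $\iota$''), but the substance of the proof is precisely in making that generation statement precise and checking it, rather than in constructing $\iota$ as a functor on objects.
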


The inclusion of direct sum into direct product totalizations induces graded maps
\begin{equation*} \label{comparison maps}
HH_* \big( MF(A,W) \big) \rightarrow HH^{BM}_* \big( MF(A,W) \big), \; \; HH^*_c \big( MF(A,W) \big) \rightarrow HH^* \big( MF(A,W) \big).   
\end{equation*}
They are not isomorphisms in general (\cite{PolPos} \S4.9) but are so in the case of Example \ref{Example compact HH} due to \cite{Dyckerhoff2011}. The argument there and in related works (e.g. \cite{LinPomerleano, Ballard}) revolves around the existence of a generator of the matrix factorization category. We expect that $mf(\Q)$ of Theorem \ref{dimer mirror sym} generates the matrix factorization category for $(\J, \ell)$ and so conjecture the equivalence of the two kinds of Hochschild (co)homology.  

\subsection{Noncommutative calculus} \label{section Noncommutative calculus}

Let $\Gamma = \z$ and $A$ be an associative algebra in degree $0$. The Hochschild homology and cohomology of $A$ form a noncommuative calculus
\[
\big( HH^*(A), \, \cup, \, \{ -, -\}, \, HH_*(A), \, \cap, \, B \big),  
\] 
in which $HH_*(A)$ is a module over the Gerstenhaber algebra $HH^*(A)$ \cite{TTnc}. We review the operations in this structure.

For a projective $A$-bimodule resolution $P_*$ of $A$, there is a diagonal map
\[
D: P_* \rightarrow P_* \otimes_A P_*,
\]
lifting the identity of $A$, unique up to homotopy equivalence, The \textbf{cup product} is the associative multiplication defined by
\[
f \cup g := \mu \circ (f \otimes g) \circ D \; \; \forall\, f, g \in \Hom_{A^e}(P_*, A)
\]
where $\mu$ is the multiplication on $A$. This operation descends to a graded commutative product on $HH^*(A)$. The \textbf{cap product} is defined by
\[
f \cap \eta : = (\mu \otimes Id_P) \circ (Id_A \otimes f \otimes Id_P) \circ (Id_A \otimes D) \, \eta \; \; \forall \, f \in \Hom_{A^e}(P_*, A), \; \eta \in A \otimes_{A^e} P_*,
\]
making $HH_*(A)$ into a left module over $HH^*(A)$ \cite{AD}. If $P_* = \B(A)$, the bar resolution of $A$, then the diagonal map is
\begin{equation} \label{bar diagonal}
D : a_1 \otimes \dots \otimes a_n \mapsto \sum_{i = 0}^n ( a_1 \otimes \dots \otimes a_i \otimes 1 ) \otimes (1 \otimes a_{i+1} \otimes \dots \otimes a_n ),
\end{equation}
and we obtain formulas for the cap and cup products on the Hochschild complexes $C^*(A)$and $C_*(A)$.

The \textbf{Gerstenhaber bracket} $\{-, -\} : C^*(A) \otimes C^*(A) \rightarrow C^*(A)$ is the Lie bracket of degree $-1$ defined by
\begin{align} \label{G bracket}
\{ f, g \}(a_1, \dots, a_{d+e-1}) & = \sum_{j \geq 0} (-1)^{j (|g| + 1)} f(a_1, \dots, a_j, g(a_{j+1}, \dots, a_{j+e}), \dots, a_{d+e -1} ) \\
& - (-1)^{(|f| + 1)(|g| + 1)} \sum_{j \geq 0} (-1)^{j (| f| + 1)} g(a_1, \dots, a_j, f(a_{j+1}, \dots, a_{j+d}), \dots, a_{d+e-1} ). \nonumber 
\end{align}
The cup product and Gerstenhaber bracket make $HH^*(A)$ into a Gerstenhaber algebra \cite{Gers}. In particular, the Leibniz identity is satisfied,
\begin{equation} \label{Leibniz}
\{ f, g \cup h \} = \{ f, g\} \cup h + (-1)^{(|f| - 1)|g|} g \{ f, h\} \; \; \; \forall f, g, h \in HH^*(A). 
\end{equation}
Note that, if $W \in A$ is a central element, the summand $d_W$ of the differential \eqref{cochain} corresponds to the adjoint action of $W$,
\[
d_W( f ) = - \{ f,  W \}.
\]

The \textbf{Connes differential} $B: C_*(A) \rightarrow C_*(A)$ is the square-zero map of degree $+1$ given by
\begin{align} \label{Connes}
B(a_0 \otimes a_1 \otimes \dots \otimes a_n) & = \sum_{i = 0}^{n} (-1)^{ni} 1 \otimes a_i \otimes  \dots \otimes a_n \otimes a_0 \otimes \dots \otimes a_{i-1} \\
& + \sum_{i = 0}^{n} (-1)^{n(i+1)} a_{i-1} \otimes 1 \otimes a_i \otimes \dots \otimes a_n \otimes a_0 \otimes \dots \otimes a_{i-2}. \nonumber
\end{align}
On Hochschild homology, the commutator of $B$ and the cap product define the \textbf{Lie derivative},
\begin{equation} \label{Cartan identity}
\Lie_f : = [B, f \cap -] \; \; \; \forall f \in HH^*(A),
\end{equation}
which makes $HH_*(A)$ into a module over $HH^*(A)$ as a Lie algebra. For a central element $W \in A$, $\Lie_W$ coincides with the differential $d_W$ on $C_*(A)$ in \eqref{chain}.

\subsection{{C}alabi--{Y}au algebras and Batalin-Vilkovisky structure} \label{section Calabi-Yau algebras}

Ginzburg \cite{Ginzburg} introduces the notion of a Calabi--Yau algebra to capture certain algebraic structures associated to Calabi-Yau manifolds. Let $A$ be an associative algebra, and let $A^e$ denote the enveloping algebra. 

\begin{defn}
An algebra $A$ is \textbf{homologically smooth} if it is a perfect $A^e$-module: that is, if it has a bounded resolution by finitely generated projective $A^e$-modules. Such an algebra is \textbf{Calabi--Yau of dimension $n$} (CY-$n$) if there exists an $A$-bimodule quasi-isomorphism 
\[ 
A[n] \simeq \RHom_{A^e} (A, A \otimes A)
\]
where $[-]$ denotes the shift in cohomological degree and $\RHom_{A^e} (A, A \otimes A)$ has $A$-bimodule action induced by the inner bimodule action on $A \otimes A$.
\end{defn}

\begin{thm}[\cite{Davison}] \label{consistency thm}
Suppose $\Q$ is a dimer embedded in a surface of positive genus. If $\Q$ is zigzag consistent, then $\J$ is Calabi-Yau of dimension $3$.
\end{thm}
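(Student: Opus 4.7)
The plan is to exhibit an explicit length-$3$ projective bimodule resolution of $\J$ that is manifestly self-dual, whence the Calabi--Yau property drops out formally. The natural candidate is the Ginzburg--Davison complex
\[
0 \to \J \otimes_{\bk} \J \xrightarrow{d_3} \bigoplus_{a \in \Q_1} \J h(a) \otimes_{\bk} t(a) \J \xrightarrow{d_2} \bigoplus_{a \in \Q_1} \J t(a) \otimes_{\bk} h(a) \J \xrightarrow{d_1} \J \otimes_{\bk} \J \xrightarrow{\mu} \J \to 0,
\]
in which $\mu$ is multiplication, $d_1$ is built from the outer derivation $a \otimes 1 - 1 \otimes a$ associated to each arrow, $d_2$ is assembled from the higher cyclic derivatives $\partial_a\partial_b\Phi_0$ of the dimer superpotential, and $d_3$ encodes, at each vertex, the dual of $d_1$. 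By construction all terms are finitely generated projective $\J^{e}$-modules, establishing homological smoothness, and the complex is palindromic: applying $\Hom_{\J^e}(-, \J^e)$ returns the complex with grading reversed. Consequently, once it is known to be exact, one obtains $\RHom_{\J^e}(\J, \J^e) \simeq \J[-3]$ essentially for free.

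The substantive work is therefore exactness. Surjectivity of $\mu$ is immediate, and exactness at the next term encodes that the relations in $\J$ are generated by $\{\partial_a\Phi_0\}_{a\in\Q_1}$; exactness at $d_3$ is then a formal consequence of self-duality. This reduces the problem to exactness at the middle term $\bigoplus_a \J t(a) \otimes_{\bk} h(a) \J$. I would attack this one vertex pair $(v,w)$ at a time. There the complex becomes a complex of $\bk$-modules with basis indexed by pairs of paths $v \to \cdot$ and $\cdot \to w$ in $\J$, and the required contracting homotopy should ``flip'' a path across a face of $\Q$ using the local relation $R_a^+ = R_a^-$ of \eqref{relations}. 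The combinatorial data of these flips is exactly what the second derivatives $\partial_a\partial_b\Phi_0$ record.

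The main obstacle is producing this contracting homotopy unambiguously, and this is precisely where zigzag consistency enters. I would use its equivalent algebraic form, Theorem 5.5 of \cite{Bocklandt2012}: $\J$ is cancellation and embeds into the localization $\JW$, whose matrix algebra realization (Theorem \ref{matrix algebra}) assigns to each nonzero element a well-defined homotopy class in $\Sigma$ and a degree in any perfect matching. These two invariants let one choose a canonical ``minimal'' representative of each path class modulo $\ell$ and thereby define the flip homotopy without ambiguity. The upshot is that the middle exactness becomes the topological statement that fundamental domains on the universal cover $\ur$, cut out by the non-self-intersecting zigzag flows, are contractible disks. Zigzag consistency is exactly what guarantees the absence of self-intersection, and hence the contractibility. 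Once this check is assembled, self-duality of the Ginzburg--Davison complex completes the proof.
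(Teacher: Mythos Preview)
The paper does not supply its own proof of this theorem; it is quoted from \cite{Davison} as a background result, and the self-dual complex $\mathbb{P}_*$ you describe is recorded later (in \S\ref{section BV localized Hochschild}) only to be used, not derived. Your outline is, in broad strokes, the strategy of that reference and of the parallel arguments in \cite{Broomhead} and \cite{Bocklandt2012}: write down Ginzburg's length-$3$ complex, observe that it is palindromic, and reduce the CY-$3$ claim to exactness, with the middle term the only nontrivial spot.

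Where your sketch becomes optimistic is in the construction of the contracting homotopy. The actual arguments do not build a homotopy by ``flipping across a face'' one arrow at a time; rather, Davison passes through an auxiliary algebra whose elements are homotopy classes of paths on the universal cover, proves the CY property there by a direct topological argument, and then descends to $\J$ using cancellation. The ``minimal representative'' idea you invoke is closer to Broomhead's genus-$1$ treatment, but even there the verification that flips can be ordered consistently is a substantial combinatorial argument, not a consequence of the matrix-algebra description alone. So your plan identifies the right complex and the right obstruction, but the step ``zigzag consistency $\Rightarrow$ well-defined flip homotopy'' hides essentially all of the work, and the mechanism you propose for it is not quite the one that succeeds in the literature.
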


The quasi-isomorphism in the definition is equivalently an $A$-bimodule isomorphism
\begin{equation} \label{CY volume}
A \cong \ext^n_{A^e}(A, A \otimes A),
\end{equation}
which is determined by the image of $1 \in A$. This element, which must be central in $\ext^n_{A^e}(A, A \otimes A)$ with respect to the bimodule action, is called a \textbf{volume} of $A$. The set of all volumes is a torsor over the ring of central units of $A$. Since $A$ is homologically smooth, there is a quasi-isomorphism
\begin{equation} \label{smooth qiso}
\RHom_{A^e} ( \RHom_{A^e}(A, A \otimes A), A) \sim A \otimes^{\mathbb{L}}_{A^e} A,
\end{equation}
under which a volume corresponds to a class in $HH_n(A)$, called a \textbf{nondegenerate element} \cite{VDBV}. We label a generic volume by $\pi$ and, by abuse of notation, use the same for a nondegenerate element. 

Precomposing \eqref{smooth qiso} with the quasi-isomorphism from a volume yields
\[
\begin{tikzcd} \label{VDB qisomorphism}
\RHom_{A^e}(A[n], A) \arrow[r, "\sim"] & A \otimes_{A^e}^{\mathbb{L}} A,
\end{tikzcd}
\]
inducing the isomorphism on homology \cite{VDBV}
\begin{equation} \label{VDB cap}
\D_{\pi}: HH^*(A) \rightarrow HH_{n-*}(A), \; \; f \mapsto f \cap \pi.
\end{equation}
Such a Poincar\'e-type duality pairing between Hochschild cohomology and homology was first observed in \cite{VdB}. The isomorphism $\D_{\pi}$ exchanges $f  \, \cup -$ and $f \, \cap \, \D_{\pi}( - )$. Moreover, the Connes differential $B$ is sent under $\D_{\pi}$ to a differential $\Delta_{\pi}$ called the \textbf{Batalin-Vilkovisky (BV) operator}. The failure of the BV operator to be a derivation with respect to the cup product is measured by the Gerstenhaber bracket:
\begin{equation} \label{BVGers}
\Delta_{\pi}(f \cup g) = \Delta_{\pi}(f) \cup g + (-1)^{|f|} f \Delta_{\pi}(g) + (-1)^{|f|} \{f, g \} \; \; \; \forall \, f, g \in HH^*(A).
\end{equation}
The triple $\big( HH^*(A), \cup, \Delta \big)$ is known as a \textbf{Batalin-Vilkovisky algebra}.

\section{Hochschild cohomology of a central localization}

We relate the Hochschild (co)homology of an algebra to that of a central localization, building on the classical result in \cite{Brylinski}. In particular, we show that there is a morphism of noncommutative calculi, including the BV structure when the algebra is Calabi--Yau. Then in the following sections, this fact is used to compare the Hochschild cohomology of the Jacobi algebra to that of $\JW$. The algebra $\JW$ is Morita equivalent to the fundamental group algebra of $\Sigma \times S^1$, whose Hochschild BV structure is well known. This will help us to compute the BV structure of $HH^*(\J)$ in \S \ref{section Batalin--Vilkovisky structure}.

Throughout, $\Q$ is assumed to be a dimer model admitting a perfect matching.

\subsection{Hochschild cohomology of a central localization} \label{section central localization}

Let $A$ be an associative algebra, $\zc$ be the center of $A$, and $S \subset \zc$ a multiplicative subset containing $1$ and excluding $0$. We denote by $\widehat{\zc}$ the localization of the center with respect to $S$. Then the \textbf{Ore localization} of $A$ with respect to $S$ can be defined as the algebra
\[
\hA : = A \otimes_{\zc} \widehat{\zc}.
\]
Moreover, for any $A$-module $M$, its Ore localization is the $\hA$-module
\[
\widehat{M} : = M \otimes_{\zc} \widehat{\zc}.
\]
The universal map $L: M \rightarrow \widehat{M}$ has kernel equal to the $S$-torsion of $M$,
\[
tor_S(M) : = \{ m \in M \, | \, sm = 0 \; \text{for some} \; s \in S \}.
\]  
See for example \S 9-10 of \cite{Lam} for a detailed account about localization for noncommutative rings.

Let $\mathcal{D}(A^e)$ and $\mathcal{D}(\hA^e)$ be the derived categories of $A$ and $\hA$-bimodules, respectively. The algebra $\hA$ is flat as a left and as a right $A$-module, and there is an adjunction
\[
\begin{tikzcd}
\mathcal{D}(A^e) \arrow[r, shift left, "F"] & \mathcal{D}(\hA^e) \arrow[l, shift left, "G"]
\end{tikzcd}
\]
where $F : = \hA \otimes_A - \otimes_A \hA$ and $G$ is the restriction functor. They yield canonical maps
\begin{align*}
I_* &:  A \otimes_{A^e}^{\mathbb{L}} A \rightarrow A \otimes_{A^e}^{\mathbb{L}} GF(A) \cong \hA \otimes_{\hA^e}^{\mathbb{L}} \hA, \\
I^* &: \RHom_{A^e}(A, A) \rightarrow \RHom_{\hA^e}(F(A), F(A)) \cong \RHom_{\hA^e}(\hA, \hA).
\end{align*}
Letting 
\[ 
\hB(A) : = F(\B(A)) = \hA \otimes_{A} \B(A) \otimes_{A} \hA \cong \bigoplus_{n \in \mathbb{N}} \hA \otimes A^{\otimes n} \otimes \hA,
\] 
we can write the maps explicitly on (co)chains:
\begin{align*}
 I_* & : A \otimes_{A^e} \B(A) \rightarrow \hA \otimes_{\hA^e} \hB(A), \\
 & I_*(a_0 \otimes a_1 \otimes \dots \otimes a_n) = L(a_0) \otimes L(a_1) \otimes a_2 \dots \otimes a_{n-1} \otimes L(a_n); \\
I^* &: \Hom_{A^e}(\B(A), A) \rightarrow \Hom_{\hA^e}(\hB(A), \hA), \\
& I^*(f)(\hat{a}_1 \otimes a_2 \otimes \dots \otimes a_{n-1} \otimes \hat{a}_n) = \hat{a}_1 L \, f(1 \otimes a_2 \otimes \dots \otimes a_{n-1} \otimes 1 ) \hat{a}_n.
\end{align*}
The comparison map
\[
\hB(A) \rightarrow \B(\hA), \; \; \; \hat{a}_1 \otimes a_2 \otimes \dots \otimes a_{n-1} \otimes \hat{a}_n \mapsto \hat{a}_1 \otimes L(a_2) \otimes \dots \otimes L(a_{n-1}) \otimes \hat{a}_n.
\]
lifts the identity of $\hA$ and so is a homotopy equivalence. Writing
\[
\phi: \hA \otimes_{\hA^e} \hB(A) \rightarrow \hA \otimes_{\hA^e} \B(\hA)
\]
for the induced map on chains and
\[
\phi^\vee: \Hom_{\hA^e}(\hB(A), \hA) \rightarrow \Hom_{\hA^e}(\B(\hA), \hA)
\] 
for the map on cochains given by precomposition with the homotopy inverse, define
\begin{align*}
L_* & : = \phi \circ I_*:  C_*(A) \rightarrow C_*(\hA), \\
L_* & : = \phi^\vee \circ I^*: C^*(A) \rightarrow C^*(\hA).
\end{align*}
In particular, $L_*$ is simply the image of $L$ under the on Hochschild homology functor,
\begin{equation} \label{Lochom}
L_*: a_0 \otimes a_1 \otimes a_2 \otimes \dots \otimes a_n \mapsto L(a_0) \otimes L(a_1) \otimes L(a_2) \otimes \dots \otimes L(a_n). 
\end{equation}

Brylinski \cite{Brylinski} shows that the Hochschild homology functor commutes with central localization. The result can be slightly enhanced to relate the Connes differentials on $A$ and $\hA$, which we denote as $B_A$ and $B_{\hA}$.

\begin{prop} \label{loc of homology}
Let $A$ be an associative algebra and $S \subset \zc$ a multiplicative subset. The map
\[
\widehat{L}_*: \widehat{HH}_*(A) = HH_*(A) \otimes_{\zc} \widehat{\zc} \longrightarrow HH_*(\hA) \; \; \sigma \otimes s^{-1} \mapsto s^{-1} \cap L_*(\sigma)
\]
is an isomorphism of $\widehat{\zc}$-modules. Moreover, $\widehat{L}_*$ intertwines the Connes differential $B_{\hA}$ with the differential defined by
\[
\hBc(\sigma \otimes s^{-1}) : = B_A(\sigma) \otimes s^{-1} - \Lie_{s} (\sigma) \otimes s^{-2} , \; \; \forall s \in S.
\]
\end{prop}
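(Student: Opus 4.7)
The plan is to handle the two claims in turn: first the $\widehat{\zc}$-module isomorphism, which is essentially Brylinski's theorem \cite{Brylinski}, then the intertwining of the Connes differential, which I would verify by a direct computation with Cartan-type identities.

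For the isomorphism, I would factor $\widehat{L}_*$ as a composition of quasi-isomorphisms on chain complexes. Since $\widehat{\zc}$ is flat over $\zc$, the localized complex $C_*(A) \otimes_{\zc} \widehat{\zc}$ has homology $HH_*(A) \otimes_{\zc} \widehat{\zc}$. Because $\hA$ is flat as a left and right $A$-module, the functor $F = \hA \otimes_A (-) \otimes_A \hA$ is exact and carries $\B(A)$ to $\hB(A)$, a resolution of $\hA$ by projective $\hA^e$-modules; this makes $I_*$ a quasi-isomorphism after localization. The comparison map $\phi$ is a quasi-isomorphism because it is induced by a homotopy equivalence of $\hA^e$-projective resolutions of $\hA$. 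The composition realizes $\widehat{L}_*$, and the factor $s^{-1}$ in its formula comes from multiplication by $s^{-1} \in \widehat{\zc}$ on the localized complex, which, once transported to $C_*(\hA)$, is implemented by the cap product with $s^{-1}$.

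For the Connes differential, I would apply the Cartan homotopy formula \eqref{Cartan identity} to the central 0-cochain $s^{-1} \in HH^0(\hA)$ to obtain
\[
B_{\hA}(s^{-1} \cap L_*(\sigma)) = s^{-1} \cap B_{\hA}(L_*(\sigma)) + \Lie_{s^{-1}}(L_*(\sigma)),
\]
and then use naturality of $B$ under $L$ to rewrite the first term as $s^{-1} \cap L_*(B_A(\sigma))$. On the other side, $\widehat{L}_*(\hBc(\sigma \otimes s^{-1})) = s^{-1} \cap L_*(B_A(\sigma)) - s^{-2} \cap L_*(\Lie_s(\sigma))$, and by naturality of $\Lie$, the last term equals $-s^{-2} \cap \Lie_s(L_*(\sigma))$. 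The required intertwining therefore reduces to the operator identity $\Lie_{s^{-1}} = -\iota_{s^{-2}} \circ \Lie_s$ on $HH_*(\hA)$, where $\iota_f := f \cap -$.

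To establish this final identity, I would apply the derivation formula for the Lie derivative, $\Lie_{fg} = \Lie_f \circ \iota_g + \iota_f \circ \Lie_g$ for 0-cochains $f, g$, to the relation $s \cup s^{-1} = 1$ and $\Lie_1 = 0$; this gives $\Lie_{s^{-1}} = -\iota_{s^{-1}} \circ \Lie_s \circ \iota_{s^{-1}}$. To collapse the right-hand side, $\Lie_s$ and $\iota_{s^{-1}}$ need to commute on $HH_*(\hA)$, which follows from the noncommutative Cartan bracket identity $[\Lie_f, \iota_g] = \iota_{\{f, g\}}$ together with the vanishing of the Gerstenhaber bracket of any two 0-cochains (the bracket formula \eqref{G bracket} inserts one cochain into the arguments of the other, and 0-cochains have no arguments). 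The main obstacle, I anticipate, will be carefully tracking sign conventions in the Cartan identities and in the various comparison maps between bar complexes, and checking that all of these statements lift from Hochschild homology to the chain level compatibly with the $\widehat{\zc}$-module structure and the explicit formula $\sigma \otimes s^{-1} \mapsto s^{-1} \cap L_*(\sigma)$.
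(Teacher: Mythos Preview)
Your proposal is correct and follows essentially the same route as the paper: cite Brylinski for the isomorphism, use the Cartan identity $\Lie_f = [B, \iota_f]$ together with naturality of $B$ under $L_*$, and then invoke the operator identity $\Lie_{s^{-1}} = -\iota_{s^{-2}} \circ \Lie_s$. The only difference is that the paper simply states this last identity without justification, whereas you supply a derivation via the Leibniz rule $\Lie_{fg} = \Lie_f \iota_g + \iota_f \Lie_g$ applied to $s \cdot s^{-1} = 1$ and the commutator $[\Lie_s, \iota_{s^{-1}}] = \iota_{\{s, s^{-1}\}} = 0$.
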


\begin{proof}
The first assertion is proved in \cite{Brylinski}. It is clear from the formulas for the Connes differential \eqref{Connes} and $L_*$ \eqref{Lochom} that $L_*$ intertwines $B_A$ and $B_{\hA}$. Then the formula for the differential $\hBc$ is obtained from the calculus identities (\S \ref{section Noncommutative calculus}). Observe
\begin{align*}
B_{\hA} \, \hL_*(\sigma \otimes s^{-1}) & = B_{\hA} \big(s^{-1} \cap L_*(\sigma) \big) \\
& = s^{-1} \cap B_{\hA} \, L_*(\sigma) + \Lie_{s^{-1}} \, L_*(\sigma) \\
& = s^{-1} \cap L_* \, B_A (\sigma) +  \Lie_{s^{-1}} \, L_*(\sigma).
\end{align*}
There is also the identity $\Lie_{s^{-1}} = -s^{-2} \cap \Lie_s$, so the last expression equals
\[
s^{-1} \cap L_* \, B_A(\sigma) - s^{-2} \cap \Lie_s \, L_*(\sigma) = s^{-1} \cap L_* \, B_A(\sigma) - s^{-2} \cap L_* \, \Lie_s(\sigma).
\]
Under the isomorphism $\hL$, this is precisely the image of
\[
B_A(\sigma) \otimes s^{-1} - \Lie_s(\sigma) \otimes s^{-2}. \qedhere
\]
\end{proof}

In order to extend the result to the full calculus structure, we use the fact that the cup and cap products can be defined on Hochschild (co)chains using the resolution $\hB(A)$. Let $D_A$ be the diagonal map for $\B(A)$ \eqref{bar diagonal}; up to homotopy equivalence, the diagonal map for $\hB(A)$ has the form
\begin{gather*}
\hD: \hB(A) \rightarrow \hB(A) \otimes_{\hA} \hB(A), \\
\hat{a}_1 \otimes a_2 \otimes \dots \otimes a_{n-1} \otimes \hat{a}_n \mapsto \sum_{i = 0}^n (\hat{a}_1  \otimes a_2 \otimes \dots \otimes a_i \otimes 1 ) \otimes (1 \otimes a_{i+1} \otimes \dots \otimes a_{n-1} \otimes \hat{a}_n).
\end{gather*} 

\begin{lem} \label{algebras}
The map $L^*: HH^*(A) \rightarrow HH^*(\hA)$ is a morphism of algebras with respect to cup product.
\end{lem}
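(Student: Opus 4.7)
The plan is to verify cup product compatibility at the level of the intermediate complex $\Hom_{\hA^e}(\hB(A), \hA)$, equipped with the cup product defined via the diagonal $\hD$, and then transfer the result through $\phi^\vee$. First, I would check that $\hD$ is a valid chain-level diagonal for the resolution $\hB(A) \to \hA$: it lifts the identity $\hA \cong \hA \otimes_{\hA} \hA$, and a direct computation confirms that it commutes with the bar differential. Since $F = \hA \otimes_A - \otimes_A \hA$ is exact with $F(A) = \hA$, the complex $\hB(A) = F(\B(A))$ is a flat (in fact free) resolution of $\hA$ as an $\hA^e$-module, so the cup product on $HH^*(\hA)$ can be computed equally well using $\hD$ on $\hB(A)$ or the standard bar diagonal on $\B(\hA)$.

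The heart of the argument is a direct computation showing that $I^*$ intertwines the cup products on $\Hom_{A^e}(\B(A), A)$ (defined via $D_A$) and $\Hom_{\hA^e}(\hB(A), \hA)$ (defined via $\hD$). For $f, g \in \Hom_{A^e}(\B(A), A)$ and a generic chain $\hat{a}_1 \otimes a_2 \otimes \dots \otimes \hat{a}_n \in \hB(A)$, one expands both $I^*(f \cup g)$ and $I^*(f) \cup I^*(g)$ using the explicit formulas for the diagonals and finds that each evaluates to
\[
\sum_{i} \hat{a}_1 \, L\!\bigl(f(1 \otimes a_2 \otimes \dots \otimes a_i \otimes 1)\bigr) \, L\!\bigl(g(1 \otimes a_{i+1} \otimes \dots \otimes a_{n-1} \otimes 1)\bigr) \, \hat{a}_n.
\]
The decisive input is that $L: A \to \hA$ is a ring homomorphism, so $L$ commutes with the inner multiplication being applied by $\mu$; without this, the factor-wise application of $L$ inside $I^*$ and the outer multiplication in the cup product would fail to match.

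Finally, to pass from $I^*$ to $L^* = \phi^\vee \circ I^*$, I would use the standard fact that any two diagonal maps on a given flat resolution of $\hA$ are chain homotopic and hence induce the same product on $\ext_{\hA^e}^*(\hA, \hA)$. The comparison map $\hB(A) \to \B(\hA)$ lifts the identity of $\hA$ between flat resolutions, so $\phi^\vee$ carries the $\hD$-cup product to the standard cup product on $C^*(\hA)$ up to chain homotopy, and thus induces an algebra isomorphism on cohomology. Composing with $I^*$ yields that $L^*$ is a morphism of algebras. The main obstacle is essentially bookkeeping: tracking the $\hA$-bimodule structure on the outer tensor factors of $\hB(A)$ while the inner factors still live in $A$, and verifying that the various diagonals really do lift the identity and align under the resolution comparisons. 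Once the correct formulas are fixed, both the coincidence of $I^*(f \cup g)$ with $I^*(f) \cup I^*(g)$ and the homotopy equivalence of diagonals reduce to direct verifications.
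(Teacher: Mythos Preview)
Your proposal is correct and follows essentially the same approach as the paper: you decompose $L^* = \phi^\vee \circ I^*$, verify by the same direct computation that $I^*$ intertwines the cup products defined via $D_A$ and $\hD$, and then invoke resolution-independence of the cup product to handle $\phi^\vee$. The paper's proof is organized identically, differing only in that it presents the argument as commutativity of two squares in a single diagram rather than as separate steps.
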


\begin{proof}
Consider the diagram of complexes
\[
\xymatrix{
\Hom_{A^e}(\B(A), A) \ar[r]^-{I^*} \ar[d]^-{f \cup -} & \Hom_{\hA^e}(\hB(A), \hA) \ar[r]^-{\phi^\vee} \ar[d]^-{I^*(f) \cup -} & \Hom_{\hA^e}(\B(\hA), \hA) \ar[d]^-{L^* (f) \cup -} \\
\Hom_{A^e}(\B(A), A) \ar[r]^-{I^*} & \Hom_{\hA^e}(\hB(A), \hA) \ar[r]^-{\phi^\vee} & \Hom_{\hA^e}(\B(\hA), \hA).
}
\]
The horizontal composition is $L^*$. Upon taking cohomology, commutativity of the second square follows from the independence of the cup product from the choice of resolution. So to prove $L^*$ is an algebra morphism, it suffices to prove commutativity of the first square. Observe
\begin{align*}
I^*(f \cup g)(\hat{a}_1  \otimes \dots \otimes \hat{a}_n ) & = \hat{a}_1  L (f \cup g)(1 \otimes a_2 \otimes \dots \otimes a_{n-1} \otimes 1) \hat{a}_n  \\
& = \hat{a}_1  L \circ \mu( f \otimes g) D_A (1 \otimes a_2 \otimes \dots \otimes a_{n-1} \otimes 1) \hat{a}_n  \\
& = \hat{a}_1 L f(1 \otimes a_2 \otimes \dots \otimes a_i \otimes 1 ) L g (1 \otimes a_{i+1} \otimes \dots \otimes a_{n-1} \otimes 1) \hat{a}_n
\end{align*}
and
\begin{align*}
I^*(f) \cup I^*(g) (\hat{a}_1  \otimes \dots \otimes \hat{a}_n) & = \mu(I^*(f) \cup I^*(g) ) \hD (\hat{a}_1 \otimes \dots \otimes \hat{a}_1 ) \\
& = I^*(f)(\hat{a}_1 \otimes a_2 \otimes a_i \otimes 1) I^*(g)(1 \otimes a_{i+1} \otimes \dots \otimes a_{n-1} \otimes \hat{a}_n ) \\
& = \hat{a}_1 L  f(1 \otimes a_2 \otimes \dots \otimes a_i \otimes 1 ) L g (1 \otimes a_{i+1} \otimes \dots \otimes a_{n-1} \otimes 1) \hat{a}_n,
\end{align*}
as desired.
\end{proof}

\begin{lem} \label{cap com}
For all $f \in HH^*(A)$ and $\sigma \in HH_*(A)$, the diagram
\[
\xymatrix{
HH^*(A) \ar[r]^-{L^*} \ar[d]^-{- \cap \sigma} & HH^*(\hA) \ar[d]^-{-\cap L_*(\sigma)} \\
HH_*(A) \ar[r]^-{L_*} & HH_*(\hA)
}
\]
commutes.
\end{lem}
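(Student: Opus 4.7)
The plan is to adapt the proof of Lemma \ref{algebras} from the cup to the cap product, exploiting the fact that cap products, like cup products, can be computed via any projective $A$-bimodule resolution of the algebra together with a choice of diagonal map. In particular, on the localized side, one can compute cap products using the resolution $\hB(A)$ rather than $\B(\hA)$.

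Following the template of Lemma \ref{algebras}, I would consider the diagram of chain complexes
\[
\xymatrix{
A \otimes_{A^e} \B(A) \ar[r]^-{I_*} \ar[d]^-{f \cap -} & \hA \otimes_{\hA^e} \hB(A) \ar[r]^-{\phi} \ar[d]^-{I^*(f) \cap -} & \hA \otimes_{\hA^e} \B(\hA) \ar[d]^-{L^*(f) \cap -} \\
A \otimes_{A^e} \B(A) \ar[r]^-{I_*} & \hA \otimes_{\hA^e} \hB(A) \ar[r]^-{\phi} & \hA \otimes_{\hA^e} \B(\hA)
}
\]
whose horizontal compositions are $L_*$. It suffices to prove that the outer rectangle commutes on homology. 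The right-hand square commutes on homology because both vertical arrows compute the cap product of the class $L^*(f) = [I^*(f)] \in HH^*(\hA)$ with the image in $HH_*(\hA)$ of a common chain; the two realizations differ only in the choice of projective $\hA$-bimodule resolution used to define cap product, and this choice is irrelevant after passing to homology.

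It then remains to verify that the left-hand square commutes on the chain level, which is a direct calculation paralleling the cup-product computation in Lemma \ref{algebras}. For a cochain $f$ of arity $d$ and a chain $\sigma = a_0 \otimes a_1 \otimes \cdots \otimes a_n$, one unwinds the cap product formulas using the diagonal map $D_A$ on $\B(A)$ and $\hD$ on $\hB(A)$, together with the explicit formulas defining $I_*$ and $I^*$. Both $I_*(f \cap \sigma)$ and $I^*(f) \cap I_*(\sigma)$ are then seen to equal
\[
L(a_0)\, L\bigl(f(a_1 \otimes \cdots \otimes a_d)\bigr) \otimes L(a_{d+1}) \otimes \cdots \otimes L(a_n),
\]
up to the same Koszul sign, using the multiplicativity of $L$ and that $\phi$ is the identity on the middle tensor factors.

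The main step is really only bookkeeping: tracking Koszul signs through the cap product formula, but this is handled in exactly the same manner as in the cup-product case already treated, so no new conceptual difficulty arises. The essential ingredient one needs beyond Lemma \ref{algebras} is that the cap product, like the cup product, is independent of the chosen projective resolution at the level of (co)homology, which justifies the commutativity of the right-hand square.
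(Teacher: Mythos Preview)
Your proposal is correct and follows essentially the same approach as the paper: factor through the intermediate resolution $\hB(A)$, invoke resolution-independence of the cap product for the right-hand square, and verify the left-hand square by the direct computation $I_*(f \cap \sigma) = I^*(f) \cap I_*(\sigma)$. The only cosmetic difference is that you orient the diagram with chains in both rows (fixing $f$) whereas the paper places cochains on top (fixing $\sigma$); note also that $\phi$ applies $L$ to the middle tensor factors rather than being the identity there, so your displayed expression is the image after $\phi$, not in $\hA \otimes_{\hA^e} \hB(A)$ itself.
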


\begin{proof}
We use a similar argument as for the previous lemma. Consider the diagram of complexes
\[
\xymatrix{
\Hom_{A^e}(\B(A), A) \ar[r]^-{I^*} \ar[d]^-{- \cap \sigma} & \Hom_{\hA^e}(\hB(A), \hA) \ar[r]^-{\phi^\vee} \ar[d]^-{- \cap I_*(\sigma)} & \Hom_{\hA^e}(\B(\hA), \hA) \ar[d]^-{- \cap L_*(\sigma)} \\
A \otimes_{A^e} \B(A) \ar[r]^-{I_*} & \hA \otimes_{\hA^e} \hB(A) \ar[r]^-{\phi} & \hA \otimes_{\hA^e} \B(\hA)
}
\]
The top horizontal composition is the map $L^*$, while the bottom horizontal composition is the map $L_*$. Upon taking cohomology, commutativity of the second square follows from the independence of the cap product from choice of resolution. So to prove the result, it suffices to prove commutativity of the first square. 

Without loss of generality, suppose
\[ 
\sigma = a_0 \otimes a_1 \otimes \dots \otimes a_n \in A \otimes_{A^e} A^{\otimes n}.
\]
Observe
\begin{eqnarray*}
I_*( f \cap \sigma) & = & I_* \big( a_0 \otimes (f \otimes Id) D_A ( a_1 \otimes \dots \otimes a_n) \big) \\
& = & I_* \big( a_0 a_1 f(1 \otimes a_2 \otimes \dots \otimes a_i \otimes 1) \otimes (1 \otimes a_{i+1} \otimes \dots \otimes a_n) \big) \\
& = & L(a_0) L(a_1) L f(1 \otimes a_2 \otimes \dots \otimes a_i \otimes 1) \otimes (1 \otimes a_{i+1} \otimes \dots \otimes a_{n-1} \otimes L(a_n)),
\end{eqnarray*}
and
\begin{eqnarray*}
I^*(f) \cap I_*(\sigma) & = & L(a_0) \otimes (I^*(f) \otimes Id)\hD( L(a_1) \otimes a_2 \otimes \dots \otimes a_{n-1} \otimes L(a_n)) \\ 
& = & L(a_0) I^*(f)(L(a_1) \otimes a_2 \otimes \dots \otimes a_i \otimes 1) \otimes (1 \otimes a_{i+1} \otimes \dots \otimes a_{n-1} \otimes L(a_n)) \\
& = & L(a_0) L(a_1) L f(1 \otimes a_2 \otimes \dots \otimes a_i \otimes 1) \otimes (1 \otimes a_{i+1} \otimes  \dots \otimes a_{n-1} \otimes L(a_n)),
\end{eqnarray*}
as desired.
\end{proof}

If $A$ is Calabi-Yau of dimension $n$, then its central localization $\hA$ is as well \cite{Farinati}. Since Van den Berg duality takes the form of capping with a nondegenerate element \cite{VDBV}, Proposition \ref{loc of homology} and Lemma \ref{cap com} can be used to prove $L^*$ intertwines the BV operators. 

\begin{prop} \label{L^* BV}
Suppose $A$ is CY-$n$ and $\pi$ is a nondegenerate element. Then $\hA$ is also CY-$n$, and the element $L_*(\pi)$ is a nondegenerate element for $\hA$. Moreover, the map $L^*$ is a morphism of BV-algebras when the Hochschild cohomologies of $A$ and $\hA$ are equipped with the operators $\Delta_{\pi}$ and $\Delta_{L_*(\pi)}$, respectively.
\end{prop}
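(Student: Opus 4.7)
The plan is to establish the three assertions---that $\hA$ is CY-$n$, that $L_*(\pi)$ is a nondegenerate element for $\hA$, and that $L^*$ intertwines the BV operators---in sequence, leveraging Lemmas \ref{algebras} and \ref{cap com} together with Proposition \ref{loc of homology}.

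First I would verify that $\hA$ is Calabi--Yau of dimension $n$. Since $A$ is perfect over $A^e$ and the functor $F = \hA \otimes_A - \otimes_A \hA$ is exact with $F(A^e) \cong \hA^e$, the module $\hA = F(A)$ is perfect over $\hA^e$, so $\hA$ is homologically smooth. Applying $F$ to the defining quasi-isomorphism $A[n] \simeq \RHom_{A^e}(A, A \otimes A)$ and invoking the base-change identity $F \circ \RHom_{A^e}(P, -) \simeq \RHom_{\hA^e}(F(P), F(-))$ for perfect $A^e$-modules $P$ yields the CY quasi-isomorphism for $\hA$.

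Second I would identify the nondegenerate element. The volume $\pi \in \ext^n_{A^e}(A, A \otimes A)$ is by definition the image of $1$ under the iso \eqref{CY volume}; applying $F$ shows that $1 \in \hA$ maps to $F(\pi)$ under the analogous iso $\hA \cong \ext^n_{\hA^e}(\hA, \hA \otimes \hA)$. Naturality of the smoothness quasi-isomorphism \eqref{smooth qiso} with respect to the flat base change $F$ then implies that the corresponding nondegenerate class in $HH_n(\hA)$ is the image of $\pi \in HH_n(A)$ under the localization map, which by formula \eqref{Lochom} is exactly $L_*(\pi)$.

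Third, by Lemma \ref{algebras}, $L^*$ respects the cup product, so it remains to show $L^* \circ \Delta_\pi = \Delta_{L_*(\pi)} \circ L^*$. Unwinding $\Delta_\pi = \D_\pi^{-1} \circ B_A \circ \D_\pi$ and $\Delta_{L_*(\pi)} = \D_{L_*(\pi)}^{-1} \circ B_{\hA} \circ \D_{L_*(\pi)}$, for $f \in HH^*(A)$ I would compute
\begin{align*}
\D_{L_*(\pi)}(L^*(\Delta_\pi f)) &= L_*(\D_\pi(\Delta_\pi f)) = L_*(B_A \D_\pi f) \\
&= B_{\hA}(L_* \D_\pi f) = B_{\hA}(\D_{L_*(\pi)}(L^* f)),
\end{align*}
where the first and last equalities use Lemma \ref{cap com} and the middle equality uses that $L_*$ commutes with the Connes differential on the image of the non-localized chain map (the $s = 1$ case of Proposition \ref{loc of homology}, also immediate from formulas \eqref{Lochom} and \eqref{Connes}). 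Applying $\D_{L_*(\pi)}^{-1}$ yields the desired intertwining, and together with cup-product compatibility this makes $L^*$ a morphism of BV algebras.

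The main obstacle will be the second step: carefully tracking the Calabi--Yau volume through the base-change functor $F$ in order to identify the resulting nondegenerate element with $L_*(\pi)$. This amounts to verifying the naturality of the chain-level smoothness quasi-isomorphism with respect to $F$, a routine but careful diagram chase that relies on the perfection of $A$ over $A^e$ and the flatness of $\hA$ over $A$.
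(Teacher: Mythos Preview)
Your proposal is correct and follows essentially the same approach as the paper. The paper packages your first two steps into a single commutative diagram relating $\RHom_{A^e}(\RHom_{A^e}(M, A \otimes A), A)$ to $M \otimes^{\mathbb L}_{A^e} A$ and its $\hA$-analogue, verified first for $M = A^e$ and then for perfect modules, which simultaneously yields the CY property for $\hA$ and identifies the nondegenerate element as $L_*(\pi)$; your third step is exactly the paper's argument, just with the commuting square $\D_{L_*(\pi)} \circ L^* = L_* \circ \D_\pi$ from Lemma~\ref{cap com} written out in line rather than drawn.
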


\begin{proof}
For $M$ a perfect $A^e$-module, consider the commutative diagram
\[
\begin{tikzcd}
\RHom_{A^e}( \RHom_{A^e}(M, A \otimes A), A) \arrow[rr, "\simeq"] \arrow[d, "F"] & & M \otimes^{\mathbb{L}}_{A^e} A \arrow[d] \\
\RHom_{\hA^e}(F \RHom_{A^e}(M, A \otimes A), F(A)) \arrow[r, "\simeq"] \arrow[d, "\simeq"] & \RHom_{A^e}(\RHom_{A^e}(M, A \otimes A), GF(A)) \arrow[r, "\simeq"] & M \otimes^{\mathbb{L}}_{A^e}  GF(A)  \arrow[d, "\simeq"] \\
\RHom_{\hA^e} ( \RHom_{\hA^e} (F(M), \hA \otimes \hA), \hA) \arrow[rr, "\simeq"] & & F(M) \otimes^{\mathbb{L}}_{\hA^e} \hA.
\end{tikzcd}
\]
The diagram can be easily checked for the free bimodule $M = A^e$, which is enough to show its validity for a general perfect module. Under the vertical composition on the left side, a quasi-isomorphism between $\RHom_{A^e}(M, A \otimes A)$ and $A$ is mapped to one between $\RHom_{\hA^e}(F(M), \hA \otimes \hA)$ and $\hA$. Since $A$ is homologically smooth, we can take $M = A$, confirming that $\hA$ is Calabi--Yau of dimension $n$. The vertical composition on the right hand side is $I_*$. Hence, if $\pi$ is a nondegenerate element for $A$, then $L_*(\pi)$ is a nondegenerate element for $\hA$. 

By Lemma \ref{cap com}, we have the commuting square 
\[
\xymatrix{
HH^*(A) \ar[r]^-{L^*} \ar[d]^-{\D_{\pi}} & HH^*(\hA) \ar[d]^-{\D_{L_*(\pi)}} \\
HH_*(A) \ar[r]^-{L_*} & HH_*(\hA).
}
\]
Proposition \ref{loc of homology} states that $L_*$ intertwines the Connes differentials, so by definition of the BV operator, $L^*$ must intertwine $\Delta_{\pi}$ and $\Delta_{L_*(\pi)}$.  
\end{proof}

Now the dual to Proposition \ref{loc of homology}, stating roughly that Hochschild cohomology ``commutes" with central localization, can be proven.

\begin{thm} \label{loc of cohomology}
Let $A$ be an associative algebra and $S \subset \zc$ a multiplicative subset.
\begin{enumerate} 
\item The map
\[
\hL^*: \widehat{HH^*}(A) = HH^*(A) \otimes_{\zc} \widehat{\zc} \longrightarrow HH^*(\hA), \; \; f \otimes s^{-1} \mapsto L^*(f) \cup s^{-1} \; \; \forall \, s \in S
\]
is a morphism of graded $\widehat{\zc}$-algebras.
\item If $A$ is homologically smooth, then $\hL^*$ is an isomorphism.
\item Suppose $A$ is CY-$n$, and let $\pi$ be a nondegenerate element. The map $\hL^*$ is an isomorphism of BV algebras, intertwining $\Delta_{L_*(\pi)}$ with the differential defined by
\[
\widehat{\Delta}_{\pi}(f \otimes s^{-1}) : = \Delta_{\pi}(f) \otimes s^{-1} - \{s, f\} \otimes s^{-2}, \; \; \forall s \in S.
\]
\end{enumerate}
\end{thm}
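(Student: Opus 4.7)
The plan is to treat the three parts sequentially, leveraging the behavior of the comparison maps $L^*$ and $L_*$ established earlier in the section. For part~(1), well-definedness of $\hL^*$ on the tensor product reduces to verifying that $L^*(zf) \cup s^{-1} = L^*(f) \cup L(z) \cup s^{-1}$ for $z \in \zc$, which is Lemma \ref{algebras} together with the observation that $L(z)$ is central in $HH^*(\hA)$; the algebra-morphism and $\widehat{\zc}$-linearity properties follow from Lemma \ref{algebras} applied to products $(f \otimes s^{-1}) \cdot (g \otimes t^{-1}) = (f \cup g) \otimes (st)^{-1}$.

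For part~(2), I would pass from the bar resolution to a bounded resolution $P_\bullet \to A$ by finitely generated projective $A^e$-modules, available by homological smoothness. Bi-flatness of $\hA$ over $A$ ensures that $F(P_\bullet) = \hA \otimes_A P_\bullet \otimes_A \hA$ is a bounded resolution of $\hA$ by finitely generated projective $\hA^e$-modules, and the adjunction $(F, G)$ together with finite generation of each $P_n$ yields
\[
\Hom_{\hA^e}(F(P_\bullet), \hA) \cong \Hom_{A^e}(P_\bullet, \hA) \cong \Hom_{A^e}(P_\bullet, A) \otimes_\zc \widehat{\zc}.
\]
Flatness of $\widehat{\zc}$ over $\zc$ then commutes cohomology with the localization, and comparing with the bar resolution identifies the resulting isomorphism with $\hL^*$.

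For part~(3), Proposition \ref{L^* BV} guarantees $L_*(\pi)$ is nondegenerate for $\hA$, so $\D_{L_*(\pi)}$ is defined. Define
\[
\widehat{\D}_\pi : \widehat{HH^*}(A) \to \widehat{HH_{n-*}}(A), \qquad f \otimes s^{-1} \mapsto (f \cap \pi) \otimes s^{-1}.
\]
The identity $L_*(f \cap \pi) = L^*(f) \cap L_*(\pi)$ from Lemma \ref{cap com}, combined with the centrality of $s^{-1}$, gives $\hL_* \circ \widehat{\D}_\pi = \D_{L_*(\pi)} \circ \hL^*$, so $\widehat{\D}_\pi$ is an isomorphism. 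Because $\hL_*$ intertwines $B_{\hA}$ with $\hBc$ by Proposition \ref{loc of homology}, $\hL^*$ intertwines $\Delta_{L_*(\pi)}$ with $\widehat{\Delta}_\pi := \widehat{\D}_\pi^{-1} \circ \hBc \circ \widehat{\D}_\pi$.

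The main obstacle is obtaining the explicit formula for $\widehat{\Delta}_\pi$. Starting from
\[
\hBc\big((f \cap \pi) \otimes s^{-1}\big) = B_A(f \cap \pi) \otimes s^{-1} - \Lie_s(f \cap \pi) \otimes s^{-2},
\]
the first term is $(\Delta_\pi(f) \cap \pi) \otimes s^{-1}$ by definition. For the second, the Cartan identity \eqref{Cartan identity} and the module identity $s \cap (f \cap \pi) = (s \cup f) \cap \pi$ give $\Lie_s(f \cap \pi) = \big(\Delta_\pi(s \cup f) - s \cup \Delta_\pi(f)\big) \cap \pi$, and the BV identity \eqref{BVGers} together with $\Delta_\pi(s) \in HH^{-1}(A) = 0$ reduces this to $\{s, f\} \cap \pi$. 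Applying $\widehat{\D}_\pi^{-1}$ delivers the stated formula; the chief point of care is the sign bookkeeping in the Cartan and BV identities, which is simplified here by $|s| = 0$.
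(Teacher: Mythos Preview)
Your proposal is correct and follows essentially the same approach as the paper. The paper is terser: part~(1) simply invokes Lemma~\ref{algebras}; part~(2) phrases the localization argument in terms of a generic perfect bimodule $M$ and reduces to $M = A^e$, which is equivalent to your direct use of a finite projective resolution; and part~(3) sets up the commutative square with $\D_\pi \otimes \mathrm{Id}$ and $\hL_*$ exactly as you do, but asserts the formula for $\widehat{\Delta}_\pi$ without writing out the Cartan/BV computation you supply.
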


\begin{proof} $ $ \\
(1) This is clear from Lemma \ref{algebras}.
$ $ \newline
$ $ \newline
(2) For $M$ a perfect $A^e$-module, consider the commutative diagram
\[
\begin{tikzcd}
\RHom_{A^e}(M, A) \arrow[r, "I^*"] \arrow[rd] & \RHom_{\hA^e}(F(M), \hA) \\
& \RHom_{A^e}(M, A) \otimes_{\zc} \hat{\zc} \arrow[u]
\end{tikzcd}
\]
where the diagonal arrow is the universal map and the vertical arrow is induced by the universal property. The latter is easily seen to be a quasi-isomorphism when $M = A^e$, so the same is true for general perfect bimodules. 

$ $\newline
$ $\newline
(3) From Proposition \ref{L^* BV} and the preceding, we have a commutative diagram of isomorphisms
\[
\xymatrix{
HH^*(A) \otimes_{\zc} \widehat{\zc} \ar[r]^-{\hL^*} \ar[d]^-{\D_{\pi} \otimes Id} & HH^*(\hA) \ar[d]^-{\D_{L_*(\pi)}} \\
HH_*(A) \otimes_{\zc} \widehat{\zc} \ar[r]^-{\hL_*} & HH_*(\hA).
}
\]
By Proposition \ref{loc of homology}, $\hL_*$ is a chain map where the left side is given differential 
\[ 
\hBc(\sigma \otimes s^{-1}) = B(\sigma) \otimes s^{-1} - \Lie_{s} (\sigma) \otimes s^{-2} \; \; \forall s \in S
\]
Under $\D_{\pi} \otimes Id$, $\hBc$ is sent to the differential $\widehat{\Delta}_{\pi}$ with the stated formula.
\end{proof}

\begin{cor} \label{kernel tor}
Let $A$ be an associative algebra and $S \subset \zc$ a multiplicative subset. Then $HH_*(\hA)$ is the localization of $HH_*(A)$ with respect to $S$, and 
\[
Ker(L_*) = tor_S(HH_*(A)).
\] 
If furthermore $A$ is homologically smooth, then $HH^*(\hA)$ is the localization of $HH^*(A)$ with respect to $S$, and
\[
Ker(L^*) = tor_S(HH^*(A)).
\]
\end{cor}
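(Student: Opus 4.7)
The plan is to deduce the corollary as a straightforward consequence of Proposition \ref{loc of homology} and Theorem \ref{loc of cohomology}(2). The essential observation is that for any $\zc$-module $M$, the tensor product $M \otimes_{\zc} \widehat{\zc}$ is, by definition, the localization of $M$ at $S$, and the universal map $M \to M \otimes_{\zc} \widehat{\zc}$, $m \mapsto m \otimes 1$, has kernel equal to $\operatorname{tor}_S(M)$. This last fact is a standard property of Ore localization for central multiplicative sets.

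First I would address the homology statement. Proposition \ref{loc of homology} already provides an isomorphism
\[
\widehat{L}_* : HH_*(A) \otimes_{\zc} \widehat{\zc} \xrightarrow{\;\sim\;} HH_*(\hA),
\]
so $HH_*(\hA)$ is the $S$-localization of $HH_*(A)$ in the $\widehat{\zc}$-module sense. Setting $s = 1$ in the formula $\widehat{L}_*(\sigma \otimes s^{-1}) = s^{-1} \cap L_*(\sigma)$ shows that $L_* = \widehat{L}_* \circ \iota$, where $\iota: HH_*(A) \to HH_*(A) \otimes_{\zc} \widehat{\zc}$ is the universal map $\sigma \mapsto \sigma \otimes 1$. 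Since $\widehat{L}_*$ is an isomorphism, $\ker L_* = \ker \iota = \operatorname{tor}_S(HH_*(A))$.

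For the cohomology statement, under the additional assumption that $A$ is homologically smooth, Theorem \ref{loc of cohomology}(2) gives an isomorphism
\[
\widehat{L}^* : HH^*(A) \otimes_{\zc} \widehat{\zc} \xrightarrow{\;\sim\;} HH^*(\hA),
\]
and the same argument applies: setting $s = 1$ in $\widehat{L}^*(f \otimes s^{-1}) = L^*(f) \cup s^{-1}$ shows that $L^*$ factors through the universal map to the localization, so $\ker L^* = \operatorname{tor}_S(HH^*(A))$.

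I do not anticipate a significant obstacle, since the corollary is essentially a reinterpretation of the isomorphisms already established. The only point that requires any care is verifying that the compositions $\widehat{L}_* \circ \iota$ and $\widehat{L}^* \circ \iota$ genuinely recover the maps $L_*$ and $L^*$ defined in \S \ref{section central localization}, which amounts to unwinding the formulas in Proposition \ref{loc of homology} and Theorem \ref{loc of cohomology}(1) at $s = 1$.
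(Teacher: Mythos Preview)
Your proposal is correct and matches the paper's approach: the corollary is stated without proof in the paper, being an immediate consequence of Proposition~\ref{loc of homology} and Theorem~\ref{loc of cohomology}(2), and your argument spells out precisely this deduction. The one small check you flag---that $\widehat{L}_* \circ \iota = L_*$ and $\widehat{L}^* \circ \iota = L^*$---is indeed routine from the formulas at $s=1$, since $1 \cap L_*(\sigma) = L_*(\sigma)$ and $L^*(f) \cup 1 = L^*(f)$.
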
 

In general, even if $A$ is torsion free (and hence $L: A \rightarrow \hA$ is injective), there may be torsion in the Hochschild (co)homology of $A$. This fact is witnessed by the Hochschild cohomology of the Jacobi algebra.

\subsection{Morita invariance} \label{section Morita invariance}

It is well known that Morita equivalence induces isomorphisms on Hochschild (co)homology. For $A$ an associative algebra and $Mat_r(A)$ the algebra of $r \times r$-matrices with coefficients in $A$, the standard isomorphisms are 
\[
\begin{tikzcd}[column sep = 3em]
HH_*(A) \arrow[r, shift left, "inc_*"] & HH_*(Mat_r(A)) \arrow[l, shift left, "tr_*"],
\end{tikzcd}
\qquad
\begin{tikzcd}[column sep = 3em]
HH^*(A) \arrow[r, shift left, "cotr^*"] & HH_*(Mat_r(A)) \arrow[l, shift left, "inc^*"]
\end{tikzcd}
\] 
whose formulas are given in \S $1.2$ and \S $1.5$ of \cite{Loday}. In particular, $tr_*$ is the generalized trace, 
\begin{equation} \label{trace}
tr_*: m_0 \otimes m_1 \otimes \dots \otimes m_n \mapsto \sum_{(i_0, \dots, i_n)} m_0^{i_0, i_1} \otimes m_1^{i_1, i_2} \otimes \dots \otimes m_n^{i_n, i_0}, \; \; \; m_k = (m_k^{i, j})_{1 \leq i, j \leq r} \in Mat_r(A),
\end{equation}
the sum being over all indices $(i_0, \dots, i_n) \in \{1, 2, \dots, r \}^{n+1}$, and $inc^*$ is the co-inclusion,
\begin{equation} \label{co-inclusion}
inc^*(\alpha)(a_0 \otimes a_1 \otimes \dots \otimes a_n) = proj_{11} \, \alpha(a_0 e_{11} \otimes a_1 e_{11} \otimes \dots \otimes a_n e_{11}),
\end{equation}
the map $proj_{11}$ being the projection onto the $(1, 1)$-coordinate. If $A$ is Calabi-Yau of dimension $n$, then so is $Mat_r(A)$ \cite{Zeng}, and the analogous statement to Theorem \ref{loc of cohomology} holds.

\begin{prop} \label{morita invariance}
Suppose $A$ is CY-$n$, and let $\pi$ be a nondegenerate element. The map $inc^*$ is an isomorphism of BV algebras when the Hochschild cohomologies of $A$ and $Mat_r(A)$ are equipped with the operators $\Delta_\pi$ and $\Delta_{inc_*(\pi)}$, respectively.
\end{prop}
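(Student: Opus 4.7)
The plan is to follow the template established in the proof of Proposition \ref{L^* BV} and Theorem \ref{loc of cohomology}(3), replacing the central localization functor $F = \hA \otimes_A - \otimes_A \hA$ with the Morita equivalence functor $\tilde F \colon \mathcal{D}(A^e) \xrightarrow{\sim} \mathcal{D}(Mat_r(A)^e)$ induced by tensoring with the Morita bimodule $P = A^{\oplus r}$. Under $\tilde F$ the diagonal bimodule $A$ corresponds to $Mat_r(A)$, and the induced map $A \otimes_{A^e}^{\mathbb{L}} A \to Mat_r(A) \otimes_{Mat_r(A)^e}^{\mathbb{L}} Mat_r(A)$ on zero-th homology is precisely $inc_*$.

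First, to show that $inc_*(\pi)$ is a nondegenerate element for $Mat_r(A)$, I would run through the same commutative diagram of duality quasi-isomorphisms as in the proof of Proposition \ref{L^* BV}, verified on $M = A^e$ and extended to perfect bimodules. The right-hand vertical composition is $inc_*$, so a volume $\pi$ for $A$ is carried to a volume $inc_*(\pi)$ for $Mat_r(A)$. Second, the Morita analog of Lemma \ref{cap com} yields the commuting square
\[
\xymatrix{
HH^*(Mat_r(A)) \ar[r]^-{inc^*} \ar[d]^-{- \cap inc_*(\pi)} & HH^*(A) \ar[d]^-{- \cap \pi} \\
HH_*(Mat_r(A)) \ar[r]^-{tr_*} & HH_*(A).
}
\]
This expresses compatibility of the cap product with Morita equivalence and can be verified either directly from the formulas \eqref{trace} and \eqref{co-inclusion} or more abstractly from naturality of cap product under equivalences of bimodule derived categories. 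Third, it is classical that Morita invariance extends to the Connes differential (see \cite{Loday} \S 1.4), so $tr_*$ intertwines $B_{Mat_r(A)}$ with $B_A$.

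Combining these three inputs, for any $f \in HH^*(Mat_r(A))$ one computes
\begin{align*}
\Delta_\pi \bigl( inc^*(f) \bigr) & = \D_\pi^{-1} B_A \D_\pi \, inc^*(f) = \D_\pi^{-1} B_A \, tr_* \, \D_{inc_*(\pi)}(f) \\
& = \D_\pi^{-1} tr_* \, B_{Mat_r(A)} \, \D_{inc_*(\pi)}(f) = inc^* \bigl( \Delta_{inc_*(\pi)}(f) \bigr),
\end{align*}
using the commuting square in the second equality and the Connes intertwining in the third. Together with the classical Morita invariance of the cup product, this proves $inc^*$ is an isomorphism of BV algebras.

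The main obstacle I anticipate is verifying the cap product compatibility at the cochain level, given the asymmetric-looking formulas for $tr_*$ (a sum over all index tuples) and $inc^*$ (restriction to the corner idempotent $e_{11}$). A clean argument sidesteps the issue by applying the resolution-independent definition of the cap product to a projective $A^e$-resolution of $A$ together with its image under $\tilde F$, in complete analogy with the role played by $\hB(A)$ in Lemma \ref{cap com}; the required chain-level compatibility between $inc^*$ and $tr_*$ is then packaged into a single homotopy equivalence of resolutions rather than a direct index-chasing computation.
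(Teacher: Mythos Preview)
Your proposal is correct and takes essentially the same approach as the paper. The paper's proof is terser: it cites \cite{Armenta} for the Morita invariance of cup and cap products, invokes the logic of Proposition \ref{L^* BV}, and then notes that $tr_*$ intertwines the Connes differentials directly from the formulas \eqref{Connes} and \eqref{trace}; your version spells out the commuting square and the chain of equalities explicitly, and addresses the nondegeneracy of $inc_*(\pi)$ in more detail, but the underlying argument is identical.
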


\begin{proof}
It is known from general Morita theory that the isomorphisms on Hochschild (co)homology preserve the cup and cap products \cite{Armenta}. Hence, following the logic of Proposition \ref{L^* BV}, we have only to show that either $tr_*$ or $inc_*$ intertwines the Connes differentials. But this is clear from the formulas for the Connes differential \eqref{Connes} and $tr_*$ \eqref{trace}.
\end{proof}

\subsection{Batalin-Vilkovisky structure of $HH^*(\JW)$} \label{section BV localized Jacobi}

Write $\Psi^*$ and $\Psi_*$ for the isomorphisms on Hochschild (co)homology induced from $\Psi$ in Theorem \ref{matrix algebra}. Composition with the Morita isomorphisms in \S \ref{section Morita invariance} yields
\[
\begin{tikzcd}
cotr^* \circ \Psi^*: HH^*(\JW) \arrow[r, "\cong"] & HH^*(\C[\pi_1(\Sigma, v_o)] \otimes \C[z^{\pm 1}]) \\
tr_* \circ \Psi_*: HH_*(\JW) \arrow[r, "\cong"] & HH_*(\C[\pi_1(\Sigma, v_o)] \otimes \C[z^{\pm 1}]).
\end{tikzcd}
\]

If $\Sigma$ is a torus, then the matrix coefficients are Laurent polynomials in three variables,
\[
\C[\pi_1(\Sigma, v_o)] \otimes \C[z^{\pm 1}] \cong \C[ x^{\pm 1}, y^{\pm 1}, z^{\pm 1} ]  
\]
where $x$ and $y$ form a basis of the fundamental group. The Hochschild-Kostant-Rosenberg isomorphism \cite{HKR} identifies the Hochschild (co)homology with polyvectors and differential forms on the algebraic $3$-torus,
\begin{align}
& \C[x^{ \pm 1}, y^{\pm 1}, z^{\pm 1}, \partial_x, \partial_y, \partial_z], \\
& \C[x^{ \pm 1}, y^{\pm 1}, z^{\pm 1}, dx, dy, dz]. \nonumber
\end{align}
In particular, the BV differential is the divergence operator associated to a choice of $3$-form. Explicitly, if $\xi_x$, $\xi_y$, and $\xi_z$ are the coordinate vector fields $\partial_x$, $\partial_y$, and $\partial_z$, the diveregence operator associated to $\pi : = x^r y^s z^t dx \, dy \, dz$ is
\[
\Div_{\pi} : = (r x^{-1} + \partial_x) \xi_x + (s y^{-1} + \partial_y) \xi_y + (t z^{-1} + \partial_z) \xi_z.
\]

If $\Sigma$ has genus $g > 1$, the algebra $\C[\pi_1(\Sigma, v_o)]$ is Calabi-Yau of dimension 2 (see \cite{Ginzburg} Corollary 6.1.4), so its Hochschild cohomology has a BV structure from Van den Bergh duality. Vaintrob \cite{Vaintrob} establishes a BV isomorphism between $HH^*( \C[ \pi_1(\Sigma, v_o)] )$ and $\mathbb{H}_{2-*}(\Sigma)$, the homology of the free loop space of $\Sigma$. The BV structure of the latter, known as string topology \cite{ChasSullivan}, consists of the loop product and the operator $\rho: \mathbb{H}_{*}(\Sigma) \rightarrow \mathbb{H}_{*+1}(\Sigma)$ induced by the $S^1$-action on the free loop space. Because the center of $\C[\pi_1(\Sigma, v_0)]$ is simply $\C$, the volume \eqref{CY volume} for the Calabi-Yau structure is unique up to scaling, implying the BV operator is unique. Let $\pi_s \in HH_2(\C [\pi_1(\Sigma, v_o])$ be a nondegenerate element.

The Kunneth isomorphism relates the Hochschild invariants of $\C[\pi_1(\Sigma, v_0)] \otimes \C[z^{\pm1}]$ to those of its tensor factors. For homology, the isomorphism holds generally (\cite{Loday} Theorem 4.2.5), so a nondegenerate element $\pi$ for the product corresponds to $\pi_s \otimes z^t dz$ for some $t \in \z$. For cohomology, certain finiteness conditions must be satisfied for the isomorphism to hold and respect BV structures.

\begin{lem} \label{Kunneth}
Suppose $\Sigma$ is a Riemann surface of genus $g > 1$. There is an isomorphism of BV algebras
\[
\big( HH^*(\C[\pi_1(\Sigma), v_o] \otimes \C[z^{\pm1}]), \, \Delta_{\pi} \big) \cong \big( \mathbb{H}_*(\Sigma) \otimes \C[z^{\pm 1}][\partial_z], \, \rho \otimes id + id \otimes (t z^{-1} + \partial_z) \xi_z \big)
\] 
\end{lem}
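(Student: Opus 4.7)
The plan is to apply a Künneth-style decomposition to both Hochschild cohomology and homology of $A_1 \otimes A_2$, where $A_1 = \C[\pi_1(\Sigma, v_o)]$ and $A_2 = \C[z^{\pm 1}]$, and then identify each tensor factor using Vaintrob's theorem on the surface side and the Hochschild--Kostant--Rosenberg isomorphism on the Laurent polynomial side. Since $A_2$ is a smooth commutative algebra of finite type, it is homologically smooth, and HKR identifies $HH^*(A_2) \cong \C[z^{\pm 1}][\partial_z]$ with finite rank in each cohomological degree. This finiteness is enough to guarantee that the natural Künneth map
\[
HH^*(A_1) \otimes HH^*(A_2) \longrightarrow HH^*(A_1 \otimes A_2)
\]
is an isomorphism of graded algebras, with cup product respecting the tensor decomposition.

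Next I would identify the two tensor factors. Vaintrob's result gives a BV isomorphism between $HH^*(A_1)$ (equipped with $\Delta_{\pi_s}$) and $\mathbb{H}_*(\Sigma)$ (equipped with the Chas--Sullivan loop product and the rotation operator $\rho$). HKR identifies $HH^*(A_2)$ with polyvector fields $\C[z^{\pm 1}][\partial_z]$. Combining these, we obtain the desired isomorphism of graded algebras
\[
HH^*(A_1 \otimes A_2) \;\cong\; \mathbb{H}_*(\Sigma) \otimes \C[z^{\pm 1}][\partial_z].
\]

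For the BV structure, I would argue on the homological side, where the Künneth isomorphism is unconditional. From the explicit formula \eqref{Connes}, an Eilenberg--Zilber/shuffle argument shows that, with respect to the Künneth decomposition of $C_*(A_1 \otimes A_2)$, the Connes differential satisfies
\[
B_{A_1 \otimes A_2} \;=\; B_{A_1} \otimes \mathrm{id} \;+\; (-1)^{|\cdot|} \, \mathrm{id} \otimes B_{A_2}.
\]
Since $A_1$ is CY of dimension $2$ and $A_2$ is CY of dimension $1$, the tensor product is CY of dimension $3$, and by the compatibility of Van den Bergh duality with tensor products, any nondegenerate element $\pi$ for $A_1 \otimes A_2$ is represented (up to central scaling) as the Künneth product $\pi_s \otimes (z^t dz)$ for some $t \in \z$. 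Capping with this product intertwines the Connes decomposition above with the corresponding decomposition of $\Delta_\pi$ on cohomology:
\[
\Delta_\pi \;=\; \Delta_{\pi_s} \otimes \mathrm{id} \;+\; \mathrm{id} \otimes \Delta_{z^t dz}.
\]
Vaintrob's theorem identifies $\Delta_{\pi_s}$ with $\rho$, while a direct HKR computation (as recalled in the preceding paragraph of the paper) identifies $\Delta_{z^t dz}$ with the divergence operator $(tz^{-1} + \partial_z)\xi_z$, yielding the stated formula.

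The main obstacle is the BV compatibility: one must verify both that the Künneth tensor of nondegenerate elements is itself nondegenerate for the product algebra (so that the BV structures are taken with respect to matching volumes) and that the Van den Bergh duality map factors as a tensor product of the two individual dualities. The first follows from the compatibility of the duality \eqref{CY volume} with tensor products of perfect bimodule resolutions; the second is the key input that transfers the additive decomposition of the Connes differential on $C_*$ into the additive decomposition of $\Delta_\pi$ on $C^*$, at which point the rest of the lemma is a matter of invoking Vaintrob and HKR.
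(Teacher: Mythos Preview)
Your proposal is correct and follows the same overall Künneth strategy as the paper, but the implementations differ in how much is deferred to the literature.

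The paper's proof is very short: it invokes Theorem~3.13 of \cite{AD}, which packages the Künneth isomorphism of BV algebras under a finiteness hypothesis on the Hom complexes. The only thing the paper has to check is that there exists a bimodule resolution $Q_*$ of $A_2 = \C[z^{\pm 1}]$ with
\[
\Hom_{(A_1 \otimes A_2)^e}(P_* \otimes Q_*, A_1 \otimes A_2) \cong \Hom_{A_1^e}(P_*, A_1) \otimes \Hom_{A_2^e}(Q_*, A_2),
\]
which is immediate upon taking $Q_*$ to be the Koszul resolution (a length-one complex of free bimodules of rank one). All of the BV compatibility --- the Eilenberg--Zilber decomposition of the Connes differential, the tensor factorisation of Van den Bergh duality, and the resulting additive splitting of $\Delta_\pi$ --- is absorbed into that cited theorem.

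Your route unpacks exactly these ingredients by hand: the shuffle argument for $B_{A_1 \otimes A_2}$, the tensor product of nondegenerate elements, and the compatibility of capping with $\pi_s \otimes z^t\,dz$ against the Künneth splitting. This is more elementary and more transparent, and you have correctly isolated the one nontrivial verification (that Van den Bergh duality respects the tensor decomposition). The trade-off is length: the paper's one-line appeal to \cite{AD} replaces a page or so of bookkeeping, at the cost of relying on a black-box result. Either approach proves the lemma.
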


\begin{proof}
Let $P_*$ be a finitely generated projective bimodule resolution of $A_1 := \C[\pi_1(\Sigma, v_o)]$. By Theorem 3.13 of \cite{AD}, we have only to show that there exists a bimodule resolution $Q_*$ of $A_2 : = \C[z^{\pm 1}]$ such that
\[
\Hom_{(A_1 \otimes A_2)^e} (P_* \otimes Q_*, A_1 \otimes A_2) \cong \Hom_{A_1^e}(P_*, A_1) \otimes \Hom_{A_2^e}(Q_*, A_2).
\] 
But this is clear if we choose $Q_*$ to be the Koszul bimodule resolution of $\C [z^{\pm 1}]$.
\end{proof}

\subsection{Batalin-Vilkovisky structure of $\widehat{HH^*}(\JW)$} \label{section BV localized Hochschild}

Let $\Q$ be a zigzag consistent dimer model, so $\J$ is Calabi--Yau of dimension $3$ (Theorem \ref{consistency thm}). The results of \S \ref{section central localization} can be used to relate the Hochschild cohomology of $\J$ to the Hochschild cohomology of $\JW$, explicitly described in the previous section. First, the volumes for the Calabi-Yau structure of $\J$ must be determined. 

The Calabi-Yau property of the Jacobi algebra is equivalent to the exactness of a certain free $\J$-bimodule complex (\cite{Ginzburg} Corollary 5.3.3), 
\[
\begin{tikzcd} \label{CY exact}
\mathbb{P}_3 \arrow[r, hookrightarrow, "\mu_3"] & \mathbb{P}_2 \arrow[r, "\mu_2"] & \mathbb{P}_1 \arrow[r, "\mu_1"] & \mathbb{P}_0 \arrow[r, twoheadrightarrow, "\mu_0"] & \J.
\end{tikzcd}
\]
The $\mathbb{P}_i$ have the form
\[
\mathbb{P}_i : = \J \otimes_{\bk} V_i \otimes_{\bk} \J
\]
where
\begin{itemize}
\item $V_0 = \bk$,
\item $V_1 = \C \Q_1$, the vector space spanned by the arrows of $\Q$,
\item $V_2 = \C \{ \partial_a \Phi_0 \, | \, a \in \Q_1 \}$, the vector space spanned by the cyclic derivatives of $\Phi_0$, and
\item $V_3 = \C \{ \Phi_0^v \, | \, v \in \Q_0 \} $, the vector space spanned by the syzygies
\[ 
\Phi_0^v : = \sum_{a \, | \, t(a) = v} a \, \partial_a \Phi_0 = \sum_{a \, | \, h(a) = v} \partial_a \Phi_0 \, a. 
\] 
\end{itemize}
In fact, $\mathbb{P}_*$ is a self-dual resolution of $\J$,
\[
\RHom_{\J^e}(\mathbb{P}_*, \J \otimes \J) \cong \mathbb{P}_{3-*}.
\]
It is moreover graded by the first homology of $\Sigma$ and the perfect matchings of $\Q$ (\S \ref{section The localization}). Hence, the Hochschild homology and cohomology of $\J$ inherit an $H_1(\Sigma, \z) \times \z^{PM(\Q)}$-grading.

\begin{lem} \label{degree VDB}
Suppose $\Q$ is a zigzag consistent dimer admitting a perfect matching.
\begin{enumerate}
\item Up to scaling, the unique volume of $\J$ is the class in $\ext^3_{\J^e}(\J, \J \otimes \J)$ of the map
\[
\pi_{0}: \mathbb{P}_3 \rightarrow \J \otimes \J, \; \;  p \otimes \Phi_0^v \otimes q \mapsto p v \otimes v q.
\]  
\item $\Delta_{\pi_0}$ is the unique BV differential induced from the Calabi-Yau structure of $\J$.
\item The Van den Bergh isomorphism 
\[
\D_{\pi_0}: HH^*(\J) \rightarrow HH_{3-*}(\J), \; \;  \alpha \mapsto \alpha \cap \pi_0
\]
is homogeneous of degree $0$ in homology and degree $1$ in all perfect matchings.
\end{enumerate}
\end{lem}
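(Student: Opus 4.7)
The plan is to establish the three parts in order, each reducing to a computation using the self-dual resolution $\mathbb{P}_*$ together with combinatorial features of the superpotential.

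For part (1), I would apply $\RHom_{\J^e}(-, \J \otimes \J)$ termwise to the resolution $\mathbb{P}_*$ and invoke the self-duality $\RHom_{\J^e}(\mathbb{P}_*, \J \otimes \J) \cong \mathbb{P}_{3-*}$ to obtain
\[
\ext^3_{\J^e}(\J, \J \otimes \J) \cong H_0(\mathbb{P}_{3-*}) \cong \J.
\]
Tracing through this identification, the class of $\pi_0$ is sent to $1 \in \J$: pairing the generator $1 \otimes \Phi_0^v \otimes 1$ of $\mathbb{P}_3$ against its self-dual element of $\mathbb{P}_0$ and evaluating via $\pi_0$ produces the idempotent $v$ in each summand, whose total is the unit. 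So $\pi_0$ is a volume, and the full set of volumes is a torsor over the group of central units $\zc^\times$. Since $\Q$ is zigzag consistent and embedded in a torus, $\zc$ is by \cite{Broomhead} the coordinate ring of an affine Gorenstein toric threefold whose defining (antizigzag) cone is strictly convex, so $\zc^\times = \C^\times$. The volume is therefore unique up to nonzero scalar.

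Part (2) is a direct consequence: if $\pi' = c \cdot \pi_0$ for some $c \in \C^\times$, then $\D_{\pi'} = c \cdot \D_{\pi_0}$, so
\[
\Delta_{\pi'} = \D_{\pi'}^{-1} \circ B \circ \D_{\pi'} = c^{-1} \D_{\pi_0}^{-1} \circ B \circ (c \, \D_{\pi_0}) = \Delta_{\pi_0},
\]
and the scalar ambiguity in the volume washes out of the BV operator.

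For part (3), I would exploit the $H_1(\Sigma, \z) \times \z^{PM(\Q)}$-grading on $\mathbb{P}_*$ inherited from $\J$ via \S\ref{section The localization}. The generator $\Phi_0^v$ of $V_3$ is a signed sum of compositions $a \cdot \partial_a \Phi_0$ over arrows $a$ with $t(a) = v$, each term being a boundary path of a face beginning at $v$. Because every face is a disk, its boundary is a contractible loop and carries the zero class in $H_1(\Sigma, \z)$; because every perfect matching $\Perf$ contains exactly one arrow from each face, the boundary has $\Perf$-degree $1$. The map $\pi_0$ sends $1 \otimes \Phi_0^v \otimes 1$ to $v \otimes v$, which has degree zero in every grading, so under the Van den Bergh duality the class $[\pi_0] \in HH_3(\J)$ inherits $H_1(\Sigma, \z)$-degree $0$ and $\Perf$-degree $1$ for each perfect matching. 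Since cap product shifts multi-degree additively, $\D_{\pi_0}$ has the stated degrees.

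The main obstacle is the claim in part (1) that $\zc^\times = \C^\times$, which requires both the toric description of the center from Broomhead and the strict convexity of the antizigzag cone (guaranteed by zigzag consistency via Proposition~\ref{intersection properties}); the remaining assertions reduce to routine bookkeeping with the multi-grading on $\mathbb{P}_*$.
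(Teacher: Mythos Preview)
Your approach is essentially the paper's: self-duality of $\mathbb{P}_*$ to identify the volume with $1 \in \J$, the torsor argument over $\zc^\times$, the scalar-invariance of $\Delta$, and the degree count via the grading on $\Phi_0^v$. Two points deserve tightening.

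First, in part (1) you argue $\zc^\times = \C^\times$ only in the torus case, invoking the toric description of the center and Proposition~\ref{intersection properties}. But the lemma is stated for any zigzag consistent dimer admitting a perfect matching, so higher genus must be covered as well. This is immediate from Proposition~\ref{higher genus center}: for $g>1$ one has $\zc = \C[\ell]$, hence $\zc^\times = \C^\times$. The paper simply asserts that the only central units are scalars; your Broomhead argument handles $g=1$, and citing Proposition~\ref{higher genus center} completes the picture.

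Second, in part (3) the sentence linking ``$\pi_0$ sends $1 \otimes \Phi_0^v \otimes 1$ to $v \otimes v$'' to ``the class $[\pi_0] \in HH_3(\J)$ has $\Perf$-degree $1$'' is a non sequitur as written: the observation about $\pi_0$ as a map says that the volume, viewed in $\ext^3_{\J^e}(\J, \J\otimes\J)$, has $\Perf$-degree $-1$, not $+1$. The paper records exactly this (``the volume $\pi_0$ has degree $-1$ in all perfect matchings'') and then observes that the induced duality $\D_{\pi_0}$ has degree $+1$; the sign change comes from passing through the quasi-isomorphism \eqref{VDB qisomorphism}, which is precomposition with the Calabi--Yau isomorphism rather than postcomposition. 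Your conclusion is correct, but the intermediate step should either track the nondegenerate element in $HH_3(\J)$ directly (it is represented by $\sum_v v \otimes \Phi_0^v$ in $\J \otimes_{\J^e} \mathbb{P}_3$, visibly of $\Perf$-degree $1$) or follow the paper and compute the degree of the volume first, then explain the flip.
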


\begin{proof}
There are $\J$-bimodule isomorphisms
\[
\begin{tikzcd}
\ext^3_{\J^e}(\J, \J \otimes \J) \ar[r, "\cong"] & H_0(\mathbb{P}_*) \ar[r, "\overline{\mu}_0"] & \J \\
\end{tikzcd} 
\]
the first given by self-duality of $\mathbb{P}_*$  The pre-image $1 \in \J$ is precisely the class of $\pi_0$. Any other volume is in the $\zc^\times$-orbit of $\pi_0$. However, the only central units in $\J$ are the nonzero scalars, proving the first assertion. The second follows because volumes that differ by a scalar yield the same BV differential.

The volume $\pi_0$ has degree $0$ in $H_1(\Sigma, \z)$ and degree $-1$ in all perfect matchings. This implies that the quasi-isomorphism \eqref{VDB qisomorphism} that descends to $\D_{\pi_0}$ has degree $0$ in $H_1(\Sigma, \z)$ and degree $1$ in all perfect matchings.
\end{proof}

Now that the volume of $\J$ has been specified, grading considerations are enough to deduce the corresponding BV structure on the localization.

\begin{thm} \label{BV torus}
Suppose $\Q$ is a zigzag consistent dimer embedded in $\Sigma$ and admitting a perfect matching. 
\begin{enumerate}
\item If $\Sigma$ has genus $g = 1$, then there is an isomorphism of BV algebras 
\[
\big( \widehat{HH^*}(\J), \, \widehat{\Delta}_{\pi_0} \big) \cong \big(\C[x^{ \pm 1}, y^{\pm 1}, z^{\pm 1}, \partial_x, \partial_y, \partial_z], \, \Div_0 \big)
\]
where $\Div_0$ is the divergence operator
\[
\Div_0 : = (-x^{-1} + \partial_x) \xi_x + (-y^{-1} + \partial_y) \xi_y + \partial_z \xi_z .
\]
\item If $\Sigma$ has genus $g > 1$, then there is an isomorphism of BV algebras
\[
\big( \widehat{HH^*}(\J), \, \widehat{\Delta}_{\pi_0} \big) \cong \big( \mathbb{H}_*(\Sigma) \otimes \C[z^{\pm 1}] [\partial_z], \, \rho \otimes id + id \otimes \partial_z \xi_z \big)
\]
where $\rho$ is the string topology BV operator.
\end{enumerate}
\end{thm}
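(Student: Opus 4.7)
The plan is to chain together three BV isomorphisms and then pin down the resulting volume form by grading. First, Theorem \ref{loc of cohomology}(3) gives $(\widehat{HH^*}(\J), \widehat{\Delta}_{\pi_0}) \cong (HH^*(\JW), \Delta_{L_*(\pi_0)})$ as BV algebras. Next, Proposition \ref{morita invariance} combined with the matrix algebra description of Theorem \ref{matrix algebra} produces a further BV isomorphism onto $HH^*(\C[\pi_1(\Sigma, v_o)] \otimes \C[z^{\pm 1}])$, equipped with the BV operator determined by $inc_* \, \Psi_* L_*(\pi_0)$. Finally, in genus one the classical Hochschild--Kostant--Rosenberg isomorphism identifies this with $\C[x^{\pm 1}, y^{\pm 1}, z^{\pm 1}, \partial_x, \partial_y, \partial_z]$, carrying the BV operator to the divergence associated to some top form; in higher genus the BV Kunneth isomorphism of Lemma \ref{Kunneth} does the analogous job, yielding $\mathbb{H}_*(\Sigma) \otimes \C[z^{\pm 1}][\partial_z]$ with its BV operator split between the string topology factor and a divergence on the $z$-factor.

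For the torus, the form must be of the shape $\pi = x^r y^s z^t \, dx \wedge dy \wedge dz$, and $\Div_\pi$ is given by the formula recalled in \S \ref{section BV localized Jacobi}. I would read off $(r, s, t)$ using Lemma \ref{degree VDB}(3) together with the explicit description of $\Psi$ in Theorem \ref{matrix algebra}: under $\Psi$, the $H_1(\Sigma, \z)$-grading on $\J$ corresponds to the $(x, y)$-bidegree on the Laurent algebra, and the grading by the reference matching $\Perf_o$ corresponds to $z$-degree. The maps $L_*$, $inc_*$, $tr_*$, and the HKR quasi-isomorphism are all homogeneous of degree zero for these gradings, and each of $dx$, $dy$, $dz$ contributes $+1$ to its corresponding direction. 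Matching the tridegree of $L_*(\pi_0)$ against that of $x^r y^s z^t \, dx \wedge dy \wedge dz$ then forces $(r, s, t) = (-1, -1, 0)$, producing exactly the stated $\Div_0$.

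For higher genus, Lemma \ref{Kunneth} splits the BV operator into the string topology differential $\rho$ on the $\mathbb{H}_*(\Sigma)$-factor (which is uniquely determined, since the center of $\C[\pi_1(\Sigma, v_o)]$ consists only of scalars, so there is a single BV differential up to scaling) and a divergence on $\C[z^{\pm 1}][\partial_z]$ associated to some 1-form $z^t \, dz$. The same perfect-matching grading argument, restricted to the $z$-factor, forces $t = 0$, giving the summand $\partial_z \xi_z$. The main obstacle is the grading bookkeeping in these last steps: one must carefully track both the $H_1(\Sigma, \z)$ and the perfect matching gradings through the Van den Bergh duality \eqref{VDB cap}, the central localization quasi-isomorphism of Theorem \ref{loc of cohomology}, the Morita trace and co-inclusion, the HKR map, and the Kunneth decomposition, taking proper account of the degree shift in the Van den Bergh pairing (encoded in Lemma \ref{degree VDB}(3)) and the differential contributions of $dx$, $dy$, $dz$. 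Once this grading comparison is settled, the identification of the volume form and its divergence is immediate.
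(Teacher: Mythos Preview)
Your proposal is correct and follows essentially the same route as the paper: compose the localization BV isomorphism of Theorem~\ref{loc of cohomology}(3) with the Morita BV isomorphism of Proposition~\ref{morita invariance}, then use HKR (genus $1$) or Lemma~\ref{Kunneth} (genus $>1$) and pin down the transported volume form by its $H_1(\Sigma,\z)\times\z^{PM(\Q)}$-degree. One small slip: the nondegenerate element on $\C[\pi_1(\Sigma,v_o)]\otimes\C[z^{\pm 1}]$ should be $tr_*\,\Psi_* L_*(\pi_0)$, not $inc_*\,\Psi_* L_*(\pi_0)$, since $\Psi_* L_*(\pi_0)$ already lives in $HH_*$ of the matrix algebra and you need the generalized trace (the inverse of $inc_*$) to push it down to the coefficient algebra; this does not affect the grading argument or the conclusion.
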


\begin{proof}
By Theorem \ref{loc of cohomology} and Proposition \ref{morita invariance}, the map 
\[
inc^* \circ \Psi^* \circ \hL^* : \widehat{HH^*}(\J) \longrightarrow HH^*(\C[\pi_1(\Sigma, v_o)] \otimes \C[z^{\pm1}])
\]
is an isomorphism of BV algebras when the BV structures are induced by the nondegenerate elements $\pi_0$ and $\pi_0' : = tr_* \Psi_* L_*(\pi_0)$. Clearly, each map $tr_*$, $\Psi_*$, and $L_*$ preserves the $H_1(\Sigma, \z) \times \z^{PM(\Q)}$-grading for all perfect matchings, so by Lemma \ref{degree VDB}, $\pi_0'$ must have degree $0$ in $H_1(\Sigma, \z)$ and degree $1$ in all perfect matchings. 

If the genus is $1$, the only nondegenrate element of these degrees is, up to to scaling, the $3$-form $x^{-1} y^{-1} dx dy dz$. The resulting BV differential is precisely the divergence operator $\Div_0$ with the stated formula.

If the genus is greater than $1$, then under the Kunneth isomorphism, the only nondegenerate element of the correct degrees is, up to scaling, $\pi_s \otimes dz$. By Lemma \ref{Kunneth}, the the resulting BV differential on the tensor product is 
\[ 
\rho \otimes id + id \otimes \partial_z \xi_z.
\]
\end{proof}

\begin{notn}
As the BV differential of the Jacobi algebra is unique, we will denote it simply as $\Delta$ when it is clear from context.
\end{notn}

\section{{H}ochschild cohomology of the {J}acobi algebra} 

Throughout, $\Q$ is assumed to be a zigzag consistent dimer that is embedded in a surface $\Sigma$ and admits a perfect matching. To a limited extent, the Hochschild cohomology of the Jacobi algebra is analyzed for arbitrary positive genus. Specializing to genus $1$, we compute the cohomology explicitly in terms of the combinatorics of the dimer, including a description of the BV structure. 

\subsection{Zeroth {H}ochschild cohomology} \label{section HH_0}

We summarize known results about the center of the Jacobi algebra. For a hyperbolic surface, the center of the fundamental group algebra $\C[\pi_1(\Sigma, v_o)]$ is trivial, so the isomorphism in Theorem \ref{matrix algebra} tells us that the center of $\JW$ is $\C[\ell^{\pm 1}]$. It is then deduced that the center of $\J$ consists of the nonnegative powers of $\ell$. 

\begin{prop} \label{higher genus center}
Suppose $\Q$ is a zigzag consistent dimer in a surface of genus $g > 1$ that admits a perfect matching. Then $HH^0(\J) \cong \C [\ell]$.
\end{prop}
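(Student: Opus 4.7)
The plan is to combine the matrix model of Theorem \ref{matrix algebra} for $\JW$ with the cancellation property of $\J$ to pin down the center. First I would compute $Z(\JW)$ using $\Psi$. Since the center of a matrix algebra over a ring $R$ is $Z(R) \cdot I$, we get $Z(\JW) \cong Z\big( \C[\pi_1(\Sigma, v_o)] \big) \otimes \C[z^{\pm 1}]$. For a closed hyperbolic surface ($g > 1$), $\pi_1(\Sigma, v_o)$ is a torsion-free non-elementary Fuchsian group and in particular has trivial center, so $Z\big( \C[\pi_1(\Sigma, v_o)] \big) = \C$. Hence $Z(\JW) = \C[z^{\pm 1}] \cdot I$.

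Next I would identify this center with $\C[\ell^{\pm 1}]$. The potential $\ell = \sum_{v} p_v$ is central in $\JW$, so $\Psi(\ell)$ is central and of the form $\lambda(z) \cdot I$. Each $p_v$ is a boundary path of a face, which bounds a disk in $\Sigma$ and therefore is homotopically trivial, while each boundary path has degree $1$ in the fixed perfect matching $\Perf_o$; consequently $\Psi(p_v) = (1 \otimes z) e_{vv}$ and $\Psi(\ell) = z \cdot I$. This shows $\C[\ell^{\pm 1}] \subset Z(\JW)$ and, by counting against $\C[z^{\pm 1}] \cdot I$, equality.

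Now I pull this back to $\J$. Zigzag consistency together with positive genus gives cancellation (\cite{Bocklandt2012}, Theorem 5.5), so the localization map $L : \J \hookrightarrow \JW$ is injective and clearly $\C[\ell] \subset HH^0(\J)$. Conversely, any $z \in Z(\J)$ satisfies $L(z) \in Z(\JW) = \C[\ell^{\pm 1}]$. To rule out negative $\ell$-powers I use the $\z$-grading of $\J$ coming from the chosen matching $\Perf_o$: this grading is preserved by $L$, the image of $\ell$ has degree $1$, and all paths have non-negative $\Perf_o$-degree. Decomposing $z = \sum_{k \geq 0} z_k$ into $\Perf_o$-homogeneous pieces (each still central, as the grading is by a subalgebra of automorphisms), the image $L(z_k)$ is the degree-$k$ part of $L(z)$, namely $c_k \ell^k$ for some scalar $c_k$. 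Injectivity of $L$ forces $z_k = c_k \ell^k$ in $\J$, and summing gives $z \in \C[\ell]$.

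The step I expect to need the most care is the identification $Z(\JW) = \C[\ell^{\pm 1}]$: it rests on the input that $\pi_1(\Sigma)$ has trivial center for $g > 1$ (which is standard but is the genuinely hyperbolic ingredient), together with a small check that $\Psi(\ell) = z \cdot I$ using contractibility of face boundaries. Everything else is formal once cancellation and the matching grading are in hand.
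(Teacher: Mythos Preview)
Your approach is essentially the paper's: use the matrix model $\Psi$ to compute $Z(\JW)$, identify it with $\C[\ell^{\pm 1}]$, and then use cancellation plus the matching grading to descend to $\C[\ell] \subset \J$. Your argument for the descent step is in fact more detailed than what the paper sketches.

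One genuine slip: you write that $\pi_1(\Sigma)$ ``has trivial center, so $Z(\C[\pi_1(\Sigma)]) = \C$.'' Trivial center of a group does \emph{not} imply trivial center of its group algebra; the center of $\C[G]$ is spanned by conjugacy-class sums, so what you need is that every nontrivial conjugacy class of $\pi_1(\Sigma)$ is infinite (the ICC property). Surface groups of genus $g>1$ are ICC --- this follows, for instance, from their being non-elementary word-hyperbolic groups with trivial center, or by a direct argument using hyperbolic geometry --- so the conclusion stands, but the justification should be corrected.
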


If the genus is $1$, the center of $\JW$ is isomorphic to the algebra of Laurent polynomials in three variables,
\[
\C[x^{\pm 1}, y^{\pm 1}, z^{\pm 1}]
\]
where $x$ and $y$ form a basis of $H_1(\Sigma, \z)$. The monomial $x^r y^s z^t$ corresponds to a sum of closed paths, one for each vertex, with homology $rx + sy$ and degree $t$ in $\Perf_o$, the perfect matching in the definition of $\Psi$. Because any perfect matching can serve as $\Perf_o$, the central element is homogeneous in all perfect matchings. Conversely, for any closed path $p$ at a vertex $v$, there exists a unique central element $f$ with the same $H_1(\Sigma, \z) \times \z^{PM(\Q)}$-degree as $p$ such that $f \cdot v = p$. 

The subalgebra $\zc$ can be constructed by a fan arising from the zigzag and perfect matching data. Consider the rays generated by the homology classes of the antizigzag cycles,
\[
\langle \eta_i \rangle : = \R_{\geq 0} \cdot \eta_i \subset H_1(\Sigma, \z) \otimes_{\z} \R \cong \R^2, \; \; 1 \leq i \leq N
\]
and the two-dimensional cones generated by consecutive classes,
\[
\sigma_i : = \R_{\geq 0} \cdot \eta_i + \R_{\geq 0} \cdot \eta_{i+1},
\]
Let $\Int \sigma_i$ denote the interior of cone $\sigma_i$,
\[
\Int \sigma_i : = \sigma_i \setminus \big( \langle \eta_i \rangle \cup \langle \eta_{i+1} \rangle \big).
\]
For each lattice point $\alpha \in H_1(\Sigma, \z)$ and each $v \in \Q_0$, there exists a minimal closed path $u^{\alpha}_v$ at $v$ and representing $\alpha$ (\cite{abc} Lemma 3.18). Uniqueness of minimal paths implies that the $u^{\alpha}_v$ have the same perfect matching degrees, so the sum
\[
x_{\alpha} : = \sum_{v \in \Q_0} u_v^{\alpha}
\]
is a minimal central element of $\J$. If $\alpha \neq 0$ falls on the ray $\langle \eta_i \rangle$, then $x_{\alpha} = x_{\eta_i}^n$ for some $n > 0$. The element $x_{\eta_i}$ is the sum of antizigzag cycles of homology $\eta_i$, so by Theorem \ref{matching polygon}, $x_{\alpha}$ has degree $0$ in the matchings $\Perf_{i-1}$, $\Perf_{i}$, and all boundary matchings between them on $MP(\Q)$. If instead $\alpha$ lies in $\Int \sigma_i$, then positive integers $r$, $s$ and $t$ can be found for which $r \alpha = s \eta_i + t \eta_{i+1}$, implying
\[
x_{\alpha}^r = x_{\eta_i}^s x_{\eta_{i+1}}^t.
\]
Consequently, $\Perf_i$ is the the unique perfect matching in which $x_{\alpha}$ has degree $0$. Any other central element $y \in \J$ of degree $\alpha$ is an $\ell$-multiple of $x_{\alpha}$ (\cite{Bocklandt2012} Lemma 7.4).

\begin{thm}[\cite{Broomhead}; \cite{abc} Theorem 3.20]
Suppose $\Q$ is a zigzag consistent dimer embedded in a torus $\Sigma$. 
\begin{enumerate}
\item If $\alpha \in \langle \eta_i \rangle \cap \z^2$, then $x_{\alpha}$ has degree $0$ in $\Perf_{i-1}$, $\Perf_{i}$, and all the boundary matchings between them in $MP(\Q)$.
\item If $\alpha \in \Int \sigma_i \cap \z^2$, then $\Perf_i$ is the unique perfect matching in which $x_{\alpha}$ has degree $0$.
\item There is an isomorphism of algebras
\[
HH^0(\J) \cong \zc \cong \frac{\C[x_{\alpha}, \ell \, | \, \alpha \in H_1(\Sigma, \z) \setminus \{0 \}] }{ (x_{\alpha} x_{\beta} - x_{\alpha + \beta} \ell^{\degree_{\Perf_i}(x_{\alpha}) + \degree_{\Perf_i}(x_{\beta})} \, | \, \alpha + \beta \in \sigma_i )}.
\] 
\end{enumerate}
\end{thm}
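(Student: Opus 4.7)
The plan is to exploit the minimality of closed paths and the matching-polytope combinatorics from Theorem \ref{matching polygon} to establish the matching-degree statements (1) and (2), then assemble the presentation in (3) from them together with the uniqueness of central elements up to $\ell$-powers. Throughout, I would use that, for zigzag consistent toric dimers, any closed path at a fixed vertex of given homology class is homotopic to a unique minimal one, and two homotopic paths in $\J$ differ by a power of $\ell$ (as recorded in the paragraphs preceding the theorem).

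For Part (1), observe that if $\alpha \in \langle \eta_i \rangle \cap \z^2$ then $\alpha = n\eta_i$ and $x_\alpha = x_{\eta_i}^n$, so it suffices to verify the claim for $x_{\eta_i}$. This element is the sum of the antizigzag cycles of homology $\eta_i$, each of which is an $\st^\pm(Z)$ for some zigzag cycle $Z$ of homology $-\eta_i$. By Theorem \ref{matching polygon}(1)-(2), all matchings on the boundary segment from $\Perf_{i-1}$ to $\Perf_i$ agree outside the zigs and zags of the zigzag cycles of homology $-\eta_i$, and by definition each antizigzag $\st^\pm(Z)$ is disjoint from $Z$ itself. Hence $x_{\eta_i}$ avoids each of these matchings entirely, giving degree $0$, and this passes to powers.

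For Part (2), choose positive integers $r, s, t$ with $r\alpha = s\eta_i + t\eta_{i+1}$. Uniqueness of minimal central elements of fixed homology forces $x_\alpha^r = x_{\eta_i}^s x_{\eta_{i+1}}^t$ on the nose, as noted in the text. Matching degrees are additive on products and nonnegative on closed paths, so $\degree_{\Perf}(x_\alpha) = 0$ if and only if $\degree_{\Perf}(x_{\eta_i}) = \degree_{\Perf}(x_{\eta_{i+1}}) = 0$. By Part (1), the set of matchings where $x_{\eta_i}$ has degree $0$ is the segment from $\Perf_{i-1}$ to $\Perf_i$ on $MP(\Q)$, while for $x_{\eta_{i+1}}$ it is the segment from $\Perf_i$ to $\Perf_{i+1}$; these intersect precisely at $\Perf_i$.

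For Part (3), I would consider the algebra map from the stated presentation to $\zc$ sending each formal $x_\alpha$ and $\ell$ to its namesake. The product $x_\alpha x_\beta$ is central of homology $\alpha + \beta$, hence equals $\ell^m x_{\alpha+\beta}$ by uniqueness of minimal central elements up to $\ell$-powers; choosing any $\Perf_i$ with $\alpha + \beta \in \sigma_i$ and invoking Parts (1)-(2) to conclude $\degree_{\Perf_i}(x_{\alpha+\beta}) = 0$ forces $m = \degree_{\Perf_i}(x_\alpha) + \degree_{\Perf_i}(x_\beta)$, so the relations hold. Surjectivity is immediate since every nonzero central element decomposes as $c \ell^m x_\alpha$ (or $c \ell^m$ if $\alpha = 0$). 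The main obstacle is injectivity: the ideal of relations must be shown to be saturated, i.e., to account for every relation among the generators. I would do this cone by cone, identifying the subalgebra generated by $\{x_\alpha : \alpha \in \sigma_i \cap \z^2\}$ and $\ell$ with the semigroup algebra $\C[\sigma_i \cap \z^2] \otimes \C[\ell]$ modulo gluing relations on the rays, using that on $\Int \sigma_i$ the matching $\Perf_i$ provides a $\z$-grading which separates the $\ell$-power from the homology class, and then reassembling over the fan using Part (1) on the common rays $\langle \eta_i \rangle$.
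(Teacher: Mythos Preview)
Your arguments for (1) and (2) are correct and coincide with the paper's own discussion in the paragraphs immediately preceding the theorem statement: the paper reduces (1) to the antizigzag case via $x_\alpha = x_{\eta_i}^n$ and invokes Theorem~\ref{matching polygon}, and for (2) it uses exactly the relation $x_\alpha^r = x_{\eta_i}^s x_{\eta_{i+1}}^t$ together with (1) to pin down $\Perf_i$. One small point worth tightening in your (2): you use that the set of matchings annihilating $x_{\eta_i}$ is \emph{exactly} the segment $[\Perf_{i-1},\Perf_i]$, whereas (1) as stated only gives one inclusion; the reverse inclusion needs Theorem~\ref{matching polygon}(3) to rule out internal matchings and the normal-fan description of $MP(\Q)$ to rule out other boundary segments.

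For (3), the paper does not supply its own proof but cites \cite{Broomhead} and \cite{abc}, so there is nothing to compare against; your outline (well-definedness from the $\ell$-power uniqueness, surjectivity from the $H_1 \times \z^{PM(\Q)}$-grading, injectivity cone-by-cone via the $\Perf_i$-grading on $\Int\sigma_i$) is a reasonable sketch of how those references proceed.
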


In fact, $\zc$ is the coordinate ring of the affine toric threefold from the cone on $MP(\Q)$, of which the Jacobi algebra is an NCCR. The two-dimensional cones $\sigma_i$ of the antizigzag fan correspond to the facets of the dual cone in this perspective. 

\subsection{First {H}ochschild cohomology}

For $A$ equal to $\J$ or $\JW$, let
\begin{itemize}
\item $\Der_{\bk}(A)$ be the $\zc$-module of derivations of $A$ that evaluate trivially on $\bk$ and
\item $\Inner_{\bk}(A) \subset \Der_{\bk}(A)$ be the subspace spanned by inner derivations, 
\[
ad_p: q \mapsto [p, q] = pq - qp \; \; \; \forall q \in \J 
\]
for some closed path $p$ in $A$.
\end{itemize}
The first Hochschild cohomology can be computed as the quotient
\[
HH^1(A) \cong \Der_{\bk}(A) / \Inner_{\bk}(A),
\]
which is the space of outer derivations of $A$.

An element $D$ of $\Der_{\bk}(\JW)$ is determined by its evaluation on the arrows of $\overline{Q}$. To be well-defined, it must evaluate trivially on the defining relations of $\JW$ \eqref{relations}, \eqref{inverse relations}. In particular, if $a \in \Q_1$ and
\begin{align*}
R_a^+ & = a_1 a_2 \dots a_m, \; \; a_i \in \Q_1 \\
R_a^- & = b_1 b_2 \dots b_n, \; \; b_j \in \Q_1,
\end{align*}  
then we must have
\begin{equation} \label{derivation}
\sum_{i = 1}^{m} a_1 \dots a_{i-1}D(a_i)a_{i+1} \dots a_m = \sum_{j = 1}^{n} b_1 \dots b_{j-1}D(b_j)b_{j+1} \dots b_n.
\end{equation}
Conversely, any map $D:  \C \Q_1 \rightarrow \JW$ respecting the $\bk$-bimodule structure and satisfying the above equation for all arrows defines a derivation.  

Let $\N = N \otimes_{\z} \C$, $\Ni = N^{in} \otimes_{\z} \C$, $\No = N^{out} \otimes_{\z} \C$, and 
\[
\zcl : = \zc \otimes \C [\ell, \ell^{-1}],
\]
the center of the localization. An element of $\zcl \otimes \N$ can naturally be identified with a derivation of $\JW$,
\[
\chi: \zcl \otimes \N \hookrightarrow \Der_{\bk}(\JW), \; \; \chi(f \otimes \gamma)(a)= f \gamma(a) \, a \; \; \forall a \in \overline{\Q}_1.
\]
For a vertex $v$, the coboundary $\partial v^* \in N^{in}$ corresponds to $ad_v$, and so $\N^{in}$ maps into the subspace $\Inner_{\bk}(\JW)$. Therefore, there is a well-defined map
\[
\overline{\chi}: \zcl \otimes \No \longrightarrow HH^1(\JW).
\]

\begin{lem} \label{derivations}
Suppose $\Q$ is a zigzag consistent dimer that is embedded in $\Sigma$ and admits a perfect matching. The map $\overline{\chi}$ is an injection of $\zcl$-modules. If the genus is $1$, then both $\chi$ and $\overline{\chi}$ are isomorphisms.
\end{lem}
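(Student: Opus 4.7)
The plan is to exploit the matrix algebra presentation $\Psi\colon\JW\xrightarrow{\sim}\operatorname{Mat}_{\#\Q_0}(\mathcal{A})$ from Theorem~\ref{matrix algebra}, where $\mathcal{A}:=\C[\pi_1(\Sigma,v_o)]\otimes\C[z^{\pm1}]$, to translate the statement into transparent linear algebra over $\mathcal{A}$. Throughout, write $\Psi(a)=\alpha_a\,e_{t(a),h(a)}$ with $\alpha_a\in\mathcal{A}^\times$, and note $Z(\JW)$ corresponds to the scalar matrices $Z(\mathcal{A})\cdot I=\zcl\cdot I$.

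I would first verify that $\chi$ itself is injective: the inclusion $\N\hookrightarrow\C^{\Q_1}$ remains injective after tensoring with $\zcl$, and $\chi$ is the restriction of the resulting map $\zcl\otimes\C^{\Q_1}\cong\zcl^{\Q_1}\to\prod_a t(a)\JW h(a)\cdot a$. Next, I would show that $\chi$ sends $\zcl\otimes\Ni$ into $\Inner_{\bk}(\JW)$ via the direct calculation
\[
\chi\Bigl(\sum_v\lambda_v\otimes\partial v^*\Bigr)(a)=\bigl(\lambda_{h(a)}-\lambda_{t(a)}\bigr)a=-\operatorname{ad}_{\sum_v\lambda_v v}(a),
\]
so that $\overline{\chi}$ is a well-defined $\zcl$-linear map.

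The heart of the proof is the injectivity of $\overline{\chi}$. Suppose $\chi(x)=\operatorname{ad}_p$ for some $x\in\zcl\otimes\N$ and $p=\sum_vp_v$ with $p_v\in v\JW v$; I must show $x\in\zcl\otimes\Ni$. Writing $\Psi(p_v)=\pi_ve_{vv}$ with $\pi_v\in\mathcal{A}$, and writing $\chi(x)(a)=c_a\cdot a$ with $c_a\in\zcl=Z(\mathcal{A})$, a direct computation in the matrix algebra turns the equality $\chi(x)(a)=\operatorname{ad}_p(a)$ into
\[
\pi_{t(a)}-\alpha_a\pi_{h(a)}\alpha_a^{-1}=c_a\quad\text{in }\mathcal{A}.
\]
I would then apply a $\C$-linear, conjugation-invariant projection $\rho\colon\mathcal{A}\to Z(\mathcal{A})=\zcl$: the identity when $g=1$ (where $\mathcal{A}=\zcl$ is commutative), and $\tau\otimes\operatorname{id}$ when $g>1$, with $\tau\colon\C[\pi_1(\Sigma,v_o)]\to\C$ the coefficient-of-identity trace. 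Since $\rho$ is conjugation-invariant and restricts to the identity on $\zcl$, setting $\lambda_v:=\rho(\pi_v)$ yields $c_a=\lambda_{t(a)}-\lambda_{h(a)}$, identifying $x$ with $-\sum_v\lambda_v\otimes\partial v^*\in\zcl\otimes\Ni$.

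For the $g=1$ case, what remains is surjectivity of $\chi$. Here $\mathcal{A}=\zcl$ is commutative, so $t(a)\JW h(a)$ is a free rank-one $\zcl$-module generated by $a$; hence any $D\in\Der_{\bk}(\JW)$ has $D(a)=c_a\cdot a$ for a unique $c_a\in\zcl$. Applying $D$ to the defining relation $\partial_a\Phi_0=R_a^+-R_a^-=0$ and using that the $c_b$ are central, I obtain
\[
\sum_{b\in\partial F^+}c_b\;=\;\sum_{b\in\partial F^-}c_b,
\]
where $F^\pm$ are the positive and negative faces adjacent to $a$. Since $\Sigma\setminus\Q$ is a disjoint union of disks the face-adjacency graph is connected, so this sum is constant over $\Q_2$, which is precisely the defining condition of $\zcl\otimes\N\subset\zcl^{\Q_1}$. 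Hence $D=\chi(x)$ for a unique $x\in\zcl\otimes\N$, and combining with the previous step shows $\overline{\chi}$ is an isomorphism in genus $1$.

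The main obstacle I anticipate is the injectivity step in arbitrary genus: in the hyperbolic case $\mathcal{A}$ is noncommutative and $p_v$ has no obvious central replacement, so one must find a canonical way to extract a central representative. The conjugation-invariance of the trace $\tau$ on $\C[\pi_1(\Sigma,v_o)]$ is exactly what makes $\rho(\pi_v)$ well-behaved under multiplication by $\alpha_a$, and without it the central element $c_a$ cannot be recovered from the noncentral data $(\pi_v)_v$.
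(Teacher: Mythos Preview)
Your proof is correct. The surjectivity argument in genus $1$ matches the paper's almost verbatim: both use that $t(a)\,\JW\,h(a)$ is free of rank one over $\zcl$ to write $D(a)=c_a\cdot a$, and then verify that the relations $R_a^+=R_a^-$ force the tuple $(c_a)$ to satisfy the face-sum condition defining $\zcl\otimes\N$. Where you diverge is in the injectivity of $\overline{\chi}$. The paper reduces to homogeneous $f$ in the $H_1(\Sigma,\z)\times\z^{PM(\Q)}$-grading, observes that if $ad_f$ lies in the image of $\chi$ then the degree of $f$ must agree with that of an element of $\zcl$, and then invokes Lemma~7.4 of \cite{Bocklandt2012} (a path in $\JW$ is determined by its homotopy class and perfect-matching degree) to conclude that $f$ is itself central, whence $ad_f=0$ and $x=0$. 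You instead push the equation $\chi(x)=ad_p$ through the matrix-algebra isomorphism $\Psi$ and apply the trace projection $\rho=\tau\otimes\mathrm{id}\colon\mathcal{A}\to Z(\mathcal{A})$; the trace identity $\rho(xy)=\rho(yx)$ kills the conjugation term $\alpha_a\pi_{h(a)}\alpha_a^{-1}$ and delivers $c_a=\lambda_{t(a)}-\lambda_{h(a)}$ directly, identifying $x$ with a coboundary. Your route is more self-contained---it avoids the external citation and treats all genera with a single linear-algebraic device---whereas the paper's argument stays inside the grading framework that organizes the rest of the computation.
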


\begin{proof}
To prove the first assertion, let
\[
f \in \bigoplus_{v \in \Q_0} v \, \JW \, v,
\]
and suppose $ad_f$ is in the image of $\chi$. We may assume that $f$ is homogeneous in the $H_1(\Sigma, \z) \times \z^{PM(\Q)}$-grading. The degrees must coincide with those of an element of $\zcl$, so by Lemma 7.4 of \cite{Bocklandt2012}, $f$ must itself be central. Then
\[
ad_f = f \sum_{v \in \Q_0} ad_v,
\]
which is in the image of $\zcl \otimes \N^{in}$. 

Now suppose $\Sigma$ is a torus. We have to show that $\chi$ is surjective. If $D \in \Der_{\bk}(\JW)$, then for every $a \in \Q_1$, $D(a)$ is an element of $t(a) \JW h(a)$, so $D(a) a^{-1}$ is an element of $t(a) \JW t(a)$. Thus, by the discussion in \S \ref{section HH_0}, there exists a unique $f_a \in \zcl$ such that 
\[
D(a) a^{-1} = f_a \, t(a).
\]
The assignment of arrows $a \mapsto f_a$ must satisfy the equation \eqref{derivation}, meaning that for all $F_1, F_2 \in \Q_2$,
\[
\sum_{a \in \partial F_1} f_a = \sum_{a \in \partial F_2} f_a.
\]
Hence, it is a $\zcl$-grading of $\J$ and an element of $\zcl \otimes \N$ which $\chi$ maps to $D$. 
\end{proof}

\begin{notn}
From here onwards, we will not notationally distinguish between $N$, $N^{in}$, and $N^{out}$ on the one hand and there isomorphic images under $\chi$ and $\overline{\chi}$ on the other.
\end{notn}

We would like to classify the derivations of the Jacobi algebra. Since $\Q$ is zigzag consistent, the universal map $L: \J \rightarrow \JW$ is injective, so the task is to determine which derivations of $\JW$ preserve $\J$. It will help our analysis to decompose derivations into perfect matchings under $\chi$. The image of $\Perf \in PM(\Q)$ is the derivation  
\[ 
E_P: \JW \rightarrow \JW,  \; \; \; E_{\Perf}(p) = \degree_{\Perf}(p) \, p,
\]
the Euler operator associated to the grading by $\Perf$. By Lemma 2.11 of \cite{Broomhead}, the collection of such derivations generate $\Der_{\bk}(\JW)$ over $\zcl$.

\begin{lem} \label{Jacobi der}
Suppose $\Q$ is a zigzag consistent dimer in a torus. As a $\zc$-module, $\Der_{\bk}(\J)$ is isomorphic to 
\[
\zc  \cdot \N \oplus \bigoplus_{\substack{\alpha \in \Int \sigma_i \\ 1 \leq i \leq N}} \C \, x_{\alpha} \ell^{-1} E_{\Perf_i}.
\]
\end{lem}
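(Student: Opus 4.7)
The plan is to identify $\Der_{\bk}(\J)$ inside $\Der_{\bk}(\JW) \cong \zcl \otimes \N$ provided by Lemma \ref{derivations}. Since $\Q$ is zigzag consistent, the map $L : \J \to \JW$ is injective and any derivation of $\J$ extends uniquely to one of $\JW$, so $\Der_{\bk}(\J)$ embeds as a subspace of $\zcl \otimes \N$. The crucial characterization is that an element $p \in \JW$ lies in $\J$ if and only if $\degree_{\Perf}(p) \geq 0$ for every perfect matching $\Perf$; this follows from the uniqueness of minimal paths modulo powers of $\ell$ (Lemma 7.4 of \cite{Bocklandt2012}) together with Theorem \ref{matching polygon}, which together imply that a path in $\J$ is divisible by $\ell^k$ precisely when its smallest matching degree is at least $k$. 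Consequently, $\chi(f \otimes \gamma)$ preserves $\J$ if and only if $fa \in \J$ whenever $\gamma(a) \neq 0$.

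I would next decompose by the $H_1(\Sigma, \z)$-grading. A homogeneous derivation of degree $\alpha$ has the form $x_\alpha \ell^m \otimes \gamma$ (taking $x_0 = 1$), and preservation of $\J$ reduces to
\[
m + \degree_\Perf(x_\alpha) + \degree_\Perf(a) \geq 0
\]
for every matching $\Perf$ and every arrow $a$ with $\gamma(a) \neq 0$. Applying Theorem \ref{matching polygon}, I carry out a case analysis: (i) for $\alpha = 0$, only $m \geq 0$ is admissible; (ii) for $\alpha \in \Int \sigma_i$, the matching $\Perf_i$ is the unique one with $\degree_{\Perf_i}(x_\alpha) = 0$, so $m = -1$ is admissible precisely when $\gamma$ is supported on $\Perf_i$, in which case the face-degree constancy for $\N$ (each face contains exactly one arrow of $\Perf_i$) forces $\gamma = c\, E_{\Perf_i}$, yielding the summand $\C\, x_\alpha \ell^{-1} E_{\Perf_i}$; (iii) for $\alpha \in \langle \eta_i \rangle \setminus \{0\}$, every matching on the boundary segment of $MP(\Q)$ from $\Perf_{i-1}$ to $\Perf_i$ assigns $x_\alpha$ degree zero, so $m = -1$ forces $\gamma$ to be supported on $\bigcap_\Perf \Perf = \Perf_{i-1} \cap \Perf_i$; since there exists a face meeting a zigzag cycle of homology $-\eta_i$ (and by Theorem \ref{matching polygon}(1) such a face contains no arrow of $\Perf_{i-1} \cap \Perf_i$, as $\Perf_{i-1}$ picks the zig and $\Perf_i$ picks the zag), the face-degree constancy condition forces $\gamma = 0$. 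The bound $\degree_\Perf(a) \leq 1$ excludes $m \leq -2$ throughout.

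Assembling the contributions, the $m \geq 0$ homogeneous pieces span $\zc \cdot \N$ and the $m = -1$ pieces yield $\bigoplus_{\alpha \in \Int \sigma_i,\, 1 \leq i \leq N} \C\, x_\alpha \ell^{-1} E_{\Perf_i}$. The direct-sum property is immediate from the $H_1$-grading combined with the observation that elements of $\zc \cdot \N$ have nonnegative $\ell$-degree in each homogeneous piece, while elements of the second summand have $\ell$-degree exactly $-1$; the $\zc$-module structure is preserved on the second summand by the product rule $x_\alpha x_\beta = x_{\alpha + \beta} \ell^{\degree_{\Perf_j}(x_\alpha) + \degree_{\Perf_j}(x_\beta)}$ of Theorem \ref{matching polygon}(3), which causes multiplication by a central element to either stay within the same $E_{\Perf_i}$-summand or shift the $\ell$-degree to a nonnegative value, landing in $\zc \cdot \N$. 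The main obstacle is case (iii): ruling out nontrivial $\gamma \in \N$ supported on $\Perf_{i-1} \cap \Perf_i$ relies on the delicate combinatorics from Theorem \ref{matching polygon} describing exactly how the corner matchings $\Perf_{i-1}$ and $\Perf_i$ differ along the zigzag cycles of homology $-\eta_i$.
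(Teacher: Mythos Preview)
Your proof is correct and follows essentially the same approach as the paper: embed $\Der_{\bk}(\J)$ into $\zcl \otimes \N$ via Lemma~\ref{derivations}, use the nonnegativity criterion in all perfect matchings to characterize which homogeneous elements $x_\alpha \ell^m \otimes \gamma$ preserve $\J$, and then carry out the same case analysis on the position of $\alpha$ in the antizigzag fan, with Theorem~\ref{matching polygon} supplying the key fact that faces meeting a zigzag of homology $-\eta_i$ contain no arrow of $\Perf_{i-1}\cap\Perf_i$. The only cosmetic difference is that you separate out $\alpha = 0$, whereas the paper absorbs it into the ray case $\alpha \in \langle \eta_i \rangle$; your extra paragraph on the $\zc$-module structure and directness of the sum is a helpful elaboration not spelled out in the paper.
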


\begin{proof}
For $\alpha \in H_1(\Sigma, \z)$, $n \in \z$, and $\gamma \in N$, consider the generic element $x_{\alpha} \ell^n \gamma \in \zcl \cdot \N$. The map $\gamma$ sends $a$ to an integer multiple of itself. If the derivation preserves $\J$, then for all $a \in \Q_1$, the path
\[
x_{\alpha} \ell^n \gamma(a)
\]
must be in $\J$, which occurs if and only if $x_{\alpha} \ell^n \gamma(a)$ has nonnegative degrees in all perfect matchings (\cite{abc} Lemma 3.19):
\[
\degree_{\Perf}(x_{\alpha} \ell^n a) = \degree_{\Perf}(x_{\alpha}) + n + \degree_{\Perf}(a) \geq 0 \; \; \forall \, \Perf \in PM(\Q), \; \forall \, a \; \text{such that} \; \gamma(a) \neq 0.
\] 
We consider cases based on the location of $\alpha$ in the antizigzag fan. 

First, suppose $\alpha \in \Int \sigma_i$ for some $i$. Then for all $a \in \Q_1$,
\begin{equation} \label{nonneg degree}
\degree_{\Perf_i}(x_{\alpha} \ell^n a) = n + \degree_{\Perf_i}(a) = \begin{cases}
n+1 & \text{if} \; a \in \Perf_i \\
n & \text{if} \; a \notin \Perf_i.
\end{cases}
\end{equation}
If $n \leq -2$, then this quantity is always negative, so $\gamma$ must be trivial. If $n = - 1$, then $\gamma(a)$ can be nonzero only if $a \in \Perf_i$, and hence if $\gamma$ is nontrivial, $\gamma = E_{\Perf_i}$. The derivation is
\[
x_{\alpha} \ell^{-1} E_{\Perf_i}.
\] 
In any other perfect matching, $x_{\alpha}$ has positive degree, so $x_{\alpha} \ell^{-1} E_{\Perf_i}$ preserves $\J$. If $n \geq 0$, then $\gamma$ can be any element of $N$. 

Now suppose $\alpha \in \langle \eta_{i+1} \rangle$. Then in addition to \eqref{nonneg degree}, we have
\[
\degree_{\Perf_{i+1}}(x_{\alpha} \ell^n a) = n + \degree_{\Perf_{i+1}}(a) = \begin{cases}
n+1 & \text{if} \; a \in \Perf_{i+1} \\
n & \text{if} \; a \notin \Perf_{i+1}.
\end{cases}
\] 
Again, if $n \leq -2$, then this quantity is always negative, so $\gamma$ must be trivial. If $n = -1$, then $\gamma(a)$ can be nonzero only if $a \in \Perf_i \cap \Perf_{i+1}$. However, by Theorem \ref{matching polygon}, in any boundary cycle meeting a zigzag cycle of homology $-\eta_i$, there is no arrow in the intersection of the two corner matchings. This forces $\gamma$ to be zero everywhere. If $n \geq 0$, then $\gamma$ can be any element of $N$. 
\end{proof}

For a corner matching $\Perf_i$ and $\alpha \in \Int \sigma_i$, let $\partial_i, \partial_{\alpha} \in HH^1(\J)$ denote the classes of $E_{\Perf_i}$ and $x_{\alpha} \ell^{-1} E_{\Perf_i}$, respectively. According to the lemma, these elements generate $HH^1(\J)$ as a $\zc$-module. Specifically, the collection 
\[
\{ \partial_i \, | \, 1 \leq i \leq N \}
\]
generates the submodule $\zc \cup \No $. 

\begin{thm} \label{HH^1 free}
Suppose $\Q$ is a zigzag consistent dimer in a torus. Additively, $HH^1(\J)$ is isomorphic to
\[
\zc \cup \No \oplus \bigoplus_{\substack{\alpha \in \Int \sigma_i \\ 1 \leq i \leq N}} \C \partial_{\alpha},
\]
and it is torsion free under the monoid action of $\{1, \ell, \dots, \ell^n, \dots \}$.
\end{thm}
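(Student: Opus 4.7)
The plan is to deduce the additive decomposition directly from Lemma \ref{Jacobi der} by identifying the inner derivations of $\J$, and then to establish torsion-freeness using the localization criterion of Corollary \ref{kernel tor}.

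First I would show that $\Inner_\bk(\J) = \chi(\zc \otimes \N^{in})$. The inclusion $\chi(\zc \otimes \N^{in}) \subseteq \Inner_\bk(\J)$ is immediate from the identity $\chi(x_\alpha \ell^n \otimes \partial v^*) = ad_{\ell^n x_\alpha v}$. For the reverse inclusion, any inner derivation $ad_p$ with $p = \sum_v p_v$ homogeneous in the $H_1(\Sigma, \z) \times \z^{PM(\Q)}$-grading has each $p_v$ equal to a scalar multiple $c_v u_v^\alpha \ell^n$ of a minimal closed path; centrality of $x_\alpha = \sum_v u_v^\alpha$ then yields $ad_p(a) = (c_{t(a)} - c_{h(a)}) x_\alpha \ell^n \, a$ for every arrow $a$, so $ad_p = \chi(x_\alpha \ell^n \otimes \gamma)$ with $\gamma = -\partial(\sum_v c_v v^*) \in \N^{in}$. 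Fixing a splitting $\N = \N^{in} \oplus \N^{out}$, the quotient of $\Der_\bk(\J)$ from Lemma \ref{Jacobi der} by $\zc \cdot \N^{in}$ splits additively as $\zc \otimes \No \oplus \bigoplus_{i, \alpha} \C \, \partial_\alpha$.

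For torsion-freeness, Corollary \ref{kernel tor} reduces the question to injectivity of $L^* : HH^1(\J) \to HH^1(\JW)$. Under the identification $HH^1(\JW) \cong \zcl \otimes \No$ of Lemma \ref{derivations}, $L^*$ acts as the canonical inclusion on $\zc \otimes \No$ and sends $\partial_\alpha$ to $x_\alpha \ell^{-1} \otimes \partial_i$, where $\partial_i$ denotes the class of $E_{\Perf_i}$ in $\No$. Assuming $L^*(g + \sum c_\alpha \partial_\alpha) = 0$, multiplying by $\ell$ and using that $\zc \otimes \No$ embeds in $\zcl \otimes \No$ (since $\zc$ is an integral domain and $\zc \otimes \No$ is $\zc$-free) gives $\ell g + \sum_{i, \alpha \in \Int \sigma_i} c_\alpha x_\alpha \partial_i = 0$ in $\zc \otimes \No$. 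Projecting onto any basis of $\No$ reduces this, for each basis direction, to a statement in $\zc$ asserting that a specific linear combination of $x_\alpha$'s is divisible by $\ell$. Working in each $H_1(\Sigma, \z)$-graded piece separately and noting that $x_\alpha$ has $\Perf_i$-degree $0$ while $\ell$ has $\Perf_i$-degree $1$, no $x_\alpha$ with $\alpha \in \Int \sigma_i$ is divisible by $\ell$ in $\zc$; so each $c_\alpha$ vanishes, after which $\ell g = 0$ forces $g = 0$ by $\zc$-freeness.

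The main subtlety is that multiplication by $\ell$ does not respect the direct sum decomposition of $HH^1(\J)$: the relation $\ell \cdot \partial_\alpha = x_\alpha \, \partial_i$ (modulo inner derivations) sends the second summand into the first, so torsion-freeness cannot be argued summand by summand. Circumventing this requires the toric-geometric rigidity of $\zc$ — namely, that the minimal central elements $x_\alpha$ indexed by interior lattice points are each non-divisible by $\ell$ — which is what ultimately prevents any $\ell$-torsion from arising.
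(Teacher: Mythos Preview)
Your proof is correct and uses the same core ingredient as the paper (minimality of the $x_\alpha$ for $\alpha\in\Int\sigma_i$), but the organization differs. You first identify $\Inner_\bk(\J)$ explicitly as $\chi(\zc\otimes\N^{in})$, obtaining the additive decomposition directly from Lemma~\ref{Jacobi der}, and then argue torsion-freeness separately via $L^*$-injectivity and $\ell$-divisibility in $\zc$. The paper instead proves $L^*$-injectivity first: it takes a derivation of $\J$ that becomes inner in $\JW$, evaluates at $\ell$ (which annihilates the inner part), and uses minimality of the $x_\alpha$ to force all the $\partial_\alpha$-coefficients to vanish; the decomposition is then read off from injectivity together with Lemma~\ref{derivations}. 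Your computation of $\Inner_\bk(\J)$ is a pleasant self-contained fact, but the paper's ``evaluate at $\ell$'' trick accomplishes the same separation more quickly and yields torsion-freeness and the decomposition simultaneously.
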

 
\begin{proof}
Let $D$ be an inner derivation of $\JW$ that preserves $\J$. By Lemma \ref{Jacobi der}, $D$ can be decomposed as
\[
D = D' + \sum_{\substack{\alpha \in \Int \sigma_i \\ 1 \leq i \leq N}} \lambda_{\alpha} x_{\alpha} \ell^{-1} E_{\Perf_i}
\]
where $D' \in \zc \cup \N$ and only finitely many of the coefficients $\lambda_{\alpha} \in \C$ are nonzero. Evaluate both sides at $\ell$ to obtain
\[
0 = D'(\ell) + \sum_{\substack{\alpha \in \Int \sigma_i \\ 1 \leq i \leq N}} \lambda_{\alpha} x_{\alpha},
\]
Since $D'(\ell) \subset \ell \cdot \zc$ and the sum consists of minimal elements, it must be that
\[
 \sum_{\substack{\alpha \in \Int \sigma_i \\ 1 \leq i \leq N}} \lambda_{\alpha} x_{\alpha} = 0 \; \Longrightarrow \; \lambda_{\alpha} = 0 \; \; \forall \, \alpha.
\] 
Hence, $D$ is an element of $\zc \cup \N$, implying it is an inner derivation of $\J$.

The map $L^*: HH^1(\J) \rightarrow HH^1(\JW)$ is therefore injective; equivalently, $HH^1(\J)$ is torsion free (Corollary \ref{kernel tor}). From this fact and Lemma \ref{derivations}, we deduce that the surjective map
\[
\zc \cup \No \oplus \bigoplus_{\substack{\alpha \in \Int \sigma_i \\ 1 \leq i \leq N}} \C \partial_{\alpha} \rightarrow HH^1(\J)
\]
is an isomorphism.
\end{proof}

\subsection{Zeroth {H}ochschild homology and third {H}ochschild cohomology}

Rather than directly compute the third cohomology group, which is less intuitive, we compute the zeroth homology group and then make an identification through Van den Bergh duality. The zeroth homology is simply the quotient
\[
HH_0(\J) \cong \bigoplus_{v \in \Q_0} v \J v \, / \, \big( [p, q] \, | \, h(p) = t(q), t(p) = h(q) \big).
\]
In other words, two closed paths in $\J$ are equivalent in $HH_0(\J)$ if they can be realized as products that differ by a linear combination of commutators. Let $[p] \in HH_0(\J)$ denote the class of a closed path $p \in \J$.   

\begin{prop} \label{torsion HH_0} 
Let $\Q$ be a dimer model and $v, w$ be two vertices. Then
\begin{enumerate}
\item $[v] = [w]$ if and only if $v = w$;
\item $\ell \cdot [v] = \ell \cdot [w]$. 
\end{enumerate}
\end{prop}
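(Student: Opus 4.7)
The plan is to prove (1) by constructing a linear functional on $HH_0(\J)$ that separates vertex classes, and to prove (2) by reducing to a single commutator identity propagated along the arrows of $\Q$.

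For (1), I would exploit the existence of a strictly positive integer grading on $\J$, obtained by summing over all corner matchings $\Perf_1, \dots, \Perf_N$; Theorem \ref{matching polygon}(1) ensures every arrow lies in at least one corner matching, so each arrow carries strictly positive degree while every vertex idempotent has degree zero. This lets me define a $\bk$-linear projection $\pi \colon \J \to \bk$ sending each vertex idempotent to itself and annihilating every homogeneous element of positive degree. The map descends from $\C\Q$ through the Jacobi ideal because each relation $R_a^+ - R_a^-$ is homogeneous of positive degree. The main check is that $\pi$ annihilates $[\J, \J]$: by bilinearity I may restrict to brackets $[p, q]$ of basis paths; then $pq$ and $qp$ are either zero or homogeneous of degree $\degree(p) + \degree(q)$, which lies in $\bk$ only when both $p$ and $q$ are themselves idempotents, in which case $[p, q] = 0$ already. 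Thus $\pi$ descends to $\bar\pi \colon HH_0(\J) \to \bk$ sending $[v] \mapsto v$, and since $\{v : v \in \Q_0\}$ is linearly independent in $\bk$, distinct vertex classes have distinct images.

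For (2), I observe that the defining expression $\ell = \sum_u p_u$, together with the fact that $v \in \bk$ is an idempotent absorbing all but one summand, gives $\ell \cdot v = p_v$, a boundary path of some face based at $v$. Hence it suffices to show $[p_v] = [p_w]$ in $HH_0(\J)$ for arbitrary vertices $v, w$. Since the quiver underlying a dimer is connected, I reduce to the case where $v$ and $w$ are joined by an arrow $a \colon v \to w$. Choosing a face $F$ containing $a$ and letting $R = R_a^{\pm}$ denote the associated partial cycle, I may take $p_v = aR$ and $p_w = Ra$, so that $[p_v] - [p_w]$ is precisely the class of the commutator $[a, R]$, which vanishes in $HH_0$. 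Iterating along a sequence of arrows in $\Q$ connecting any two vertices yields the general case.

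The principal obstacle is verifying that $\pi$ genuinely descends through $[\J, \J]$; this rests entirely on the strict positivity of the grading, without which a product of paths of positive length could in principle collapse to a vertex idempotent in $\J$ and so spoil compatibility of $\pi$ with commutators. The existence of the corner matchings under the standing hypotheses of the section is exactly what makes this obstruction vanish.
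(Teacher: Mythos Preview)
Your argument for (2) is essentially the paper's: both reduce to vertices lying in a common face and observe that the two boundary paths based at those vertices differ by a single commutator $[a,R]$ (or $[p,q]$ in the paper's notation).

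For (1) your approach is correct but imports more structure than the proposition requires. The statement is posed for an arbitrary dimer model, yet your projection $\pi$ is built from corner matchings via Theorem~\ref{matching polygon}, which is available only in the zigzag-consistent torus case; in higher genus the existence of a strictly positive perfect-matching grading is an additional hypothesis, not a consequence of the standing assumptions. The paper sidesteps this entirely: since each relation $R_a^+ - R_a^-$ has path length at least~$2$, the only way a product of paths $pq$ can equal a vertex idempotent in $\J$ is if $p = q = v$, so no commutator $pq - qp$ contributes to the $\bk$-component and the projection $\J \to \bk$ separating the vertex classes comes for free. Both arguments realise the same idea---project onto the degree-zero piece and note that commutators land above it---but the path-length version needs no perfect matchings and therefore establishes the proposition in the generality in which it is stated.
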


\begin{proof}
The terms in the relations of the Jacobi algebra \eqref{} have path length at least $2$, so if $p, q$ are paths such that $pq = v$, then $p = q = v$. The vertices must therefore project to distinct nonzero classes in $HH_0(\J)$.

For the second assertion, it suffices to consider vertices $v$ and $w$ lying in the same face. Let $p: v \rightarrow w$ and $q: w \rightarrow v$ be paths such that $pq$ and $qp$ are boundary paths of the face. Then
\[
\ell \cdot [v] = [pq] = [qp] = \ell \cdot [w].
\]  
\end{proof}

Consequently, differences of vertices are torsion elements under the monoid action of the potential,
\[
[v] - [w] \in tor_{\ell}(HH_0(\J)) \; \; \forall \, v, w \in \Q_0.
\] 
They lie in the kernel of the map $L_*: HH_0(\J) \rightarrow HH_0(\JW)$ according to Corollary \ref{kernel tor}. So unlike the zeroth and first cohomologies, the zeroth homology (third cohomology) of the Jacobi algebra cannot be realized as a subspace of the localization.

When the ambient surface $\Sigma$ is a torus, every closed path is a summand of a unique homogeneous central element by the discussion in \S \ref{section HH_0}. Thus, $HH_0(\J)$ is generated by the vertices as a $\zc$-module,
\begin{equation} \label{surjection HH_0}
\bigoplus_{v \in \Q_0} \zc \cdot [v] \twoheadrightarrow HH_0(\J). 
\end{equation}
To compute the kernel of this map, we examine how multiplication by a central element relates the vertices. As seen in Proposition \ref{torsion HH_0}, multiplication by $\ell$ equates any two vertices because they can be connected by boundary paths. More generally, if $f \in \zc$ and $v$ and $v'$ could be connected by a chain of intersecting closed paths that were summands of $f$, then $f \cdot v -f \cdot v'$ could be expressed as a sum of commutators, implying $f \cdot [v] = f \cdot [v']$. The existence of such a chain is equivalent to the existence of a path between $v$ and $v'$ whose arrows are contained in lifts of $f$ to $\C \Q$. We formalize this notion as follows.

\begin{defn}
For an $H_1(\Sigma, \z) \times \z^{PM(\Q)}$-homogeneous element $f \in \zc$, let 
\[
\Q_1(f) = \{ a \in \Q_1 \, | \, a \; \text{is contained in a lift of $f$ to $\C \Q$} \}.
\]
A path $p \in \J$ is \textbf{contained in} $f$ if it has a lift $\tilde{p} \in \C \Q$ such that, as a collection of arrows, $\tilde{p} \subset \Q_1(f)$. 
\end{defn}

We would like to show that, if $p$ is contained in $f$, then in fact any lift of $p$ is contained in $\Q_1(f)$, justifying the use of the term for an element of $\J$. This requires further analysis of the equivalence of paths in the Jacobi algebra. 

If two paths in the dimer project to the same class in $\J$, then one can be obtained from the other by a sequence of substitutions of partial cycles \eqref{relations}. We define a groupoid $\mJ$ to keep track of the possible substitution sequences. The objects of $\mJ$ are the paths in $\C \Q$. For each path $p$, let $Id_p$ be the identity element in $\mJ(p, p)$. If $q$ is obtained from $p$ by replacing a single instance of $R_a^{\pm}$ with $R_{a}^{\mp}$ for some $a \in p$, let $S(p, q)$ be a formal arrow from $p$ to $q$. The morphism spaces of $\mJ$ are generated by these arrows, subject to the relations
\[
Id_q S(p, q) = S(p, q) = S(p, q) Id_p, \; \; S(q, p) S(p, q) = Id_p.
\]
In general, for arbitrary paths $p$ and $q$, an element of $\mJ(p, q)$ is a string of arrows 
\[
S(p_{n-1}, q = p_n) \dots S(p_1, p_2) S(p = p_0, p_1). 
\]
Notice that, if $p$ and $q$ are not in the same class in $\J$, then $\mJ(p, q) = \emptyset$.

We say that $S(p, q)$ is an \textbf{upwards substitution} if it replaces a negative partial cycle $R_a^-$ with the corresponding positive one $R_a^+$; otherwise, we say $S(p, q)$ is a \textbf{downwards substitution}. The type of substitution is encoded in the data $(p, q)$, but for emphasis, we write 
\[
\Su(p, q), \; \; \Sd(p, q)
\]
for upwards and downwards substitutions, respectively.


\begin{lem} \label{groupoid commutativity}
Suppose $\Q$ is a zigzag consistent dimer and $p \neq r$ are minimal paths. There exists a morphism
\[
\Sd(q, r) \Su(p, q) \in \mJ(p, r)
\]
if and only if there exists a morphism
\[
\Su(q', r) \Sd(p, q') \in \mJ(p, r).
\]
\end{lem}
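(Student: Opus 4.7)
The plan is to perform a case analysis based on the local interaction of the two substitutions within $q$. Write $p = u\cdot R_a^-\cdot v$ and $q = u\cdot R_a^+\cdot v$ for the upward substitution $\Su(p,q)$ at an arrow $a$. The downward substitution identifies an occurrence of a positive partial cycle, so $q = u'\cdot R_b^+\cdot v'$ and $r = u'\cdot R_b^-\cdot v'$. Because every arrow of $\Q$ lies in a unique positive face, the subpaths $R_a^+$ and $R_b^+$ inside $q$ either share no arrows (when $a$ and $b$ belong to different positive faces) or are portions of a common positive face boundary.

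In the first situation, the occurrence of $R_b^+$ lies entirely inside $u$ or entirely inside $v$. Then the same positive partial cycle is present at the same location in $p$, so the downward substitution can be applied there to produce a path $q'$ with $\Sd(p,q')$ valid. Since the subword $R_a^-$ of $p$ is unaffected, the upward substitution $\Su(q',r)$ at $a$ completes the required sequence.

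In the second situation, $a R_a^+ = b R_b^+$ is a common face boundary up to cyclic rotation. If $b = a$ and the positions coincide, then $\Sd(q,r)$ exactly reverses $\Su(p,q)$, forcing $r = p$ and contradicting the hypothesis $p\neq r$. Otherwise $b$ is a distinct arrow of this face, and the cyclic rotation sending $R_a^+$ to $R_b^+$ shows that $R_b^+$ contains the arrow $a$, which does not appear in $R_a^+$. Hence the occurrence of $R_b^+$ in $q$ must straddle $R_a^+$ and extend into $v$ (or symmetrically into $u$). This pins down a rigid local form for the relevant subword of $q$, from which one reads off the positive partial cycle in $p$ sitting adjacent to $R_a^-$ whose downward substitution, followed by an upward substitution at the corresponding rotated position, produces $r$ via an intermediate $q'$.

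The main obstacle will be the careful bookkeeping in the second situation: tracking the cyclic rotation of the shared face boundary to verify that the substitutions $\Sd(p,q')$ and $\Su(q',r)$ genuinely match the negative and positive partial cycles demanded by the definition of $\mJ$. Zigzag consistency will enter both through the cancellation properties of $\J$ (needed to equate and match the local subwords in $p$ and $r$) and in ruling out degenerate configurations in which a lifted face boundary could overlap itself in the universal cover, which would otherwise spoil the rearrangement.
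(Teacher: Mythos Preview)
Your Case~1 is correct and coincides with the paper's argument. The gap is in Case~2: you attempt to \emph{handle} the overlapping configuration, but the paper instead shows that this configuration is \emph{impossible} for minimal paths, and your proposed handling cannot be completed.

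Concretely, suppose $R_a^+$ and $R_b^+$ share an arrow in $q$ with $a\neq b$ in the same positive face. As you observe, $R_b^+$ contains the arrow $a$, so the occurrence of $R_b^+$ in $q$ must include an instance of $a$ adjacent to $R_a^+$; say $v$ begins with $a$ (the other side is symmetric). Then $q = u\, R_a^+\, a\, v''$, and the subword $R_a^+\, a$ is a full positive boundary cycle. Hence $q$, and therefore $p$ and $r$, represent an $\ell$-multiple in $\J$, contradicting minimality. So Case~2 never arises, and the lemma reduces entirely to your Case~1. You never invoke the hypothesis that $p$ and $r$ are minimal, which is precisely the tool that eliminates the overlap.

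Your proposed rescue in Case~2---finding ``the positive partial cycle in $p$ sitting adjacent to $R_a^-$''---does not work as stated: in $p = u\, R_a^-\, v$ the subword $R_b^+$ has been destroyed, and there is no reason a different positive partial cycle should appear at that location. In fact, chasing the same adjacency in $p$ shows $p$ contains the full negative boundary $R_a^-\, a$, again contradicting minimality; so the configuration is inconsistent rather than salvageable. Zigzag consistency plays no direct role in the argument beyond guaranteeing the ambient setup (cancellation); the only ingredients needed are minimality and the dimer axiom that each arrow lies in a unique positive face.
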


\begin{proof}
Suppose there is a morphism 
\[
\Sd(q, r) \Su(p, q) \in \mJ(p, r)
\]
where $\Su(p, q)$ replaces $R^-_{b_1}$ with $R^+_{b_1}$ and $\Sd(q, r)$ replaces $R^+_{b_2}$ with $R^-_{b_2}$. Write
\[
p = a_1 a_2 \dots a_i R_{b_1}^- a_{i+1} \dots a_n, \; \; q = a_1 a_2 \dots a_i R_{b_1}^+ a_{i+1} \dots a_n, \; \; a_k \in \Q_1 \; \; \forall k
\] 
We claim that $R^+_{b_2}$ is contained in the subpaths $a_1 \dots a_i$ or $a_{i+1} \dots a_n$. For a contradiction, suppose $R^+_{b_1}$ and $R^+_{b_2}$ overlap as subpaths of $q$. Since every arrow in a dimer is contained in a unique positive face, $R^+_{b_1}$ and $R^+_{b_2}$ must be part of the same positive boundary cycle. But since $p \neq r$, $\Sd(q, r)$ is not the inverse of $\Su(p, q)$, meaning $R^+_{b_1} \neq R^+_{b_2}$. This implies $R^+_{b_2}$ contains $a_i$ or $a_{i+1}$, which must be the arrow $b_1$. Therefore, $q$ contains the complete boundary cycle $b_1 R^+_{b_1}$ or $ R^+_{b_1} b_1$, contradicting the minimality of $p$, $q$, and $r$.

Without loss of generality, assume $R^+_{b_2}$ is contained in the segment $a_{i+1} \dots a_n$ and write
\[
p = a_1 \dots a_i R_{b_1}^- a_{i+1} \dots a_j R_{b_2}^+ a_{j+1} \dots a_n.
\]
Then let 
\[
q' = a_1 \dots a_i R_{b_1}^- a_{i+1} \dots a_j R_{b_2}^ -a_{j+1} \dots a_n.
\]
The path $r$ can be obtained from $q'$ by substituting $R_{b_1}^+$ for $R_{b_1}^-$. This sequence determines the morphism
\[
\Su(q', r) \Sd(p, q') \in \mJ(p, r).
\] 
\end{proof}

The result effectively states that, between equivalent minimal paths, upwards and downwards morphisms commute. This is the key to proving invariance of our notion of containment to the choice of lift.

\begin{lem} \label{invariance of containment}
Let $\Q$ be a zigzag consistent dimer. Suppose $p \in \J$ is a path contained in a homogeneous central element $f$. If $\tilde{p} \in \C \Q$ is any lift of $p$, then $\tilde{p} \subset \Q_1(f)$. 
\end{lem}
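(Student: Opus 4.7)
Plan: Both $\tilde{p}$ and the given lift $\tilde{p}_0$ project to the same class $p \in \J$, so they are connected in the substitution groupoid $\mJ$ by a finite chain
\[
\tilde{p}_0 \longrightarrow \tilde{p}_1 \longrightarrow \cdots \longrightarrow \tilde{p}_n = \tilde{p}
\]
in which each arrow replaces one occurrence of $R_a^+$ by $R_a^-$ or vice versa. By induction on $n$ it suffices to treat a single substitution $\tilde{q} = x \cdot R_a^{+} \cdot y \rightsquigarrow x \cdot R_a^{-} \cdot y = \tilde{q}'$ and show that $\tilde{q}' \subset \Q_1(f)$ whenever $\tilde{q} \subset \Q_1(f)$.

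Since the arrows of $x$ and $y$ are unaltered, the remaining task is to show that every arrow $c \in R_a^-$ lies in $\Q_1(f)$. I would invoke \S \ref{section HH_0} to write $f = \lambda\, x_\alpha \ell^n$ (up to scalar) for some $\alpha \in H_1(\Sigma, \z)$ and $n \geq 0$, so that $\Q_1(f)$ is the union over $v \in \Q_0$ of arrows appearing in some closed path at $v$ which is equivalent in $\J$ to $u_v^\alpha \ell^n$. Two key observations make the equivalence $R_a^+ \leftrightarrow R_a^-$ compatible with $\Q_1(f)$: the partial cycles $R_a^+$ and $R_a^-$ have identical $H_1(\Sigma, \z) \times \z^{PM(\Q)}$-degrees (both equal to the degree of the full face boundary minus the degree of $a$), and they project to the same class in $\J$. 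Consequently, closing up $\tilde{q}'$ into a cyclic path will produce a closed path with the same homology and matching degrees as closing up $\tilde{q}$.

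To realize $c \in R_a^-$ inside a lift of $f$, I would concatenate $\tilde{q}'$ with a minimal connecting path $r: h(\tilde{q}') \to t(\tilde{q}')$ whose homology and matching degrees are chosen so that the closed path $\tilde{q}' \cdot r$ at $t(\tilde{q}')$ has the same $H_1 \times \z^{PM}$-degree as $u_{t(\tilde{q}')}^\alpha \ell^{n+k}$ for some $k \geq 0$. Existence of such an $r$ is guaranteed by \cite{abc} Lemma 3.18, provided the required degrees of $r$ are nonnegative in every perfect matching; the hypothesis $\tilde{q} \subset \Q_1(f)$, combined with the fact that $R_a^+$ and $R_a^-$ share matching degrees, delivers exactly these bounds. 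Then the uniqueness of closed paths at a fixed vertex with fixed homology modulo $\ell$-multiplication (\cite{Bocklandt2012} Lemma 7.4) identifies $\tilde{q}' \cdot r$ with $u_{t(\tilde{q}')}^\alpha \ell^{n+k}$ in $\J$ up to scalar; expanding the extra $\ell^k$ as boundary cycles produces a bona fide lift of $f$ whose support contains $c$. The main technical obstacle is the bookkeeping of matching degrees to guarantee that the connecting path $r$ has nonnegative degree everywhere, which is the heart of the argument and relies essentially on zigzag consistency via the characterization in Theorem \ref{matching polygon}.
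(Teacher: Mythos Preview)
Your reduction to a single substitution step is correct, and the observation that $R_a^+$ and $R_a^-$ have the same $H_1(\Sigma,\z)\times\z^{PM(\Q)}$-degrees is the right starting point. The gap is in the degree bookkeeping that you flag as ``the heart of the argument.''

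The hypothesis $\tilde{q}\subset\Q_1(f)$ says only that \emph{each individual arrow} of $\tilde{q}$ lies in \emph{some} lift of $f$; different arrows may lie in different lifts. This gives no upper bound on the total degree $\deg_{\Perf}(\tilde{q})$ in terms of $\deg_{\Perf}(f)$. For instance, a long path $\tilde q$ can have $\deg_{\Perf}(\tilde q)$ arbitrarily large while $\deg_{\Perf}(x_\alpha)$ is fixed. Hence the connecting path $r$ you need, with $\deg_\Perf(r)=\deg_\Perf(x_\alpha)-\deg_\Perf(\tilde q')$, will generally have negative required degree, and you are forced to take $k>0$.

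Once $k>0$, the argument collapses: the closed path $\tilde q'\cdot r$ is then a lift of $f\ell^k\cdot v$, not of $f\cdot v$, so you have only shown $c\in\Q_1(f\ell^k)$. But $\Q_1(\ell)=\Q_1$, so $\Q_1(f\ell^k)=\Q_1$ and this is vacuous. Your phrase ``expanding the extra $\ell^k$ as boundary cycles produces a bona fide lift of $f$ whose support contains $c$'' is exactly the statement that needs proof: there is no reason a lift of $f\ell^k$ containing $c$ can be rewritten in $\C\Q$ as a lift of $f$ (still containing $c$) times boundary cycles. Indeed, asserting this is tantamount to assuming the lemma.

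The paper avoids this trap by proving a sharper intermediate claim: if $R_a^+\subset\Q_1(f)$ with $f=x_\alpha$ minimal, then $R_a^+$ is actually a subpath of a \emph{single} lift of $f\cdot v$. The proof is an arrow-by-arrow induction along $R_a^+=b_1\cdots b_m$, and the essential tool is Lemma~\ref{groupoid commutativity}, which lets one reorder a substitution sequence so that all downward moves precede all upward moves. Minimality of $x_\alpha$ then forbids any substitution from disturbing the already-assembled segment $b_1\cdots b_i$, because doing so would force the arrow $a$ (which lies in a matching killing $x_\alpha$) into the path. Once $R_a^+$ sits inside a single lift, one substitution produces a lift containing $R_a^-$, and the induction on your chain $\tilde p_0\to\cdots\to\tilde p_n$ goes through.
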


\begin{proof}
Since $\Q_1(\ell) = \Q_1$ and $\Q_1(1) = \emptyset$, all paths are contained in $\ell$, and no path is contained in $1$. So the statement is true for a hyperbolic surface, and for a torus, we have only to consider a minimal central element $f = x_{\alpha}$, $\alpha \in H_1(\Sigma, \z) \setminus \{0\}$. It suffices to show that $R^{+}_a \subset 
Q_1(f)$ if and only if $R^{-}_a \subset \Q_1(f)$ for any arrow $a$. In fact, we prove a stronger statement: $R_a^{\pm} \subset \Q_1(f)$ if and only if $R_a^{\pm}$ is contained in a closed path in $\C \Q$ representing $f \cdot v$ for some $v \in \Q_0$.

Write
\[
R_a^+ = b_1 b_2 \dots b_m, \; \; b_i \in \Q_i \; \; \forall i
\]
and suppose $R_a^+ \subset \Q_1(f)$. We proceed by induction: assume $b_1 \dots b_i$ is contained in a closed path
\[
p = c \dots b_1 b_2 \dots b_i \in \C \Q
\]
representing $f \cdot v$, where $v = t(c) = h(b_i) = t(b_{i+1})$. Since $b_{i+1} \in \Q_1(f)$, there is a closed path
 \[
 q = b_{i+1} \dots d, 
 \]
representing $f \cdot v$ as well.

\begin{figure}[h]
\[
\begin{tikzpicture}
\draw (0,0) circle [radius=.2];
\draw (0,2) circle [radius=.2];
\node at (2, 0) {$v$};
\draw (2,0) circle [radius=.2];
\draw (4,0) circle [radius=.2];
\draw (4, 2) circle [radius=.2];
\draw (4, -1) circle [radius=.2];
\draw (0, -1) circle [radius=.2];
\node at (2, 1) {$+$};
\draw[->, dashed, blue] (-2, 2) -- (-0.2, 2);
\node[above, blue] at (-1, 2) {$p$};
\draw[->, blue] (0, 1.8) -- (0, 0.2);
\node[right, blue] at (0, 1) {$b_1$};
\draw[->, blue] (0.2, 0) -- (1.8, 0);
\node[above, blue] at (1, 0) {$b_2$};
\draw[->, red] (2.2, 0) -- (3.8, 0);
\node[above, red] at (3, 0) {$b_3$};
\draw[->] (4, 0.2) -- (4, 1.8);
\node[left] at (4, 1) {$b_4$};
\draw[->, dashed] (3.8, 2) -- (0.2, 2);
\node[above] at (2, 2) {$a$}; 
\draw[->, dashed, red] (4.2, 0) -- (6, 0);
\draw[->, blue] (2.18, -0.09) -- (3.82, -0.91);
\node[above, blue] at (3, -0.5) {$c$};
\draw[->, red] (0.18, -0.91) -- (1.82, -0.09); 
\node[above, red] at (1, -0.5) {$d$};
\draw[->, dashed, red] (-2.18, -2.09) -- (-0.18, -1.09);
\draw[->, dashed, blue] (4.18, -1.09) -- (6.18, -2.09);
\node[above, red] at (-1.18, -1.59) {$q$};
\end{tikzpicture}
\]
\caption{An example of the inductive step at $i = 2$.}
\end{figure}
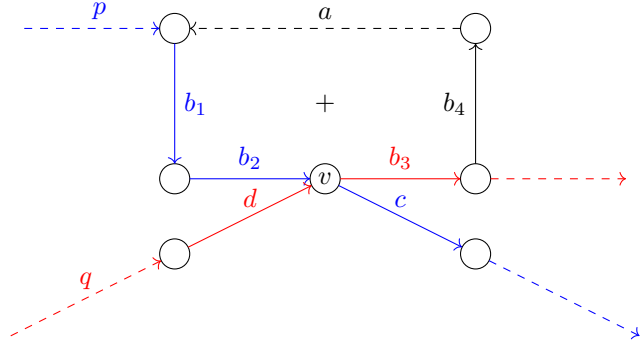
 
The paths $p$ and $q$ are equivalent in $\J$, and hence there exists a sequence of substitutions by which $q$ is obtained from $p$,
\[
S(p_{n-1}, q = p_n) S(p_{n-2}, p_{n-1}) \dots S(p = p_0, p_1) \in \mJ(p, q).
\]
By repeated application of Lemma \ref{groupoid commutativity}, there must also be a morphism from $p$ to $q$ in which all the downward substitutions occur first,
\[
\Su(p_{n-1}', q = p_n') \dots \Su(p_j', p_{j+1}') \Sd(p_{j-1}', p_{j}') \dots \Sd(p = p_0', p_1') \in \mJ(p, q).
\]
We claim that $b_1 b_2 \cdots b_i$ is a subpath of $p_j'$. To see this, observe that any positive partial cycle that ends in $b_k$, $1 \leq k \leq i$, must contain $a$:
\[
R^+_{b_{k+1}} = b_{k+2} \dots b_m a b_1 \dots b_k. 
\]  
Because $f$ is minimal and $R_a^+ \subset \Q_1(f)$, the arrow $a$ must be in a perfect matching in which $f$ has degree $0$. Therefore, none of the paths $p_0', \dots, p_n'$ contain $a$, implying the downward substitutions preserve $b_1 \dots b_i$.  

We can thus write
\[
p_j' = c' \dots b_1 \dots b_i. 
\]
If $c' = b_{i+1}$, then we have finished. Otherwise, the upwards substitutions must turn $c'$ into $b_{i+1}$, in which case there exists $p_k'$, $j < k \leq n$,  containing the positive partial cycle beginning with $b_{i+1}$:
\[
R_{b_i}^+ = b_{i+1} \dots b_{m} a b_1 \dots b_{i-1}.
\]
As before, this is disallowed because $a$ is in the perfect matching killing $f$. Consequently, it must have been that $c' = b_{i+1}$, and $p_j'$ is a representative of $f \cdot v$ containing $b_1, \dots, b_{i+1}$. 
\end{proof}

\begin{cor} \label{arrow sets same}
Suppose $\Q$ is a zigzag consistent dimer. For any homogeneous $f \in \zc$ and $n > 0$, $\Q_1(f) = \Q_1(f^n)$.
\end{cor}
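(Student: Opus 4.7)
The plan is to prove both set inclusions, deferring the nontrivial combinatorial work to Lemma \ref{invariance of containment}. The easy direction is direct manipulation of lifts; the hard direction turns on the observation that one can produce a lift of $f^n \cdot v_0$ whose arrows are \emph{visibly} in $\Q_1(f)$, after which Lemma \ref{invariance of containment} propagates this to every other lift of the same element.

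For $\Q_1(f) \subseteq \Q_1(f^n)$, start with $a \in \Q_1(f)$ and pick a lift $\tilde{f} \in \C \Q$ of $f$ containing $a$, decomposed by vertices as $\tilde{f} = \sum_{v \in \Q_0} \tilde{p}_v$ with $\tilde{p}_v = v \tilde{f} v$. Since concatenations across distinct vertices vanish, the product $\tilde{f}^n = \sum_v \tilde{p}_v^n$ is a well-defined element of $\C \Q$ that projects to $f^n$ in $\J$, and its arrow set agrees with that of $\tilde{f}$. In particular $a \in \Q_1(f^n)$.

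For the reverse inclusion $\Q_1(f^n) \subseteq \Q_1(f)$, take $a \in \Q_1(f^n)$, so $a$ appears in some lift $\widetilde{f^n} = \sum_v \tilde{q}_v$ of $f^n$, and specifically in $\tilde{q}_{v_0}$ for some $v_0 \in \Q_0$. Then $\tilde{q}_{v_0}$ is a lift of the element $f^n \cdot v_0 \in \J$. To exhibit a competing lift with controlled arrow set, fix any lift $\tilde{p}_{v_0} \in v_0 \, \C \Q \, v_0$ of $f \cdot v_0$ and extend it by choosing lifts at every other vertex to a full lift of $f$; this forces the arrows of $\tilde{p}_{v_0}$ to lie in $\Q_1(f)$. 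The power $\tilde{p}_{v_0}^n$ is then a lift of $f^n \cdot v_0$ whose arrow set is the same as that of $\tilde{p}_{v_0}$, so $f^n \cdot v_0 \in \J$ is contained in $f$ in the sense of the definition preceding Lemma \ref{invariance of containment}. Applying that lemma with the central element $f$, every lift of $f^n \cdot v_0$ — in particular $\tilde{q}_{v_0}$ — has its arrows in $\Q_1(f)$, and hence $a \in \Q_1(f)$.

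The only potentially delicate point is that two different lifts of $f^n \cdot v_0$ can look very different as elements of $\C \Q$, related by long chains of substitutions of partial cycles; but invariance under exactly such substitutions is what Lemma \ref{invariance of containment} packages, so no further combinatorial surgery is required.
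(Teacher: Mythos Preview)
Your proof is correct and follows essentially the same approach as the paper: both directions are handled as you describe, with the nontrivial reverse inclusion reduced to Lemma \ref{invariance of containment} by exhibiting one lift of $f^n \cdot v_0$ (the paper uses $f^n \cdot t(a)$, which amounts to a cyclic permutation of your $\tilde{q}_{v_0}$) built from powers of a lift of $f \cdot v_0$. Your choice of $v_0$ rather than $t(a)$ is marginally cleaner, as it avoids the implicit cancellation argument needed to identify the cyclic permutation with $f^n \cdot t(a)$.
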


\begin{proof}
The direction $\Q_1(f) \subset Q_1(f^n)$ is obvious. For the reverse, let $a \in \Q_1(f^n)$, so there exists a path $q \in \C \Q$ containing $a$ and lifting $f^n \cdot t(a) \in \J$. But $f^n \cdot t(a) = (f \cdot t(a))^n$, which is contained in $f$. Thus, Lemma \ref{invariance of containment} tells us that $q \subset \Q_1(f)$.
\end{proof}

Returning to the computation of the kernel of \eqref{surjection HH_0}, we make a definition to capture the notion of two vertices being connected by cycles in a central element.

\begin{defn}
Let $f$ be an $H_1(\Sigma, \z) \times \z^{PM(\Q)}$-homogeneous element of $\zc$. A vertex $v$ is $f$-\textbf{connected} to a vertex $v'$ if there exists a path $p: v \to v'$ that is contained in $f$.
\end{defn}

\begin{lem} \label{connectedness equiv}
Suppose $\Q$ is a zigzag consistent dimer embedded in a torus. Let $f$ be a homogeneous element of $\zc$. The relation of $f$-connectedness is an equivalence relation. Moreover, there is an isomorphism of $\zc$-modules
\[
\bigoplus_{v \in \Q_0} \zc \cdot [v] \, / \, \{ f \cdot [v] - f \cdot [v'] \, | \, f \in \zc, \; v, v' \; \text{are $f$-connected} \, \} \longrightarrow HH_0(\J).
\]
\end{lem}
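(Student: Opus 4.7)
The plan is to first establish the equivalence relation and then identify the kernel of the natural surjection $\Theta: \bigoplus_v \zc \cdot [v] \to HH_0(\J)$, reducing throughout to homogeneous elements. Reflexivity holds via the length-zero path at $v$ (empty arrow set, trivially in $\Q_1(f)$), and transitivity follows by concatenating lifts. Symmetry is the substantive step: given $p: v \to v'$ with lift $\tilde p \subset \Q_1(f)$, I propose to strengthen Lemma \ref{invariance of containment} to show that any path contained in $f$ is the beginning of a closed path at its starting vertex lifting $f_v$, with all arrows in $\Q_1(f)$. The induction on the length of the extension mirrors the proof of Lemma \ref{invariance of containment}, using Lemma \ref{groupoid commutativity} to push downward substitutions past the constructed prefix and the fact that no such substitution can invoke an arrow in a perfect matching killing $f$. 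A preliminary cycling argument (centrality of $f$ and cancellation in $\J$) shows $f_v \neq 0$ whenever $v$ is incident to an arrow of $\Q_1(f)$, making the target $f_v$ meaningful. The tail $q: v' \to v$ of this extended closed path realizes the reverse connectedness.

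For the module identification, non-closed paths vanish in $HH_0$ via $p = vp - pv = [v, p]$ when $h(p) \neq v$, so $\Theta$ factors through $\bigoplus_v v \J v \twoheadrightarrow HH_0(\J)$. By the center structure of \S \ref{section HH_0}, the map $\zc \to v \J v$, $f \mapsto f_v$, is a $\zc$-linear isomorphism: surjectivity is existence of minimal closed paths, and injectivity holds since a nonzero central element restricts nontrivially to every vertex (if $f_v = 0$, then $f_u r = r f_v = 0$ for a path $r: u \to v$, forcing $f_u = 0$ by cancellation). Assembling gives an injective lift $\Theta_0: \bigoplus_v \zc \cdot [v] \hookrightarrow \J$ with image $\bigoplus_v v \J v$, and the task reduces to computing $\Theta_0^{-1}\big([\J, \J] \cap \bigoplus_v v \J v \big)$.

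For the forward inclusion: if $v, v'$ are $f$-connected via $p$, symmetry yields $q: v' \to v$ with $pq = f_v$ in $\J$. Then $p \cdot qp = (pq) p = f_v p = p f_{v'}$, and iterating the single-arrow cancellation along the letters of $p$ gives $qp = f_{v'}$. Hence $\Theta_0(f \cdot [v] - f \cdot [v']) = f_v - f_{v'} = pq - qp = [p, q] \in [\J, \J]$. For the reverse inclusion: any element of $[\J, \J] \cap \bigoplus_v v \J v$ decomposes into closed-part commutators $xy - yx$ with $x: v \to w$ and $y: w \to v$ (non-closed pieces must mutually cancel). Letting $g \in \zc$ be the unique central element with $g_v = xy$, the same centrality-and-cancellation calculation gives $g_w = yx$; the lift $\tilde x \tilde y$ of $g_v$ shows $x$ is contained in $g$, so $v$ and $w$ are $g$-connected. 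Thus $xy - yx = \Theta_0(g \cdot [v] - g \cdot [w])$ lies in the claimed submodule, completing the proof.

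The principal obstacle is the extension of Lemma \ref{invariance of containment} from partial cycles to arbitrary paths contained in $f$, which drives both the symmetry step and the construction of $q$ with $pq = f_v$ in the forward kernel inclusion. The required inductive argument faithfully mirrors the original but must track the extending prefix through repeated applications of the groupoid commutativity.
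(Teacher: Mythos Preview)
Your proposed strengthening of Lemma~\ref{invariance of containment} is false as stated. Take $f = x_\alpha$ with $\alpha \in \Int \sigma_i$, choose an arrow $a \in \Q_1(f)$ lying in a corner matching $\Perf_j$ with $j \neq i$ (such $a$ exists since $\deg_{\Perf_j}(f) > 0$), and let $C_a$ be a path with $a C_a$ lifting $f \cdot t(a)$. Then $p := a C_a a$ is contained in $f$, but $\deg_{\Perf_j}(p) = \deg_{\Perf_j}(f) + 1$, so no lift of $p$ can be a prefix of any lift of $f \cdot t(a)$. The inductive argument you sketch does not transfer from partial cycles to arbitrary paths: the key step in Lemma~\ref{invariance of containment} uses that consecutive arrows $b_k, b_{k+1}$ of $R_a^+$ lie in a common positive face, forcing any positive partial cycle ending in $b_k$ to contain the forbidden arrow $a$. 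For a general path this structural constraint is absent, and your claim that the induction ``faithfully mirrors the original'' does not survive contact with this obstruction. Consequently both your symmetry argument and your forward-inclusion argument (which needs $pq = f_v$ for the given $p$) break down.

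The paper's proof sidesteps this by working arrow by arrow. For symmetry, each arrow $a_i$ of $p = a_1 \cdots a_n$ is completed to a cycle $a_i C_i$ lifting $f \cdot t(a_i)$; then $C_n C_{n-1} \cdots C_1 : v' \to v$ is manifestly contained in $f$. For the forward inclusion, centrality and cancellation give $C_i a_i = f \cdot h(a_i) = a_{i+1} C_{i+1}$, whence $f \cdot v - f \cdot v' = a_1 C_1 - C_n a_n = \sum_i [a_i, C_i]$ telescopes into a sum of commutators, with no need to realise $f_v - f_{v'}$ as a single commutator. Your reverse inclusion is correct and matches the paper's. The natural repair of your approach is to observe that $f$-connectedness is generated by single-arrow steps, at which point the argument collapses to the paper's.
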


\begin{proof}
Reflexivity and transitivity of $f$-connectedness are clear. To see that the relation is symmetric, write
\[
p = a_1 a_2 \dots a_n: v \rightarrow v', \; \; a_i \in \Q_1
\]  
for a path contained in $f$ that connects $v$ to $v'$. Then there exist paths $C_i: h(a_i) \rightarrow t(a_i)$, $1 \leq i \leq n$, that complete the arrows $a_i$ to summands of $f$:
\[
a_iC_i = f \cdot t(a_i) \in \J.
\]
But then the path 
\[
p: C_n C_{n-1} \dots C_1: v' \rightarrow v
\]
is also contained in $f$, so $v'$ is $f$-connected to $v$.

Observe that, for each $1 \leq i < n$, 
\[
C_i a_i = f h(a_i) =f t(a_{i+1}) =  a_{i+1}C_{i+1}
\]
Therefore, in $\J$, we have
\[
f v - f v' = \sum_{i = 1}^n a_i C_i - C_i a_i = \sum_{i = 1}^n [a_i, C_i],
\]
implying $f [v] - f [v']$ is trivial in $HH_0(\J)$. Therefore, the submodule
\begin{equation} \label{submodule}
\{ f \cdot [v] - f \cdot [v'] \, | \, f \in \zc, \; v, v' \; \text{are $f$-connected} \, \} \subset \bigoplus_{v \in \Q_0} \zc \cdot [v] 
\end{equation}
lies in the kernel of \eqref{surjection HH_0}.

Conversely, if $p, q \in \J$ are paths such that $h(p) = t(q)$ and $h(q) = t(p)$, then
\[
[p, q] = pq - qp = f t(p) - f h(p)
\]
where $f$ is the unique central element with summands $pq$ and $qp$. The vertices $t(p)$ and $h(p)$ are then clearly $f$-connected, so the kernel of the map \eqref{surjection HH_0} lies inside the submodule \eqref{submodule}.
\end{proof}

According to Proposition \ref{torsion HH_0}, all vertices are $\ell$-connected, but no two vertices are $1$-connected. It remains to determine how vertices are connected by the minimal central elements $x_{\alpha}$, $\alpha \in H_1(\Sigma, \z) \setminus \{0\}$. The answer depends on whether $\alpha$ lies on a ray or in the interior of a cone of the antizigzag fan.

For each $1 \leq i \leq N$, enumerate the zigzag cycles representing the homology class $-\eta_i$ as
\[
Z_{i, j}, \; \; 1 \leq j \leq m_i.
\]
By Proposition \ref{intersection properties}, parallel zigzag cycles do not intersect in arrows. Therefore, we may assume the ordering is such that, for each $j$ mod $m_i$, $\st^+(Z_{i, j}) \cup \st^-(Z_{i, j+1})$ bounds a closed connected subspace of the torus $\Sigma$ that does not contain a zigzag of homology $-\eta_i$ (see Figure \ref{zigzag strip}). Label this subspace $\E_{i, j}$. Notice that the $\E_{i, j}$ are pairwise disjoint strips of the torus.

\begin{lem} \label{opposite connectedness}
Suppose $\Q$ is a zigzag consistent dimer embedded in a torus, and let $v$ and $w$ be vertices of $\Q$. For each $1 \leq i \leq N$, $v$ and $w$ are $x_{\eta_i}$-connected if and only if there exists $1 \leq j \leq m_i$ such that
\[
v, w \in \E_{i, j}.
\]
\end{lem}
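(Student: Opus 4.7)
My plan is to characterize the arrow set $\Q_1(x_{\eta_i})$ combinatorially using matching data, and then deduce the equivalence classes of $x_{\eta_i}$-connectedness from the geometric strip decomposition. By Lemma \ref{invariance of containment} and Corollary \ref{arrow sets same}, the set $\Q_1(x_{\eta_i})$ is independent of the chosen lift. The minimal central element $x_{\eta_i}$ has degree zero precisely in $\Perf_{i-1}$, $\Perf_i$, and the boundary matchings between them in $MP(\Q)$, so nonnegativity of matching degrees forces every arrow of any lift to have degree zero in all these matchings. Theorem \ref{matching polygon} identifies these arrows explicitly: they are the arrows that are neither zigs nor zags of any zigzag cycle of homology $-\eta_i$ and are not the common arrow of $\Perf_{i-1} \cap \Perf_i$ on a non-meeting face. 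Conversely, any such arrow lies either on an antizigzag cycle of homology $\eta_i$ (if it is a boundary arrow of some $\E_{i,j}$) or inside a non-meeting face in some $\E_{i,j}$, and in both cases extends to a minimal closed cycle of homology $\eta_i$ within $\Q_1(x_{\eta_i})$. I therefore expect $\Q_1(x_{\eta_i})$ to consist exactly of the arrows whose image in $\Sigma$ lies in some strip $\E_{i,j}$.

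For the backward direction, any $\Q_1(x_{\eta_i})$-path from $v$ to $w$ uses no arrow of a zigzag of homology $-\eta_i$, so its image in $\Sigma$ is confined to a single strip $\E_{i,j}$; hence $v, w \in \E_{i,j}$.

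For the forward direction, assume $v, w \in \E_{i,j}$. Every vertex of $\E_{i,j}$ lies either on the boundary antizigzag cycles $\st^+(Z_{i,j})$ or $\st^-(Z_{i,j+1})$, or else in the interior at a corner of a non-meeting face; in each case it is incident to arrows of $\Q_1(x_{\eta_i})$. The two boundary antizigzag cycles are each connected simple closed curves. They are linked through interior non-meeting faces of $\E_{i,j}$ (each of whose non-common arrows lies in $\Q_1(x_{\eta_i})$) when the strip has nonempty interior, or coincide as a single cycle when the strip degenerates. In either case, the $\Q_1(x_{\eta_i})$-subgraph restricted to $\E_{i,j}$ is connected and spans all its vertices, producing the required path.

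The delicate step is verifying the connectedness of the $\Q_1(x_{\eta_i})$-subgraph within each $\E_{i,j}$, that is, ruling out components isolated by the removal of the single $\Perf_{i-1} \cap \Perf_i$-selected arrow from each interior non-meeting face. I expect to handle this by a local face-level argument: the non-selected arrows on the boundary of each face form a subpath joining the endpoints of the selected arrow, so removing the selected arrow preserves face-boundary connectivity. Combined with the topological connectedness of $\E_{i,j}$ and the fact that antizigzag cycles are simple closed curves by zigzag consistency, this should transfer cellular connectedness to graph-theoretic connectedness, completing the proof.
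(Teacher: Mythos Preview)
Your approach is genuinely different from the paper's, and the easy direction (an $x_{\eta_i}$-connecting path cannot cross a zigzag of homology $-\eta_i$, hence stays in one strip) is fine and matches the paper's argument. The problem is in the other direction, where your plan rests on the equality
\[
\Q_1(x_{\eta_i}) \;=\; \{\, a \in \Q_1 : a \notin \Perf_{i-1} \cup \Perf_i \,\}.
\]
The inclusion $\subseteq$ is immediate from degree considerations, but the inclusion $\supseteq$ is exactly the hard part and you have not proved it. For an arrow $a$ lying in a face of $\E_{i,j}$ that meets no zigzag of homology $-\eta_i$, and with $a$ not the common $\Perf_{i-1}\cap\Perf_i$ arrow, you assert that $a$ ``extends to a minimal closed cycle of homology $\eta_i$'' --- but membership in $\Q_1(x_{\eta_i})$ means precisely that $a$ lies on some lift of $x_{\eta_i}\cdot v$, i.e.\ on a minimal closed path of homology $\eta_i$, and you give no construction of such a path. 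The minimal cycle $u^{\eta_i}_{t(a)}$ at $t(a)$ need not pass through $a$, so this is not automatic. Your subsequent connectedness argument is about the degree-zero subgraph, not about $\Q_1(x_{\eta_i})$; without the $\supseteq$ inclusion, a path through degree-zero arrows does not witness $x_{\eta_i}$-connectedness.

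The paper sidesteps this entirely. Rather than characterising $\Q_1(x_{\eta_i})$, it takes any vertex $v \in \E_{i,j}$ and builds a closed path $r$ at $v$ by concatenating: a segment of a minimal cycle of homology $\eta_{i-1}$ (which enters and leaves $\E_{i,j}$ only through zags, since $\Perf_{i-1}$ kills it), a segment of a minimal cycle of homology $\eta_{i+1}$ (which enters and leaves only through zigs, since $\Perf_i$ kills it), and a segment of the antizigzag $\st^+(Z_{i,j})$. Because $\Perf_{i-1}$ and $\Perf_i$ coincide on every face inside $\E_{i,j}$, each segment has degree zero in both matchings, so $r$ is a lift of $x_{\eta_i}^m$ for some $m>0$; then Corollary~\ref{arrow sets same} places all arrows of $r$ in $\Q_1(x_{\eta_i})$, connecting $v$ to the boundary antizigzags. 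This constructive use of the adjacent homology classes $\eta_{i\pm 1}$ is the key idea your sketch is missing.
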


\begin{proof}
First, we show that any vertex $v \in \E_{i, j}$ is $x_{\eta_i}$-connected to the vertices of $\st^+(Z_{i, j})$ and $\st^-(Z_{i, j+1})$ and thus connected to all vertices in $\E_{i, j}$. Let $p \in \C \Q$ be a minimal closed path at $v$ of homology $\eta_{i-1}$, projecting to a summand of $x_{\eta_{i-1}}$ in $\J$. Since $\eta_{i-1}$ and $\eta_i$ are linearly independent, $p$ must intersect each of $Z_{i, j}$ and $Z_{i, j+1}$ in an arrow. Both $x_{\eta_{i-1}}$ and $x_{\eta_i}$ are killed by the perfect matching $\Perf_{i-1}$, which, according to Theorem \ref{matching polygon}, contains all zigs of zigzag cycles of homology $-\eta_i$. Therefore, $p$ must intersect $Z_{i, j}$, through which it enters $\E_{i, j}$, and $Z_{i, j+1}$, through which is leaves $\E_{i, j}$, only in zags. Let $a$ be the first zag of $Z_{i, j+1}$ through which $p$ leaves, and let $b$ be last zag of $Z_{i, j}$ through which $p$ enters.

Similarly, there is a minimal closed path $q$ at $t(a)$ of homology $\eta_{i+1}$, projecting to a summand of $x_{\eta_{i+1}}$. Both $x_{\eta_i}$ and $x_{\eta_{i+1}}$ are killed by the perfect matching $\Perf_{i}$, which contains all zags of zigzag cycles of homology $-\eta_{i}$. Therefore, $q$ must intersect $Z_{i, j}$, through which it leaves $\E_{i, j}$, and $Z_{i, j+1}$, through which it enters $\E_{i. j}$, only in zigs. Let $c$ be the first zig of $Z_{i, j}$ through which $q$ leaves.

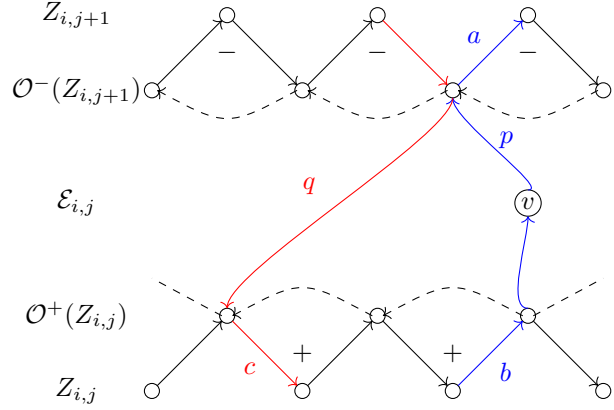
\begin{figure}[h]
\[
\begin{tikzpicture}
\node at (-1, 1) {$Z_{i, j+1}$};
\node at (-1, 0) {$\st^-(Z_{i, j+1})$};
\node at (-1, -1.5) {$\E_{i, j}$};
\node at (-1, -3) {$\st^+(Z_{i, j})$};
\node at (-1, -4) {$Z_{i, j}$}; 
\draw (0,0) circle [radius=0.1];
\draw (1,1) circle [radius=0.1];
\draw (2,0) circle [radius=0.1];
\draw (3,1) circle [radius=0.1];
\draw (4,0) circle [radius=0.1];
\draw (5,1) circle [radius=0.1];
\draw (6,0) circle [radius=0.1];
\draw (0,-4) circle [radius=0.1];
\draw (1,-3) circle [radius=0.1];
\draw (2,-4) circle [radius=0.1];
\draw (3,-3) circle [radius=0.1];
\draw (4,-4) circle [radius=0.1];
\draw (5,-3) circle [radius=0.1];
\draw (6,-4) circle [radius=0.1];
\draw (5, -1.5) circle [radius=0.175]; 
\node at (5, -1.5) {$v$};
\node at (1, 0.5) {$-$};
\node at (3, 0.5) {$-$};
\node at (5, 0.5) {$-$};
\node at (2, -3.5) {$+$};
\node at (4, -3.5) {$+$};
\draw[->] (0.07, 0.07) -- (0.93, 0.93);
\draw[->] (1.07, 0.93) -- (1.93, 0.07);
\draw[->] (2.07, 0.07) -- (2.93, 0.93);
\draw[->, red] (3.07, 0.93) -- (3.93, 0.07);
\draw[->, blue] (4.07, 0.07) -- (4.93, 0.93);
\node[above left, blue] at (4.5, 0.5) {$a$};
\draw[->] (5.07, 0.93) -- (5.93, 0.07);
\draw[->, dashed] (1.93, 0)..controls (1, -0.5) and (1, -0.5)..(0.07, 0);
\draw[->, dashed] (3.93, 0)..controls (3, -0.5) and (3, -0.5)..(2.07, 0);
\draw[->, dashed] (5.93, 0)..controls (5, -0.5) and (5, -0.5)..(4.07, 0);
\draw[->] (0.07, -3.93) -- (0.93, -3.07);
\draw[->, red] (1.07, -3.07) -- (1.93, -3.93);
\node[red, below left] at (1.5, -3.5) {$c$};
\draw[->] (2.07, -3.93) -- (2.93, -3.07);
\draw[->] (3.07, -3.07) -- (3.93, -3.93);
\draw[->, blue] (4.07, -3.93) -- (4.93, -3.07);
\node[below right, blue] at (4.5, -3.5) {$b$};
\draw[->] (5.07, -3.07) -- (5.93, -3.93);
\draw[->, dashed] (2.93, -3)..controls (2, -2.5) and (2, -2.5)..(1.07, -3);
\draw[->, dashed] (4.93, -3)..controls (4, -2.5) and (4, -2.5)..(3.07, -3);
\draw[-, dashed] (0.93, -3) -- (0, -2.5);
\draw[-, dashed] (5.07, -3) -- (6, -2.5);
\draw[->, blue] (5, -1.325)..controls +(right:3mm) and +(down:3mm)..(4, -0.1); 
\node[above right, blue] at (4.5, -0.9) {$p$};
\draw[->, blue] (5, -2.9)..controls +(left:3mm) and +(down:3mm)..(5, -1.675);
\draw[->, red] (4, -0.1)..controls +(down:5mm) and +(up:5mm)..(1, -2.9);
\node[above left, red] at (2.3, -1.5) {$q$};
\end{tikzpicture}
\]
\caption{\label{zigzag strip} The subspace $\E_{i, j}$.}
\end{figure}

We thus have a closed path at $v$ that is completely contained in the strip $\E_{i, j}$, 
\[
r : = p[v \to t(a)] \, q[t(a) \to t(c)] \, \st^+(Z_{i, j})[t(c) \to h(b)] \, p[h(b) \to v]
\]
where $X[v_1 \to v_2]$ denotes the subpath of $X \in \{p, q, \st^+(Z_{i, j}) \}$ from vertex $v_1$ to vertex $v_2$. Since the strip $\E_{i, j}$ contains no zigzag cycle of homology $-\eta_i$, by Theorem \ref{matching polygon}, the perfect matchings $\Perf_{i-1}$ and $\Perf_{i}$ coincide in all boundary cycles contained in $\E_{i, j}$. Therefore, the segments of $p$ and $q$ contained in $\E_{i, j}$ are killed by both $\Perf_{i-1}$ and $\Perf_{i}$, implying that the same is true for the whole path $r$. Consequently, the homology of $r$ must be in $\langle \eta_i \rangle$, and it is clearly nonzero. So $r$ must project to a summand of $x_{\eta_i}^m$ in $\J$, for some $m \in \z_{>0}$, i.e., $r \subset \Q_1(x_{\eta_i}^m)$. But $\Q_1(x_{\eta_i}^m) = \Q_1(x_{\eta_i})$ by Corollary \ref{arrow sets same}, so we conclude that $v$ is $x_{\eta_i}$-connected to the vertices of $\st^+(Z_{i, j})$ and $\st^-(Z_{i, j+1})$. 

For the other direction, note that any path connecting vertices in different strips must cross a zigzag of homology $-\eta_i$. It must therefore have positive degree in $\Perf_{i-1}$ or $\Perf_{i}$. Hence, the vertices cannot be $x_{\eta_i}$-connected.
\end{proof}

The last part of the computation is to determine how minimal elements corresponding to lattice points in the cone interiors connect vertices.

\begin{thm} \label{HH_0 comp}
Suppose $\Q$ is a zigzag consistent dimer in a torus. There is an isomorphism of $\zc$-modules
\[
HH_0(\J) \cong \bigoplus_{v \in \Q_0} \zc \cdot [v]  \, / \, \mathcal{I} 
\]
where $\mathcal{I}$ is the submodule generated by
\begin{enumerate}
\item $x_{\eta_i} ( [v] - [v'] )$ for all $1 \leq i \leq N$, $1 \leq j \leq m_i$, and $v, v' \in \E_{i, j}$;
\item $x_{\alpha}( [v] - [v'])$ for all $1 \leq i \leq N$, $\alpha \in \Int \sigma_i$, and $v, v' \in \Q_0$;
\item $\ell ([v] - [v'])$ for all $v, v' \in \Q_0$.
\end{enumerate}
\end{thm}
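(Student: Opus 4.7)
The plan is to use Lemma \ref{connectedness equiv}, which already presents $HH_0(\J)$ as the quotient of $\bigoplus_{v \in \Q_0} \zc \cdot [v]$ by the submodule $\mathcal{I}'$ spanned by $f([v] - [v'])$ over homogeneous $f \in \zc$ and $f$-connected pairs $v, v'$. The inclusion $\mathcal{I} \subseteq \mathcal{I}'$ is immediate: relation (1) is a connectedness relation by Lemma \ref{opposite connectedness}, relation (3) by Proposition \ref{torsion HH_0}, and relation (2) by the $x_\alpha$-connectedness claim proved below. For the reverse inclusion, I would write a general homogeneous element of $\zc$ as $x_\alpha \ell^n$ up to scalar, using the description in \S \ref{section HH_0}, and treat three cases. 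If $n \geq 1$, then $\Q_1(x_\alpha \ell^n) \supseteq \Q_1(\ell) = \Q_1$, so all pairs are $f$-connected and $f([v]-[v']) = x_\alpha \ell^{n-1} \cdot \ell([v]-[v']) \in \mathcal{I}$ by type (3). If $n = 0$ and $\alpha \in \langle \eta_i \rangle$, write $x_\alpha = x_{\eta_i}^m$; by Corollary \ref{arrow sets same}, $\Q_1(x_{\eta_i}^m) = \Q_1(x_{\eta_i})$, so Lemma \ref{opposite connectedness} identifies $x_\alpha$-connected pairs with pairs in a common strip $\E_{i, j}$, and the resulting relation is $x_{\eta_i}^{m-1}$ times a generator of type (1).

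The substantive case is $n = 0$ and $\alpha \in \Int \sigma_i$, where I would show every pair of vertices is $x_\alpha$-connected. The key algebraic input is the toric identity
\[
x_\alpha^r = x_{\eta_i}^s \, x_{\eta_{i+1}}^t
\]
for some positive integers $r, s, t$ satisfying $r \alpha = s \eta_i + t \eta_{i+1}$, valid because $\alpha$ lies in the interior of the cone generated by $\eta_i$ and $\eta_{i+1}$. Concatenating a lift of $x_{\eta_i}^s \cdot v$ at $v$ with a lift of $x_{\eta_{i+1}}^t \cdot v$ at $v$ produces a lift of $x_\alpha^r \cdot v$, giving $\Q_1(x_{\eta_i}^s) \cup \Q_1(x_{\eta_{i+1}}^t) \subseteq \Q_1(x_\alpha^r)$. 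By Corollary \ref{arrow sets same} each exponent can be removed, yielding $\Q_1(x_{\eta_i}) \cup \Q_1(x_{\eta_{i+1}}) \subseteq \Q_1(x_\alpha)$. Consequently, every pair of vertices that is $x_{\eta_i}$- or $x_{\eta_{i+1}}$-connected is $x_\alpha$-connected, and since the concatenation of paths contained in $x_\alpha$ remains contained in $x_\alpha$, the $x_\alpha$-relation also contains the transitive closure of the two strip partitions $\{\E_{i, j}\}_j$ and $\{\E_{i+1, k}\}_k$.

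The remaining combinatorial step is to verify that this joint equivalence relation is trivial on $\Q_0$, for which I would appeal to Proposition \ref{intersection properties}(2): any zigzag cycle $Z_{i, j}$ of homology $-\eta_i$ meets any zigzag cycle $Z_{i+1, k}$ of homology $-\eta_{i+1}$ in at least one arrow, and the endpoint vertices of this arrow lie on both a boundary of $\E_{i, j}$ and a boundary of $\E_{i+1, k}$. As $j$ and $k$ vary, these shared boundary vertices link every $\eta_i$-strip to every $\eta_{i+1}$-strip, so alternating $x_{\eta_i}$-chains inside $\eta_i$-strips with $x_{\eta_{i+1}}$-chains inside $\eta_{i+1}$-strips reaches any vertex from any other. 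This establishes (2), finishes the inclusion $\mathcal{I}' \subseteq \mathcal{I}$, and yields the stated isomorphism. The main obstacle I anticipate is making the transversality argument between the two strip families fully rigorous at vertices lying on boundary zigzags, but the strip structure from Lemma \ref{opposite connectedness} together with the intersection count from Proposition \ref{intersection properties} should handle this without subtlety.
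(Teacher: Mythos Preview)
Your proposal is correct and follows essentially the same approach as the paper: both start from Lemma~\ref{connectedness equiv}, reduce the generic homogeneous $f = x_\alpha \ell^n$ to the minimal case, and establish the key claim that all vertices are $x_\alpha$-connected when $\alpha \in \Int\sigma_i$ via the factorization $x_\alpha^r = x_{\eta_i}^s x_{\eta_{i+1}}^t$ together with Corollary~\ref{arrow sets same}.

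The only noteworthy difference is in the transversality step. The paper first treats the special point $\alpha = \eta_i + \eta_{i+1}$, where $x_\alpha = x_{\eta_i} x_{\eta_{i+1}}$, and argues directly that a closed path of homology $\eta_{i+1}$ (a summand of $x_{\eta_{i+1}}$) must, by linear independence on the torus, cross every strip $\E_{i,j}$; this immediately links all $\eta_i$-strips to one another under $x_{\eta_i} x_{\eta_{i+1}}$-connectedness. You instead invoke Proposition~\ref{intersection properties}(2) to intersect zigzags $Z_{i,j}$ with $Z_{i+1,k}$ and then use the resulting shared arrow to link the two strip families. Your route works, but the indexing caveat you flag is real: the endpoints of an arrow on $Z_{i,j}$ lie in $\E_{i,j}$ or $\E_{i,j-1}$ depending on whether they sit on $\mathcal{O}^+(Z_{i,j})$ or $\mathcal{O}^-(Z_{i,j})$, so you need to vary $j,k$ to see that the joint relation is connected. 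The paper's formulation sidesteps this bookkeeping, since a single $\eta_{i+1}$-cycle already visits \emph{every} $\eta_i$-strip; you may find it cleaner to phrase your argument that way.
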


\begin{proof}
First, consider 
\[
\alpha = \eta_i + \eta_{i+1} \; \Longrightarrow \; x_{\alpha} = x_{\eta_i} x_{\eta_{i+1}}.
\]
Since $\eta_i$ and $\eta_{i+1}$ are linearly independent, any two closed paths representing these homologies intersect. This implies that vertices in any two strips $\E_{i,j}$ and $\E_{i, k}$ are $x_{\eta_i} x_{\eta_{i+1}}$-connected. Combining this with Lemma \ref{opposite connectedness}, we conclude all vertices are $x_{\eta_i} x_{\eta_{i+1}}$-connected.

For general $\alpha \in \Int \sigma_i$, there exist $r, s, t \in \z_{>0}$ such that
\[
r \alpha = s \eta_i + t \eta_{i+1} \; \Longrightarrow \: x_{\alpha}^r = x_{\eta_i}^s x_{\eta_{i+1}}^t,
\]
so by the above, any two vertices are $x_{\alpha}^r$-connected. Corollary \ref{arrow sets same} requires that they are $x_{\alpha}$-connected as well.
\end{proof}

Van den Berg duality with respect to the nondegenerate element $\pi_0$ establishes an isomorphism of $\zc$-modules $HH_0(\J) \cong HH^3(\J)$. For each $v \in \Q_0$, let $\theta_v \in HH^3(\J)$ be the class corresponding to $[v]$,
\[
[v] = \theta_v \cap \pi_0.
\]
It is deduced from Lemma \ref{degree VDB} that $\theta_v$ has degree $0$ in $H_1(\Sigma, \z)$ and degree $-1$ in all perfect matchings.

The $H_1(\Sigma, \z) \times \z_{\Perf}$-degree of $\pi_0$ is $(0, 1)$ for all perfect matchings, so the degree of $\theta_v$ is $(0, -1)$.  

\begin{cor} \label{HH^3 first}
Suppose $\Q$ is a zigzag consistent dimer in a torus. There is an isomorphism of $\zc$-modules
\[
HH^3(\J) \cong \bigoplus_{v \in \Q_0} \zc \cdot \theta_v \, / \, \mathcal{I}'
\]
where $\mathcal{I}'$ is the submodule generated by
\begin{enumerate}
\item $x_{\eta_i} ( \theta_v - \theta_{v'} )$ for all $1 \leq i \leq N$, $1 \leq j \leq m_i$, and $v, v' \in \E_{i, j}$;
\item $x_{\alpha}( \theta_v - \theta_{v'})$ for all $1 \leq i \leq N$, $\alpha \in \Int \sigma_i$, and $v, v' \in \Q_0$;
\item $\ell (\theta_v- \theta_{v'})$ for all $v, v' \in \Q_0$.
\end{enumerate}
\end{cor}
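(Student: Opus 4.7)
The plan is to obtain Corollary \ref{HH^3 first} as a direct transport of Theorem \ref{HH_0 comp} across the Van den Bergh duality isomorphism $\D_{\pi_0}: HH^3(\J) \to HH_0(\J)$ defined in \eqref{VDB cap}, which the paragraph preceding the corollary has already identified as a $\zc$-module isomorphism. By definition $\D_{\pi_0}(\theta_v) = [v]$, so the strategy is to check that under $\D_{\pi_0}$ the generating set $\{\theta_v\}_{v \in \Q_0}$ of the proposed presentation of $HH^3(\J)$ maps bijectively to the generating set $\{[v]\}_{v \in \Q_0}$ used in Theorem \ref{HH_0 comp}, and that each relation generating $\mathcal{I}'$ maps to the corresponding relation generating $\mathcal{I}$.

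First I would verify $\zc$-linearity of $\D_{\pi_0}$. The $\zc$-action on $HH^*(\J)$ is by cup product with $\zc = HH^0(\J)$, and for any $f \in HH^0(\J)$ and $\alpha \in HH^3(\J)$ one has $(f \cup \alpha) \cap \pi_0 = f \cap (\alpha \cap \pi_0)$ by associativity of the cap action; the resulting action of $HH^0(\J)$ on $HH_0(\J)$ agrees with the natural $\zc$-action inherited from $\J$, since $HH_0(\J)$ is a quotient of $\J$ by commutators and cap product with a degree-zero class is just the module action. This makes $\D_{\pi_0}$ a $\zc$-module isomorphism.

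Next, using $\zc$-linearity, the image of each generator of $\mathcal{I}'$ is exactly the corresponding generator of $\mathcal{I}$: for instance $\D_{\pi_0}(x_{\eta_i}(\theta_v - \theta_{v'})) = x_{\eta_i}([v] - [v'])$, and analogously for the other two families. Hence $\D_{\pi_0}$ descends to an isomorphism of $\zc$-modules
\[
\bigoplus_{v \in \Q_0} \zc \cdot \theta_v \, / \, \mathcal{I}' \;\xrightarrow{\;\cong\;}\; \bigoplus_{v \in \Q_0} \zc \cdot [v] \, / \, \mathcal{I} \;\cong\; HH_0(\J),
\]
and composing with $\D_{\pi_0}^{-1}$ gives the stated presentation of $HH^3(\J)$.

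Since Theorem \ref{HH_0 comp} has already done the combinatorial work of identifying the kernel of the surjection from the free module on vertex classes, there is no real obstacle here; the only point requiring care is the verification of $\zc$-linearity of the duality map, and this is immediate from the compatibility of cup and cap products.
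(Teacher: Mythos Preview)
Your proposal is correct and matches the paper's approach exactly: the corollary is stated immediately after Theorem \ref{HH_0 comp} and the definition of $\theta_v$ via $[v] = \theta_v \cap \pi_0$, with the only justification being that Van den Bergh duality $\D_{\pi_0}$ is a $\zc$-module isomorphism carrying $\theta_v$ to $[v]$. Your added verification of $\zc$-linearity via the cup/cap compatibility is a reasonable elaboration of what the paper takes for granted from \S\ref{section Calabi-Yau algebras}.
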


This characterization of $HH^3(\J)$ can be related to our previous description of $HH^1(\J)$ in Theorem \ref{HH^1 free}. Recall from Lemma \ref{derivations} that
\[
HH^1(\JW) \cong \zcl \cup \No.
\] 
Since $\JW$ is Morita equivalent to Laurent polynomials in three variables (Theorem \ref{matrix algebra}), the Hochschild cohomology of $\JW$ is the free graded commutative algebra generated by $HH^1(\JW)$ over $HH^0(\JW)$. Therefore,
\[
HH^3(\JW) \cong \zcl \cup \No \cup \No \cup \No,
\]
which is a free $\zcl$-module of rank $1$. According to Corollary \ref{kernel tor}, the morphism of algebras
\[
L^*: HH^*(\J) \longrightarrow HH^*(\JW) 
\] 
has kernel in degree $3$ equal to
\[
tor_{\ell}(HH^3(\J)) = \{ \theta_v - \theta_v' \, | \, v, v' \in \Q_0 \} \, \cup \, \{ x_{\eta_i}^m (\theta_v - \theta_v') \, | \, 1 \leq i \leq N, \, 1 \leq j, k \leq m_i, \, v \in \E_{i, j}, \, v' \in \E_{i, k}, \, m \in \z_{> 0} \}
\]
Hence, $L^*$ is injective on the subspace concentrated in nonnegative degrees in all perfect matchings,
\[
\zc \cup \No \cup \No \cup \No,
\]
and on the subspaces concentrated in degree $\alpha \in \Int \sigma_i$ and degree $-1$ in $\Perf_i$ for each $1 \leq i \leq N$,
\[
\partial_i \cup \No \cup \No.
\]
Degree considerations then allow us to deduce that, for all $v \in \Q_0$, 
\begin{align} \label{theta coeff}
\ell \cdot \theta_v  & = \lambda_{i, j, k} \, \partial_i \cup \partial_j \cup \partial_k, \\
x_{\alpha} \cdot \theta_v & = \nu_{i, j, k} \, \partial_{\alpha} \cup \partial_j \cup \partial_k \nonumber
\end{align}  
where $\alpha \in \Int \sigma_i$, $i, j, k$ are distinct, and $\lambda_{i, j, k}, \nu_{i. j, k} \in \C^*$. We can recast Corollary \ref{HH^3 first} as follows.

\begin{thm} \label{HH^3 second}
Suppose $\Q$ is a zigzag consistent dimer model in a torus. There is an isomorphism of $\zc$-modules
\begin{align*}
HH^3(\J) \cong & \, \zc \cup \No \cup \No \cup \No \, \oplus \, \bigoplus_{\substack{ \alpha \in \Int \sigma_i \\ 1 \leq i \leq N}} \partial_{\alpha} \cup \No \cup \No \\
& \C \{ x_{\eta_i}^m \theta_v \, | \, 1 \leq i \leq N, \, 1 \leq j \leq m_i, \, v \in \E_{i, j}, \, n \in \z_{> 0} \} \, \oplus \, \C \{ \theta_v \, | \, v \in \Q_0 \}.
\end{align*}
\end{thm}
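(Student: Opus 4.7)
The plan is to translate the presentation of $HH^3(\J)$ given in Corollary \ref{HH^3 first} into the cup product basis, using the identities \eqref{theta coeff} together with the control on $L^*: HH^3(\J) \to HH^3(\JW)$ afforded by Corollary \ref{kernel tor}.

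First, I would decompose $HH^3(\J)$ according to its $\ell$-torsion. By Corollary \ref{kernel tor}, the $\ell$-torsion submodule coincides with $\ker L^*$ in degree $3$. Combining the relations $\mathcal{I}'$ of Corollary \ref{HH^3 first} with the strip analysis of Lemma \ref{opposite connectedness}, this kernel is spanned by classes of two kinds: the differences $\theta_v - \theta_{v'}$ (which satisfy $\ell (\theta_v - \theta_{v'}) = 0$) and the multiples $x_{\eta_i}^m(\theta_v - \theta_{v'})$ for $v \in \E_{i, j}$ and $v' \in \E_{i, k}$ with $j \neq k$, $m \geq 1$. Choosing one representative vertex per strip and noting that the $\theta_v$ themselves sit in the $H_1(\Sigma,\z) \times \z^{PM(\Q)}$-graded piece of degree $(0, -1)$ where $L^*$ vanishes identically, the torsion subspace acquires the basis claimed in the two rightmost summands of the theorem.

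Next, I would analyze the image of $L^*$ on the torsion-free quotient. By Lemma \ref{derivations}, Theorem \ref{matrix algebra}, and Morita invariance, $HH^3(\JW) \cong \zcl \cup \No \cup \No \cup \No$ is a free rank-one $\zcl$-module. The identities \eqref{theta coeff} identify $L^*(\ell \theta_v)$ and $L^*(x_\alpha \theta_v)$ with nonzero scalar multiples of $\partial_i \cup \partial_j \cup \partial_k$ and $\partial_\alpha \cup \partial_j \cup \partial_k$, respectively. A degree-matching argument using the $H_1(\Sigma, \z) \times \z^{PM(\Q)}$-grading on both sides would then identify the torsion-free image with the submodule $\zc \cup \No \cup \No \cup \No \, \oplus \, \bigoplus_{\alpha \in \Int \sigma_i,\, 1 \leq i \leq N} \partial_\alpha \cup \No \cup \No$ inside $HH^3(\JW)$.

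Combining these pieces, the short exact sequence $0 \to \ker L^* \to HH^3(\J) \to L^*(HH^3(\J)) \to 0$ splits as graded vector spaces because the torsion and torsion-free parts live in distinct $H_1(\Sigma, \z) \times \z^{PM(\Q)}$-graded pieces, yielding the claimed decomposition. The hardest step will be the degree-matching in step two: one must systematically pair each cup product in the target with an appropriate $f \theta_v$, and conversely verify that for every $f \in \zc$ and $v \in \Q_0$ the image $L^*(f \theta_v)$ lies in the stated submodule. Both directions reduce to checking that the constants $\lambda_{i, j, k}, \nu_{i, j, k}$ of \eqref{theta coeff} are nonzero in every required degree and that no unexpected degrees allow the torsion-free image to exceed the claimed submodule.
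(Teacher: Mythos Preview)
Your overall strategy—use $L^*$ together with Corollary~\ref{kernel tor} and the $H_1(\Sigma,\z)\times\z^{PM(\Q)}$-grading to reorganize Corollary~\ref{HH^3 first}—is exactly what the paper does. But the specific decomposition you propose is wrong: the two rightmost summands of the theorem are \emph{not} the $\ell$-torsion of $HH^3(\J)$.

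Concretely, $L^*$ does not vanish on the graded piece of degree $(0,-1,\dots,-1)$. From \eqref{theta coeff} we have $\ell\,\theta_v = \lambda_{i,j,k}\,\partial_i\cup\partial_j\cup\partial_k$ with $\lambda_{i,j,k}\neq 0$, so $L^*(\theta_v)=\lambda_{i,j,k}\,\ell^{-1}\partial_i\cup\partial_j\cup\partial_k\neq 0$ in $HH^3(\JW)$. The torsion in this graded piece is the codimension-one subspace spanned by the differences $\theta_v-\theta_{v'}$, not the full $\C\{\theta_v\mid v\in\Q_0\}$. The same happens along each ray: for fixed $i$ and $m>0$, the space $\C\{x_{\eta_i}^m\theta_v\mid v\in\E_{i,j},\,1\le j\le m_i\}$ is $m_i$-dimensional, but its torsion subspace (spanned by $x_{\eta_i}^m(\theta_v-\theta_{v'})$ for $v,v'$ in different strips) has dimension $m_i-1$. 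So your claim that ``the torsion and torsion-free parts live in distinct graded pieces'' is false, and the torsion-free image of $L^*$ is strictly larger than $\zc\cup\No^{\cup 3}\oplus\bigoplus\partial_\alpha\cup\No^{\cup 2}$.

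The paper's decomposition is not torsion versus torsion-free; it is organized purely by graded piece. In a graded piece where every corner-matching degree is $\ge 0$, or where exactly one is $-1$ with homology in some $\Int\sigma_i$, the piece is one-dimensional and $L^*$ is injective there, so the cup product elements $f\,\partial_i\cup\partial_j\cup\partial_k$ or $\partial_\alpha\cup\partial_j\cup\partial_k$ give a basis. In the remaining graded pieces—homology $0$ or on a ray $\langle\eta_i\rangle$, with two corner-matching degrees equal to $-1$—the piece has dimension $|\Q_0|$ or $m_i$, and the paper simply retains the basis $\{\theta_v\}$ or $\{x_{\eta_i}^m\theta_v\}$ coming from Corollary~\ref{HH^3 first}. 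Reorganizing your argument along these lines fixes the gap.
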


\subsection{Second {H}ochschild cohomology} 

Suppose that the dimer $\Q$, in addition to being zigzag consistent, has a strictly positive integer grading. By Lemma 2.11 of \cite{Broomhead}, this condition is equivalent to every arrow being contained in a perfect matching, which is always satisfied in genus $1$. Then $\J$ is a nonnegatively graded, connected $\bk$-algebra, so by Lemma 3.6.1 of \cite{EtingofGinzburg07}, the rows of the following diagram are exact:
\begin{equation} \label{exact sequence}
\xymatrix{
\bk \ar[r] & HH^3(\J) \ar[r]^-{\Delta} \ar[d]^-{\mathbb{D}} & HH^2(\J)) \ar[r]^-{\Delta} \ar[d]^-{\mathbb{D}} & HH^1(\J) \ar[r]^-{\Delta} \ar[d]^-{\mathbb{D}} & HH^0(\J) \ar[r]^-{\Delta} \ar[d]^-{\D} & 0 \\
\bk \ar[r] & HH_0(\J) \ar[r]^-{B} & HH_1(\J) \ar[r]^-{B} & HH_2(\J) \ar[r]^-{B} & HH_3(\J) \ar[r]^-{B} & 0.
}
\end{equation}
The map $\bk \rightarrow HH_0(\J)$ sends $v$ to $[v]$, and the dual map $\bk \rightarrow HH^3(\J)$ sends $v$ to $\theta_v$. Thus, as a vector space, the second cohomology decomposes as
\begin{eqnarray} \label{HH^2 decomp}
HH^2(\J) & \cong & Im \big( \Delta: HH^3(\J) \rightarrow HH^2(\J) \big) \\
&  \oplus &  Ker \big( \Delta: HH^1(\J) \rightarrow HH^0(\J) \big). \nonumber
\end{eqnarray}

When the genus is $1$, the structure of $HH^2(\J)$ can be deduced from the explicit computations of $HH^1(\J)$ and $HH^3(\J)$. The map 
\[
\Delta: HH^3(\J) \rightarrow HH^2(\J)
\]
restricted to the subspace complementary to $\C \{ \theta_v \, | \, v \in \Q_0\}$ is injective. So for all $1 \leq i \leq N$, $1 \leq j \leq m_i$, and $v \in \E_{i, j}$, let 
\[
\psi_{i, j} = \Delta (x_{\eta_i} \theta_v ).
\]  

\begin{lem} \label{BV on theta}
Suppose $\Q$ is a zigzag consistent dimer in a torus. The BV differential satisfies
\[
\Delta( f x_{\eta_i} \theta_v) = f \psi_{i, j} + x_{\eta_i} \Delta(f \theta_v)
\]
where $f \in \zc$ is an $H_1(\Sigma, \z) \times \z^{PM(\Q)}$ homogeneous element, $1 \leq i \leq N$, $1 \leq j \leq m_i$, and $v \in \E_{i, j}$.
\end{lem}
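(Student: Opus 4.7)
The plan is to reduce the identity to two successive applications of the BV-Leibniz formula \eqref{BVGers} together with one application of the Gerstenhaber-Leibniz rule \eqref{Leibniz}, exploiting that the BV operator and the Gerstenhaber bracket both lower cohomological degree by $1$ and that $HH^{-1}(\J) = 0$. Write $f x_{\eta_i} \theta_v = x_{\eta_i} \cup (f \theta_v)$ and apply \eqref{BVGers}:
\[
\Delta(x_{\eta_i} \cup f \theta_v) = \Delta(x_{\eta_i}) \cup f\theta_v \, + \, x_{\eta_i} \Delta(f \theta_v) \, + \, \{x_{\eta_i}, f \theta_v\}.
\]
Since $x_{\eta_i} \in HH^0(\J)$ and $\Delta$ has degree $-1$, the first term vanishes, leaving
\[
\Delta(f x_{\eta_i} \theta_v) \, = \, x_{\eta_i} \Delta(f \theta_v) \, + \, \{x_{\eta_i}, f \theta_v\}.
\]
So it suffices to identify $\{x_{\eta_i}, f\theta_v\}$ with $f \psi_{i,j}$.

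For this, I would apply \eqref{Leibniz} to expand the bracket across the product $f \cup \theta_v$:
\[
\{x_{\eta_i}, f \theta_v\} \, = \, \{x_{\eta_i}, f\} \cup \theta_v \, + \, (-1)^{(|x_{\eta_i}|-1)|f|} f \{x_{\eta_i}, \theta_v\}.
\]
Because $x_{\eta_i}$ and $f$ both sit in $HH^0(\J)$, the first term lands in $HH^{-1}(\J) = 0$, and the sign in the second term is trivial, yielding $\{x_{\eta_i}, f\theta_v\} = f\{x_{\eta_i}, \theta_v\}$.

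Finally, I compute $\{x_{\eta_i}, \theta_v\}$ by a second application of \eqref{BVGers} to the product $x_{\eta_i} \cup \theta_v$:
\[
\Delta(x_{\eta_i} \theta_v) \, = \, \Delta(x_{\eta_i}) \cup \theta_v \, + \, x_{\eta_i} \Delta(\theta_v) \, + \, \{x_{\eta_i}, \theta_v\}.
\]
The first summand again vanishes. For the second, $\theta_v$ lies in the image of the map $\bk \to HH^3(\J)$ in \eqref{exact sequence}, and by exactness of that row it lies in the kernel of $\Delta$, so $\Delta(\theta_v) = 0$. Thus $\{x_{\eta_i}, \theta_v\} = \Delta(x_{\eta_i} \theta_v) = \psi_{i,j}$, which combined with the two earlier reductions gives the claimed identity.

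The argument is almost purely formal manipulation of the noncommutative calculus identities; the only nontrivial input is the vanishing $\Delta(\theta_v) = 0$, which relies on the exact sequence \eqref{exact sequence}, and the observation that $HH^{-1}(\J) = 0$ kills any term of the shape $\{g, h\}$ with $g, h \in HH^0(\J)$. No genuine obstacle arises, and the homogeneity of $f$ is used only implicitly to make sure all graded formulas are well-defined.
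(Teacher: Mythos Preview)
Your argument is correct, and it differs from the paper's proof in an instructive way. The paper works on the Hochschild \emph{homology} side: it writes down the explicit $1$-boundary
\[
d(1 \otimes_{\bk} f \otimes_{\bk} x_{\eta_i} v) = f \otimes_{\bk} x_{\eta_i} v - 1 \otimes_{\bk} f x_{\eta_i} v + x_{\eta_i} \otimes_{\bk} f v,
\]
applies the Connes differential $B$, and then transports the resulting identity across Van den Bergh duality to obtain the statement for $\Delta$. Your proof, by contrast, stays entirely on the cohomology side and is purely formal: two applications of the BV identity \eqref{BVGers} and one of the Leibniz rule \eqref{Leibniz}, together with $\Delta(\theta_v)=0$ from the exact sequence \eqref{exact sequence} and the vanishing of $HH^{-1}$. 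Your route is shorter and more conceptual, making clear that the lemma is really just a consequence of the abstract BV--Gerstenhaber axioms plus the one computation $\Delta(\theta_v)=0$; the paper's route has the virtue of exhibiting a concrete chain-level witness, which is sometimes useful when one later needs explicit representatives.
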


\begin{proof}
Writing $d$ for the Hochschild differential \eqref{chain}, we have a 1-boundary
\[
d (1 \otimes_{\bk} f \otimes_{\bk} x_{\eta_i} \, v )  = f \otimes_{\bk} x_{\eta_i} \, v - 1 \otimes_{
\bk} f x_{\eta_i} \, v + x_{\eta_i} \otimes_{\bk} f \, v.
\]
Thus, in cohomology,
\[
0 = f B([x_{\eta_i} \, v]) - B([f x_{\eta_i} \, v]) + x_{\eta_i} B([ f \, v])
\]
where the formula for the Connes differential $B$ \eqref{Connes} has been applied. The Van den Bergh dual of this identity yields
\[
\Delta( f x_{\eta_i} \theta_v) = f \psi_{i, j} + x_{\eta_i} \Delta(f \theta_v).
\]
\end{proof}

In particular, if $f = x_{\eta_i}^n$, then an induction argument yields
\begin{equation} \label{psi formula}
\Delta(x_{\eta_i}^{n+1} \theta_v) = (n+1) x_{\eta_i}^n \psi_{i, j} \; \; \forall \, n \geq 0.
\end{equation}

\begin{thm} \label{HH^2 comp}
Suppose $\Q$ is a zigzag consistent dimer in a torus. Additively, the second Hochschild cohomology of the Jacobi algebra is
\[
HH^2(\J) \cong \zc \cup \No \cup \No \, \oplus \, \bigoplus_{\substack{\alpha \in \Int \sigma_i \\ 1 \leq i \leq N}} \C \,\partial_{\alpha} \cup \No \, \oplus \, \C \{ x_{\eta_i}^n \psi_{i, j} \, | \, 1 \leq i \leq N, \, 1 \leq j \leq m_i, \, n \in \z_{>0} \}.
\]
\end{thm}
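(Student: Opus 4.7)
The plan is to exploit the decomposition $HH^2(\J) \cong \mathrm{Im}(\Delta \colon HH^3 \to HH^2) \oplus \mathrm{Ker}(\Delta \colon HH^1 \to HH^0)$ from \eqref{HH^2 decomp}, combined with the explicit presentations of $HH^1(\J)$ (Theorem \ref{HH^1 free}) and $HH^3(\J)$ (Theorem \ref{HH^3 second}) already established. Throughout, I will rely on the fact that $L^* \colon HH^*(\J) \to HH^*(\JW)$ is injective on all torsion-free summands by Corollary \ref{kernel tor} and intertwines $\Delta$ with the divergence operator $\Div_0$ of Theorem \ref{BV torus}(1), by Theorem \ref{loc of cohomology}(3). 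So for classes supported in torsion-free summands, the BV image can be computed concretely in the Laurent polynomial model of $HH^*(\JW)$.

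First I compute the image of $\Delta \colon HH^3(\J) \to HH^2(\J)$ by running through the four summands of Theorem \ref{HH^3 second}. Exactness of $\bk \to HH^3 \xrightarrow{\Delta} HH^2$ kills every $\theta_v$. Iterating Lemma \ref{BV on theta} gives formula \eqref{psi formula}: each $x_{\eta_i}^{n+1} \theta_v$ maps to $(n+1) x_{\eta_i}^n \psi_{i,j}$, which produces the torsion summand $\C\{x_{\eta_i}^n \psi_{i,j} : n \in \z_{>0}\}$ of the claim. For the two torsion-free summands $\zc \cup \No \cup \No \cup \No$ and $\bigoplus \partial_\alpha \cup \No \cup \No$, I transport classes to $HH^3(\JW)$ via $L^*$, apply $\Div_0$ explicitly, and then use $H_1(\Sigma, \z) \times \z^{PM(\Q)}$-homogeneity to identify the pullbacks as lying, respectively, in $\zc \cup \No \cup \No$ and $\bigoplus_{\alpha,i} \partial_\alpha \cup \No$ inside $HH^2(\J)$.

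Next, I handle the kernel of $\Delta \colon HH^1(\J) \to HH^0(\J)$. Since $HH^1(\J) = \zc \cup \No \oplus \bigoplus \partial_\alpha$ is torsion-free (Theorem \ref{HH^1 free}), $L^*$ is injective on it and reduces the kernel computation to one under $\Div_0$ on $HH^1(\JW)$, which is explicit. A splitting of \eqref{HH^2 decomp} then embeds this kernel into the complementary portion of $\zc \cup \No \cup \No \oplus \bigoplus \partial_\alpha \cup \No$ not already produced by the image of $\Delta$ on the torsion-free part of $HH^3$, so the two contributions together fill out exactly the first two summands of the claim.

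The main obstacle is the bookkeeping required to assemble the two contributions into the two clean free $\zc$-modules $\zc \cup \No \cup \No$ and $\bigoplus_{\alpha, i} \partial_\alpha \cup \No$ appearing in the statement. Concretely, one must track how the Euler operators $\partial_i$ and $\partial_\alpha$ translate into coordinate vector fields on the three-torus model, how $\Div_0$ acts on the polyvectors built from them, and why the combined grading constraints in $H_1(\Sigma,\z) \times \z^{PM(\Q)}$ force both the image of $\Delta$ and any splitting of the kernel to land precisely in the predicted graded subspaces, with no spillover into torsion pieces or other degrees and no overcounting between the two contributions.
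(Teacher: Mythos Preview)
Your strategy matches the paper's: both exploit the decomposition \eqref{HH^2 decomp} together with the known presentations of $HH^1(\J)$ and $HH^3(\J)$. The paper, however, executes this by a graded dimension count rather than by computing image and kernel globally and then assembling. It restricts to the $H_1(\Sigma,\z) \times \z^N$-grading by corner matchings only, runs through five cases for $\Lambda = (\alpha, n_1, \ldots, n_N)$, and in each case reads off $\dim HH^1(\J)_\Lambda$, $\dim HH^3(\J)_\Lambda$, and $\dim \mathrm{Ker}(\Delta|_{HH^1(\J)_\Lambda})$ directly from Theorems \ref{HH^1 free} and \ref{HH^3 second} and Proposition \ref{Euler BV}; this fixes $\dim HH^2(\J)_\Lambda$. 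It then exhibits that many explicit cup products (e.g.\ $\partial_\alpha \cup \partial_j$, $\partial_\alpha \cup \partial_k$) and verifies their linear independence by pushing forward to $HH^2(\JW)$ via $L^*$, where $HH^*(\JW)$ is free graded-commutative on $\No$ over $\zcl$. This sidesteps exactly the ``bookkeeping'' you flag as the main obstacle: rather than constructing an abstract splitting of \eqref{HH^2 decomp} and arguing about where the lifted kernel of $\Delta\colon HH^1 \to HH^0$ lands inside $HH^2$ (the step you leave vague, and which is delicate since one does not yet know the torsion-free part of $HH^2$), the paper never chooses a splitting at all---it just counts dimensions and writes down spanning sets in each graded piece.
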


\begin{proof}
The BV differential $\Delta$ preserves the grading by first homology and the perfect matchings. Consequently, additively computing $HH^2(\J)$ reduces to a dimension count for each homogeneous subspace using the decomposition \eqref{HH^2 decomp}. In fact, it suffices to consider only the grading by the subgroup $H_1(\Sigma, \z) \times \z^N$ where the $\z^N$ coordinates correspond to degrees in the corner matchings: i.e.,
\[
\Lambda = (\alpha, n_1, \dots, n_N) \in H_1(\Sigma, \z) \times \z^N,
\]
where $n_i$ is the degree in $\Perf_i$. Write $HH^*(\J)_{\Lambda}$ for the $\Lambda$-homogeneous subspace.\\

\noindent \underline{Case 1:} $\alpha \in H_1(\Sigma, \z)$, $n_i < 1$ for all $i$. \\

\noindent The subspaces $HH^1(\J)_\Lambda$ and $HH^3(\J)_\Lambda$ are trivial, implying $HH^2(\J)_\Lambda$ is as well. \\

\noindent \underline{Case 2:} $\alpha = 0$, $n_i = -1$ for all $i$. \\

\noindent The subspace $HH^1(\J)_\Lambda$ is trivial, but
\[
HH^3(\J)_{\Lambda} = \C \{ \theta_v \, | \, v \in \Q_0 \}.
\] 
However, this lies in the kernel of $\Delta$ according to the sequence \eqref{exact sequence}, so $HH^2(\J)_{\Lambda} = 0$. \\

\noindent \underline{Case 3:} $\alpha = n \cdot \eta_i$ for $n > 0$, $n_{i-1} = n_i = -1$, $n_j \geq 0$ for all $j \neq i$. \\

\noindent The subspace $HH^1(\J)_{\Lambda}$ is trivial, but
\[
HH^3(\J)_{\Lambda}= \C \{ x_{\eta_i}^n \theta_v \, | \, 1 \leq k \leq m_i, \, v \in \E_{i,k} \}. 
\]
By the definition of $\psi_{i, j}$ and Lemma \ref{BV on theta}, the injective image of this subspace under $\Delta$ is
\[
HH^2(\J)_{\Lambda} = \C \{ x_{\eta_i}^n \psi_{i, j} \, | \, 1 \leq k \leq m_i \},
\]
which is $m_i$-dimensional for each $i$. \\

\noindent \underline{Case 4:} $\alpha \in \Int \sigma_i$, $n_i = -1$, $n_j \geq 0$ for all $j \neq i$. \\

\noindent The subspace $HH^0(\J)_{\Lambda}$ is trivial, so
\[
HH^1(\J)_{\Lambda} = \C \partial_{\alpha}
\]
must lie in the kernel of $\Delta$. Moreover, the BV differential is injective on
\[
HH^3(\J)_{\Lambda} = \partial_{\alpha} \cup \No \cup \No,
\]
which is one dimensional. Therefore,
\[
dim_{\C} HH^2(\J)_{\Lambda}= 2.
\]
For distinct $i$, $j$, and $k$, the products
\[
\partial_{\alpha} \cup \partial_j, \; \partial_{\alpha} \cup \partial_k,
\]
are linearly independent as elements of $HH^2(\JW)$, since $HH^*(\JW)$ is the free graded commutative algebra generated by $\No$ over $\zcl$. Hence, the same is true for them as elements of $HH^2(\J)$, implying they span the homogeneous subspace
\[
HH^2(\J)_{\Lambda} = \partial_{\alpha} \cup \No.
\] 

\noindent \underline{Case 5:} $\alpha \in H_1(\Sigma, \z)$, $n_i \geq 0$ for all $i$ \\

\noindent The dimension of 
\[
ker( \Delta: HH^1(\J)_{\Lambda} \rightarrow HH^0(\J)_{\Lambda}) 
\]
is $2$, while the injective image of
\[
HH^3(\J)_{\Lambda} = x_{\alpha} \ell^n \cdot \No \cup \No \cup \No
\]
under $\Delta$ has dimension $1$. Hence,
\[
dim_{\C} HH^2(\J)_{\Lambda} = 3.
\]
The same argument as for the preceding case then shows that
\[
HH^2(\J)_{\Lambda} = x_{\alpha} \ell^n \cdot \No \cup \No.
\]
\end{proof}

As is well known, the second Hochschild cohomology of an associative algebra classifies its infinitesimal deformations up to gauge equivalence (see e.g. \cite{TS}). In fact, deformations of $\J$ within the class of Calabi--Yau algebras can be identified using the BV differential. For each $\Phi \in HH_0(\C \Q)$, a new algebra is obtained by infinitesimally perturbing the superpotential of the Jacobi algebra in the direction of $\Phi$,
\begin{equation} \label{CY def}
J(\Q, \Phi_0 + \hbar \, \Phi) : = \frac{ \C \Q [\hbar]}{ \Big( \hbar^2, \, \partial_a \big( \Phi_0 + \hbar \Phi \big) \, | \, a \in \Q_1 \Big)}.
\end{equation}
The second cohomology class corresponding to $\Phi$ under the composition
\begin{equation} \label{def comp}
\begin{tikzcd}
HH_0(\C \Q) \arrow[r, twoheadrightarrow] & HH_0(\J) \arrow[r, "\D"] & HH^3(\J) \arrow[r, "\Delta"] & HH^2(\J),
\end{tikzcd}
\end{equation}
is precisely that of $J(\Q, \Phi_0 + \hbar \, \Phi)$ (\cite{EGP} Proposition 2.1.5).

\begin{prop}
Suppose $\Q$ is a zigzag consistent dimer in a torus. The first order Calabi--Yau deformations of the Jacobi algebra are of the form \eqref{CY def} where $\Phi \in HH_0(\C \Q)$ is a linear combination of cycles of the following types:
\begin{enumerate}
\item a multiple of a boundary cycle, depending up to gauge equivalence on the homology and perfect matching degrees;
\item a minimal closed cycle of homology in $\Int \sigma_i$ for some $1 \leq i \leq N$, depending up to gauge equivalence only on the homology class;
\item $\st^+(Z_{i, j})^n$, where $1 \leq i \leq N$, $1 \leq j \leq m_i$, and $n \in \z_{> 0}$.  
\end{enumerate}
\end{prop}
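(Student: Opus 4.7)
The plan is to use the identification of first-order Calabi--Yau deformations with elements in the image of the composition \eqref{def comp},
\[
HH_0(\C \Q) \twoheadrightarrow HH_0(\J) \xrightarrow{\D} HH^3(\J) \xrightarrow{\Delta} HH^2(\J),
\]
from Proposition 2.1.5 of \cite{EGP}. Because the first arrow is surjective and $\D$ is the Van den Bergh isomorphism, the image of the composition coincides with $\Delta(HH^3(\J))$. So the task reduces to exhibiting, for each generator of $\Delta(HH^3(\J))$, a representative cycle $\Phi \in HH_0(\C \Q)$ of one of the three listed types.

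Combining Theorem \ref{HH^3 second} with the exact sequence \eqref{exact sequence}, which places $\C\{\theta_v \, | \, v \in \Q_0\}$ in the kernel of $\Delta$, and with Theorem \ref{HH^2 comp}, the image decomposes into the three summands $\zc \cup \No \cup \No$, $\bigoplus_{1 \leq i \leq N, \, \alpha \in \Int \sigma_i} \C \, \partial_\alpha \cup \No$, and $\C\{ x_{\eta_i}^n \psi_{i,j} \}$. Using $\zc$-equivariance of $\D$, the preimage of $f \theta_v$ under $\D$ is $f \cdot [v] \in HH_0(\J)$, which lifts to any closed cycle in $\C \Q$ at $v$ representing $f \cdot v \in \J$. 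I would construct such lifts type by type: for the first summand, writing $f = \ell^{k+1} g$ with $g \in \zc$, the product of $k+1$ boundary cycles at $v$ with a minimal lift of $g \cdot v$ is a multiple of a boundary cycle, giving type (1); for the second summand, the class $x_\alpha \cdot v \in \J$ lifts to a minimal closed cycle of homology $\alpha \in \Int \sigma_i$ at $v$, giving type (2); for the third summand, by \eqref{psi formula} the preimage of $x_{\eta_i}^{n-1} \psi_{i,j}$ up to scalar is $x_{\eta_i}^n \theta_v$ for $v \in \E_{i,j}$, and $\st^+(Z_{i,j})^n$ is a closed minimal path of homology $n \eta_i$ based at any vertex on $\st^+(Z_{i,j})$, hence by uniqueness of minimal paths (\cite{Bocklandt2012} Lemma 7.4) represents $x_{\eta_i}^n \cdot v$ in $\J$, giving type (3).

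The gauge-equivalence dependencies are read off the graded structure of $HH^2(\J)$. For type (1), the $H_1(\Sigma, \z) \times \z^{PM(\Q)}$-grading distinguishes deformations by the $\zc$-degree of $f$, i.e., by the total homology and perfect matching degrees of the cycle $\Phi$. For type (2), the subspace of $\zc$ in each degree $\alpha \in \Int \sigma_i$ is one-dimensional (\S \ref{section HH_0}), and by Theorem \ref{HH_0 comp}(2) all vertices yield the same class $x_\alpha \cdot [v]$ in $HH_0(\J)$, so the deformation depends only on $\alpha$. For type (3), the elements $\{ x_{\eta_i}^{n-1} \psi_{i,j} \}$ form a basis of the third summand, so distinct triples $(i,j,n)$ give distinct classes; within a fixed $(i,j)$, different choices of vertex on $\st^+(Z_{i,j})$ yield the same class by Theorem \ref{HH_0 comp}(1).

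The main obstacle is the third type: matching the algebraic class $x_{\eta_i}^n \theta_v$ to the specific geometric cycle $\st^+(Z_{i,j})^n$ requires choosing $v$ on the antizigzag $\st^+(Z_{i,j})$, which has homology $\eta_i$ opposite to $Z_{i,j}$, after which uniqueness of minimal paths forces $(u_v^{\eta_i})^n = \st^+(Z_{i,j})^n$ in $\J$. Once this identification is in place, Lemma \ref{BV on theta} and formula \eqref{psi formula} translate directly into the desired cycle, and the surjectivity of the composition onto $\Delta(HH^3(\J))$ is immediate.
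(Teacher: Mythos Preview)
Your proposal is correct and follows essentially the same route as the paper. The paper's proof invokes the additional fact that the space of first-order Calabi--Yau deformations is $\operatorname{Ker}\big(\Delta: HH^2(\J)\to HH^1(\J)\big)$ (citing \cite{EGP} \S 2) and then uses exactness of \eqref{exact sequence} to identify this with $\operatorname{Im}\big(\Delta:HH^3(\J)\to HH^2(\J)\big)$; you assume this identification directly from \cite{EGP} Proposition 2.1.5, but the surjectivity onto all Calabi--Yau deformations really does require that kernel-equals-image step, so you should make it explicit. Beyond that, the paper reads off the gauge-equivalence classes from Theorem \ref{HH_0 comp} on the $HH_0(\J)$ side, whereas you pass through the dual decomposition of $HH^2(\J)$ via Theorems \ref{HH^3 second} and \ref{HH^2 comp}; the two descriptions match under $\D$ and the content is the same.
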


\begin{proof}
The classes in $HH^2(\J)$ corresponding to Calabi--Yau deformations are those in the subspace (\cite{EGP} \S 2)
\[
Ker( \Delta: HH^2(\J) \rightarrow HH^1(\J).
\] 
Exactness of \eqref{exact sequence} means this subspace equals
\[
Im(\Delta: HH^3(\J) \rightarrow HH^2(\J),
\]
so in fact every first order Calabi--Yau deformation is a deformation of the superpotential. Our computation of $HH_0(\J)$ (Theorem \ref{HH_0 comp}) then determines the gauge equivalence classes of the deformations. A cycle in $\C \Q$ of the first type maps to $f \ell [v] \in HH_0(\J)$ for some homogeneous $f \in \zc$ and vertex $v$; the class in $HH_0(\J)$ depends only on the homology and perfect matching degrees of $f$. Similarly, a cycle of the second type maps to $x_{\alpha} [v] \in HH_0(\J)$ where $\alpha \in \Int \sigma_i$ is its homology and $v$ is any vertex; the class depends on the homology. The cycle $\st^+(Z_{i, j})^n$ maps to $x_{\eta_i}^n [v] \in HH_0(\J)$ with $v \in \E_{i,j}$, which depends on the homology, $n \in \z_{>0}$, and the strip $\E_{i, j}$. 
\end{proof}

For example, consider the antizigzag cycle $\st^+(Z_{i,j})$, corresponding under the composition \eqref{def comp} to  $\psi_{i, j} \in HH^2(\J)$. The associated first order deformation is, up to gauge equivalence,
\[
J(\Q, \Phi_0 + \hbar \st^+(Z_{i, j})) = \frac{ \C \Q [\hbar]}{ \Big( \hbar^2, \, \partial_a \big( \Phi_0 + \hbar \st^+(Z_{i, j}) \big) \, | \, a \in \Q_1 \Big)},
\] 
If $a$ is an arrow not in $\st^+(Z_{i, j})$, then the relation \eqref{relations} is unchanged from before, 
\[
\partial_a \big( \Phi_0 + \hbar \st^+(Z_{i, j}) \big) = R_a^+ - R_a^-.
\]
However, if $a \in \st^+(Z_{i, j})$, then the relation is perturbed by the path that completes $a$ to the antizigzag: if 
\[
a a_1 \dots a_m, \; \; a_k \in \Q_1
\]
represents the cycle $\st^+(Z_{i, j}) \in HH_0(\C \Q)$ at $t(a) = h(a_m)$, then
\[
\partial_a \big( \Phi_0 + \hbar \st^+(Z_{i, j}) \big) = R_a^+ - R_a^- + \hbar a_1 \dots a_m.
\]
 
\subsection{Batalin--Vilkovisky structure} \label{section Batalin--Vilkovisky structure}
 
In the following, $\Q$ is assumed to be a zigzag consistent dimer embedded in a torus.
 
\subsubsection{BV operator}
 
First, we determine how the BV operator and the Gerstenhaber bracket evaluate on $HH^1(\J)$.

\begin{prop} \label{Euler BV}
Suppose $\Q$ is a zigzag consistent dimer in a torus. For all $H_1(\Sigma, \z) \times \z^{PM(\Q)}$-homogeneous $f \in \zc$, $1 \leq i, j \leq N$, $\alpha \in \Int \sigma_i$, and $\beta \in \Int \sigma_j$,
\begin{enumerate}
\item $\Delta (f \partial_i) = \big( 1 + \degree_{\Perf_i}(f) \big) \, f $;
\item $\Delta(\partial_{\alpha}) = 0$;
\item $\{ \partial_i, f \} = \degree_{\Perf_i}(f) \, f$;
\item $\{ \partial_{\alpha}, f \} = \degree_{\Perf_i} (f) x_{\alpha} \ell^{-1} f$;
\item $\{ \partial_i, \partial_j \} = 0$;
\item $\{\partial_i, \partial_{\beta} \} =  (\degree_{\Perf_i} (x_{\beta}) - 1) \partial_{\alpha}$;
\item
\begin{align*}
\{ \partial_{\alpha}, \partial_{\beta} \} & = \\ 
& \begin{cases}
0 & \text{if} \; \degree_{\Perf_j}(x_{\alpha}) = 0 \; \text{or} \; \degree_{\Perf_i}(x_{\beta}) = 0  \\
( \degree_{\Perf_i}(x_{\beta}) - 1) \partial_{\alpha + \beta} & \text{if} \; \degree_{\Perf_j}(x_{\alpha}) = 1 \; \text{and} \; \degree_{\Perf_i}(x_{\beta}) \geq 1 \\
(1 - \degree_{\Perf_j}(x_{\alpha})) \partial_{\alpha + \beta} & \text{if} \; \degree_{\Perf_j}(x_{\alpha}) \geq 1 \; \text{and} \; \degree_{\Perf_i}(x_{\beta}) = 1 \\
x_{\alpha + \beta} \ell^{\degree_{\Perf_k}(x_{\alpha}) + \degree_{\Perf_k}(x_{\beta}) - 2} & \text{if} \; \alpha + \beta \in \sigma(\Perf_k), \degree_{\Perf_j}(x_{\alpha}) \geq 2, \; \text{and} \\
\qquad \cdot \big( (\degree_{\Perf_i}(x_{\beta}) - 1) \partial_j + (1 - \degree_{\Perf_j}(x_{\beta})) \partial_i \big) & \; \degree_{\Perf_i}(x_{\beta}) \geq 2
\end{cases} 
\end{align*}
\end{enumerate}
\end{prop}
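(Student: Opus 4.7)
The strategy is to verify all identities in the localization $\widehat{HH^*}(\J)$, where the BV algebra structure is made explicit by Theorem \ref{BV torus}. Since $HH^0(\J) = \zc$ is $\ell$-torsion free and Theorem \ref{HH^1 free} shows the same for $HH^1(\J)$, Corollary \ref{kernel tor} implies the map $\hL^*$ is injective on every element appearing in the claims, so it suffices to prove each identity in $\widehat{HH^*}(\J)$. Under the Morita and matrix-algebra isomorphisms of \S \ref{section BV localized Jacobi}, $\partial_i$ corresponds to the Euler derivation $E_{\Perf_i}$ viewed as a vector field on the coordinate $3$-torus, $\partial_\alpha$ corresponds to $x_\alpha \ell^{-1} E_{\Perf_i}$, and $\Delta$ acts as $\Div_0$.

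For claims (3) and (4), the Gerstenhaber bracket on $HH^0 \otimes HH^1$ reduces to the evaluation of a derivation on a central element, so $\{\partial_i, f\} = E_{\Perf_i}(f) = \degree_{\Perf_i}(f)\, f$ and $\{\partial_\alpha, f\} = x_\alpha \ell^{-1}\, \degree_{\Perf_i}(f)\, f$. For (1) and (2), apply the BV Leibniz rule $\Delta(fD) = f \Delta(D) + D(f)$ valid in the polyvector algebra. A direct coordinate calculation shows $\Delta(\partial_i) = 1$: writing $E_{\Perf_i} = r\, x\xi_x + s\, y\xi_y + z\xi_z$ (using $\degree_{\Perf_i}(\ell) = 1$ and integers $r, s$ depending on the class of $\Perf_i - \Perf_o$), each of $\Div_0(x\xi_x)$ and $\Div_0(y\xi_y)$ vanishes while $\Div_0(z\xi_z) = \partial_z(z) = 1$. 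Then (1) follows since $\Delta(f\partial_i) = f + \degree_{\Perf_i}(f) f = (1 + \degree_{\Perf_i}(f)) f$. For (2), $\Delta(\partial_\alpha) = x_\alpha \ell^{-1} + E_{\Perf_i}(x_\alpha \ell^{-1}) = x_\alpha \ell^{-1} + (\degree_{\Perf_i}(x_\alpha) - 1) x_\alpha \ell^{-1} = 0$, since $\alpha \in \Int \sigma_i$ forces $\degree_{\Perf_i}(x_\alpha) = 0$.

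For (5)--(7), the Gerstenhaber bracket of two $HH^1$ classes reduces to the commutator of derivations. Euler operators for different perfect matchings commute, giving $\{\partial_i, \partial_j\} = 0$. For (6), we compute $[E_{\Perf_i}, x_\beta \ell^{-1} E_{\Perf_j}] = E_{\Perf_i}(x_\beta \ell^{-1})\, E_{\Perf_j} = (\degree_{\Perf_i}(x_\beta) - 1)\, \partial_\beta$. For (7), expanding
\begin{equation*}
[x_\alpha \ell^{-1} E_{\Perf_i},\, x_\beta \ell^{-1} E_{\Perf_j}] = x_\alpha x_\beta \ell^{-2}\bigl[(\degree_{\Perf_i}(x_\beta) - 1)\, E_{\Perf_j} + (1 - \degree_{\Perf_j}(x_\alpha))\, E_{\Perf_i}\bigr]
\end{equation*}
and substituting $x_\alpha x_\beta = x_{\alpha + \beta}\, \ell^{\degree_{\Perf_k}(x_\alpha) + \degree_{\Perf_k}(x_\beta)}$ for $\alpha + \beta \in \sigma_k$ (from the presentation of $\zc$ in \S \ref{section HH_0}) yields the formula. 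The case distinctions correspond to: both integer coefficients $\degree_{\Perf_i}(x_\beta) - 1$ and $\degree_{\Perf_j}(x_\alpha) - 1$ are at least $1$, giving the $\zc$-combination of $\partial_i$ and $\partial_j$; exactly one of them vanishes (one of the degrees equals $1$), so the surviving summand is absorbed into $\partial_{\alpha + \beta}$; and one of the degrees equals $0$, equivalent to $i = j$ by the uniqueness of the matching killing $x_\alpha$, which makes the bracket antisymmetric in $\alpha, \beta$ and hence zero.

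The main obstacle is the bookkeeping in case (7): identifying which geometric case applies from the position of $\alpha, \beta, \alpha + \beta$ in the antizigzag fan and verifying that the resulting element of $\widehat{HH^1}(\J)$ matches the correct generator of $HH^1(\J)$ furnished by Theorem \ref{HH^1 free}.
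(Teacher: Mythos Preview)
Your proposal is correct and follows essentially the same approach as the paper: both arguments push the computation into $HH^*(\JW)$ via the injective BV morphism $L^*$ (legitimized by the torsion-freeness of $HH^0$ and $HH^1$), identify $\partial_i$ with the weighted Euler vector field under Theorem~\ref{BV torus}, compute $\Div_0$ directly to get $\Delta(\partial_i)=1$, and then deduce (1)--(7) from the BV identity together with the commutator formula $\{fE_{\Perf}, gE_{\Perf'}\} = \degree_{\Perf}(g)\,fg\,E_{\Perf'} - \degree_{\Perf'}(f)\,fg\,E_{\Perf}$. The only imprecision is your justification of the vanishing case in (7): saying the bracket is ``antisymmetric in $\alpha,\beta$ and hence zero'' is not the right reason---the Lie bracket is always antisymmetric. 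What actually happens when $\degree_{\Perf_j}(x_\alpha)=0$ (forcing $i=j$) is that both coefficients in your displayed formula become $-1$ and $+1$ respectively with $E_{\Perf_i}=E_{\Perf_j}$, so the two terms cancel directly.
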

 
\begin{proof}
As the map
\[
L^*: HH^*(\J) \rightarrow HH^*(\JW)
\]
is a morphism of BV algebras (Theorem \ref{loc of cohomology}) that is injective in degrees $0$ and $1$ (Theorems \ref{HH^1 free}), computations can be done faithfully in $HH^*(\JW)$. Consider the isomorphism
\[
\zeta: = cotr^* \Psi^*: (HH^*(\JW), \Delta_{L_*(\pi_0)} \big) \rightarrow \big( \C[x^{\pm 1}, y^{\pm 1}, z^{\pm 1}, \partial_x, \partial_y, \partial_z], \Div_0 \big). 
\]
Recall that the map $\Psi$ (Theorem \ref{matrix algebra}) was defined for a choice of basepoint $v_o$ and a perfect matching $\Perf_o$. Let $p_x$ and $p_y$ be paths in $v_o \JW v_o$ that represent the homology classes $x$ and $y$ and have degree $0$ in $\Perf_o$. The element $\partial_i$ is sent to the Euler vector field weighted by the cohomology class $(n_x, n_y) : = ( \degree_{\Perf_i}(p_x), \degree_{\Perf_i}(p_y))$ of the cocycle $\Perf_i - \Perf_o$,
\[
\zeta( \partial_i) = n_x \, x \partial_x + n_y \, y \partial_y + z \partial_z.
\]
Since
\[
\Div_0(n_x x \partial_x + n_y y \partial_y + z \partial_z) = 1,
\]
we must have $\Delta_{L_*(\pi_0)}(\partial_i) = 1$. A similar computation shows $\Delta_{L_*(\pi_0)}(\partial_{\alpha}) = 0$. (Alternatively, formula (2) can be deduced from the fact that $\Delta$ preserves the $H_1(\Sigma, \z) \times \z^{PM(\Q)}$-grading and $HH^0(\J) \cong \zc$ has no element of degree $-1$ in any $\Perf_i$.)

Formulas (3) and (4) follow immediately from the definition of the Gerstenhaber bracket. Then identity \eqref{BVGers} can be used to obtain formula (1) for general homogeneous coefficient $f \in \zc$.

On the cochain level, for homogeneous $f, g \in \zcl$ and $\Perf, \Perf' \in PM(\Q)$, we have the general formula
\[
\{ f E_{\Perf}, g E_{\Perf'} \} = \degree_{\Perf}(g) fg E_{\Perf'} - \degree_{\Perf'}(f) fg E_{\Perf}
\]
from which formulas (5), (6), and (7) are easily obtained.
\end{proof}

Proposition \ref{Euler BV} and the identities \eqref{Leibniz} and \eqref{BVGers} determine $\Delta$ on the parts of $HH^2(\J)$ and $HH^3(\J)$ that are generated by $HH^1(\J)$ and $HH^0(\J)$. As for the remainder, the exact sequence \eqref{exact sequence} and formula \eqref{psi formula} give 
\[
\Delta(x_{\eta_i}^n \theta_v ) =
\begin{cases}
0 & \text{if} \; n = 0 \\
n x_{\eta_i}^{n-1} \psi_{i, j} & \text{if} \; n \geq 1.  
\end{cases}
\]
The fact that $\Delta^2 = 0$ implies 
\[
\Delta(x_{\eta_i}^n \psi_{i, j} ) = 0,
\]
completing the computation of the BV operator.


\subsubsection{Products $HH^0(\J) \cup HH^1(\J) \rightarrow HH^1(\J)$} 

We have only to describe the action of $\zc$ on the subspace of $HH^1(\J)$ complementary to $\zc \cup \No$, which is spanned by the $\partial_{\alpha}$. Let $\alpha \in \Int \sigma_i$ and $\beta \in H_1(\Sigma, \z) \setminus \{ 0 \}$, and suppose $\alpha + \beta \in \sigma_j$. Observe
\[
x_{\beta} x_{\alpha} \ell^{-1} E_{\Perf_i} = x_{\alpha + \beta} \ell^{\degree_{\Perf_j}(x_{\alpha}) + \degree_{\Perf_j}(x_{\beta}) - 1} E_{\Perf_i},
\]
and so
\[
x_\beta \cdot \partial_\alpha =
\begin{cases}
\partial_{\alpha + \beta} & \text{if} \; \beta \in \sigma_i \\
x_{\alpha + \beta} \ell^{\degree_{\Perf_j}(x_{\alpha}) + \degree_{\Perf_j}(x_{\beta}) - 1} \partial_i & \text{otherwise}.
\end{cases}
\]
Moreover, it is easily checked that
\[
\ell \cdot \partial_{\alpha} = x_{\alpha} \cdot \partial_i.
\]

\subsubsection{Products $HH^0(\J) \cup HH^2(\J) \rightarrow HH^3(\J)$}

After the preceeding calculations, we have only to describe the action of $\zc$ on the subspace of $HH^2(\J)$ complementary to $\zc \cup \No \cup \No$ and $\partial_{\alpha} \cup \No$. By Lemma \ref{BV on theta},
\[
f \cdot \psi_{i, j} = \Delta( f \, x_{\eta_i} \theta_v) - x_{\eta_i} \Delta( f \theta_v)
\]
where $f \in \zc$ is a homogeneous central element and $v \in \E_{i, j}$. The formulas for $\Delta$ and the identities \eqref{theta coeff} can be used to write the expression for various $f$ explicitly in terms of our basis.

\subsubsection{Products $HH^1(\J) \cup HH^1(\J) \rightarrow HH^2(\J)$}

We only need to calculate products of the form $\partial_{\alpha} \cup \partial_{\beta}$ in terms of our basis for $HH^2(\J)$ in Theorem \ref{HH^2 comp}.

\begin{prop}
Suppose $\Q$ is a zigzag consistent dimer in a torus. For $\alpha \in \Int \sigma_i$ and $\beta \in \Int \sigma_j$ with $i \leq j$,
\[
\partial_{\alpha} \cup \partial_{\beta} = 
\begin{cases}
0 & \text{if} \; \degree_{\Perf_j}(x_{\alpha}) = \degree_{\Perf_i}(x_{\beta}) = 0 \\
x_{\eta_{i+1}}^{n-1} \psi_{i+1, 1} & \text{if} \; \degree_{\Perf_{j}}(x_{\alpha}) = \degree_{\Perf_{i}}(x_{\beta}) = 1, \alpha + \beta \in \langle \eta_{i+1} \rangle \\
\partial_i \cup \partial_{\alpha + \beta} & \text{if} \; \degree_{\Perf_j}(x_{\alpha}) = 1, \degree_{\Perf_i}(x_{\beta}) > 1 \\
x_{\alpha + \beta} \ell^{\degree_{\Perf_k}(x_{\alpha}) + \degree_{\Perf_k}(x_{\beta}) - 2} \partial_i \cup \partial_j & \text{if} \; \degree_{\Perf_j}(x_{\alpha}), \degree_{\Perf_i}(x_{\beta}) > 1, \alpha + \beta \in \sigma_k
\end{cases}
\]
\end{prop}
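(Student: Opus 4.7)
The strategy is to exploit the algebra morphism $L^* : HH^*(\J) \to HH^*(\JW)$ (Theorem \ref{loc of cohomology}), which is injective on the torsion-free summands of $HH^2(\J)$ classified in Theorem \ref{HH^2 comp}. In the localization, $HH^*(\JW)$ is the free graded commutative $\zcl$-algebra on $\No$, so the cup product there is straightforward:
\[
L^*(\partial_\alpha \cup \partial_\beta) \;=\; x_\alpha x_\beta \, \ell^{-2} \, E_{\Perf_i} \wedge E_{\Perf_j} \;=\; x_{\alpha + \beta} \, \ell^{\degree_{\Perf_k}(x_\alpha) + \degree_{\Perf_k}(x_\beta) - 2} \, E_{\Perf_i} \wedge E_{\Perf_j},
\]
where $\sigma_k$ is the cone containing $\alpha + \beta$.

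The next step is to determine, in each case, which $H_1(\Sigma, \z) \times \z^N$-graded piece of $HH^2(\J)$ the product inhabits, by tracking $\Perf_m$-degrees. For Case 1 ($i = j$ by the argument that $\degree_{\Perf_j}(x_\alpha) = 0$ with $\alpha \in \Int \sigma_i$ forces $i = j$), the product has $\Perf_i$-degree $-2$, and no element listed in Theorem \ref{HH^2 comp} has such a degree, so the answer is zero. For Cases 3 and 4, the product has $\Perf_m$-degree $\geq -1$ with at most one equality, so it lies in the torsion-free summands $\partial_{\alpha+\beta} \cup \No$ or $\zc \cup \No \cup \No$ respectively; injectivity of $L^*$ on these summands then matches the localized formula with the claimed basis expression after rewriting $E_{\Perf_k}$ in terms of the other $E_{\Perf_m}$ modulo $\Ni$.

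The subtle case is Case 2, where both $\Perf_i$- and $\Perf_{i+1}$-degrees equal $-1$ and $\alpha + \beta = n \eta_{i+1}$ for some $n \geq 1$. Here $L^*$ vanishes on the whole graded piece, which by Theorem \ref{HH^2 comp} is spanned by the torsion classes $\{x_{\eta_{i+1}}^{n-1} \psi_{i+1, j'}\}_{1 \leq j' \leq m_{i+1}}$, so the coefficients must be pinned down intrinsically. The plan is to combine the BV identity \eqref{BVGers},
\[
\Delta(\partial_\alpha \cup \partial_\beta) \;=\; -\{\partial_\alpha, \partial_\beta\},
\]
with $\Delta \partial_\alpha = \Delta \partial_\beta = 0$ (Proposition \ref{Euler BV}(2)) and the Gerstenhaber bracket formula (Proposition \ref{Euler BV}(7)), which under the hypotheses of Case 2 gives zero; this shows the cup product lies in $\ker \Delta$. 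To further cut down to a single $\psi_{i+1, j'}$, one applies the $\zc$-module action: multiplying by a suitable $x_{\eta_j}$ for $j \neq i+1$ yields a product that falls in a torsion-free summand, where the coefficient can be read off from the localization, and unravelling this via Lemma \ref{BV on theta} and the relations \eqref{theta coeff} between $\ell \theta_v$, $x_\alpha \theta_v$, and the $\partial_m$'s identifies the specific strip.

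The hard part will be the geometric input that forces the answer in Case 2 to concentrate on a single strip $\E_{i+1, j'}$: one needs to show that the minimal closed paths representing $\alpha$ and $\beta$, whose product is a summand of $x_{\alpha + \beta} = x_{\eta_{i+1}}^n$, can be chosen to meet a prescribed strip, so that under the isomorphism $HH_0(\J) \cong HH^3(\J)$ from Van den Bergh duality the corresponding $\theta_v$ (and hence $\psi_{i+1, j'}$) is singled out. This ties the algebraic computation back to the strip decomposition used in Lemma \ref{opposite connectedness}, and the normalization $j' = 1$ reflects a choice of labeling of strips rather than intrinsic data.
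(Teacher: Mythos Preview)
Your treatment of Cases 1, 3, and 4 matches the paper's: compute the cochain representative $x_\alpha x_\beta \ell^{-2} E_{\Perf_i}\cup E_{\Perf_j}$, track its $H_1(\Sigma,\z)\times\z^N$-degree, and read off the answer from the basis of Theorem~\ref{HH^2 comp}. Fine.

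In Case 2, however, you are missing the decisive observation and consequently proposing a far more elaborate argument than is needed. The paper notes that any closed path of homology $\alpha\in\Int\sigma_i$ must cross every zigzag cycle of homology $-\eta_{i+1}$, so $\deg_{\Perf_{i+1}}(x_\alpha)\ge m_{i+1}$. The hypothesis $\deg_{\Perf_{j}}(x_\alpha)=1$ (with $j=i+1$, which follows as you say) therefore forces $m_{i+1}=1$. There is then only \emph{one} strip and only \emph{one} class $\psi_{i+1,1}$ in the relevant graded piece, so no ``concentration on a single strip'' argument is required: the cochain $x_{\eta_{i+1}}^{n}\ell^{-1}E_{\Perf_i}\cup E_{\Perf_{i+1}}$ lies in a one-dimensional graded piece of $HH^2(\J)$ and is identified with $x_{\eta_{i+1}}^{n-1}\psi_{i+1,1}$ directly.

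Your proposed route through the BV identity and multiplication by auxiliary central elements might be made to work, but it is unnecessary once $m_{i+1}=1$ is established. Also, your assertion that ``$L^*$ vanishes on the whole graded piece'' is not correct: the individual classes $x_{\eta_{i+1}}^{n-1}\psi_{i+1,j'}$ are not $\ell$-torsion (only their pairwise differences are, by the description of $\ker L^*$ preceding Theorem~\ref{HH^3 second}), so $L^*$ does not kill them. This misstep is what sends you down the harder path; the paper's argument sidesteps it entirely by showing there is only one $j'$ to begin with.
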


\begin{proof}
On the cochain level, the product $\partial_{\alpha} \cup \partial_{\beta}$ is represented by
\[
x_{\alpha} x_{\beta} \ell^{-2} E_{\Perf_i} \cup E_{\Perf_j}.
\]

\noindent \underline{Case $1$:} $\degree_{\Perf_j}(x_{\alpha}) = \degree_{\Perf_i}(x_{\beta}) = 0$. \\

\noindent The element $\partial_{\alpha} \cup \partial_{\beta}$ has degree $-2$ in $\Perf_i = \Perf_j$ and so must be trivial. \\

\noindent \underline{Case $2$:} $\degree_{\Perf_j}(x_{\alpha}) = \degree_{\Perf_i}(x_{\beta}) = 1$. \\

\noindent Then $\degree_{\Perf_i} (x_{\alpha} x_{\beta}) = \degree_{\Perf_j}(x_{\alpha} x_{\beta}) = 1$, so
\[
x_{\alpha} x_{\beta} = x_{\alpha + \beta} \ell
\]
and $x_{\alpha + \beta}$ is killed by both $\Perf_i$ and $\Perf_j$. Hence, they must be consecutive corner matchings in $MP(\Q)$: $j = i+1$ and $\alpha + \beta = n \cdot \eta_{i+1}$ for some $n \geq 1$. Observe that
\[
\degree_{\Perf_{j}}(x_{\alpha}), \degree_{\Perf_i}(x_{\beta}) \geq m_{i+1},
\]
since a closed path of homology $\alpha$ or $\beta$ must intersect each zigzag cycle of homology $-\eta_{i+1}$. So the assumption that both degrees equal $1$ implies $m_{i+1} = 1$, i.e., there is only one zigzag cycle of homology $-\eta_{i+1}$. The cocycle
\[
x_{\alpha} x_{\beta} \ell^{-2} E_{\Perf_i} \cup E_{\Perf_j} = x_{\eta_{i+1}}^n \ell^{-1} E_{\Perf_i} \cup E_{\Perf_{i+1}},
\]
represents the cohomology class $x_{\eta_{i+1}}^{n-1} \psi_{i+1, 1}$. \\

\noindent \underline{Case $3$:} $\degree_{\Perf_j}(x_{\alpha}) = 1$, $\degree_{\Perf_i}(x_{\beta}) > 1$. \\

\noindent As in the preceding case, $\Perf_j$ kills the element $x_{\alpha + \beta}$, but in this instance, it is the unique perfect matching that does so. Thus, $\alpha + \beta \in \Int \sigma_j$ and 
\[
x_{\alpha} x_{\beta} \ell^{-2} E_{\Perf_i} \cup E_{\Perf_j} = x_{\alpha + \beta} \ell^{-1} E_{\Perf_i} \cup E_{\Perf_j},
\]
representing the cohomology class $\partial_i \cup \partial_{\alpha + \beta}$. \\

\noindent \underline{Case $4$:} $\degree_{\Perf_j}(x_{\alpha}), \degree_{\Perf_i}(x_{\beta}) > 1$. \\

\noindent The product $x_{\alpha} x_{\beta}$ is a multiple of at least $\ell^2$, so
\[
\partial_{\alpha} \cup \partial_{\beta} = x_{\alpha + \beta} \ell^{\degree_{\Perf_k}(x_{\alpha}) + \degree_{\Perf_k}(x_{\beta}) - 2} \partial_i \cup \partial_j
\]
where $\alpha + \beta \in \sigma_k$. 
\end{proof}

\subsubsection{Products $HH^1(\J) \cup HH^2(\J) \rightarrow HH^3(\J)$}

The previous computations inform us how to take products of $HH^1(\J)$ with
\[
\zc \cup \No \cup \No, \; \; \partial_{\alpha} \cup \No. 
\]
It is left to calculate products with the elements $\psi_{i, j}$.

\begin{prop}
For $1 \leq i, k \leq N$ and $1 \leq j \leq m_i$,
\[
\partial_k \cup \psi_{i, j} = \degree_{\Perf_k} (x_{\eta_i}) x_{\eta_i} \theta_v
\]
where $v \in \E_{i,j}$. Moreover, if $\alpha \in \Int \sigma_k$, 
\[
\partial_{\alpha} \cup \psi_{i, j} = \degree_{\Perf_k}(x_{\eta_i}) x_{\alpha + \eta_i} \ell^{\degree_{\Perf_r}(x_{\alpha}) + \degree_{\Perf_r}(x_{\eta_i}) - 1} \theta_v
\]
where $v \in \E_{i, j}$ and $\alpha + \eta_i \in \sigma_r$.
\end{prop}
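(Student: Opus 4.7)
The strategy is to exploit the definition $\psi_{i,j} = \Delta(x_{\eta_i}\theta_v)$ together with the BV--Gerstenhaber identity \eqref{BVGers} and the vanishing of $HH^4(\J)$ (as $\J$ is Calabi--Yau of dimension $3$). The key reduction is that computing $\partial_k \cup \psi_{i,j}$ or $\partial_\alpha \cup \psi_{i,j}$ can be rephrased as evaluating a Gerstenhaber bracket of $\partial_k$ or $\partial_\alpha$ with $x_{\eta_i}\theta_v$, which in turn breaks into factors handled by Proposition \ref{Euler BV} via the Leibniz rule \eqref{Leibniz}.

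For the first formula, start from $\partial_k \cup x_{\eta_i}\theta_v \in HH^4(\J) = 0$ and apply $\Delta$ via \eqref{BVGers}. Substituting $\Delta(\partial_k) = 1$ (Proposition \ref{Euler BV}(1)) and $\Delta(x_{\eta_i}\theta_v) = \psi_{i,j}$, the identity rearranges to express $\partial_k \cup \psi_{i,j}$ in terms of $x_{\eta_i}\theta_v$ and $\{\partial_k, x_{\eta_i}\theta_v\}$. The Leibniz rule expands this bracket as $\{\partial_k, x_{\eta_i}\}\theta_v + x_{\eta_i}\{\partial_k, \theta_v\}$: the first summand is $\degree_{\Perf_k}(x_{\eta_i})\, x_{\eta_i}\theta_v$ by Proposition \ref{Euler BV}(3), while $\{\partial_k, \theta_v\}$ is computed by a second application of \eqref{BVGers} to $\partial_k \cup \theta_v \in HH^4 = 0$, using $\Delta(\theta_v) = 0$ from the exact sequence \eqref{exact sequence}. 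Combining the terms delivers the claimed formula.

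The second formula follows by the analogous argument with $\partial_\alpha$ replacing $\partial_k$: here $\Delta(\partial_\alpha) = 0$ by Proposition \ref{Euler BV}(2), so the BV--Gerstenhaber identity collapses to express $\partial_\alpha \cup \psi_{i,j}$ directly as a bracket. Leibniz together with Proposition \ref{Euler BV}(4) gives $\{\partial_\alpha, x_{\eta_i}\} = \degree_{\Perf_k}(x_{\eta_i})\, x_\alpha \ell^{-1} x_{\eta_i}$, and $\{\partial_\alpha, \theta_v\} = 0$ by a parallel argument applied to $\partial_\alpha \cup \theta_v = 0$. Applying the central multiplication $x_\alpha x_{\eta_i} = x_{\alpha+\eta_i}\ell^{\degree_{\Perf_r}(x_\alpha) + \degree_{\Perf_r}(x_{\eta_i})}$ (with $\alpha + \eta_i \in \sigma_r$) and simplifying the resulting power of $\ell$ yields the stated formula. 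Equivalently, the relation $\ell\partial_\alpha = x_\alpha \partial_k$ in $HH^1(\J)$ reduces the second formula to the first after dividing by $\ell$ within the $\ell$-torsion-free summand of $HH^3(\J)$ identified by Theorem \ref{HH^3 second}. The principal technical obstacle is careful sign-tracking through the BV--Gerstenhaber identity: since $\theta_v$ sits in odd cohomological degree and each reduction introduces $(-1)^{|f|}$ factors, the signs must be accumulated consistently, but beyond this the argument is purely algebraic, reducing all products with $\psi_{i,j}$ to central multiplication and bracket relations already computed.
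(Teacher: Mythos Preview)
Your approach is correct in spirit but genuinely different from the paper's. The paper does \emph{not} go through the BV--Gerstenhaber identity at all: instead it applies Van den Bergh duality $\D_{\pi_0}$ directly to transfer the question to Hochschild homology, where the computation becomes a straightforward evaluation of a cap product. Concretely, since $\D$ exchanges $\cup$ with $\cap$ and $\Delta$ with $B$, one has $\D(\partial_k \cup \psi_{i,j}) = \partial_k \cap B([x_{\eta_i}\cdot v])$. The Connes differential of a degree-zero chain is $B([x_{\eta_i}\cdot v]) = [1 \otimes_{\bk} x_{\eta_i}\cdot v]$, and capping with the Euler-type derivation $E_{\Perf_k}$ simply extracts the $\Perf_k$-degree, yielding $\degree_{\Perf_k}(x_{\eta_i})[x_{\eta_i}\cdot v]$; applying $\D^{-1}$ gives the result. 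The argument for $\partial_\alpha$ is identical with one extra step rewriting $x_\alpha x_{\eta_i}\ell^{-1}$ in terms of $x_{\alpha+\eta_i}$.

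Your route through \eqref{BVGers} and \eqref{Leibniz} is a valid alternative and has the virtue of staying entirely inside cohomology, using only the algebraic relations already recorded in Proposition~\ref{Euler BV} and the exact sequence \eqref{exact sequence}. The paper's approach is shorter and more direct because it bypasses the bracket entirely; yours is more systematic in that it shows the formula is forced purely by the BV-algebra axioms once $\Delta(\partial_k)$, $\Delta(\partial_\alpha)$, $\Delta(\theta_v)$, and the brackets with central elements are known. One caution: carrying out your computation with the sign conventions exactly as stated in \eqref{BVGers} and Proposition~\ref{Euler BV} produces $-\degree_{\Perf_k}(x_{\eta_i})\,x_{\eta_i}\theta_v$ rather than $+\degree_{\Perf_k}(x_{\eta_i})\,x_{\eta_i}\theta_v$ (the extra $x_{\eta_i}\theta_v$ from $\Delta(\partial_k)=1$ is cancelled by the contribution $x_{\eta_i}\{\partial_k,\theta_v\}=x_{\eta_i}\theta_v$, leaving only $-\{\partial_k,x_{\eta_i}\}\theta_v$). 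This sign discrepancy reflects the notorious convention-dependence of BV formulas rather than an error in your strategy, but you should be explicit about which sign conventions you are using and check consistency against a known case (for instance Proposition~\ref{Euler BV}(1)) before asserting the final sign.
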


\begin{proof}
Under Van den Bergh duality, the product 
\[
\partial_k \cup \psi_{i, j} = \partial_k \cup \Delta(x_{\eta_i} \theta_v) 
\]
is sent to
\[
\partial_k \cap B([x_{\eta_i} \cdot v]) = \partial_k \cap [1 \otimes_{\bk} x_{\eta_i} \cdot v] = \degree_{\Perf_k}(x_{\eta_i}) [x_{\eta_i} \cdot v].
\]
The inverse of $\D$ maps this to
\[
\degree_{\Perf_k}(x_{\eta_i}) x_{\eta_i} \theta_v.
\]
Replacing $\partial_k$ with $\partial_{\alpha}$, we compute
\[
\partial_\alpha \cap [1 \otimes_{\bk} x_{\eta_i} \cdot v] = \degree_{\Perf_k}(x_{\eta_i}) [x_{\alpha} x_{\eta_i} \ell^{-1} \cdot v ]
= \degree_{\Perf_k}(x_{\eta_i}) [x_{\alpha + \eta_i} \ell^{\degree_{\Perf_r}(x_{\alpha}) + \degree_{\Perf_r}(x_{\eta_i})-1} \cdot v],
\] 
assuming $\alpha + \eta_i \in \sigma_r$. The inverse of $\D$ maps this to 
\[
\degree_{\Perf_k}(x_{\eta_i}) x_{\alpha + \eta_i} \ell^{\degree_{\Perf_r}(x_{\alpha}) + \degree_{\Perf_r}(x_{\eta_i})-1} \theta_v.
\]
\end{proof}

\subsection{Example: mirror to four punctured sphere} 

\begin{figure}[h]
\centering
\begin{subfigure}{.4\textwidth}
\centering
\[
\begin{tikzpicture}
\node at (0,0) {$v$} ;
\draw (0, 0) circle [radius=0.2] ;
\node at (4,0) {$v$} ;
\draw (4, 0) circle [radius=0.2] ;
\node at (0, 4) {$v$} ;
\draw (0, 4) circle [radius=0.2] ;
\node at (4, 4) {$v$} ;
\draw (4, 4) circle [radius=0.2] ;
\node at (2, 2) {$w$} ;
\draw (2, 2) circle [radius = 0.2] ;
\draw[dotted, ->] (0.2, 0) -- (3.8, 0) ;
\draw[dotted, ->] (4, 0.2) -- (4, 3.8 );
\draw[dotted, ->] (0.2, 4) -- (3.8, 4) ;
\draw[dotted, ->] (0, 0.2) -- (0, 3.8) ;
\draw[->] (3.86, 0.14) -- (2.14, 1.86) ;
\node[below left] at (3, 1) {$a_1$} ;
\draw[->] (1.86, 1.86) -- (0.14, 0.14) ;
\node[below right] at (1, 1) {$b_1$} ;
\draw[->] (0.14, 3.86) -- (1.86, 2.14) ;
\node[above right] at (1, 3) {$a_2$} ;
\draw[->] (2.14, 2.14) -- (3.86, 3.86) ;
\node[above left] at (3, 3) {$b_2$} ;
\node at (3.25, 2) {$-$} ;
\node at (2, 1) {$+$} ;
\node at (0.75, 2) {$-$} ;
\node at (2, 3) {$+$} ;
\end{tikzpicture}
\]
\caption{$\Q$}
\end{subfigure}
\begin{subfigure}{.4\textwidth}
\centering
\[
\begin{tikzpicture}
\draw[black, thick] (0,0) -- (0,2);
\draw[black, thick] (0,2) -- (2,2);
\draw[black, thick] (2,2) -- (2, 0);
\draw[black, thick] (2,0) -- (0,0);
\draw[black, thick, ->] (0, 1) -- (-1, 1);
\draw[black, thick, ->] (1, 2) -- (1, 3);
\draw[black, thick, ->] (2, 1) -- (3, 1);
\draw[black, thick, ->] (1, 0) -- (1, -1);
\node[left] at (1, -0.5) {$- \eta_1$ };
\node[above] at (2.5, 1) {$ - \eta_2$ } ;
\node[right] at (1, 2.5) {$ - \eta_3$ } ;
\node[above] at (-0.5, 1) {$-\eta_4$} ;
\filldraw[black] (0,0) circle (2pt) node[anchor=east] {$ \{b_1\} = \Perf_4$}; 
\filldraw[black] (0,2) circle (2pt) node[anchor=east] {$\{ a_1\} = \Perf_3 $};
\filldraw[black] (2,2) circle (2pt) node[anchor=west] {$\Perf_2 = \{b_2\}$};
\filldraw[black] (2,0) circle (2pt) node[anchor=west] {$\Perf_1 = \{a_2\}$};
\end{tikzpicture}
\]
\caption{$MP(\Q)$}
\end{subfigure}
\begin{subfigure}{.4\textwidth}
\centering
\[
\begin{tikzpicture}
\draw[->, thick] (0, 0) -- (1.5, 0);
\node[right] at (1.5, 0) {$\langle \eta_4 \rangle$};
\node at (1, 1) {$\sigma_4$};
\draw[->, thick] (0, 0) -- (0, 1.5);
\node[above] at (0, 1.5) {$\langle \eta_1 \rangle$};
\node at (-1, 1) {$\sigma_1$};
\draw[->, thick] (0, 0) -- (-1.5, 0);
\node[left] at (-1.5, 0) {$\langle \eta_2 \rangle$};
\node at (-1, -1) {$\sigma_2$};
\draw[->, thick] (0, 0) -- (0, -1.5);
\node[below] at (0, -1.5) {$\langle \eta_3 \rangle$};
\node at (1, -1) {$\sigma_3$};
\end{tikzpicture}
\]
\caption{The antizigzag fan}
\end{subfigure}
\caption{\label{4 punctured sphere} Mirror dual to the four punctured sphere.}
\end{figure}
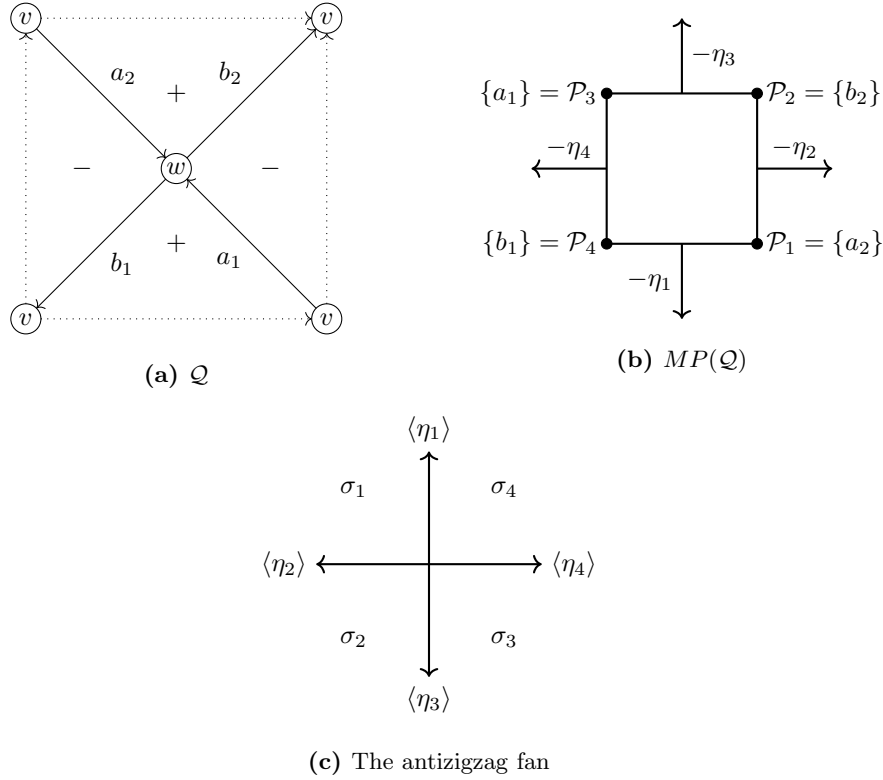

Consider the zigzag consistent dimer in a torus illustrated in Figure \ref{4 punctured sphere}. There are four zigzag cycles, which coincide with the antizigzag cycles, represented by
\[
a_1 b_1, \; a_2 b_1, \; a_2 b_2, \; a_1 b_2.
\]
There are four perfect matchings, one for each arrow. The dimer dual has genus $0$ and four vertices, determining the sphere with four punctures.  

The minimal central elements corresponding to the antizigzags are
\begin{eqnarray*}
x_{\eta_1} & = & a_1 b_2 + b_2 a_1 \\
x_{\eta_2} & = & a_1 b_1 + b_1 a_1 \\
x_{\eta_3} & = & a_2 b_1 + b_1 a_2 \\
x_{\eta_4} & = & a_2 b_2 + b_2 a_2.
\end{eqnarray*}
In fact, they generate the entire center, subject to the single relation $\ell = x_{\eta_1} x_{\eta_3} = x_{\eta_2} x_{\eta_4}$:
\[
HH^0(\J) \cong \zc \cong \C[x_{\eta_1}, x_{\eta_2}, x_{\eta_3}, x_{\eta_4}] / (x_{\eta_1} x_{\eta_3} - x_{\eta_2} x_{\eta_4}).
\]
By Theorem \ref{HH^1 free}, the first Hochschild cohomology is
\[
HH^1(\J) = \zc \cup \No \; \oplus \; \bigoplus_{\substack{i \in \z / 4 \z \\ m, n > 0}} \C \cdot \partial_{m \eta_i + n \eta_{i+1}}.
\]
No two zigzag cycles are parallel, so in second cohomology, there is only one element $\psi_i : = \psi_{i, 1}$ for each $1 \leq i \leq 4$, represented by the cocycle 
\[
x_{\eta_i} \ell^{-1} E_{\Perf_{i-1}} \cup E_{\Perf_i} = x_{\eta_{i+2}}^{-1} E_{\Perf_{i-1}} \cup E_{\Perf_i} . 
\]  
Then according to Theorem \ref{HH^2 comp}, 
\[
HH^2(\J) = \zc \cup \No \cup \No \, \oplus \, \bigoplus_{\substack{i \in \z / 4 \z \\ m, n > 0}} \C \cdot \partial_{m \eta_i + n \eta_{i+1}} \cup \No \, \oplus \, \C \{ x_{\eta_i}^n \psi_i \, | \, i \in \z / 4 \z, \, n \geq 0 \}.
\]
Finally, in the presentation of Theorem \ref{HH^3 second}, the third cohomology is 
\begin{align*}
HH^3(\J) \cong & \, \zc \cup \No \cup \No \cup \No \, \oplus \, \bigoplus_{\substack{i \in \z / 4 \z \\ m, n > 0}} \C \cdot \partial_{m \eta_i + n \eta_{i+1}} \cup \No \cup \No \\ 
& \oplus \, \C \{ x_{\eta_i}^n \theta_v \, | \, i \in \z / 4 \z, \, n \geq 1 \} \, \oplus \, \C \{ \theta_v, \theta_w \}.
\end{align*}
This can be written more simply in the form of Corollary \ref{HH^3 first}, 
\[
HH^3(\J) \cong \zc \cdot \theta_v \, \oplus \, \C \cdot \theta_w
\]
where $f \cdot \theta_w = f \cdot \theta_v$ for any $H_1(\Sigma, \z) \times \z^{PM(\Q)}$ homogeneous $f \in \zc \setminus \C$. The kernel of the universal map
\[
L^*: HH^*(\J) \rightarrow HH^*(\JW)
\]
is the torsion
\[
tor_{\ell}(HH^*(\J)) = \C \cdot ( \theta_v - \theta_w).
\]

\section{Hochschild cohomology of the category of matrix factorizations}

According to Theorem \ref{compact type}, the compactly supported Hochschild cohomology and the Borel--Moore Hochschild homology of $\M$ are isomorphic to those of the curved algebra $(\J, \ell)$. We compute them by the same spectral sequence used by C\u{a}ld\u{a}raru--Tu \cite{CaldararuTu}, who address the case of a Landau-Ginzburg model describing an isolated hypersurface singularity. It is in the arguments for degeneration of the spectral sequence that properties specific to the Jacobi algebra are needed. Throughout, $\Q$ is assumed to be a zigzag consistent dimer that admits a perfect matching.

\subsection{Borel--Moore homology and compactly supported cohomology} 

Let $A = \J$. A double complex supported above the diagonal is obtained by letting 
\[ 
C_{i, j} = \begin{cases}
A \otimes A^{\otimes (j-i)} & \text{if} \; j \geq i \\
0 & \text{otherwise},
\end{cases}
\] 
equipped with $d_A$ as the vertical differential and $\Lie_{\ell} = d_{\ell}$ as the horizontal differential \eqref{chain}. 
\[
\begin{tikzcd}
& \vdots \arrow[d] & \vdots \arrow[d] & \vdots \arrow[d] \\
\dots & A \otimes A^{\otimes 3} \arrow[l, "d_{\ell}"] \arrow[d, "d_A"] & A \otimes A^{\otimes 2} \arrow[l, "d_{\ell}"]\arrow[d, "d_A"]  & A \otimes A \arrow[l, "d_{\ell}"] \arrow[d, "d_A"] & A \arrow[l, "d_{\ell}"]  \arrow[d] & \dots \arrow[l] \\
\dots & A \otimes A^{\otimes 2} \arrow[l, "d_{\ell}"]  \arrow[d, "d_A"] & A \otimes A  \arrow[l, "d_{\ell}"] \arrow[d, "d_A"] & A  \arrow[l, "d_{\ell}"] \arrow[d] & 0 \arrow[l] \arrow[d] & \dots \arrow[l]  \\
\dots & A \otimes A  \arrow[l, "d_{\ell}"] \arrow[d, "d_A"] & A  \arrow[l, "d_{\ell}"] \arrow[d]& 0 \arrow[l] \arrow[d] & \dots \arrow[l] \\
\dots & A  \arrow[l, "d_{\ell}"] \arrow[d] & 0 \arrow[l] \arrow[d]& 0 \arrow[l] \arrow[d] \\
& \vdots & \vdots & \vdots
\end{tikzcd}
\]
Note that $C_{*, *}$ is $2$-periodic along the $i = j$ diagonal. Modulo $2$, each homogeneous subspace of the direct product totalization $\Tprod(C_{*,*})$ coincides modulo $2$ with the homogeneous subspace of $C_*^{BM}(A)$ of the same parity. Therefore,
\[
H^* (\Tprod(C_{*,*})) = H_*^{BM}(A, \ell) \; * \, \text{mod} \, 2.
\]

The periodicity can be leveraged to reduce the computation essentially to the first quadrant. Let $C_{*, *}^+$ be the truncation
\[
C_{i, j}^+ = \begin{cases}
A \otimes A^{\otimes (j-i)} & \text{if} \; j \geq i \geq 0 \\
0 & \text{otherwise}.
\end{cases}
\]
For $r \in \mathbb{N}$, let $C_{*, *}^+[2r]$ denote the complex shifted by $2r$ along the $i = j$ diagonal (in the direction of the third quadrant). If $s > r$, then $C_{*, *}^+[2r]$ is a quotient of $C_{*, *}^+[2s]$ by the subcomplex consisting of terms $C_{i, j}$ for which $-2s \leq i < -2r$ or $-2s \leq j < -2r$. Thus, there are quotient maps on the totalizations
\[
\Tot(C_{*,*}^+)[2s] \rightarrow \Tot(C_{*, *}^+)[2r], \; \; \; s > r \geq 0.
\]
Here, we ignore the distinction between direct product and direct sum totalizations since they coincide for the truncated complexes. The inverse system $\{\Tot(C_{*,*}^+)[2r] \, | \, r \in \mathbb{N}\}$ has limit $\Tprod(C_{*, *})$, and because it satisfies the Mittag-Leffler condition (\cite{Weibel} Theorem 3.5.8), there is an exact sequence
\begin{equation} \label{spectral exact sequence}
0 \rightarrow \varprojlim\nolimits^1 H^{i+1}(\Tot(C_{*,*}^+)[2r]) \rightarrow H^i(\Tprod(C_{*,*})) \rightarrow \varprojlim H^i(\Tot(C_{*,*}^+)[2r]) \rightarrow 0.  
\end{equation}
for all $i \in \z$. The symbol $\varprojlim\nolimits^{1}$ denotes the first derived functor of the inverse limit. Since $H^i(\Tot(C_{*,*}^+)[2r]) = H^{i + 2r}(\Tot(C_{*,*}^+))$, the Borel--Moore Hochschild homology is determined from \eqref{spectral exact sequence} by the first quadrant complex.




\begin{lem} \label{cokernel bid}
Suppose $\Q$ is a zigzag consistent dimer embedded in $\Sigma$ and admitting a perfect matching.
\begin{enumerate}
\item If $\Sigma$ has genus $g > 1$ , then 
\[
HH^0(\J) \, / \{ \ell, HH^1(\J) \} \cong \C.
\]
\item If $\Sigma$ has genus $g = 1$, then
\[
HH^0(\J) \, / \{ \ell, HH^1(\J) \} \cong \C[x_{\eta_1}, \dots, x_{\eta_{N}}] / \big( x_{\eta_i}x_{\eta_j} \, | \, i \neq j \big).
\]
\end{enumerate}
\end{lem}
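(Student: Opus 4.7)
The plan is to analyze the $\zc$-linear map $\{\ell, -\}\colon HH^1(\J) \to HH^0(\J) = \zc$, whose image is precisely the subspace we wish to quotient by. Linearity over $\zc$ follows from the Leibniz identity \eqref{Leibniz} together with $\{\ell, z\} = 0$ for $z \in \zc$ (both lie in $HH^0$), so the image is an ideal $I \subset \zc$ and the task reduces to identifying its generators.

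For the hyperbolic case, I would invoke Proposition \ref{higher genus center} to know $HH^0(\J) = \C[\ell]$, and then exhibit a single class that produces $\ell$. Every summand of $\ell$ is a boundary cycle of perfect matching degree $1$, so for any perfect matching $\Perf$ the Euler derivation satisfies $E_\Perf(\ell) = \ell$. Hence $\{\ell, E_\Perf\} = -\ell$, the ideal $I$ equals $(\ell)$, and the quotient is $\C$.

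For genus $1$, Theorem \ref{HH^1 free} furnishes explicit generators $\{\partial_i\} \cup \{\partial_\alpha\}$ of $HH^1(\J)$ as a $\zc$-module. Using Proposition \ref{Euler BV} with graded antisymmetry, I would verify $\{\ell, \partial_i\} = -\ell$ and $\{\ell, \partial_\alpha\} = -x_\alpha$ for $\alpha \in \Int \sigma_i$, so
\[
I = \big(\ell,\ x_\alpha : \alpha \in \Int \sigma_i,\ 1 \leq i \leq N \big).
\]
Since the $x_\alpha$ for $\alpha$ in cone interiors and the positive powers of $\ell$, together with the $x_{\eta_i}^m$, span $\zc$, the only candidate generators for $\zc/I$ are the $x_{\eta_i}$.

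The main obstacle is verifying the presentation---specifically, that $x_{\eta_i} x_{\eta_j} = 0$ in $\zc/I$ for $i \neq j$ and that no additional relations appear. For the product, I would apply the multiplication law $x_\alpha x_\beta = x_{\alpha + \beta} \ell^{\deg_{\Perf_k}(x_\alpha) + \deg_{\Perf_k}(x_\beta)}$ and split cases: either $\eta_i + \eta_j \in \Int \sigma_k$, in which case $x_{\eta_i + \eta_j}$ is itself one of the defining generators of $I$; or $\eta_i + \eta_j$ lies on a ray $\langle \eta_k \rangle$. In the latter situation $i$ and $j$ must be non-adjacent (since for $j = i \pm 1$ the sum is a strictly positive combination of the generators of $\sigma_i$ and thus lies in $\Int \sigma_i$), which forces $\{i-1, i\} \cap \{j-1, j\} = \emptyset$ and hence at least one of $\deg_{\Perf_k}(x_{\eta_i}), \deg_{\Perf_k}(x_{\eta_j})$ to be positive, placing the product in $(\ell) \subset I$. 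The absence of further relations should follow from the $\z^{PM(\Q)}$-grading on $\zc$: the $x_{\eta_i}^m$ have distinct vanishing patterns on the corner matchings that also distinguish them from every element of $I$, so they remain $\C$-linearly independent in the quotient and yield the stated presentation.
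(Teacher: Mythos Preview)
Your genus $1$ argument is essentially the paper's: compute $\{\ell,-\}$ on the $\zc$-module generators of $HH^1(\J)$ furnished by Theorem~\ref{HH^1 free}, read off the ideal $I=(\ell,\,x_\alpha:\alpha\in\Int\sigma_i)$, and identify the quotient. Your extra work on the presentation is sound, though the case split for $x_{\eta_i}x_{\eta_j}$ omits the possibility $\eta_i+\eta_j=0$; this is easily absorbed (the product is then a positive power of $\ell$, since $\Perf_k$ cannot simultaneously kill $x_{\eta_i}$ and $x_{\eta_j}$ when $i\neq j$ and $N\geq 3$).

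The genuine gap is in the hyperbolic case. You produce $\ell\in I$, but you never argue that $1\notin I$, i.e.\ that $I\neq\C[\ell]$. Unlike genus~$1$, here you do \emph{not} have an explicit set of $\zc$-module generators for $HH^1(\J)$: Lemma~\ref{derivations} only shows $\overline{\chi}$ is injective, not surjective, so there may be outer derivations not of the form $fE_\Perf$. Consequently, computing $\{\ell,-\}$ on the derivations you happen to know does not bound $I$ from above. The paper closes this by a path-length argument: every boundary cycle has length $\geq 3$, so for any derivation $D$ the element $D(\ell)$ lies in filtered path length $\geq 2$, whence the constant $1$ cannot appear in the image. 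You need some such argument (path length, or a perfect-matching degree argument showing $\{\ell,-\}$ cannot decrease $\Perf$-degree below $1$) to finish.
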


\begin{proof}
For any $\Perf \in PM(\Q)$ and central element $f$,
\begin{equation} \label{image formula}
\{ \ell, f E_{\Perf} \} = f \ell.
\end{equation}
If $\Sigma$ has genus $g > 1$, the center is $\C [\ell]$ (Proposition \ref{higher genus center}). The above formula implies that 
\[
\ell \cdot \C[\ell] \subset \{\ell,  HH^1(\J) \}.
\]
On the other hand, the boundary cycles of $\Q$ have path length at least $3$, so the image of $\ell$ under any derivation of $\J$ must have filtered degree at least $2$ in the filtration of $\J$ by path length. Therefore, the reverse inclusion holds, meaning 
\[
HH^0(\J) \, / \{ \ell, HH^1(\J) \} = \C[ \ell] /\,  \ell \, \cdot \C[ \ell ] \cong \C.
\]

If $\Sigma$ has genus $g = 1$, Theorem \ref{HH^1 free} and formula \eqref{image formula} imply that $\ell$ and the minimal elements 
\[ 
x_{\alpha}, \; \; \alpha \in \Int \sigma_i, \; \; 1 \leq i \leq N
\]
generate the image $\{ \ell, HH^1(\J) \}$. Hence, the quotient is generated by the powers of the minimal elements associated to the antizigzag cycles.
\end{proof}

\begin{prop} \label{spectral}
Suppose $\Q$ is a zigzag consistent dimer admitting a perfect matching. There is an isomorphism of $\Z$-graded vector spaces
\begin{align} \label{final complex}
& HH_*^{BM} \big( \M \big) \cong H^*\big( HH_*(\J), \, \Lie_{\ell} \big) \\
& HH^*_c \big( \M \big) \cong  H^*\big( HH^*(\J), \, \{\ell, -\} \big. \nonumber
\end{align}
\end{prop}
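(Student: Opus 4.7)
The approach is a spectral sequence argument. By Theorem \ref{compact type} it suffices to compute $HH^{BM}_*(\J,\ell)$ and $HH^*_c(\J,\ell)$. The bicomplex $C_{*,*}$ constructed above already computes the former through its direct product totalization. For the compactly supported cohomology I would assemble the dual bicomplex $D^{*,*}$, with $D^{i,j}=\underline{\operatorname{Hom}}(\J^{\otimes(j-i)},\J)$ for $j\geq i$, vertical differential the Hochschild cochain differential, and horizontal differential $\{\ell,-\}$, whose direct sum totalization computes $HH^*_c(\J,\ell)$.

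The first step is to analyze the first-quadrant truncations $C^+_{*,*}$ and its cohomological analogue using the spectral sequence of the column filtration. Taking $d_A$-cohomology in each column yields an $E_1$-page with $HH_*(\J)$ (resp.\ $HH^*(\J)$) in every column, and the induced horizontal $d_1$ is by construction the Lie derivative $\Lie_\ell$ (resp.\ the Gerstenhaber bracket $\{\ell,-\}$). So $E_2$ coincides with the claimed right-hand side of \eqref{final complex} in both cases.

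The main step, and the chief obstacle, is to prove that both spectral sequences collapse at $E_2$. The plan is to exploit the $H_1(\Sigma,\z)\times\z^{PM(\Q)}$-grading on $HH^*(\J)$ from Theorem \ref{HH of Jacobi algebra}: since $\{\ell,-\}$ preserves it, so does every higher $d_r$, reducing degeneration to a homogeneous component-by-component check. Most homogeneous pieces of $HH^*(\J)$ are concentrated in a single cohomological degree, so $d_r=0$ for purely dimensional reasons; the remaining pieces are treated by direct bracket computation via Proposition \ref{Euler BV}. Lemma \ref{cokernel bid} already verifies the $E_\infty$-prediction in cohomological degree $0$. For the Borel--Moore side, Van den Bergh duality $\D_{\pi_0}\colon HH^*(\J)\to HH_{3-*}(\J)$ intertwines $\{\ell,-\}$ with $\Lie_\ell$ (using that $\Delta(\ell)=0$ in the BV identity \eqref{BVGers}), so degeneration on the cohomological side transfers to the homological side.

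Finally, for the Borel--Moore case one must pass from the first-quadrant truncation to the full bicomplex. Here I would apply the exact sequence \eqref{spectral exact sequence}: the tower $\{H^*(\Tot(C^+_{*,*})[2r])\}_{r\geq 0}$ satisfies the Mittag--Leffler condition because in each fixed total degree only finitely many shifted groups contribute nontrivially, so $\varprojlim\nolimits^1=0$ and $HH^{BM}_*(\J,\ell)=\varprojlim H^*(\Tot(C^+_{*,*})[2r])$ is identified with $H^*(HH_*(\J),\Lie_\ell)$. The compactly supported side needs no analogous limit argument since direct sum totalization commutes with the colimit over truncations.
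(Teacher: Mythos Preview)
Your overall architecture matches the paper's: set up the bicomplex, run the column-filtration spectral sequence, argue degeneration at $E_2$, and handle the inverse limit via Mittag--Leffler. The Van den Bergh transfer between $\{\ell,-\}$ and $\Lie_\ell$ is fine, and the limit argument is essentially the paper's (the paper phrases Mittag--Leffler as ``the transition maps are isomorphisms for $s>r\geq 2$'', which is what your stabilization claim amounts to).

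The gap is in your degeneration argument. Your plan---``most homogeneous pieces of $HH^*(\J)$ are concentrated in a single cohomological degree, and the rest are handled by direct bracket computation via Proposition~\ref{Euler BV}''---does not work as stated. First, higher differentials $d_r$ for $r\geq 2$ are secondary operations, not Gerstenhaber brackets, so Proposition~\ref{Euler BV} gives you no direct grip on them. Second, Proposition~\ref{Euler BV} (and the explicit description in Theorem~\ref{HH of Jacobi algebra}) is genus-$1$ specific, whereas the proposition is stated for any zigzag consistent dimer admitting a perfect matching. Third, the claim about concentration in a single cohomological degree is neither made precise nor justified.

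The paper's degeneration argument is both simpler and genus-independent. The key structural fact you are missing is that $\J$ is Calabi--Yau of dimension $3$, so $HH_n(\J)=0$ for $n\notin\{0,1,2,3\}$ and the $E_1$ page lives on four adjacent diagonals. Consequently the \emph{only} higher differential that can possibly be nonzero is $d_2$, mapping from the $HH_0$ diagonal to the $HH_3$ diagonal. The paper then kills $d_2$ by a degree count: $d_2$ raises perfect matching degree by $2$, but by Lemma~\ref{cokernel bid} and the degree shift in Van den Bergh duality (Lemma~\ref{degree VDB}), the cokernel $HH_3(\J)/\Lie_\ell\bigl(HH_2(\J)\bigr)$ is concentrated in perfect matching degree at most $1$. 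Hence $d_2=0$ and the spectral sequence collapses. This uses only Lemmas~\ref{degree VDB} and~\ref{cokernel bid}, both of which hold in arbitrary genus.
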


\begin{proof}
Let $E^*_{*,*}$ be the homological spectral sequence for the first quadrant double complex $C_{*, *}^+$ with respect to the vertical filtration. The entries of $E^1_{*, *}$ are the Hochschild homology groups of $\J$, and the differential is $\Lie_\ell$. 
\[
\begin{tikzcd}[row sep = small]
& \vdots & \vdots & \vdots & \vdots & \\
0 & HH_3(\J) \arrow[l] & HH_2(\J) \arrow[l, "\Lie_{\ell}"] & HH_1(\J) \arrow[l, "\Lie_{\ell}"] & HH_0(\J) \arrow[l, "\Lie_{\ell}"]  & \dots \arrow[l] \\
0 & HH_2(\J)  \arrow[l] & HH_1(\J)  \arrow[l, "\Lie_{\ell}"] & HH_0(\J)  \arrow[l, "\Lie_{\ell}"] & 0 \arrow[l]  & \dots \arrow[l]  \\
0 & HH_1(\J)  \arrow[l] & HH_0(\J)  \arrow[l, "\Lie_{\ell}"] & 0 \arrow[l]  & \dots \arrow[l] \\
0 & HH_0(\J) \arrow[l]  & 0 \arrow[l] & \dots \arrow[l] 
\end{tikzcd}
\]
Evidently, the only possible nonzero components of the differential on $E^2_{*,*}$ are
\[
d^2_{i, i}: Ker \big( \Lie_\ell: HH_0(\J) \to HH_1(\J) \big) \rightarrow Coker \big( \Lie_\ell: HH_2(\J) \rightarrow HH_3(\J) \big).
\]
The differentials $d_A$ and $\Lie_\ell$ have degree $0$ in $H_1(\Sigma, \z)$ and degree $1$ in all perfect matchings, so $d^2_{*,*}$ has degree $0$ in $H_1(\Sigma, \z)$ and degree $2$ in all perfect matchings. Consequently, the image of $d^2_{i,i}$ must be concentrated in perfect matching degrees greater than or equal to $2$. However, Van den Bergh duality induces an isomorphism
\[
\overline{\D}: HH^0(\J) / \{\ell, HH^1(\J) \} \longrightarrow HH_3(\J) / \Lie_{\ell} \big( (HH_2(\J) \big).
\]
According to Lemma \ref{degree VDB}, $\overline{\D}$ has degree $0$ in $H_1(\Sigma, \z)$ and degree $1$ in all perfect matchings. We deduce from Lemma \ref{cokernel bid} that the subspace of 
\[
Coker \big( \Lie_\ell: HH_2(\J) \rightarrow HH_3(\J) \big)
\]
lying in perfect matching degrees greater than or equal to $2$ is trivial. Therefore, the differential $d^2_{*, *}$ is $0$, and the spectral sequence degenerates at the second page.

For $r \geq 2$, 
\[
H^{i + 2r}(\Tot(C^+_{*,*})) \cong \begin{cases}
H^{\text{even}}\big(HH_*(\J), \Lie_{\ell} \big) & \text{if} \; i \equiv 0 \; \text{mod} \; 2 \\
H^{\text{odd}}\big(HH_*(\J), \Lie_{\ell} \big) & \text{if} \; i \equiv 1 \; \text{mod} \; 2,
\end{cases}
\]
and so
\begin{equation} \label{inverse limit}
\varprojlim H^*(\Tot(C_{*, *}^+[2r])) = H^*\big( HH_*(\J), \Lie_\ell \big).
\end{equation}
Because the projections
\[ 
H^*(\Tot(C_{*, *}^+[2s])) \rightarrow H^*(\Tot(C_{*, *}^+[2r]))
\]
are isomorphisms for $s > r \geq 2$, the inverse system $\{H^*(\Tot(C_{*, *}^+[2r]) \, | \, r \in \mathbb{N}\}$ satisfies the Mittag-Leffler condition (\cite{Weibel} Theorem 3.5.8), ensuring that
\[ 
\varprojlim\nolimits^1 H^{*}(\Tot(C_{*,*}^+)[2r]) = 0
\]
in all degrees. As a result, \eqref{inverse limit} gives the desired description of the Borel--Moore Hochschild homology of $\M$.

The computation of compactly supported Hochschild cohomology follows more easily. The relevant double complex is 
\[
C^{i,j} = \begin{cases}
\Hom(A^{\otimes (i-j)}, A) & \text{if} \; j \leq i \\
0 & \text{otherwise}. 
\end{cases}
\]
equipped with vertical differential $d_A$ and horizontal differential $\{\ell, -\}$ \eqref{cochain}. Since compactly supported cohomology is a direct sum totalization, there is no need for truncating and taking inverse limits: the spectral sequence with respect to the vertical filtration converges to it. The same argument as above involving the gradings shows that the spectral sequence collapses at the second page.
\end{proof}

We can now use our explicit description of $HH^*(\J)$ in the case of a toric dimer to compute the cohomology in Proposition \ref{spectral}. In fact, we can compute the multiplicative structure on $HH^*_c( \M )$ endowed by the spectral sequence. To establish notation, fix a vertex $v_o$, and let $\Q_0^* = \Q_0 \setminus \{v_o\}$. It may be assumed that, for each homology $\eta_i$, the zigzag cycles are ordered so that $v_o \in \E_{i, 1}$. Then in the cohomology \eqref{final complex}, for each $1 \leq i \leq N$ and $1 < j \leq m_i$, let 
\begin{itemize}
\item $X_i$ denote the class of $x_{\eta_i}$,
\item $\Psi_{i,j}$ denote the class of $\psi_{i, j} - \psi_{i, 1}$,
\item $\Theta_v$ denote the class of $\theta_v - \theta_{v_o}$ where $v \in \Q_0^*$,
\item and $U, V$ respectively denote the classes of
\[
\partial_r - \partial_t, \; \; \partial_s - \partial_t
\]
for fixed $1 \leq r < s < t \leq N$.
\end{itemize}

\begin{thm} \label{compact HH comp}
Suppose $\Q$ is a zigzag consistent dimer a torus. There is an isomorphism of algebras
\[
HH^*_c \big( \M \big) = \frac{\C[X_i, \Psi_{i, j}, U, V, \Theta_v \, | \, 1 \leq i \leq N, \, 1 < j \leq m_i, \, v \in \Q_0^* ]}{(\mathcal{R})}
\]
where $X_i$ and $\Psi_{i, j}$ are even variables; $U$, $V$, and $\Theta_v$ are odd variables; and $\mathcal{R}$ is the vector space generated by
\begin{itemize}
\item $X_i X_j$ for all $i \neq j$;
\item $\Psi_{i, j} \Psi_{k, l}$ for all $i, j, k, l$;
\item $X_i \Psi_{j, k}$ for all $i \neq j$;
\item $(\degree_{\Perf_s}(X_i) - \degree_{\Perf_t}(X_i) ) X_i  \, U -  ( \degree_{\Perf_t}(X_i)  - \degree_{\Perf_r} (X_i) ) X_i \, V$ for all $i$;
\item $X_i \Theta_v $ for all $i$ and $v \in \E_{i, 1}$;
\item $\Psi_{i, j} U - ( \degree_{\Perf_r}(X_i ) - \degree_{\Perf_t}(X_i) ) X_i \, \Theta_v$ for all $i, j$ and $v \in \E_{i, j}$;
\item $\Psi_{i, j} V - (\degree_{\Perf_s}(X_i) - \degree_{\Perf_t}(X_i) ) X_i \, \Theta_v$ for all $i, j$ and $v \in \E_{i, j}$;
\item $\Psi_{i, j} \Theta_v$ for all $i, j$ and $v \in \Q_0^*$;
\item $UV$, $U \Theta_v$, and $V \Theta_v$ for all $v \in \Q_0^*$.
\end{itemize}
\end{thm}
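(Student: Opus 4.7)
The approach is to apply Proposition \ref{spectral} and compute the cohomology of $\bigl(HH^*(\J), \{\ell, -\}\bigr)$ as a ring. Since $\{\ell,-\}$ is a graded derivation of the cup product by the Leibniz identity \eqref{Leibniz}, the cohomology automatically inherits a $\Z/2$-graded commutative algebra structure, so the task reduces to identifying generators and relations. All the necessary ingredients are already assembled in Section 4: the additive bases of each $HH^i(\J)$ from Theorems \ref{HH^1 free}, \ref{HH^2 comp}, and \ref{HH^3 second}; the formulas for $\{\ell,-\}$ encoded in Proposition \ref{Euler BV}; and the cup product formulas of \S 4.5.

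The plan is first to carry out the additive computation degree by degree, then verify each relation in $\mathcal{R}$ on cocycle representatives. In $HH^0(\J) = \zc$ the differential vanishes, while the image from $HH^1(\J)$ is generated by $\{\ell,\partial_i\}$, a nonzero multiple of $\ell$, and $\{\ell,\partial_\alpha\}$, a nonzero multiple of $x_\alpha$ for $\alpha \in \Int \sigma_i$. By Lemma \ref{cokernel bid} the cokernel is $\C[x_{\eta_1},\ldots,x_{\eta_N}]/(x_{\eta_i}x_{\eta_j} \mid i \neq j)$, producing the $X_i$ together with the relations $X_i X_j = 0$ for $i \neq j$. In $HH^1(\J)$ the key observation is that $\{\ell,\partial_i\} = \{\ell,\partial_j\}$ places each $\partial_i - \partial_j$ in the kernel; after quotienting by the image of $\{\ell,-\}\colon HH^2(\J) \to HH^1(\J)$ and combining with the $\partial_\alpha$ contributions, the surviving classes match the Euler-characteristic count $2g^\vee + K - 1$ of Theorem \ref{main theorem}, and $U, V$ serve as a basis modulo the $X_i$-families. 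In $HH^2(\J)$ the classes $\psi_{i,j} - \psi_{i,1}$ for $j > 1$ lie in the kernel but not the image, yielding the $\Psi_{i,j}$, while $x_{\eta_i}^n \psi_{i,j}$ for $n \geq 1$ survives as $X_i^n \Psi_{i,j}$. In $HH^3(\J)$, Van den Bergh duality together with Corollary \ref{HH^3 first} and the connectedness analysis of Lemma \ref{opposite connectedness} identify the odd survivors $\Theta_v = \theta_v - \theta_{v_o}$ for $v \in \Q_0^*$ and the families $x_{\eta_i}^n \theta_v$.

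With generators in hand, each relation in $\mathcal{R}$ is verified by applying the cup products of \S 4.5 to cocycle representatives. The vanishing relations $\Psi_{i,j}\Psi_{k,l} = 0$, $X_i \Psi_{j,k} = 0$ for $i \neq j$, $UV = 0$, $U\Theta_v = V\Theta_v = 0$, and $\Psi_{i,j}\Theta_v = 0$ each reduce to observing that the corresponding product in $HH^*(\J)$ lands in a bidegree where only coboundaries of $\{\ell,-\}$ can live. The linear relation $(\degree_{\Perf_s}(X_i) - \degree_{\Perf_t}(X_i))X_i U - (\degree_{\Perf_t}(X_i) - \degree_{\Perf_r}(X_i))X_i V = 0$ follows from the exactness under $\{\ell,-\}$ of appropriate $\degree_{\Perf}$-weighted combinations of $x_{\eta_i}\cdot \partial_m$. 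The coupling relations $\Psi_{i,j}U = (\degree_{\Perf_r}(X_i) - \degree_{\Perf_t}(X_i))X_i\Theta_v$ and its $V$-analogue are pulled back from the identity $\partial_k \cup \psi_{i,j} = \degree_{\Perf_k}(x_{\eta_i})\, x_{\eta_i}\theta_v$ established in \S 4.5.5, after passage to the quotient.

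The main obstacle will be the combinatorial bookkeeping required to confirm that $\mathcal{R}$ is complete, i.e., that no further relations are hiding. This amounts to a bigraded Hilbert-series count matching the additive totals from the previous step in every $H_1(\Sigma,\z) \times \z^{PM(\Q)}$-weight, and it is most delicate in the presence of parallel zigzag cycles (multiplicities $m_i > 1$), where the strip decomposition of Lemma \ref{opposite connectedness} governs exactly which monomials in $X_i$ and $\Theta_v$ are $\C$-linearly independent. Once this consistency is checked, the presentation in the statement follows.
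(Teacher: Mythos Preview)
Your proposal is correct and follows essentially the same approach as the paper: apply Proposition \ref{spectral}, compute the cohomology of $\{\ell,-\}$ degree by degree using the additive descriptions from Theorems \ref{HH^1 free}, \ref{HH^2 comp}, \ref{HH^3 second} together with Proposition \ref{Euler BV} and Lemma \ref{cokernel bid}, and then read off the multiplicative relations from the cup-product formulas of \S 4.5. The paper's proof differs only in being more explicit at the additive step, writing down the evaluations of $\{\ell,-\}$ on each basis element of $HH^3(\J)$, $HH^2(\J)$, and $HH^1(\J)$ as concrete formulas (its displays \eqref{3 to 2} and \eqref{2 to 1}) rather than leaving them as consequences of Proposition \ref{Euler BV}; this makes the kernel/image comparison immediate and avoids the separate Hilbert-series bookkeeping you anticipate, but the substance is the same.
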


\begin{proof}
To determine $HH^*_c( \M)$ additively, we simply evaluate the differential $\{\ell, -\}$ on each cohomology group $HH^*(\J)$ using the identities \eqref{Leibniz} and the formulas in \S \ref{section Batalin--Vilkovisky structure}. \\
\vspace{0.5em}

\noindent \underline{ $\{\ell, -\}: HH^3(\J) \rightarrow HH^2(\J)$ } \\
\vspace{0.5em}

\noindent For all $1 \leq i < j < k \leq N$, $1 \leq p \leq N$, $f \in \zc$, $\alpha \in \Int \sigma_i$, $n \geq 0$, and $v \in \Q_0$, observe
\begin{align} \label{3 to 2}
& \{ \ell, f \partial_i \cup \partial_j \cup \partial_k \}  = f \ell \big( \partial_i \cup \partial_j - \partial_i \cup \partial_k + \partial_j \cup \partial_k \big) \\
& \{\ell, \partial_{\alpha} \cup \partial_j \cup \partial_k \} = x_{\alpha} \big( \partial_i \cup \partial_j - \partial_i \cup \partial_k + \partial_j \cup \partial_k \big) \nonumber \\
& \{\ell, x_{\eta_r}^n \theta_v\} = \lambda_{i, j, k} x_{\eta_p}^n \big( \partial_i \cup \partial_j - \partial_i \cup \partial_k + \partial_j \cup \partial_k \big). \nonumber
\end{align}
So $\{\ell, -\}$ is injective when restricted to the subspaces
\[
\zc \cup \No \cup \No \cup \No, \; \; \partial_{\alpha} \cup \No \cup \No.
\]
However, the last evaluation in \eqref{3 to 2} is independent of the vertex $v$, so differences of the form
\[
x_{\eta_i}^n (\theta_v - \theta_{v_o}), \; \; n \geq 0
\] 
span the kernel. Hence,
\[
H^3 \big( HH^*(\J), \{\ell, - \} \big) \cong \C \{ \Theta_v  \, | \, v \in \Q_0^* \} \, \oplus \, \C \{ X_i^n \Theta_v \, | \, 1 \leq i \leq N, \, 1 < j \leq m_i, \, n > 0, \, v \in \E_{i, j} \}. 
\] \\

\noindent \underline{ $\{\ell, -\}: HH^2(\J) \rightarrow HH^1(\J)$ } \\
\vspace{0.5em}

\noindent For all $1 \leq i < j < k \leq N$, $1 \leq p \leq N$, $1 \leq q \leq m_p$, $f \in \zc$, $\alpha \in \Int \sigma_i$, and $n \geq 0$, observe
\begin{align} \label{2 to 1}
& \{ \ell, f \partial_i \cup \partial_j \} = f \ell (\partial_j - \partial_i) \\
& \{ \ell, \partial_{\alpha} \cup \partial_j \} = x_{\alpha} (\partial_j - \partial_i) \nonumber \\
& \{ \ell, x_{\eta_p}^n \psi_{p, q} \} =  \lambda_{i, j, k} \big( \degree_{\Perf_j}(x_{\eta_p}) - \degree_{\Perf_k}(x_{\eta_p}) \big) x_{\eta_p}^{n+1} (\partial_i - \partial_k) \nonumber \\
& + \lambda_{i, j, k} \big( \degree_{\Perf_k}(x_{\eta_p}) - \degree_{\Perf_i}(x_{\eta_p})  \big) x_{\eta_p}^{n+1} (\partial_j - \partial_k), \nonumber
\end{align} 
the factor $\lambda_{i, j, k}$ coming from \eqref{theta coeff}. Comparing to \eqref{3 to 2}, we see 
\[
Ker \big( \{\ell, - \}: \zc \cup \No \cup \No \rightarrow \zc \cup \No \big) = \{ \ell, HH^3(\J) \}.
\]
Moreover, the map $\{\ell, -\}$ is injective on the subspace 
\[
\bigoplus_{\substack{\alpha \in \Int \sigma_i \\ 1 \leq i \leq N}} \partial_{\alpha} \cup \No
\] 
The last evaluation in \eqref{2 to 1} is independent of the index $s$, so the kernel of $\{\ell, -\}$ in degree $2$ is spanned by elements of the form 
\[
x_{\eta_p}^n (\psi_{p, q} - \psi_{p, 1}), \; \; n \geq 0.
\]
Therefore,
\[
H^2 \big( HH^*(\J), \{ \ell, - \} \big) \cong \C \{ X_p^n \Psi_{p, q} \, | \, 1 \leq p  \leq N, \, 1 < q \leq m_i, \, n \geq 0 \}. 
\] \\

\noindent \underline{ $\{\ell, -\}: HH^1(\J) \rightarrow HH^0(\J)$ } \\
\vspace{0.5em}

\noindent The cokernel of this map is calculated in Lemma \ref{cokernel bid}:
\[
H^0 \big( HH^*(\J), \{ \ell, - \} \big) \cong \frac{\C [ X_1, \dots, X_N ]}{ (X_i X_j \, | \, i \neq j )}.
\] 
From Proposition \ref{Euler BV}, the kernel is spanned by elements of the form
\[
f (\partial_i - \partial_j)
\]
where $f \in \zc$. The formulas \eqref{2 to 1} indicate that all such elements with $f \in \ell \cdot \zc$ or $f = x_{\alpha}$ for some $\alpha \in \Int \sigma_i$ lie in $\{\ell, HH^2(\J) \}$. So the quotient is spanned by the elements  of the form
\[
x_{\eta_i}^n (\partial_r - \partial_t), \; \; x_{\eta_i}^n (\partial_s - \partial_t), \; \; n \geq 0,
\]  
subject to the relation imposed by the last identity of \eqref{2 to 1}. Hence,
\[
H^1 \big( HH^*(\J), \{\ell, -\} \big) = \frac{\C \{X_i^n U, X_i^n V \, | \, 1 \leq i \leq N, \, n \geq 0 \}}{ \C \{ (\degree_{\Perf_s}(X_i) - \degree_{\Perf_t}(X_i) ) X_i^n  \, U -  ( \degree_{\Perf_t}(X_i)  - \degree_{\Perf_r} (X_i) ) X_i^n \, V \, | \, n \geq 1 \} }
\] \\

We have thus obtained the desired additive description of $HH^*_c(\M)$. The product structure descends from that on $HH^*(\J)$ and is easily deduced from the formulas in \S \ref{section Batalin--Vilkovisky structure}.
\end{proof}

\begin{cor}
Suppose $\Q$ is a zigzag consistent dimer in a torus. There is an additive isomorphism
\[
HH^{BM}_* \big( \M \big)  = \frac{\C[X_i, \Psi_{i, j}, U, V, \Theta_v \, | \, 1 \leq i \leq N, \, 1 < j \leq m_i, \, v \in \Q_0^* ]}{(\mathcal{R})} \, [1]
\]
where $[1]$ denotes the parity shift.
\end{cor}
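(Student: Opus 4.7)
The plan is to derive the Borel--Moore computation from the compactly supported cohomology computation in Theorem \ref{compact HH comp} via Van den Bergh duality, with a degree shift that flips parity. The two key inputs are (i) the analogue of Proposition \ref{spectral} for Borel--Moore homology, which identifies $HH^{BM}_*\big(\M\big)$ with $H^*\big(HH_*(\J), \Lie_\ell\big)$, and (ii) the Van den Bergh isomorphism $\D_{\pi_0}: HH^*(\J) \rightarrow HH_{3-*}(\J)$ from Lemma \ref{degree VDB}, which shifts cohomological degree by $3$.

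First I would verify that $\D_{\pi_0}$ intertwines the differential $\{\ell,-\}$ on $HH^*(\J)$ with $\Lie_\ell$ on $HH_*(\J)$ (up to an overall sign, which is immaterial for computing (co)homology). Recall from \S \ref{section Noncommutative calculus} that for a central element $W$, the cochain differential coming from curvature is $d_W = -\{-,W\}$, while the chain differential is $\Lie_W = d_W$. Because $\D_{\pi_0}$ is the cap product with a nondegenerate class representing the identity of $A$ in $\ext^3_{\J^e}(\J, \J\otimes\J)$, it is a chain map with respect to the Hochschild differentials and commutes with the curvature terms (both are induced by multiplication by the central element $\ell$). Consequently, $\D_{\pi_0}$ descends to an isomorphism of complexes between $(HH^*(\J), \{\ell, -\})$ and $(HH_{3-*}(\J), \Lie_\ell)$.

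Combining this with Proposition \ref{spectral} and Theorem \ref{compact HH comp} gives the chain of isomorphisms
\[
HH^{BM}_k\big(\M\big) \cong H^k\big(HH_*(\J), \Lie_\ell\big) \cong H^{3-k}\big(HH^*(\J), \{\ell, -\}\big) \cong HH^{3-k}_c\big(\M\big)
\]
valid modulo $2$ in the $\Z$-grading. Since $3 \equiv 1 \pmod 2$, this identification swaps even and odd components, which is exactly the claimed parity shift $[1]$ applied to the algebra in Theorem \ref{compact HH comp}.

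The main subtlety, and the step I would handle most carefully, is the compatibility of $\D_{\pi_0}$ with the curvature differentials at the chain level. While the statement that Van den Bergh duality intertwines cup and cap products (the content of \S \ref{section Calabi-Yau algebras}) is standard, one must check that the spectral sequence construction underlying Proposition \ref{spectral} is preserved term by term under $\D_{\pi_0}$, so that the degeneration at $E_2$ established there transports directly to the homological side. Once this is verified, the additive isomorphism follows immediately by reading off Theorem \ref{compact HH comp} with degrees shifted by $3$ mod $2$; no further computation of generators or relations is required.
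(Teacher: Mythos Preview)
Your approach is correct and is essentially the same as the paper's: Van den Bergh duality $\D_{\pi_0}$ intertwines $\{\ell,-\}$ with $\Lie_\ell$, so taking cohomology of both complexes in Proposition \ref{spectral} gives the result with the parity shift coming from $3\equiv 1\pmod 2$. Your concern about transporting the spectral sequence degeneration is unnecessary, since Proposition \ref{spectral} already proves the isomorphisms $HH^{BM}_*(\M)\cong H^*(HH_*(\J),\Lie_\ell)$ and $HH^*_c(\M)\cong H^*(HH^*(\J),\{\ell,-\})$ separately; you only need the intertwining of differentials on the $E_2$ pages, which your argument via cap/cup compatibility and the Cartan identity provides.
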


\begin{proof}
Van den Bergh duality establishes an isomorphism
\[
H^* \big( HH^*(\J), \{\ell, -\} \big) \cong H^{3-*} \big( HH_{3-*}(\J), \Lie_{\ell} \big).
\]
\end{proof}

The compactly supported cohomology actually inherits a BV algebra structure from the complex \eqref{final complex}, since the differential $\{\ell, -\}$ commutes with $\Delta$. Let $\overline{\Delta}$ denote the BV operator on $HH^*_c \big( \M \big)$. From the formulas for $\Delta$ established in \S \ref{section Batalin--Vilkovisky structure}, it is straightforward to evaluate $\overline{\Delta}$ on the basis in Theorem \ref{compact HH comp}. 

\begin{thm}
Suppose $\Q$ is a zigzag consistent dimer in a torus. The BV operator $\overline{\Delta}$ is trivial on $HH^{even}_c \big( \M \big)$ and satisfies the following formulas on $HH^{odd}_c \big( \M \big)$:
\begin{itemize}
\item $\overline{\Delta}(X_i^n \Theta_v) = n X_i^{n-1} \Psi_{i, j}$ for all $1 \leq i \leq N$, $1 < j \leq m_i$, $v \in \E_{i, j}$, and $n \geq 0$;  
\item $\overline{\Delta}(X_i^n U) = n (\degree_{\Perf_r}(X_i) - \degree_{\Perf_t}(X_i) \big) X_i^n$ for all $1 \leq i \leq N$ and $n \geq 0$;
\item $\overline{\Delta}(X_i^n V) = n (\degree_{\Perf_s}(X_i) - \degree_{\Perf_t}(X_i) \big) X_i^n$ for all $1 \leq i \leq N$ and $n \geq 0$.
\end{itemize}
\end{thm}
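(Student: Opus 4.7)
The plan is to show first that the BV operator $\Delta$ on $HH^*(\J)$ descends to the cohomology of $\big(HH^*(\J), \{\ell,-\}\big)$, and then to read off its values on a set of representatives for the basis given in Theorem \ref{compact HH comp}. All the computational input has already been gathered in \S\ref{section Batalin--Vilkovisky structure} and Proposition \ref{spectral}; the remaining task is bookkeeping.

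The first step is the descent. Because $\ell \in HH^0(\J)$ and $\Delta$ has cohomological degree $-1$, we have $\Delta(\ell) = 0$. The BV identity \eqref{BVGers} applied with $f = \ell$ then gives
\[
\{\ell, g\} = \Delta(\ell \cup g) - \ell \cup \Delta(g),
\]
so that $\{\ell,-\}$ is the graded commutator $[\Delta,\, \ell\cup -]$. Combining this with $\Delta^2 = 0$ yields $[\Delta, \{\ell,-\}] = 0$, and hence $\Delta$ induces a well-defined operator $\overline{\Delta}$ on the cohomology computed by Proposition \ref{spectral}, which is $HH^*_c(\M)$.

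The second step is to evaluate $\overline{\Delta}$ on representatives. For the even part, $X_i^n$ lifts to $x_{\eta_i}^n \in HH^0(\J)$, which lies in the kernel of $\Delta$ for degree reasons. For $X_i^n \Psi_{i,j}$, choose $v \in \E_{i,j}$ and write $\psi_{i,j} = \Delta(x_{\eta_i}\theta_v)$; the identity \eqref{psi formula} gives
\[
x_{\eta_i}^n \psi_{i,j} \;=\; \tfrac{1}{n+1}\Delta\bigl(x_{\eta_i}^{n+1}\theta_v\bigr),
\]
so $\Delta(x_{\eta_i}^n\psi_{i,j}) = 0$. The remaining even classes are spanned by these, since the relations in $\mathcal{R}$ kill every other monomial, so $\overline{\Delta}$ vanishes on $HH^{\text{even}}_c(\M)$. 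For the odd part, $X_i^n \Theta_v$ lifts to $x_{\eta_i}^n(\theta_v - \theta_{v_o})$, and applying \eqref{psi formula} to each summand gives
\[
\Delta\bigl(x_{\eta_i}^n(\theta_v - \theta_{v_o})\bigr) = n\, x_{\eta_i}^{n-1}(\psi_{i,j} - \psi_{i,1}),
\]
whose class is $n X_i^{n-1} \Psi_{i,j}$. For $X_i^n U$ and $X_i^n V$, lift to $x_{\eta_i}^n(\partial_r - \partial_t)$ and $x_{\eta_i}^n(\partial_s - \partial_t)$ and invoke Proposition \ref{Euler BV}(1) on each term; the constant $1$ cancels between the two $\partial$'s, leaving the stated multiples of $x_{\eta_i}^n$.

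The only subtle point will be the even-part vanishing: one must verify that no further spurious even monomials survive in $HH^*_c(\M)$ (for instance $\Theta_v \Theta_w$, which vanishes automatically because $HH^{\geq 4}(\J)=0$, or products of $\Psi_{i,j}$'s, which vanish by the listed relations). This is a case-by-case check against the basis produced in the proof of Theorem \ref{compact HH comp}, and once it is carried out the formulas above are forced, completing the proof.
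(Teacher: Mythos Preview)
Your proposal is correct and follows exactly the approach the paper takes: the paper's own proof is a single sentence stating that $\overline{\Delta}$ is read off from the formulas of \S\ref{section Batalin--Vilkovisky structure} on the basis of Theorem~\ref{compact HH comp}, and you have simply written out those evaluations explicitly (together with the descent argument, which the paper asserts just before the theorem). The hedging in your final paragraph is unnecessary, since the proof of Theorem~\ref{compact HH comp} already enumerates the even basis as $1$, $X_i^n$, and $X_i^n\Psi_{i,j}$, so there are no further even monomials to check.
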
 

We note that the zigzag cycles, of which there are
\[
K : = \sum_{i = 1}^N m_i,
\]
are in bijection with the even generators 
\[
\{ X_i, \Psi_{i, j} \, | \, 1 \leq i \leq N, \, 1 < j \leq m_i \} 
\]
as well as with the odd generators
\[
\{ X_i (U + V), X_i \Theta_v \, | \, 1 \leq i \leq N, \, 1 < j \leq m_i, \, v \in \E_{i, j} \}.
\]
Dimer duality exchanges zigzag cycles and vertices but preserves the number of edges and the number of faces (\cite{Bocklandt2016} \S 8). Therefore, the Euler characteristic of the dual to a toric dimer is
\[
2 - 2 g^{\vee} = K - \# \Q_0.
\]
The odd generators $U$, $V$, and $\Theta_v$ span a vector space of dimension 
\[
\# \Q_0 + 1 = 2 g^{\vee} + K - 1,
\]
which is precisely the rank of the first cohomology of the punctured surface $\Sigma^\vee \setminus \Q_0^\vee$. Hence, we may recast the compactly supported cohomology of $\M$ additively as
\[
HH^*_c \big( \M \big) \cong H^*(\Sigma^\vee \setminus \Q_0^\vee, \C) \, \oplus \, \bigoplus_{\substack{1 \leq i \leq K \\ 1 \leq n}} Z_i^n \, \oplus \, \bigoplus_{\substack{1 \leq i \leq K \\ 1 \leq n}}  Z_i^n \xi_i,
\]
where the $Z_i$ are even variables and the $\xi_i$ are odd variables. The BV differential satisfies
\[
\overline{\Delta}( Z_i^n \xi_i ) = n Z_i^n
\] 
and is trivial on all other components. This answer agrees with the symplectic cohomology of the punctured surface \cite{Lekili}, giving strong evidence that the two kinds of Hochschild cohomology of the matrix factorization category coincide. 

However, the multiplicative structures of the compactly supported and symplectic cohomologies are generally not the same. This is expected from the observation that, on the A-side, punctures are topologically independent, but on the B-side, zigzag cycles are grouped together according to first homology class. There is a filtration on $SH^*(\Sigma^\vee \setminus \Q_0^\vee)$ reflecting the degeneracy among zigzags in $HH^*_c(\M)$, and the associated graded multiplicative structure coincides with that of the compactly supported cohomology.  

\subsection{Example: mirror to the four punctured sphere}

Recall the zigzag consistent dimer in a torus in Figure \ref{4 punctured sphere}. There are four zigzag cycles, no two of which are parallel. Fixing $w = v_o$ as the reference vertex, we see the compactly supported cohomology is 
\begin{align*}
HH^*_c \big( \M \big) & \cong \frac{\C[X_i, U, V, \Theta_v \, | \, 1 \leq i \leq 4]}{(\mathcal{R})} \\
& \cong H^*(\Sigma^{\vee} \setminus \Q_0^\vee, \C) \, \oplus \, \bigoplus_{\substack{1 \leq i \leq 4 \\ 1 \leq n}} \C X_i^n \, \oplus \, \bigoplus_{\substack{1 \leq i \leq 4 \\ 1 \leq n}} \C X_i^n(U + V).
\end{align*}
In this case, $HH^*_c \big( \M \big)$ is isomorphic to the symplectic cohomology of the four punctured sphere as BV algebras.

\subsection{Example: mirror to the five punctured sphere}

Recall from Example \ref{pinchpoint} the suspended pinchpoint (reproduced below), which is mirror to the five punctured sphere. The two zigzag cycles represented by the paths $ag$ and $ce$ are parallel, of homology $-\eta_4$. Fixing $v_1$ as the reference vertex, the compactly supported cohomology is
\begin{align*}
HH^*_c \big( \M \big) & \cong \frac{\C[X_i, \Psi_{4, 2}, U, V, \Theta_{v_k} \, | \, 1 \leq i \leq 4, \, k = 2, 3]}{(\mathcal{R})} \\
& \cong \, H^*(\Sigma^\vee \setminus \Q_0^\vee, \C) \, \oplus \, \bigoplus_{\substack{1 \leq i \leq 4 \\ 1 \leq n}} \C X_i^n \, \oplus \, \bigoplus_{0 \leq n} \C X_4^n \Psi_{4, 2}  \, \oplus \, \bigoplus_{\substack{1 \leq i \leq 4 \\ 1 \leq n}} \C X_i^n(U + V) \\
& \oplus \, \bigoplus_{1 \leq n} \C X_4^n \Theta_{v_2}.
\end{align*}
Notice that $X_4 \Theta_{v_2} = X_4 \Theta_{v_3}$, since $v_2$ and $v_3$ are $x_{\eta_4}$-connected. The five even generators
\[
X_1, X_2, X_3, X_4, \Psi_{4, 2}
\]
and the five odd generators
\[
X_1(U+V), X_2(U+V), X_3(U+V), X_4(U+V), X_4 \Theta_{v_2}
\]
correspond to the five zigzag cycles, or the five punctures of the mirror dual. As a $\z / 2\z$-graded vector space equipped with the BV operator, $HH^*_c \big( \M \big)$ agrees with the symplectic cohomology of the five punctured sphere. However, the multiplicative structures are not isomorphic, owing to the existence of parallel zigzag cycles.


\begin{figure}[h] \label{5 punctured sphere}
\centering
\begin{subfigure}{.4\textwidth}
\centering
\[
\begin{tikzpicture}
\draw (0,0) circle [radius=0.2];
\node at (0, 0) {$v_1$};
\draw (0,2) circle [radius=0.2];
\node at (0, 2) {$v_2$};
\draw (0,4) circle [radius=0.2];
\node at (0, 4) {$v_3$};
\draw (0,6) circle [radius=0.2];
\node at (0, 6) {$v_1$};
\draw (2,0) circle [radius=0.2];
\node at (2, 0) {$v_1$};
\draw (2,2) circle [radius=0.2];
\node at (2, 2) {$v_2$};
\draw (2,4) circle [radius=0.2];
\node at (2, 4) {$v_3$};
\draw (2,6) circle [radius=0.2];
\node at (2, 6) {$v_1$};
\node at (1, -1) {$\Q$};
\draw[->] (.2, 0) -- (1.8, 0);
\node[below] at (1,0) {$d$};
\draw[->] (0, 0.2) -- (0, 1.8);
\node[left] at (0, 1) {$a$};
\draw[<-] (0, 2.2) -- (0, 3.8);
\node[left] at (0, 3) {$b$};
\draw[->] (0, 4.2) -- (0, 5.8);
\node[left] at (0, 5) {$c$};
\draw[->] (0.2, 6) -- (1.8, 6);
\node[above] at (1, 6) {$d$};
\draw[->] (2, 0.2) -- (2, 1.8);
\node[right] at (2, 1) {$a$};
\draw[<-] (2, 2.2) -- (2, 3.8);
\node[right] at (2, 3) {$b$};
\draw[->] (2, 4.2) -- (2, 5.8);
\node[right] at (2, 5) {$c$};
\draw[->] (1.86, 5.86) -- (0.14, 4.14);
\node[above left] at (1, 5) {$e$};
\draw[->] (0.14, 2.14) -- (1.86, 3.86);
\node[above left] at (1, 3) {$f$};
\draw[->] (1.86, 1.86) -- (0.14, 0.14);
\node[above left] at (1, 1) {$g$};
\end{tikzpicture}
\]
\caption{$\Q$}
\end{subfigure}
\begin{subfigure}{0.4\textwidth}
\centering
\[
\begin{tikzpicture}
\draw (0,2) -- (0,6);
\draw (0,6) -- (2,2);
\draw (2,2) -- (2, 0);
\draw (2,0) -- (0,2);
\draw[->] (0, 3) -- (-1, 3);
\node[left] at (-1, 3) {$ce$};
\draw[->] (0, 5) -- (-1, 5);
\node[above] at (-0.5, 3) {$-\eta_4$};
\node[left] at (-1, 5) {$ag$};
\node[above] at (-0.5, 5) {$-\eta_4$};
\draw[->] (1, 4) -- (3, 5);
\node[right] at (3, 5) {$dafc$};
\node[above left] at (2, 4.5) {$-\eta_3$};
\draw[->] (2, 1) -- (3, 1);
\node[right] at (3, 1) {$fb$};
\node[above] at (2.5, 1) {$-\eta_2$};
\draw[->] (1, 1) -- (0, 0);
\node[left] at (0, 0) {$debg$};
\node[above left] at (0.5, 0.5) {$-\eta_1$};
\filldraw[black] (0,2) circle (2pt) node[anchor=east] {$\{e, g\} = \Perf_4$ }; 
\filldraw[black] (0,4) circle (2pt) node[anchor=east] {$\{a, e\}, \{c, g\}$};
\filldraw[black] (0,6) circle (2pt) node[anchor=east] {$ \{ a, c\} = \Perf_3 $};
\filldraw[black] (2,2) circle (2pt) node[anchor=west] {$ \Perf_2 = \{d, f\} $};
\filldraw[black] (2, 0) circle (2pt) node[anchor= west] {$\Perf_1 = \{b, d\}$};
\node at (1, -1) {$MP(\Q)$};
\end{tikzpicture}
\]
\caption{$MP(\Q)$}
\end{subfigure}
\begin{subfigure}{0.4\textwidth}
\centering
\[
\begin{tikzpicture}
\draw[->, thick] (0, 0) -- (1, 1);
\node[above right] at (1, 1) {$\langle \eta_1 \rangle$};
\node at (0.9, 0.25) {$\sigma_4$};
\draw[->, thick] (0, 0) -- (-1.5, 0);
\node[left] at (-1.5, 0) {$\langle \eta_2 \rangle$};
\node at (-0.9, 0.25) {$\sigma_1$};
\draw[->, thick] (0,0) -- (-2, -1);
\node[below left] at (-2, -1) {$\langle \eta_3 \rangle$};
\node at (-0.9, -0.25) {$\sigma_2$};
\draw[->, thick] (0, 0) -- (1.5, 0);
\node[right] at (1.5, 0) {$\langle \eta_4 \rangle$};
\node at (0.9, -0.25) {$\sigma_3$};
\end{tikzpicture}
\]
\caption{The antizigzag fan}
\end{subfigure}
\caption{Mirror dual to the five punctured sphere}
\end{figure}
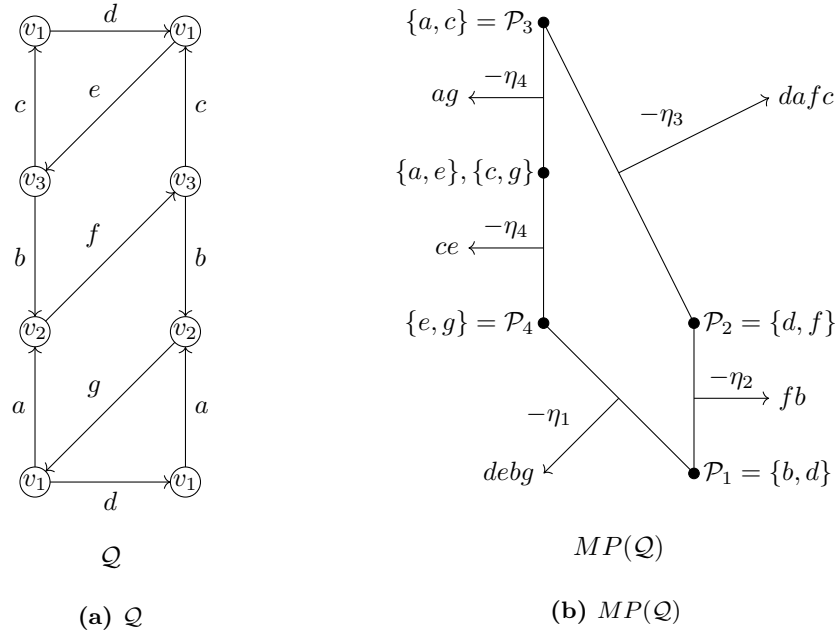


\bibliographystyle{plain}
\bibliography{Dimer_models_and_Hochschild_cohomology}

\end{document}